\documentclass{article}




    \usepackage[final,nonatbib]{neurips_2020}

     
\usepackage[utf8]{inputenc} 
\usepackage[T1]{fontenc}    
\usepackage{hyperref}       
\usepackage{url}            
\usepackage{booktabs}       
\usepackage{amsfonts}       
\usepackage{nicefrac}       
\usepackage{microtype}      

\usepackage{amssymb, amsmath, amsthm, latexsym}
\usepackage{fullpage, color}
\usepackage{url}
\usepackage{algorithm}
\usepackage{algpseudocode}
\usepackage{tabularx}
\usepackage{paralist}
\usepackage{mathtools}
\usepackage{tcolorbox}
\usepackage{xcolor}

\usepackage{bbm} 

\usepackage{makecell}
\usepackage{multirow}
\usepackage{booktabs}

\usepackage{nicefrac}       

\usepackage[numbers]{natbib}
\usepackage{sidecap}

\bibliographystyle{plain}

\usepackage{pifont}
\usepackage{subfigure}

\newcommand{\R}{\mathbb{R}}
\newcommand{\eqdef}{\stackrel{\text{def}}{=}}

\def\<#1,#2>{\left\langle #1,#2\right\rangle}

\usepackage{mdframed} 
\usepackage{thmtools}

\definecolor{shadecolor}{gray}{0.9}
\declaretheoremstyle[
headfont=\normalfont\bfseries,
notefont=\mdseries, notebraces={(}{)},
bodyfont=\normalfont,
postheadspace=0.5em,
spaceabove=1pt,
mdframed={
  skipabove=8pt,
  skipbelow=8pt,
  hidealllines=true,
  backgroundcolor={shadecolor},
  innerleftmargin=4pt,
  innerrightmargin=4pt}
]{shaded}

\declaretheorem[style=shaded,within=section]{definition}
\declaretheorem[style=shaded,sibling=definition]{theorem}

\declaretheorem[style=shaded,sibling=definition]{corollary}

\declaretheorem[style=shaded,sibling=definition]{lemma}


\newcommand{\circledOne}{\text{\ding{172}}}
\newcommand{\circledTwo}{\text{\ding{173}}}
\newcommand{\circledThree}{\text{\ding{174}}}
\newcommand{\circledFour}{\text{\ding{175}}}
\newcommand{\circledFive}{\text{\ding{176}}}

\usepackage[colorinlistoftodos,bordercolor=orange,backgroundcolor=orange!20,linecolor=orange,textsize=scriptsize]{todonotes}


\newcommand{\cF}{{\cal F}}

\newcommand{\cX}{{\cal X}}


\newcommand{\EE}{\mathbf{E}}

\def\R{\mathbb{R}}

\def\R{\mathbb R}

\def\EE{\mathbb E}
\def\PP{\mathbb P}

\def\la{\langle}
\def\ra{\rangle}

\def\tnabla{\widetilde{\nabla}}

\def\Bxi{\boldsymbol{\xi}}

\def\Bxi{\boldsymbol{\xi}}

\def\clip{\text{clip}}

\def\obf{\mathbbm{1}}


\usepackage{hyperref}

\usepackage{accents}
\newlength{\dhatheight}

\title{Stochastic Optimization with Heavy-Tailed Noise via \\ Accelerated Gradient Clipping}

%

\author{%
  Eduard Gorbunov\thanks{\texttt{eduard.gorbunov@phystech.edu}, \url{eduardgorbunov.github.io}} \\
  MIPT and HSE, Russia\\
  \And 
  Marina Danilova\thanks{\texttt{danilovamarina15@gmail.com}, \url{marinadanya.github.io}}\\
  ICS RAS and MIPT, Russia\\
  \And 
  Alexander Gasnikov\thanks{\texttt{gasnikov@yandex.ru}}\\
  MIPT and HSE, Russia\\
}

\begin{document}

\maketitle

\begin{abstract}
  In this paper, we propose a new accelerated stochastic first-order method called {\tt clipped-SSTM} for smooth convex stochastic optimization with heavy-tailed distributed noise in stochastic gradients and derive the first high-probability complexity bounds for this method closing the gap in the theory of stochastic optimization with heavy-tailed noise. Our method is based on a special variant of accelerated Stochastic Gradient Descent ({\tt SGD}) and clipping of stochastic gradients. We extend our method to the strongly convex case and prove new complexity bounds that outperform state-of-the-art results in this case. Finally, we extend our proof technique and derive the first non-trivial high-probability complexity bounds for {\tt SGD} with clipping without light-tails assumption on the noise. 
\end{abstract}

\section{Introduction}\label{sec:intro}
In this paper we focus on the following problem
\begin{equation}
    \min\limits_{x\in \R^n}f(x),\quad f(x) = \EE_\xi\left[f(x,\xi)\right],\label{eq:main_problem}
\end{equation}
where $f(x)$ is a smooth convex function and the mathematical expectation in \eqref{eq:main_problem} is taken with respect to the random variable $\xi$ defined on the probability space $(\cX, \cF, \PP)$ with some $\sigma$-algebra $\cF$ and probability measure $\PP$. Such problems appear in various applications of machine learning \cite{Goodfellow-et-al-2016,shalev2014understanding,shapiro2014lectures} and mathematical statistics \cite{spokoiny2012parametric}. Perhaps, the most popular method to solve problems like \eqref{eq:main_problem} is Stochastic Gradient Descent ({\tt SGD}) \cite{hardt2015train, nemirovski1978cesari, nemirovsky1983problem, robbins1951stochastic, shalev2011pegasos}. There is a lot of literature on the convergence in expectation of {\tt SGD} for (strongly) convex \cite{ghadimi2013stochastic, gorbunov2019unified, gower2019sgd, moulines2011non, needell2016stochastic, nemirovski2009robust, nguyen2018sgd} and non-convex \cite{davis2019stochastic, ghadimi2013stochastic, khaled2020better} problems under different assumptions on stochastic gradient. When the problem is good enough, i.e.\ when the distributions of stochastic gradients are \textit{light-tailed}, this theory correlates well with the real behavior of trajectories of {\tt SGD} in practice. Moreover, the existing \textit{high-probability} bounds for {\tt SGD} \cite{devolder2011stochastic,dvurechensky2016stochastic, nemirovski2009robust} coincide with its counterpart from the theory of convergence in expectation up to logarithmical factors depending on the confidence level.

However, there are a lot of important applications where the noise distribution in the stochastic gradient is significantly \textit{heavy-tailed} \cite{simsekli2019tail,zhang2019adam}. For such problems {\tt SGD} is often less robust and shows poor performance in practice. Furthermore, existing results for the convergence with high-probability for {\tt SGD} are also much worse in the presence of heavy-tailed noise than its ``light-tailed counterparts''. In this case, rates of the convergence in expectation can be insufficient to describe the behavior of the method. 

To illustrate this phenomenon we consider a simple example of stochastic optimization problem and apply {\tt SGD} with constant stepsize to solve it. After that, we present a natural and simple way to resolve the issue of {\tt SGD} based on the \textit{clipping} of stochastic gradients. However, we need to introduce some important notations and definitions before we start to discuss this example.

\subsection{Preliminaries}\label{sec:notation}
In this section we introduce the main part of notations, assumption and definitions. The rest is classical for optimization literature and stated in the appendix (see Section~\ref{sec:appendix_notation}). Throughout the paper we assume that at each point $x\in \R^n$ function $f$ is accessible only via stochastic gradients $\nabla f(x,\xi)$ such that 
\begin{eqnarray}
    \EE_\xi[\nabla f(x,\xi)] &=& \nabla f(x),\quad
    \EE_\xi\left[\left\|\nabla f(x,\xi) - \nabla f(x)\right\|_2^2\right] \le \sigma^2,\label{eq:bounded_variance_clipped_SSTM}
\end{eqnarray}
i.e.\ we have an access to the unbiased estimator of $\nabla f(x)$ with uniformly bounded by $\sigma^2$ variance where $\sigma$ is some non-negative number. These assumptions on the stochastic gradient are standard in the stochastic optimization literature \cite{ghadimi2012optimal, ghadimi2013stochastic, juditsky2011first, lan2012optimal, nemirovski2009robust}. Below we introduce one of the most important definitions in this paper.
\begin{definition}[light-tailed random vector]\label{def:light_tailed_distrib}
    We say that random vector $\eta$ has a light-tailed distribution, i.e.\ satisfies ``light-tails'' assumption, if there exist $\EE[\eta]$ and $\PP\left\{\left\|\eta - \EE[\eta]\right\|_2 > b\right\} \le 2\exp\left(-\frac{b^2}{2\sigma^2}\right)$ for all $b > 0$
\end{definition}
Such distributions are often called sub-Gaussian ones (see \cite{jin2019short} and references therein). One can show (see Lemma~2 from \cite{jin2019short}) that this definition is equivalent to
\begin{equation}
    \EE\left[\exp\left(\nicefrac{\left\|\eta - \EE[\eta]\right\|_2^2}{\sigma^2}\right)\right] \le \exp(1) \label{eq:light_tails_def_equivalent}
\end{equation}
up to absolute constant difference in $\sigma$. Due to Jensen's inequality and convexity of $\exp(\cdot)$ one can easily show that inequality \eqref{eq:light_tails_def_equivalent} implies $\EE[\|\eta - \EE[\eta]\|_2^2] \le \sigma^2$. However, the reverse implication does not hold in general. Therefore, in the rest of the paper by stochastic gradient with heavy-tailed distribution, we mean such a stochastic gradient that satisfies \eqref{eq:bounded_variance_clipped_SSTM} but not necessarily \eqref{eq:light_tails_def_equivalent}.

\subsection{Simple Motivational Example: Convergence in Expectation and Clipping}\label{sec:motivation}
In this section we consider {\tt SGD} $x^{k+1} = x^k - \gamma \nabla f(x^k,\xi^k)$ applied to solve the problem \eqref{eq:main_problem} with $f(x,\xi) = \nicefrac{\|x\|_2^2}{2} + \la \xi, x\ra$, where $\xi$ is a random vector with zero mean and the variance by $\sigma^2$ (see the details in Section~\ref{sec:toy_details}). The state-of-the-art theory (e.g.\ \cite{gorbunov2019unified, gower2019sgd}) says that convergence properties in expectation of {\tt SGD} in this case depend only on the stepsize $\gamma$, condition number of $f$, initial suboptimality $f(x^0) - f(x^*)$ and the variance $\sigma$, but does not depend on distribution of $\xi$. However, the trajectory of {\tt SGD} significantly depends on the distribution of $\xi$. To illustrate this we consider $3$ different distributions of $\xi$ with the same $\sigma$, i.e., Gaussian distribution, Weibull distribution \cite{weibull1951statistical} and Burr Type XII distribution \cite{burr1942cumulative, mclaughlin2001compendium} with proper shifts and scales to get needed mean and variance for $\xi$ (see the details in Section~\ref{sec:toy_details}). For each distribution, we run {\tt SGD} several times from the same starting point, the same stepsize $\gamma$, and the same batchsize, see typical runs in Figure~\ref{fig:sgd_toy_example}.
\begin{figure}[h]
    \centering
    \includegraphics[width=0.32\textwidth]{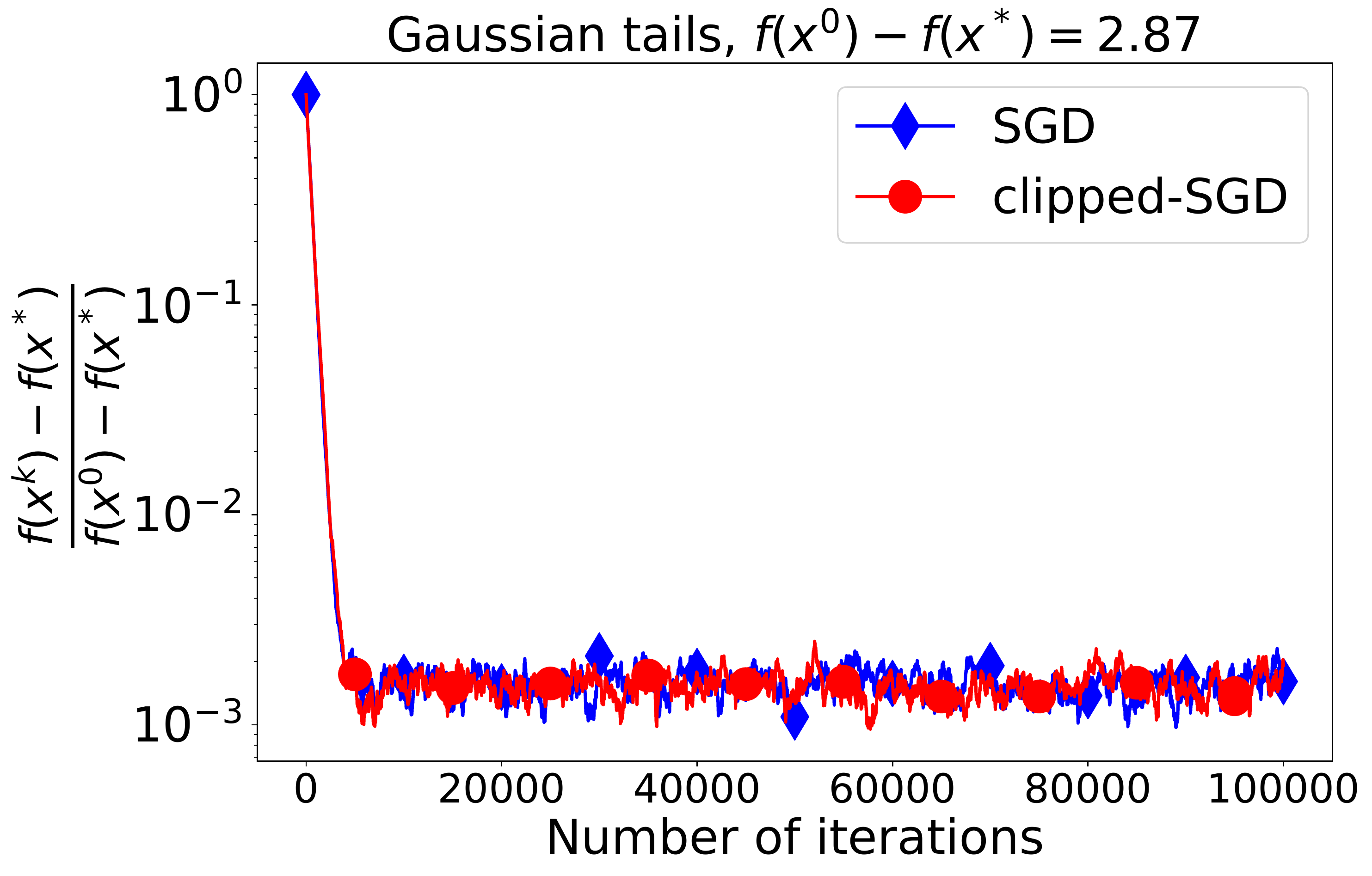}
    \includegraphics[width=0.32\textwidth]{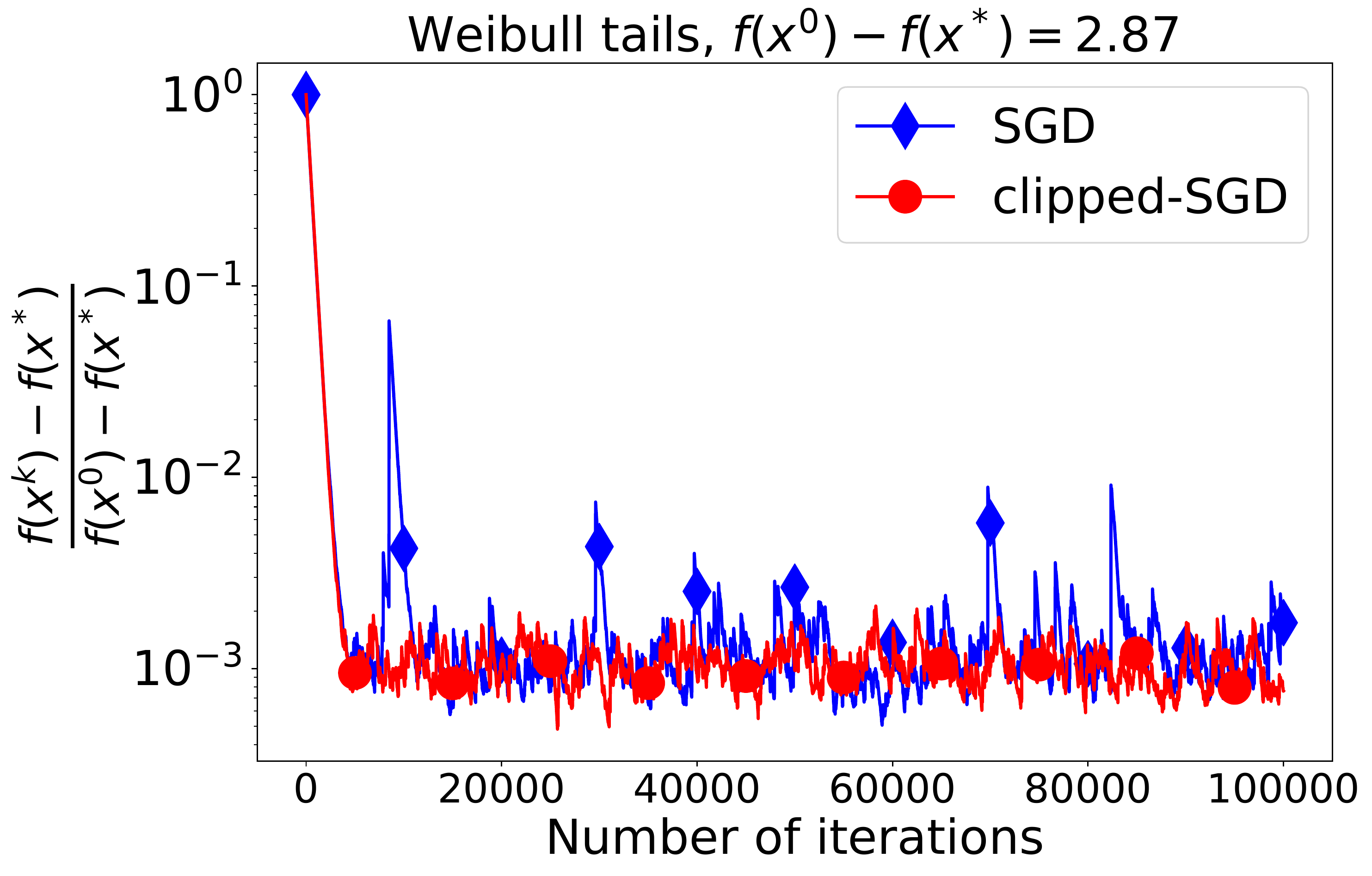}
    \includegraphics[width=0.32\textwidth]{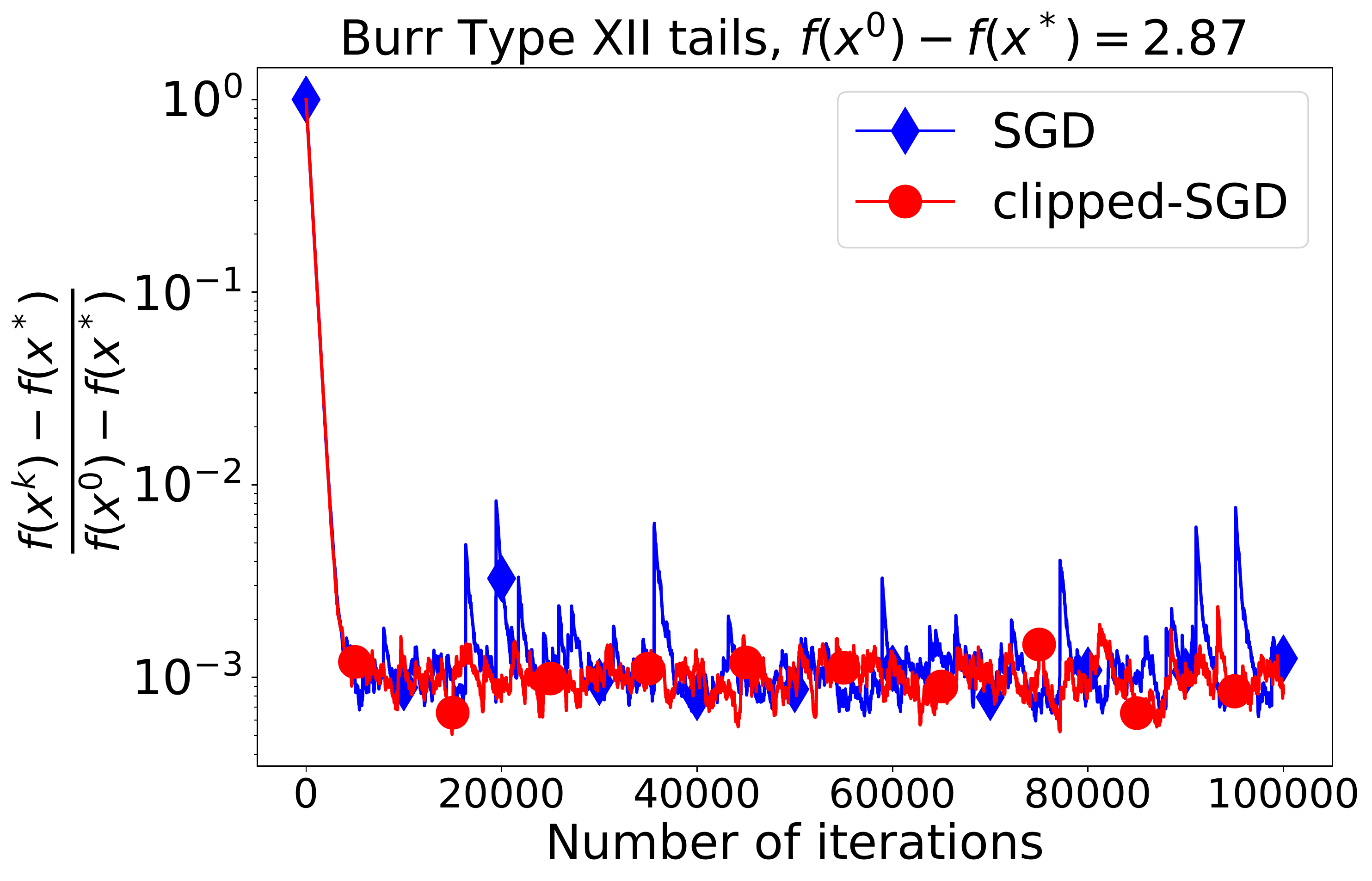}
    \caption{\scriptsize Typical trajectories of {\tt SGD} and {\tt clipped-SGD} applied to solve \eqref{eq:toy_problem} with $\xi$ having Gaussian, Weibull, and Burr Type XII tails.}
    \label{fig:sgd_toy_example}
\end{figure}
This simple example shows that {\tt SGD} in all $3$ cases rapidly reaches a neighborhood of the solution and then starts to oscillate there. However, these oscillations are significantly larger for the second and the third cases where stochastic gradients are heavy-tailed. Unfortunately, guarantees for the convergence in expectation cannot express this phenomenon, since in expectation the convergence guarantees for all $3$ cases are identical.

Moreover, in practice, e.g., in training big machine learning models, it is often used only a couple runs of {\tt SGD} or another stochastic method. The training process can take hours or even days, so, it is extremely important to obtain good accuracy of the solution \textit{with high probability}. However, as our simple example shows, {\tt SGD} fails to converge robustly if the noise in stochastic gradients is heavy-tailed which was also noticed for several real-world problems like training AlexNet \cite{krizhevsky2012imagenet} on CIFAR10 \cite{krizhevsky2009cifar} (see \cite{simsekli2019tail}) and training an attention model \cite{vaswani2017attention} via BERT \cite{devlin2018bert} (see \cite{zhang2019adam}). 


Clearly, since the distributions of stochastic gradients in the second and the third cases are heavy tailed the probability of sampling too large $\xi$ (in terms of the norm) and, as a consequence, too large $\nabla f(x,\xi)$ is high even if we are close to the solution. Once the current point $x^k$ is not too far from the solution and {\tt SGD} gets a stochastic gradient with too large norm the method jumps far from the solution. Therefore, we see large oscillations. Since the reason of such oscillations is large norm of stochastic gradient it is natural to \textit{clip} it, i.e., update $x^{k+1}$ according to $x^{k+1} = x^k - \gamma \min\{1,\nicefrac{\lambda}{\|\nabla f(x^k,\xi^k)\|_2}\}\nabla f(x^k,\xi^k).$ The obtained method is known in literature as {\tt clipped-SGD} (see \cite{gehring2017convolutional, Goodfellow-et-al-2016, menon2020can, merity2017regularizing, peters2018deep, zhang2019gradient, zhang2019adam} and references therein). Among the good properties of {\tt clipped-SGD} we emphasize its robustness to the heavy-tailed noise in stochastic gradients (see also \cite{zhang2019adam}). In our tests, trajectories of {\tt clipped-SGD} oscillate not significantly even for heavy-tailed distributions, and clipping does not spoil the rate of convergence. These two factors make {\tt clipped-SGD} preferable than {\tt SGD} when we deal with heavy-tailed distributed stochastic gradients (see further discussion in Section~\ref{sec:grad_clipping_extra}).



\subsection{Related Work}\label{sec:related_work}
\subsubsection{Smooth Stochastic Optimization: Light-Tailed Noise}\label{sec:light_tails}
In the light-tailed case high-probability complexity bounds and complexity bounds in expectation for {\tt SGD} and {\tt AC-SA} differ only in logarithmical factors of $\nicefrac{1}{\beta}$, see the details in Table~\ref{tab:cvx_case_comparison}. Such bounds were obtained in \cite{devolder2011stochastic} for {\tt SGD} in the convex case and then were extended to the $\mu$-strongly convex case in \cite{dvurechensky2016stochastic} for modification of {\tt SGD} called Stochastic Intermediate Gradient Method ({\tt SIGM}). Finally, optimal complexities were derived in \cite{ghadimi2012optimal,ghadimi2013optimal,lan2012optimal} for the method called {\tt AC-SA} in the convex case and for Multi-Staged {\tt AC-SA} ({\tt MS-AC-SA}) in the strongly convex case.

\subsubsection{Smooth Stochastic Optimization: Heavy-Tailed Noise}\label{sec:rel_work_heavy_tails}
Without light tails assumption the most straightforward results lead to $O(\nicefrac{1}{\beta^2})$ and $O(\nicefrac{1}{\beta})$ dependency on $\beta$ in the complexity bounds. Such bounds can be obtained from the complexity bounds for the convergence in expectation via Markov's inequality. However, for small $\beta$ these bounds become unacceptably poor. Classical results \cite{elisseeff2002stability,nesterov2008confidence,shalev2009stochastic} reduce these dependence to $O(\ln(\beta^{-1}))$ but they have worse dependence on $\varepsilon$ than corresponding results relying on light tails assumption.

For a long time the following question was open: \textit{is it possible to design stochastic methods having the same or comparable complexity bounds as in the light-tailed case but without light tails assumption on stochastic gradients?} In \cite{nazin2019algorithms} and \cite{davis2019low} the authors give a positive answer to this question \textit{but only partially}. Let us discuss the results from these papers in detail.

In \cite{nazin2019algorithms} Nazin et al.\ develop a new algorithm called Robust Stochastic Mirror Descent ({\tt RSMD}) which is based on a special truncation of stochastic gradients and derive complexity guarantees similar to {\tt SGD} in the convex case but without light assumption, see Table~\ref{tab:cvx_case_comparison}. This technique is very similar to gradient clipping. Moreover, in \cite{nazin2019algorithms} authors consider also composite problems with non-smooth composite term. However, in \cite{nazin2019algorithms} the optimization problem is defined on some \textit{compact} convex set $X$ with diameter $\Theta = \max\{\|x-y\|_2\mid x,y\in X\} < \infty$ and the analysis depends substantially on the boundedness of $X$. Using special restarts technique together with iterative squeezing of the set $X$ Nazin et al.\ extend their method to the $\mu$-strongly convex case, see Table~\ref{tab:str_cvx_case_comparison}.
Finally, in the discussion section of \cite{nazin2019algorithms} authors formulate the following question: \textit{is it possible to develop such \textbf{accelerated} stochastic methods that have the same or comparable complexity bounds as in the light-tailed case but do not require stochastic gradients to be light-tailed?}

In the strongly convex case the positive answer to this question was given by Davis et al.\ \cite{davis2019low} where authors propose a new method called {\tt proxBoost} that is based on robust distance estimation \cite{hsu2016loss,nemirovsky1983problem} and proximal point method \cite{martinet1970regularisation, martinet1972determination, rockafellar1976monotone}, see Table~\ref{tab:str_cvx_case_comparison}. However, this approach requires solving an auxiliary optimization problem at each iteration that can lead to poor performance in practice.

In our paper we close the gap in theory, i.e., we provide a positive answer to the following question:
\textit{Is it possible to develop such an accelerated stochastic method that have the same or comparable complexity bound as for {\tt AC-SA} in the convex case but do not require stochastic gradients to be light-tailed?}


\subsection{Our Contributions}\label{sec:contrib}
\begin{itemize}
    \item One of the main contributions of our paper is a new method called Clipped Stochastic Similar Triangles Method ({\tt clipped-SSTM}). For the case when the objective function $f$ is convex and $L$-smooth we derive the following complexity bound \textit{without light tails assumption on the stochastic gradients:} $O(\max\{\sqrt{\nicefrac{LR_0^2}{\varepsilon}}, \nicefrac{\sigma^2R_0^2}{\varepsilon^2}\}\ln(\nicefrac{LR_0^2}{\varepsilon\beta)}).$ This bound outperforms all known bounds for this setting (see Table~\ref{tab:cvx_case_comparison}) and up to the difference in logarithmical factors recovers the complexity bound of {\tt AC-SA} derived under light tails assumption. That is, in this paper we close the gap in theory theory of smooth convex stochastic optimization with heavy-tailed noise. Moreover, unlike in \cite{nazin2019algorithms}, we do not assume boundedness of the set where the optimization problem is defined, which makes our analysis more complicated. We also study different batchsize policies for {\tt clipped-SSTM}.
    \item Using restarts technique we extend {\tt clipped-SSTM} to the $\mu$-strongly convex objectives and obtain a new method called Restarted {\tt clipped-SSTM} ({\tt R-clipped-SSTM}). For this method we prove the following complexity bound (again, \textit{without light tails assumption on the stochastic gradients}): $O(\max\{\sqrt{\nicefrac{L}{\mu}}\ln(\nicefrac{\mu R^2}{\varepsilon}), \nicefrac{\sigma^2}{\mu\varepsilon}\}\ln(\nicefrac{L}{\mu\beta}\ln(\nicefrac{\mu R^2}{\varepsilon}))).$ Our bound outperforms the state-of-the-art result from \cite{davis2019low} in terms of the dependence on $\ln\frac{L}{\mu}$, see Table~\ref{tab:str_cvx_case_comparison} for the details.
    \item We prove the first high-probability complexity guarantees for {\tt clipped-SGD} in convex and strongly convex cases \textit{without light tails assumption on the stochastic gradients}, see Tables~\ref{tab:cvx_case_comparison}~and~\ref{tab:str_cvx_case_comparison}. The complexity we prove for {\tt clipped-SGD} in the convex case is comparable with corresponding bound for {\tt SGD} derived under light tails assumption. In the $\mu$-strongly convex case we derive a new complexity bound for the restarted version of {\tt clipped-SGD} ({\tt R-clipped-SGD}) which is comparable with its ``light-tailed counterpart''.
    \item We conduct several numerical experiments with the proposed methods in order to justify the theory we develop. In particular, we show that {\tt clipped-SSTM} can outperform {\tt SGD} and {\tt clipped-SGD} in practice even without using large batchsizes. Moreover, in our experiments we illustrate how clipping makes the convergence of {\tt SGD} and {\tt SSTM} more robust and reduces their oscillations.
\end{itemize}

\begin{table}[h]
    \centering
    \caption{Comparison of existing high-probability convergence results for stochastic optimization under assumptions \eqref{eq:bounded_variance_clipped_SSTM} for convex and $L$-smooth objectives. The second column contains an overall number of stochastic first-order oracle calls needed to achieve $\varepsilon$-solution with probability at least $1-\beta$. In the third column ``light'' means that $\nabla f(x,\xi)$ satisfies \eqref{eq:light_tails_def_equivalent} and ``heavy'' means that the result holds even in the case when \eqref{eq:light_tails_def_equivalent} does not hold. Column ``Domain'' describes the set where the optimization problem is defined. For {\tt RSMD} $\color{red}\Theta$ is a diameter of the set where the optimization problem is defined. We use red color to emphasize the restrictions we eliminate. 
    }
    \label{tab:cvx_case_comparison}
    \begin{tabular}{|c|c|c|c|}
        \hline
        Method & Complexity & Tails & Domain \\
        \hline
        \makecell{{\tt SGD} \cite{devolder2011stochastic}} & $O\left(\max\left\{\frac{L{R_0}^2}{\varepsilon}, \frac{\sigma^2{R_0}^2}{\varepsilon^2}\ln^2(\beta^{-1})\right\}\right)$ & {\color{red}light} & {\color{red} bounded} \\
        \hline
        \makecell{{\tt AC-SA} \cite{ghadimi2012optimal,lan2012optimal}} & $O\left(\max\left\{\sqrt{\frac{LR_0^2}{\varepsilon}}, \frac{\sigma^2R_0^2}{\varepsilon^2}\ln(\beta^{-1})\right\}\right)$ & {\color{red}light} & arbitrary\\
        \hline
        \makecell{{\tt RSMD} \cite{nazin2019algorithms}} & $O\left(\max\left\{\frac{L{\color{red}\Theta}^2}{\varepsilon}, \frac{\sigma^2{\color{red}\Theta}^2}{\varepsilon^2}\right\}\ln(\beta^{-1})\right)$ & heavy & {\color{red} bounded}\\
        \hline
        \makecell{{\tt clipped-SGD} {\scriptsize\color{blue}[This work]}} & $O\left(\max\left\{\frac{L{R_0}^2}{\varepsilon}, \frac{\sigma^2{R_0}^2}{\varepsilon^2}\right\}\ln(\beta^{-1})\right)$ & heavy & $\R^n$ \\
        \hline
        \makecell{{\tt clipped-SSTM} {\scriptsize\color{blue}[This work]}} & $O\left(\max\left\{\sqrt{\frac{LR_0^2}{\varepsilon}}, \frac{\sigma^2R_0^2}{\varepsilon^2}\right\}\ln\frac{LR_0^2+\sigma R_0}{\varepsilon\beta}\right)$ & heavy & $\R^n$ \\
        \hline
    \end{tabular}
\end{table}
\begin{table}[h]
    \centering
     \caption{Comparison of existing high-probability convergence results for stochastic optimization under assumptions \eqref{eq:bounded_variance_clipped_SSTM} for $\mu$-strongly convex and $L$-smooth objectives. The second column contains an overall number of stochastic first-order oracle calls needed to achieve $\varepsilon$-solution with probability at least $1-\beta$. In the third column ``light'' means that $\nabla f(x,\xi)$ satisfies \eqref{eq:light_tails_def_equivalent} and ``heavy'' means that the result holds even in the case when \eqref{eq:light_tails_def_equivalent} does not hold. Column ``Domain'' describes the set where the optimization problem is defined. For {\tt RSMD} $\color{red}\Theta$ is a diameter of the set where the optimization problem is defined and $R = \sqrt{\nicefrac{2(f(x^0) - f(x^*))}{\mu}}$, $r_0 = f(x^0) - f(x^*)$. We use red color to emphasize the restrictions we eliminate.
    }
    \label{tab:str_cvx_case_comparison}
    \begin{tabular}{|c|c|c|c|}
        \hline
        Method & Complexity & Tails & Domain \\
        \hline
        \makecell{{\tt SIGM} \cite{dvurechensky2016stochastic}} & $O\left(\max\left\{\frac{L}{\mu}\ln\frac{\mu R_0^2}{\varepsilon}, \frac{\sigma^2}{\mu\varepsilon}\ln\left(\beta^{-1}\ln\frac{\mu R_0^2}{\varepsilon}\right)\right\}\right)$ & {\color{red}light} & {arbitrary} \\
        \hline
        \makecell{{\tt MS-AC-SA} \cite{ghadimi2013optimal}} & $O\left(\max\left\{\sqrt{\frac{L}{\mu}}\ln\frac{LR_0^2}{\varepsilon}, \frac{\sigma^2}{\mu\varepsilon}\ln\left(\beta^{-1}\ln\frac{LR_0^2}{\varepsilon}\right)\right\}\right)$ & {\color{red}light} & arbitrary \\
        \hline
        \makecell{{\tt restarted-RSMD}\\ \cite{nazin2019algorithms}} & $ O\left(\max\left\{\frac{L}{\mu}\ln\left(\frac{\mu{\color{red}\Theta}^2}{\varepsilon}\right),\frac{\sigma^2}{\mu\varepsilon}\right\}\ln\left(\beta^{-1}\ln\frac{\mu{\color{red}\Theta}^2}{\varepsilon}\right)\right)$ & heavy & {\color{red} bounded} \\
        \hline
        \makecell{{\tt proxBoost} \cite{davis2019low}} & \makecell{$ O\left(\max\left\{\sqrt{\frac{L}{\mu}}\ln\left(\frac{LR_0^2{\color{red}\ln\frac{L}{\mu}}}{\varepsilon}\right), \frac{\sigma^2{\color{red}\ln\frac{L}{\mu}}}{\mu\varepsilon}\right\}\cdot C\right)$,\\where $C = {\color{red}\ln\left(\frac{L}{\mu}\right)}\ln\left(\frac{\ln\frac{L}{\mu}}{\beta}\right)$} & heavy & arbitrary\\
        \hline
        \makecell{{\tt clipped-SGD}\\ {\scriptsize\color{blue}[This work]}} & $ O\left(\max\left\{\frac{L}{\mu}, \frac{\sigma^2}{\mu\varepsilon}\cdot \frac{L}{\mu}\right\}\ln\left(\frac{r_0}{\varepsilon}\right)\ln\left(\frac{L}{\mu\beta}\ln\frac{r_0}{\varepsilon}\right)\right)$ & heavy & $\R^n$ \\
        \hline
        \makecell{{\tt R-clipped-SGD}\\ {\scriptsize\color{blue}[This work]}} & $ O\left(\max\left\{\frac{L}{\mu}\ln\frac{\mu R^2}{\varepsilon}, \frac{\sigma^2}{\mu\varepsilon}\right\}\ln\left(\frac{L}{\mu\beta}\ln\frac{\mu R^2}{\varepsilon}\right)\right)$ & heavy & $\R^n$\\
        \hline
        \makecell{{\tt R-clipped-SSTM}\\ {\scriptsize\color{blue}[This work]}} & $O\left(\max\left\{\sqrt{\frac{L}{\mu}}\ln\frac{\mu R^2}{\varepsilon}, \frac{\sigma^2}{\mu\varepsilon}\right\}\ln\left(\frac{L}{\mu\beta}\ln\frac{\mu R^2}{\varepsilon}\right)\right)$ & heavy & $\R^n$ \\
        \hline
    \end{tabular}
\end{table}

\subsubsection{Relation to \cite{zhang2019adam}}\label{sec:relation_to_zhangs_paper}
While Zhang et al. \cite{zhang2019adam} consider different setup, \cite{zhang2019adam} is highly relevant to our paper, and, in some sense, it complements our findings. In particular, it contains the analysis of several versions of {\tt clipped-SGD} establishing the rates of convergence \textit{in expectation} while we focus on the \textit{high-probability} complexity guarantees. Secondly, we consider convex and strongly convex cases while \cite{zhang2019adam} provides an analysis for non-convex and strongly convex problems. Finally, \cite{zhang2019adam} relies on the following assumption: there exist such $G> 0$ and $\alpha \in (1,2]$ that the stochastic gradient $g(x)$ satisfies $\mathbb{E}\|g(x)\|_2^\alpha \le G^\alpha$. This assumption implies the boundedness of the gradient of the objective function $f(x)$ which is quite restrictive and does not hold on the whole space for strongly convex functions. In our paper, we assume only boundedness of the variance. Moreover, we consider \textit{smooth} problems that allows us to accelerate {\tt clipped-SGD} and obtain {\tt clipped-SSTM}, while Zhang et al. \cite{zhang2019adam} provide non-accelerated rates.

\subsection{Paper Organization}\label{sec:paper_org}
The remaining part of the paper is organized as follows. In Section~\ref{sec:acc_method} we present {\tt clipped-SSTM} together with the main complexity result in the convex case that we prove for this method. Then, we present the first high-probability complexity bounds for {\tt clipped-SGD} for for the convex problems. In Section~\ref{sec:numerical_exp} we provide our numerical experiments justifying our theoretical results. Finally, in Section~\ref{sec:discussion} we provide some concluding remarks and discuss the limitations and possible extensions of the results developed in the paper. Due to the space limitations, we put the exact formulations of all theorems, results for the strongly convex problems and the full proofs in the Appendix (see Sections~\ref{sec:proofs_acc_method}~and~\ref{sec:missing_proofs_SGD}), together with auxiliary and technical results and additional experiments (see Section~\ref{sec:extra_exp}). Moreover, in Section~\ref{sec:sketch_clipped_SSTM} we present a sketch of the proof of the main convergence result for {\tt clipped-SSTM} and explain the intuition behind it.

\section{Accelerated {\tt SGD} with Clipping}\label{sec:acc_method}
In this section we consider the situation when $f(x)$ is convex and $L$-smooth on $\R^n$. For this problem we present a new method called Clipped Stochastic Similar Triangles Method ({\tt clipped-SSTM}, see Algorithm~\ref{alg:clipped-SSTM}). 
\begin{algorithm}[h]
\caption{Clipped Stochastic Similar Triangles Method ({\tt clipped-SSTM})}
\label{alg:clipped-SSTM}   
\begin{algorithmic}[1]
\Require starting point $x^0$, number of iterations $N$, batchsizes $\{m_k\}_{k=1}^N $, stepsize parameter $a$, clipping parameter $B$
\State Set $A_0 = \alpha_0 = 0$, $y^0 = z^0 = x^0$
\For{$k=0,\ldots, N-1$}
\State Set $\alpha_{k+1} = \frac{k+2}{2aL}$, $A_{k+1} = A_k + \alpha_{k+1}$, $\lambda_{k+1} = \frac{B}{\alpha_{k+1}}$
\State $x^{k+1} = \nicefrac{(A_k y^k + \alpha_{k+1} z^k)}{A_{k+1}}$
\State Draw fresh i.i.d.\ samples $\xi_1^k,\ldots,\xi_{m_{k}}^k$ and compute $\nabla f(x^{k+1},\Bxi^k) = \frac{1}{m_k}\sum_{i=1}^{m_k}\nabla f(x^{k+1},\xi_i^k)$
\State Compute $\tnabla f(x^{k+1},\Bxi^k) = \clip(\nabla f(x^{k+1},\Bxi^k),\lambda_{k+1})$ using \eqref{eq:clip_operator} 
\State $z^{k+1} = z^k - \alpha_{k+1}\tnabla f(x^{k+1},\Bxi^k)$
\State $y^{k+1} = \nicefrac{(A_k y^k + \alpha_{k+1} z^{k+1})}{A_{k+1}}$
\EndFor
\Ensure $y^N$ 
\end{algorithmic}
\end{algorithm}
In our method we use a clipped stochastic gradient that is defined in the following way:
\begin{equation}
    \clip(\nabla f(x,\Bxi), \lambda) = \min\left\{1,\nicefrac{\lambda}{\|\nabla f(x,\Bxi)\|_2}\right\}\nabla f(x,\Bxi)
      \label{eq:clip_operator}
\end{equation}
where $\nabla f(x,\Bxi)=\frac{1}{m}\sum_{i=1}^m\nabla f(x,\xi_i)$ is a mini-batched version of $\nabla f(x)$. That is, in order to compute $\clip(\nabla f(x,\Bxi), \lambda)$ one needs to get $m$ i.i.d.\ samples $\nabla f(x,\xi_1),\ldots,\nabla f(x,\xi_m)$, compute its average and then project the result $\nabla f(x,\Bxi)$ on the Euclidean ball with radius $\lambda$ and center at the origin.
Next theorem summarizes the main convergence result for {\tt clipped-SSTM}.
\begin{theorem}\label{thm:main_result_clipped_SSTM_main}
    Assume that function $f$ is convex and $L$-smooth. Then for all $\beta \in (0,1)$ and $N\ge 1$ such that $\ln(\nicefrac{4N}{\beta}) \ge 2$ we have that after $N$ iterations of {\tt clipped-SSTM} with $m_k = \Theta\left(\max\left\{1,\nicefrac{\sigma^2 \alpha_{k+1}^2N\ln(\nicefrac{N}{\beta})}{R_0^2}\right\}\right)$, $B = \Theta(\nicefrac{R_0}{\ln(\nicefrac{N}{\beta})})$ and $a = \Theta(\ln^2(\nicefrac{N}{\beta}))$ that $f(y^N) - f(x^*) = O(\nicefrac{aLR_0^2}{N^2})$ holds with probability at least $1-\beta$ where $R_0 = \|x^0 - x^*\|_2$. In other words, if we choose $a$ to be equal to the maximum from \eqref{eq:B_a_parameters_clipped_SSTM}, then the method achieves $f(y^N) - f(x^*) \le \varepsilon$ with probability at least $1-\beta$ after $O(\sqrt{\nicefrac{LR_0^2}{\varepsilon}}\ln(\nicefrac{LR_0^2}{\varepsilon\beta}))$ iterations and requires $O(\max\{\sqrt{\nicefrac{LR_0^2}{\varepsilon}}, \nicefrac{\sigma^2R_0^2}{\varepsilon^2}\}\ln(\nicefrac{LR_0^2}{\varepsilon\beta}))$ oracle calls.
\end{theorem}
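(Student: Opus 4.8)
The plan is to run the standard ``similar triangles'' energy argument for the accelerated scheme with the clipped oracle $\tnabla f(x^{k+1},\Bxi^k)$ in place of the true gradient, and then to control the resulting stochastic errors in high probability by an induction on a good event. I would work with the potential $\Psi_k = A_k(f(y^k)-f(x^*)) + \tfrac12\|z^k-x^*\|_2^2$. Combining $L$-smoothness at $x^{k+1}$ (together with $y^{k+1}-x^{k+1} = -\tfrac{\alpha_{k+1}^2}{A_{k+1}}\tnabla f(x^{k+1},\Bxi^k)$), convexity of $f$ at $x^{k+1}$ against both $y^k$ and $x^*$, and the three-point identity for the $z$-step
\[
\alpha_{k+1}\langle\tnabla f(x^{k+1},\Bxi^k),z^k-x^*\rangle = \tfrac{\alpha_{k+1}^2}{2}\|\tnabla f(x^{k+1},\Bxi^k)\|_2^2 + \tfrac12\|z^k-x^*\|_2^2 - \tfrac12\|z^{k+1}-x^*\|_2^2 ,
\]
yields a one-step inequality $\Psi_{k+1}\le\Psi_k + (\text{error}_k)$. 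The stepsize relation $A_{k+1}\ge aL\alpha_{k+1}^2$ built into the choice of $\alpha_{k+1}$ is what makes this work: after completing the square it produces a negative term $-\tfrac12\alpha_{k+1}^2\|\nabla f(x^{k+1})\|_2^2$ that absorbs the curvature, so the only quadratic residual left is of order $\alpha_{k+1}^2\|\omega_k\|_2^2$ in the noise, while the clipping bound $\alpha_{k+1}\|\tnabla f(x^{k+1},\Bxi^k)\|_2\le\alpha_{k+1}\lambda_{k+1}=B$ keeps every increment bounded.

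Next I would isolate the stochastic error. Writing $\nabla_k=\nabla f(x^{k+1})$ and $\tnabla_k=\tnabla f(x^{k+1},\Bxi^k)$ and splitting $\tnabla_k-\nabla_k = b_k+\omega_k$ into the conditional bias $b_k=\EE_k[\tnabla_k]-\nabla_k$ and the zero-mean part $\omega_k=\tnabla_k-\EE_k[\tnabla_k]$ (where $\EE_k$ conditions on everything before the $k$-th batch), telescoping gives, up to absolute constants,
\[
\Psi_N \le \tfrac12 R_0^2 + \sum_{k=0}^{N-1}\alpha_{k+1}\langle b_k,x^*-z^k\rangle + \sum_{k=0}^{N-1}\alpha_{k+1}\langle\omega_k,x^*-z^k\rangle + \sum_{k=0}^{N-1}\alpha_{k+1}^2\|\omega_k\|_2^2 .
\]
I would bound the three sums separately. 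The variance sum is handled through its conditional expectation $\sum_k\alpha_{k+1}^2\,\EE_k\|\omega_k\|_2^2 = O(\sum_k\alpha_{k+1}^2\sigma^2/m_k)$, which the batchsize $m_k=\Theta(\max\{1,\sigma^2\alpha_{k+1}^2 N\ln(N/\beta)/R_0^2\})$ is precisely designed to keep at $O(R_0^2)$; its deviation $\sum_k\alpha_{k+1}^2(\|\omega_k\|_2^2-\EE_k\|\omega_k\|_2^2)$ is a martingale. The bias sum is bounded using the clipping estimate $\|b_k\|_2\le 4\sigma^2/(m_k\lambda_{k+1})$, which holds as soon as $\lambda_{k+1}\ge 2\|\nabla_k\|_2$. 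The middle sum is a martingale whose increments vanish in conditional mean because $z^k$ is measurable before the $k$-th batch.

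The decisive step is the high-probability bookkeeping, which must be inductive because the domain is all of $\R^n$ and the noise is only bounded-variance. Both the clipping bias estimate and any Bernstein-type bound on the two martingales require a priori control of $\|z^k-x^*\|_2$: the increment $\alpha_{k+1}\langle\omega_k,x^*-z^k\rangle$ has magnitude at most $2B\|z^k-x^*\|_2$ and conditional variance at most $\alpha_{k+1}^2\|z^k-x^*\|_2^2\sigma^2/m_k$, both finite only while the iterates stay bounded. I would therefore define the events $E_k=\{\Psi_j\le cR_0^2\text{ for all }j\le k\}$, note that each $E_k$ is measurable before the $k$-th batch, truncate the increments to $E_k$ so they become bounded unconditionally, and apply a martingale Bernstein inequality to the truncated sums. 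On $E_k$ one has $f(y^k)-f(x^*)=O(aLR_0^2/k^2)$, hence $\|\nabla f(y^k)\|_2\le\sqrt{2L(f(y^k)-f(x^*))}=O(\sqrt a\,LR_0/k)$; transferring this to $x^{k+1}$ via smoothness and $\|x^{k+1}-y^k\|_2=\tfrac{\alpha_{k+1}}{A_{k+1}}\|z^k-y^k\|_2=O(R_0/k)$ gives $\alpha_{k+1}\|\nabla f(x^{k+1})\|_2=O(R_0/\sqrt a)$. With $a=\Theta(\ln^2(N/\beta))$ and $B=\Theta(R_0/\ln(N/\beta))$ this forces $\lambda_{k+1}=B/\alpha_{k+1}\ge 2\|\nabla f(x^{k+1})\|_2$ (small bias) while simultaneously pushing each Bernstein tail below $\beta/N$. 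A union bound over the $N$ steps then closes the induction, giving $\Psi_N=O(R_0^2)$ and hence $f(y^N)-f(x^*)=O(aLR_0^2/N^2)$ with probability at least $1-\beta$; solving for $N$ and summing $m_k$ produces the claimed iteration and oracle complexities.

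I expect the main obstacle to be exactly this circularity between iterate boundedness and concentration: the increment and conditional-variance bounds that the martingale inequalities need are available only when $\|z^k-x^*\|_2$ is already controlled, which is the very conclusion being proved. Untangling it requires the inductive, truncate-to-$E_k$ construction above, together with the tight tuning of $a$ and $B$ so that the clipping bias stays negligible — this is where the bound $\alpha_{k+1}\|\nabla f(x^{k+1})\|_2=O(R_0/\sqrt a)$, and with it the passage $\|\nabla f(y^k)\|_2^2\le 2L(f(y^k)-f(x^*))$, is indispensable — while the concentration error and the accumulated variance each stay $O(R_0^2)$. The heavy-tailed assumption is what rules out a direct sub-Gaussian argument and makes clipping essential, since it is clipping that simultaneously bounds the martingale increments and, through a large enough threshold, keeps the induced bias controllable.
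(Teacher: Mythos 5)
Your proposal is correct and follows essentially the same route as the paper's own proof: the same similar-triangles energy inequality (Lemma~\ref{lem:main_opt_lemma_clipped_SSTM}, which your potential $\Psi_k$ telescopes to), the same bias/variance split of the clipped gradient with the bounds of Lemma~\ref{lem:main_stoch_lemma_clipped_SSTM}, the same inductive good-event construction with truncated increments and martingale Bernstein concentration, and the same tuning of $a$ and $B$ to enforce $\|\nabla f(x^{k+1})\|_2 \le \nicefrac{\lambda_{k+1}}{2}$ via $\sqrt{2L(f(y^k)-f(x^*))}$ and smoothness. You also correctly identified the central difficulty (the circularity between iterate boundedness and concentration) and its resolution, which is exactly the paper's advanced-recurrences argument.
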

The theorem says that for any $\beta\in(0,1)$ {\tt clipped-SSTM} converges to $\varepsilon$-solution with probability at least $1-\beta$ and requires exactly the same number of stochastic first-order oracle calls (up to the difference in constant and logarithmical factors) as optimal stochastic methods like {\tt AC-SA} \cite{ghadimi2012optimal,lan2012optimal} or Stochastic Similar Triangles Method \cite{gasnikov2016universal, gorbunov2019optimal}. However, our method \textit{achieves this rate under less restrictive assumption}. Indeed, Theorem~\ref{thm:main_result_clipped_SSTM_main} holds even in the case when the stochastic gradient $\nabla f(x,\xi)$ satisfies only \eqref{eq:bounded_variance_clipped_SSTM} and can have \textit{heavy-tailed} distribution. In contrast, all existing results that establish \eqref{eq:clipped_SSTM_oracle_complexity} and that are known in the literature hold only in the light-tails case, see Section~\ref{sec:light_tails}.

Finally, when $\sigma^2$ is big then Theorem~\ref{thm:main_result_clipped_SSTM_main} says that at iteration $k$ {\tt clipped-SGD} requires large batchsizes $m_k \sim k^2 N$ (see \eqref{eq:bathces_clipped_SSTM}) which is proportional to $\varepsilon^{-\nicefrac{3}{2}}$ for last iterates. It can make the cost of one iteration extremely high, therefore, we also consider different stepsize policies that remove this drawback in Section~\ref{sec:convergence_cvx_case_clipped_SSTM}. In particular, the following result shows that {\tt clipped-SSTM} achieves the same oracle complexity even with constant batchsizes $m_k$ when stepsize parameter $a$ is chosen properly.
\begin{corollary}\label{cor:clipped_sstm_small_stepsize_const_batch_main}
    Let the assumptions of Theorem~\ref{thm:main_result_clipped_SSTM} hold and $a = \Theta\left(\max\{1,\ln^2(\nicefrac{N}{\beta}), \nicefrac{\sqrt{\ln\nicefrac{N}{\beta}}\sigma N^{\nicefrac{3}{2}}}{LR_0}\}\right)$. Then $m_k = O(1)$ and {\tt clipped-SSTM} achieves $f(y^N) - f(x^*) \le \varepsilon$ with probability at least $1-\beta$ after $O(\max\{\sqrt{\nicefrac{LR_0^2}{\varepsilon}}, \nicefrac{\sigma^2R_0^2}{\varepsilon^2}\}\ln(\nicefrac{(LR_0^2+\sigma R_0)}{\varepsilon\beta}))$ iterations/oracle calls.
\end{corollary}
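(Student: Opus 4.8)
The plan is to obtain this corollary as a direct consequence of the full version of Theorem~\ref{thm:main_result_clipped_SSTM_main}, treating its guarantee $f(y^N) - f(x^*) = O(\nicefrac{aLR_0^2}{N^2})$ (valid with probability at least $1-\beta$ whenever $m_k = \Theta(\max\{1, \nicefrac{\sigma^2\alpha_{k+1}^2 N\ln(\nicefrac{N}{\beta})}{R_0^2}\})$) as a black box. The only genuinely new content is twofold: first, to verify that the enlarged choice of $a$ collapses every prescribed batchsize to $O(1)$ while leaving the rate formula untouched; and second, to convert that rate into an iteration count, which then equals the oracle count because $\sum_k m_k = O(N)$ once $m_k = O(1)$.

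First I would substitute $\alpha_{k+1} = \nicefrac{(k+2)}{(2aL)}$ into the batchsize requirement, which turns it into $m_k = \Theta(\max\{1, \nicefrac{\sigma^2(k+2)^2 N\ln(\nicefrac{N}{\beta})}{4a^2L^2R_0^2}\})$. The second argument increases in $k$, so it is largest at $k=N-1$, where $(k+2)^2 = O(N^2)$, giving a worst-case value $O(\nicefrac{\sigma^2 N^3\ln(\nicefrac{N}{\beta})}{a^2L^2R_0^2})$. Demanding that this be $O(1)$ is precisely $a \ge \Omega(\nicefrac{\sqrt{\ln(\nicefrac{N}{\beta})}\,\sigma N^{\nicefrac32}}{LR_0})$, which is exactly the third term in the prescribed value of $a$. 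Thus with the stated $a$ we get $m_k = O(1)$ uniformly in $k$, so that iteration and oracle complexities coincide up to constants.

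Next I would insert this $a$ into the rate $O(\nicefrac{aLR_0^2}{N^2})$ and split according to which term of the maximum defining $a$ is active. In the regime $a = \Theta(\ln^2(\nicefrac{N}{\beta}))$ the bound becomes $O(\nicefrac{LR_0^2\ln^2(\nicefrac{N}{\beta})}{N^2})$; setting it $\le\varepsilon$ and solving the transcendental inequality $N \gtrsim \sqrt{\nicefrac{LR_0^2}{\varepsilon}}\,\ln(\nicefrac{N}{\beta})$ yields $N = O(\sqrt{\nicefrac{LR_0^2}{\varepsilon}}\,\ln(\nicefrac{LR_0^2}{\varepsilon\beta}))$. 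In the regime $a = \Theta(\nicefrac{\sqrt{\ln(\nicefrac{N}{\beta})}\,\sigma N^{\nicefrac32}}{LR_0})$ the rate simplifies to $O(\nicefrac{\sqrt{\ln(\nicefrac{N}{\beta})}\,\sigma R_0}{\sqrt{N}})$, and setting it $\le\varepsilon$ gives $N = O(\nicefrac{\sigma^2 R_0^2}{\varepsilon^2}\,\ln(\nicefrac{\sigma R_0}{\varepsilon\beta}))$. Taking the maximum and bounding both logarithms by the single factor $\ln(\nicefrac{(LR_0^2+\sigma R_0)}{\varepsilon\beta})$ reproduces the claimed complexity.

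The main obstacle I anticipate is the self-consistent handling of the implicit dependence of $a$ on $N$: because $a$ scales like $N^{\nicefrac32}$ in the stochastic regime, the accelerated $\nicefrac{1}{N^2}$ rate degrades to $\nicefrac{1}{\sqrt N}$, and one must confirm that this is exactly the unavoidable $\varepsilon^{-2}$ rate forced by bounded-variance noise rather than slack in the estimates. Equally delicate is controlling the logarithmic factors while solving the transcendental inequalities $N \asymp (\cdots)\ln(\nicefrac{N}{\beta})$, so that the two regime-dependent logarithms $\ln(\nicefrac{LR_0^2}{\varepsilon\beta})$ and $\ln(\nicefrac{\sigma R_0}{\varepsilon\beta})$ merge cleanly into $\ln(\nicefrac{(LR_0^2+\sigma R_0)}{\varepsilon\beta})$ without losing a factor; here I would invoke the standard fact that $N \gtrsim C\ln(\nicefrac{N}{\beta})$ holds once $N \gtrsim C\ln(\nicefrac{C}{\beta})$ (up to constants and doubly-logarithmic terms) to close the loop.
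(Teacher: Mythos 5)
Your proposal is correct and follows essentially the same route as the paper: the paper likewise verifies $m_k = O(1)$ from $a \ge \nicefrac{\sigma N^{\nicefrac{3}{2}}\sqrt{\ln(\nicefrac{4N}{\beta})}}{LR_0}$ and then splits on which term of the maximum defining $a$ is active, handling the logarithmic regime via Theorem~\ref{thm:main_result_clipped_SSTM} (yielding $N = O(\sqrt{\nicefrac{LR_0^2}{\varepsilon}}\ln\nicefrac{LR_0^2}{\varepsilon\beta})$) and the stochastic regime via the constant-batchsize case of Corollary~\ref{cor:different_stepsizes_clipped_SSTM} with $a_0 = \nicefrac{\sigma}{LR_0}$ (yielding $N = O(\nicefrac{\sigma^2R_0^2}{\varepsilon^2}\ln\nicefrac{\sigma R_0}{\varepsilon\beta})$), before merging the two logarithms into $\ln\nicefrac{(LR_0^2+\sigma R_0)}{\varepsilon\beta}$ exactly as you do. The only cosmetic difference is that you re-derive the $O(\nicefrac{\sqrt{\ln(\nicefrac{N}{\beta})}\,\sigma R_0}{\sqrt{N}})$ rate inline rather than citing the intermediate corollary, which is the same computation.
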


\section{{\tt SGD} with Clipping}\label{sec:clipped_SGD}
In this section we present our complexity results for {\tt clipped-SGD} (see Algorithm~\ref{alg:clipped-SGD}) in the convex case.
\begin{algorithm}[h]
\caption{Clipped Stochastic Gradient Descent ({\tt clipped-SGD})}
\label{alg:clipped-SGD}   
\begin{algorithmic}[1]
\Require starting point $x^0$, number of iterations $N$, batchsizes $\{m_k\}_{k=0}^{N-1}$, stepsize $\gamma > 0$, clipping level $\lambda > 0$
\For{$k=0,\ldots, N-1$}
\State Draw fresh i.i.d.\ samples $\xi_1^k,\ldots,\xi_{m_{k}}^k$ and compute $\nabla f(x^{k},\Bxi^k) = \frac{1}{m_k}\sum_{i=1}^{m_k}\nabla f(x^k,\xi_i^k)$
\State Compute $\tnabla f(x^{k},\Bxi^k) = \clip(\nabla f(x^{k},\Bxi^k),\lambda)$ using \eqref{eq:clip_operator} 
\State $x^{k+1} = x^k - \gamma \tnabla f(x^{k},\Bxi^k)$
\EndFor
\Ensure $\Bar{x}^N = \frac{1}{N}\sum_{k=0}^{N-1}x^k$ 
\end{algorithmic}
\end{algorithm}
Next theorem summarizes the main convergence result for {\tt clipped-SGD} in this case.
\begin{theorem}\label{thm:main_result_clipped_SGD_main}
    Assume that function $f$ is convex and $L$-smooth. Then for all $\beta \in (0,1)$ and $N\ge 1$ such that $\ln(\nicefrac{4N}{\beta}) \ge 2 $ we have that after $N$ iterations of {\tt clipped-SGD} with $\lambda = \Theta(LR_0)$ and $m_k = m = \Theta(\max\{1,\nicefrac{N\sigma^2}{R_0^2L^2\ln(\nicefrac{N}{\beta})}\})$ where $R_0 = \|x^0 - x^*\|_2$ and stepsize $\gamma = \nicefrac{1}{80L\ln(\nicefrac{4N}{\beta})}$ that $f(\Bar{x}^N) - f(x^*) = O(\nicefrac{LR_0^2\ln(\nicefrac{4N}{\beta})}{N})$ with probability at least $1-\beta$ where $\Bar{x}^N = \frac{1}{N}\sum_{k=0}^{N-1}x^k$.
    In other words, the method achieves $f(\Bar{x}^N) - f(x^*) \le \varepsilon$ with probability at least $1-\beta$ after $O\left(\nicefrac{LR_0^2}{\varepsilon}\ln(\nicefrac{LR_0^2}{\varepsilon\beta})\right)$ iterations and requires $O(\max\{\nicefrac{LR_0^2}{\varepsilon}, \nicefrac{\sigma^2R_0^2}{\varepsilon^2}\}\ln(\nicefrac{LR_0^2}{\varepsilon\beta}))$ oracle calls.
\end{theorem}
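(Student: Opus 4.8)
The plan is to track the squared iterate distance $R_k^2 \eqdef \|x^k - x^*\|_2^2$ through the standard descent recursion and to tame the stochastic error by a martingale concentration argument carried out inside an induction on $k$. Expanding the update $x^{k+1} = x^k - \gamma \tnabla f(x^k,\Bxi^k)$ gives
\[
    R_{k+1}^2 = R_k^2 - 2\gamma\la \tnabla f(x^k,\Bxi^k), x^k - x^*\ra + \gamma^2\|\tnabla f(x^k,\Bxi^k)\|_2^2.
\]
I would write the clipping error as $\theta^k \eqdef \tnabla f(x^k,\Bxi^k) - \nabla f(x^k)$ and split it into an unbiased part $\theta^k_u \eqdef \tnabla f(x^k,\Bxi^k) - \EE_{\Bxi^k}[\tnabla f(x^k,\Bxi^k)]$ and a part $\theta^k_b \eqdef \EE_{\Bxi^k}[\tnabla f(x^k,\Bxi^k)] - \nabla f(x^k)$ that is deterministic given $x^k$, the latter being an unavoidable bias because clipping is nonlinear. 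Using convexity $\la \nabla f(x^k), x^k - x^*\ra \ge f(x^k) - f(x^*)$ and $L$-smoothness to absorb the quadratic term, the recursion takes the form $R_{k+1}^2 \le R_k^2 - 2\gamma(f(x^k)-f(x^*)) + (\text{error terms in } \theta^k)$.

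The key properties I would invoke are the clipping estimates (the auxiliary lemmas of the appendix): whenever $\|\nabla f(x^k)\|_2 \le \lambda/2$, the bias obeys $\|\theta^k_b\|_2 = O(\sigma^2/(\lambda m))$, the unbiased part is bounded almost surely by $\|\theta^k_u\|_2 \le 2\lambda$, and its conditional second moment satisfies $\EE_{\Bxi^k}[\|\theta^k_u\|_2^2] = O(\sigma^2/m)$ --- the last two using only the bounded-variance assumption \eqref{eq:bounded_variance_clipped_SSTM} and not light tails. The hypothesis $\|\nabla f(x^k)\|_2 \le \lambda/2$ is supplied by $L$-smoothness, since $\|\nabla f(x^k)\|_2 = \|\nabla f(x^k) - \nabla f(x^*)\|_2 \le L\|x^k - x^*\|_2$, as long as $x^k$ stays in a ball of radius $\Theta(R_0)$ around $x^*$; this is exactly why $\lambda = \Theta(LR_0)$ is chosen.

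Summing the recursion over $k = 0,\ldots,N-1$ telescopes the $R_k^2$ terms and yields $2\gamma\sum_k (f(x^k) - f(x^*)) \le R_0^2 + (\text{error terms in } \theta^k)$; dividing by $2\gamma N$ and applying Jensen gives $f(\bar{x}^N) - f(x^*) \le \frac{R_0^2}{2\gamma N} + \frac{1}{2\gamma N}(\text{error terms})$, where the first term is already $\Theta(LR_0^2\ln(4N/\beta)/N)$ for the prescribed $\gamma$. It remains to bound the error sum with probability $1-\beta$. The martingale $\sum_k \la \theta^k_u, x^k - x^*\ra$ has increments bounded by $2\lambda\cdot CR_0$ on the event $\{\|x^k - x^*\|_2 \le CR_0\}$ and conditional variances controlled by the second-moment estimate, so Freedman's (Bernstein-type) inequality yields a deviation of order $\sqrt{(\text{variance sum})\ln(N/\beta)} + \lambda R_0\ln(N/\beta)$. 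The batch size $m$ is tuned so the variance sum is $O(R_0^2)$, the bias sum is controlled deterministically via the $O(\sigma^2/(\lambda m))$ bound, and the quadratic term $\gamma^2\sum_k\|\theta^k_u\|_2^2$ is handled by an analogous Bernstein bound; the factor $\ln(4N/\beta)$ in $\gamma$ is precisely what absorbs the logarithmic deviation.

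The main obstacle --- and the reason the argument must be inductive rather than a single concentration bound --- is the circular dependence between the concentration estimate (which needs $\|x^k - x^*\|_2 \le CR_0$ both to bound the martingale increments and to guarantee $\|\nabla f(x^k)\|_2 \le \lambda/2$) and the boundedness of the iterates (which follows from the recursion only once the error is controlled). I would break this loop by defining $E_k = \{\|x^\ell - x^*\|_2 \le CR_0 \text{ for all } \ell \le k\}$ and proving $\PP(E_N)\ge 1-\beta$ by induction: assuming $E_{k-1}$, all clipping estimates hold up to step $k-1$, the partial martingale sum concentrates, and feeding this back into the telescoped recursion forces $R_k^2 \le C^2R_0^2$, i.e. $E_k$, on an event of the required probability (the union bound over the $O(N)$ per-step failures costs only the $\ln(N/\beta)$ already present). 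On $E_N$ the telescoped bound gives $f(\bar{x}^N) - f(x^*) = O(LR_0^2\ln(4N/\beta)/N)$. Setting this equal to $\varepsilon$ yields $N = O(\frac{LR_0^2}{\varepsilon}\ln\frac{LR_0^2}{\varepsilon\beta})$ iterations, and multiplying by the batch size $m$ gives the stated $O(\max\{\frac{LR_0^2}{\varepsilon}, \frac{\sigma^2R_0^2}{\varepsilon^2}\}\ln\frac{LR_0^2}{\varepsilon\beta})$ oracle complexity.
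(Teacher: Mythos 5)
Your proposal is correct and follows essentially the same route as the paper's proof: the same distance recursion with the clipping error split into its unbiased part and its bias, the same magnitude/bias/second-moment estimates activated by $\|\nabla f(x^k)\|_2 \le \nicefrac{\lambda}{2}$ (hence $\lambda = \Theta(LR_0)$), Bernstein's inequality for martingale differences applied to both the linear term $\sum_k \langle \theta^k_u, x^k - x^*\rangle$ and the centered quadratic term, and the same induction on the events $E_k$ (with truncated variables to make the increments almost surely bounded) to break the circularity between iterate boundedness and concentration. The final telescoping, Jensen step, and the iteration/oracle-call accounting also coincide with the paper's.
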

To the best of our knowledge, it is the first result for {\tt clipped-SGD} establishing non-trivial complexity guarantees for the convergence with high probability. Up to the difference in logarithmical factors our bound recovers the complexity bound for {\tt SGD} which was obtained under light tails assumption and the complexity bound for {\tt RSMD}. However, unlike in \cite{nazin2019algorithms}, we do not assume that the optimization problem is defined on the bounded set. The proof technique is similar to one we use to prove Theorem~\ref{thm:main_result_clipped_SSTM}. One can find the full proof in Section~\ref{sec:proof_cvx_SGD}.

\section{Numerical Experiments}\label{sec:numerical_exp}
We have tested\footnote{One can find the code here: \url{https://github.com/eduardgorbunov/accelerated_clipping}.} {\tt clipped-SSTM} and {\tt clipped-SGD} on the logistic regression problem, the datasets were taken from LIBSVM library \cite{chang2011libsvm}. To implement methods we use Python 3.7 and standard libraries. One can find additional experiments and details in Section~\ref{sec:additional_logreg}.

First of all, using standard solvers from {\tt scipy} library we find good enough approximation of the solution of the problem for each dataset. For simplicity, we denote this approximation by $x^*$. Then, we numerically study the distribution of $\|\nabla f_i(x^*)\|_2$ and plot corresponding histograms for each dataset, see Figure~\ref{fig:noise_distrib}.
\begin{figure}[h]
    \centering
    \includegraphics[width=0.32\textwidth]{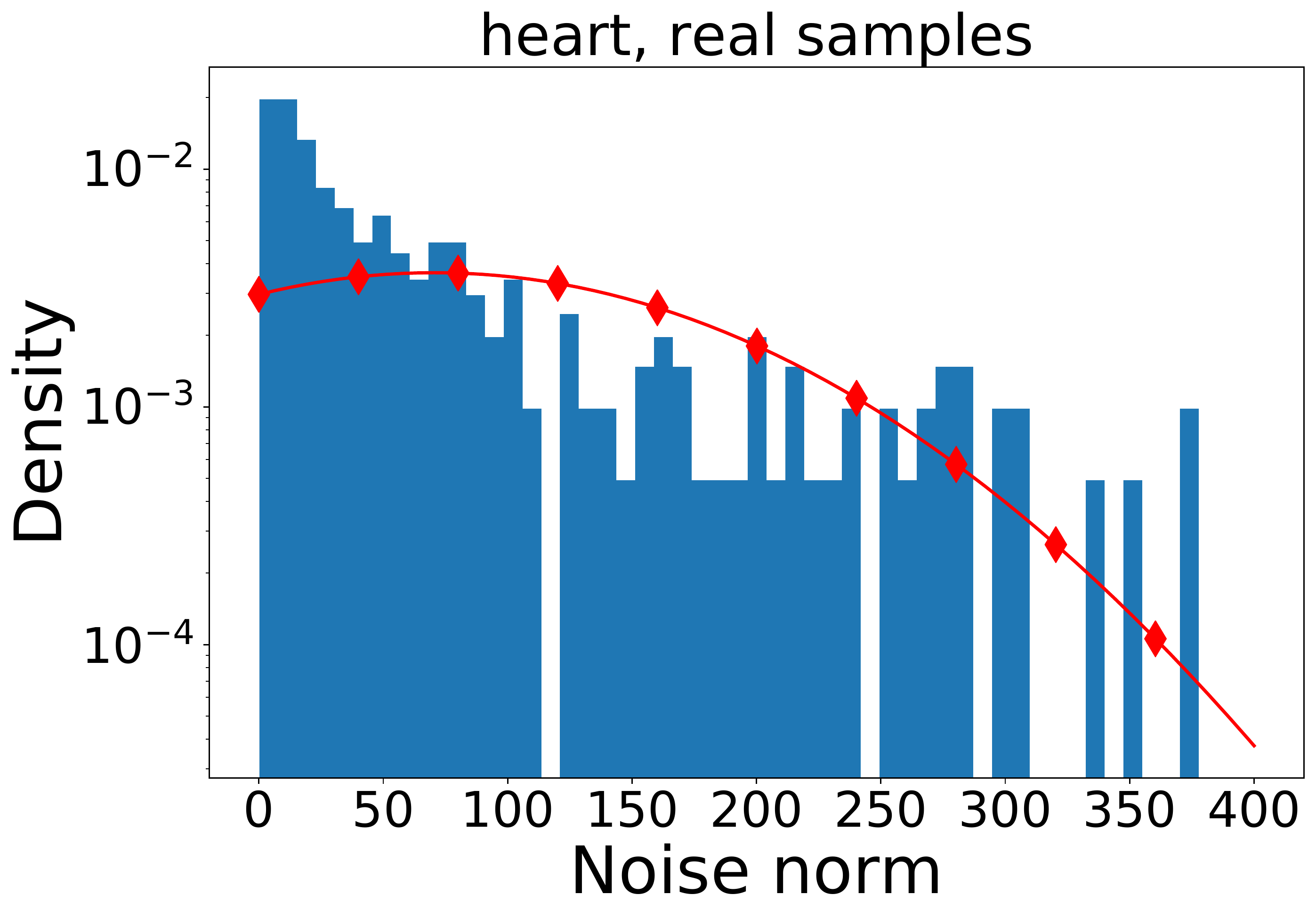}
    \includegraphics[width=0.32\textwidth]{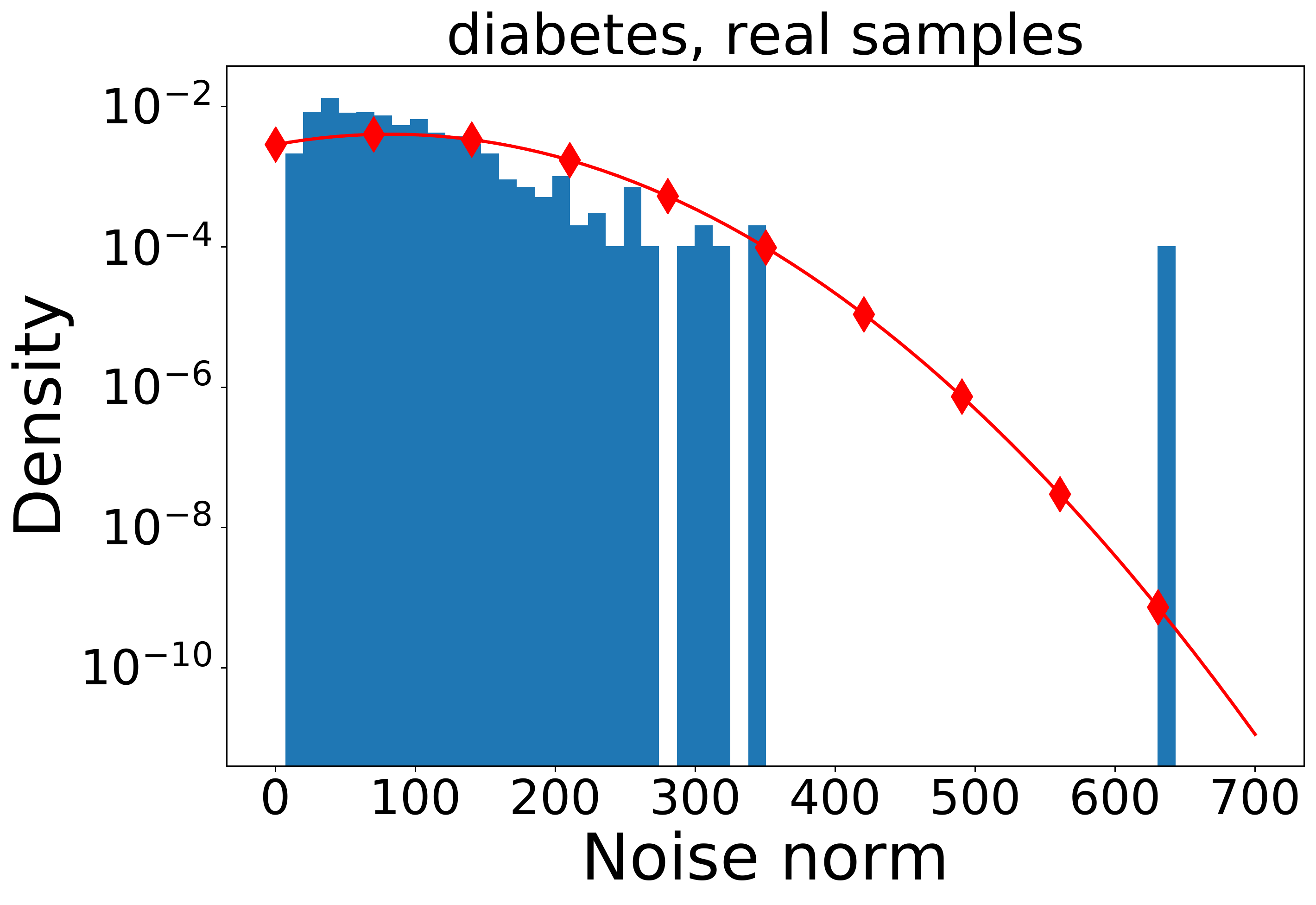}
    \includegraphics[width=0.32\textwidth]{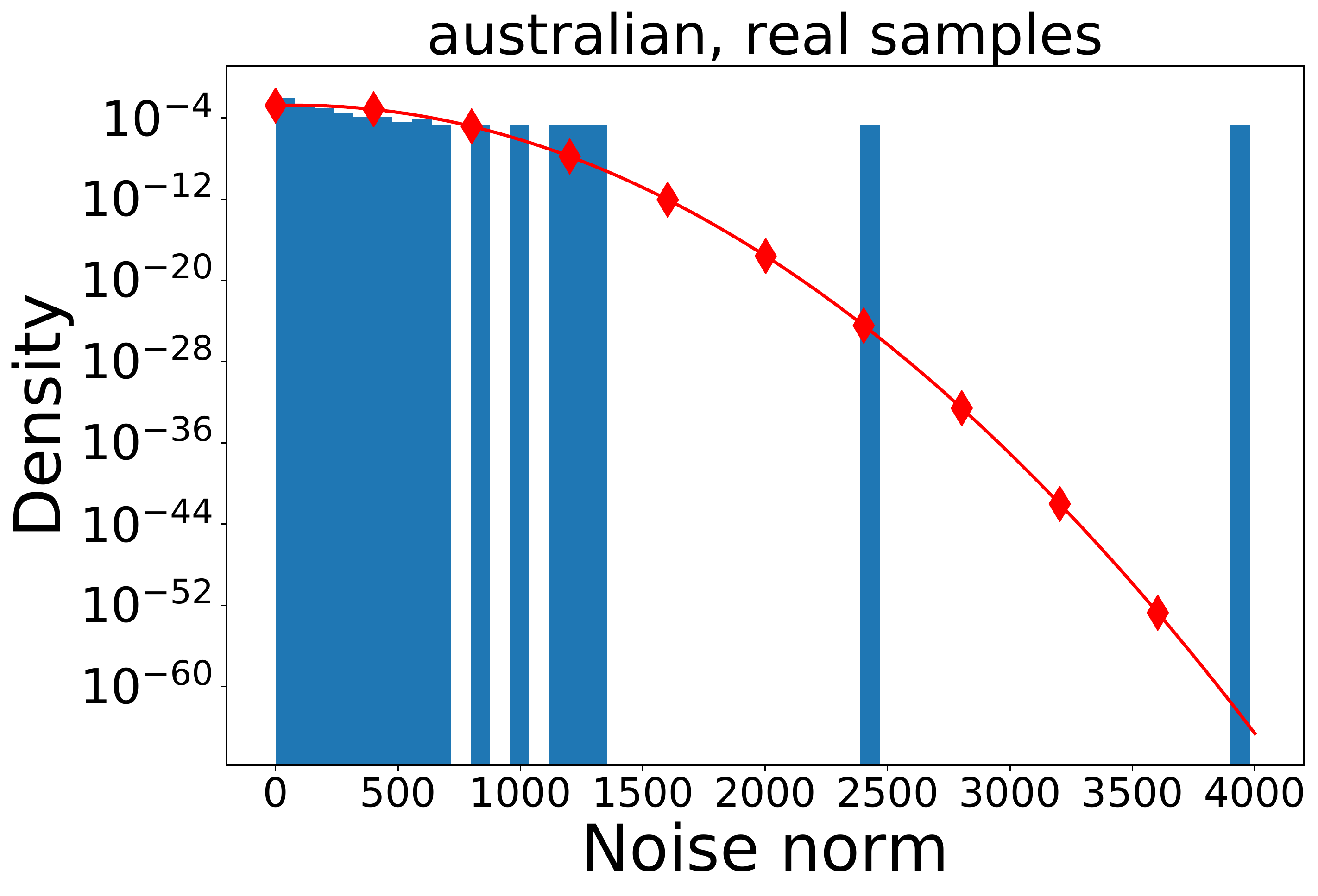}
    \caption{Histograms of $\|\nabla f_i(x^*)\|_2$ for different datasets. Red lines correspond to probability density functions of normal distributions with empirically estimated means and variances.}
    \label{fig:noise_distrib}
\end{figure}
These histograms hint that near the solution for {\tt heart} dataset tails of stochastic gradients are not heavy and the norm of the noise can be well-approximated by Gaussian distribution, whereas for {\tt diabetes} and {\tt australian} we see the presense of outliers that makes the distribution heavy-tailed.

Next, let us consider numerical results for {\tt SGD} and {\tt SSTM} with and without clipping applied to solve logistic regression problem on these $3$ datasets, see Figures~\ref{fig:heart_logreg}- \ref{fig:australian_logreg}. 
\begin{figure}[h]
    \centering
    \includegraphics[width=0.32\textwidth]{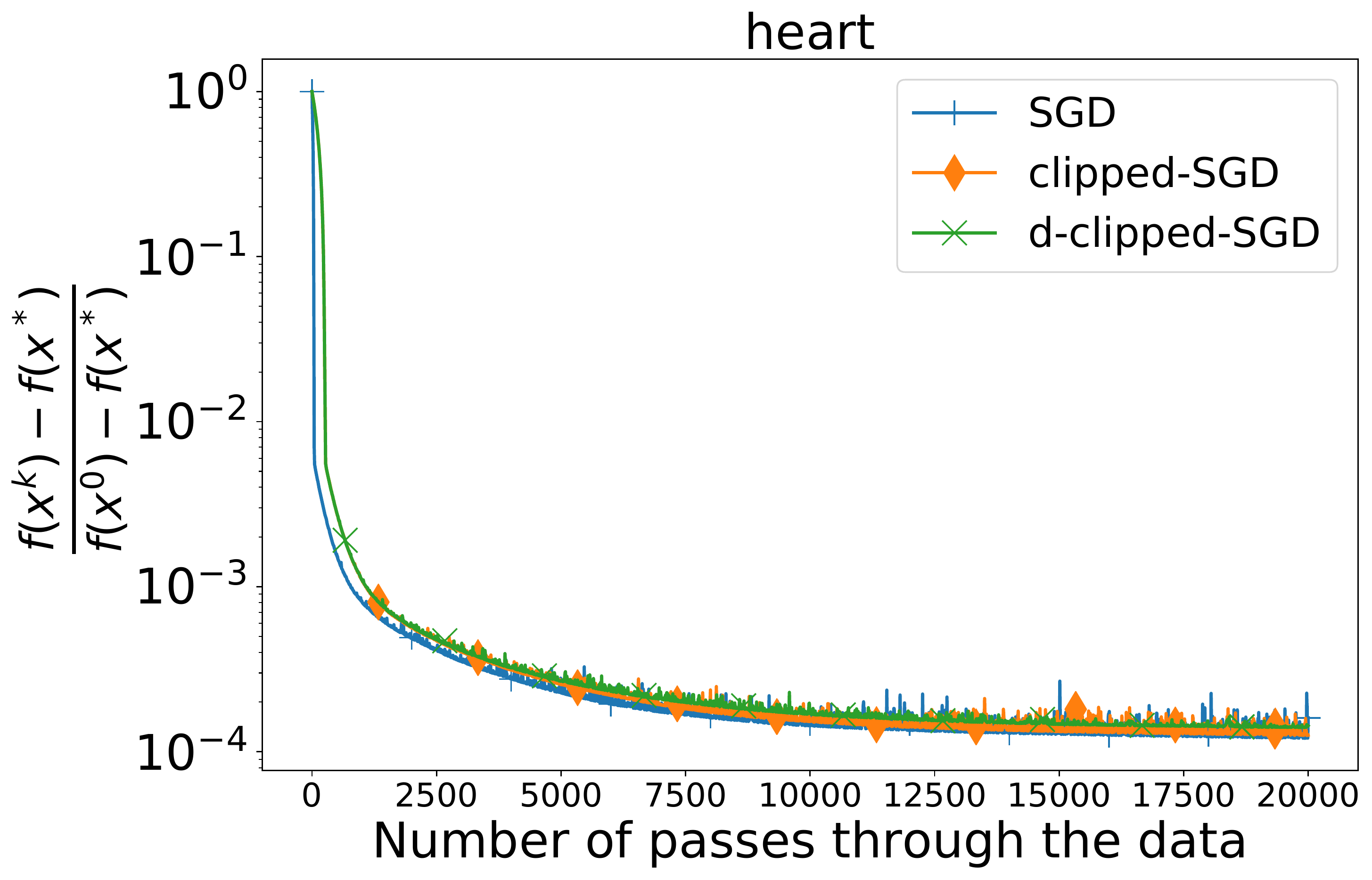}
    \includegraphics[width=0.32\textwidth]{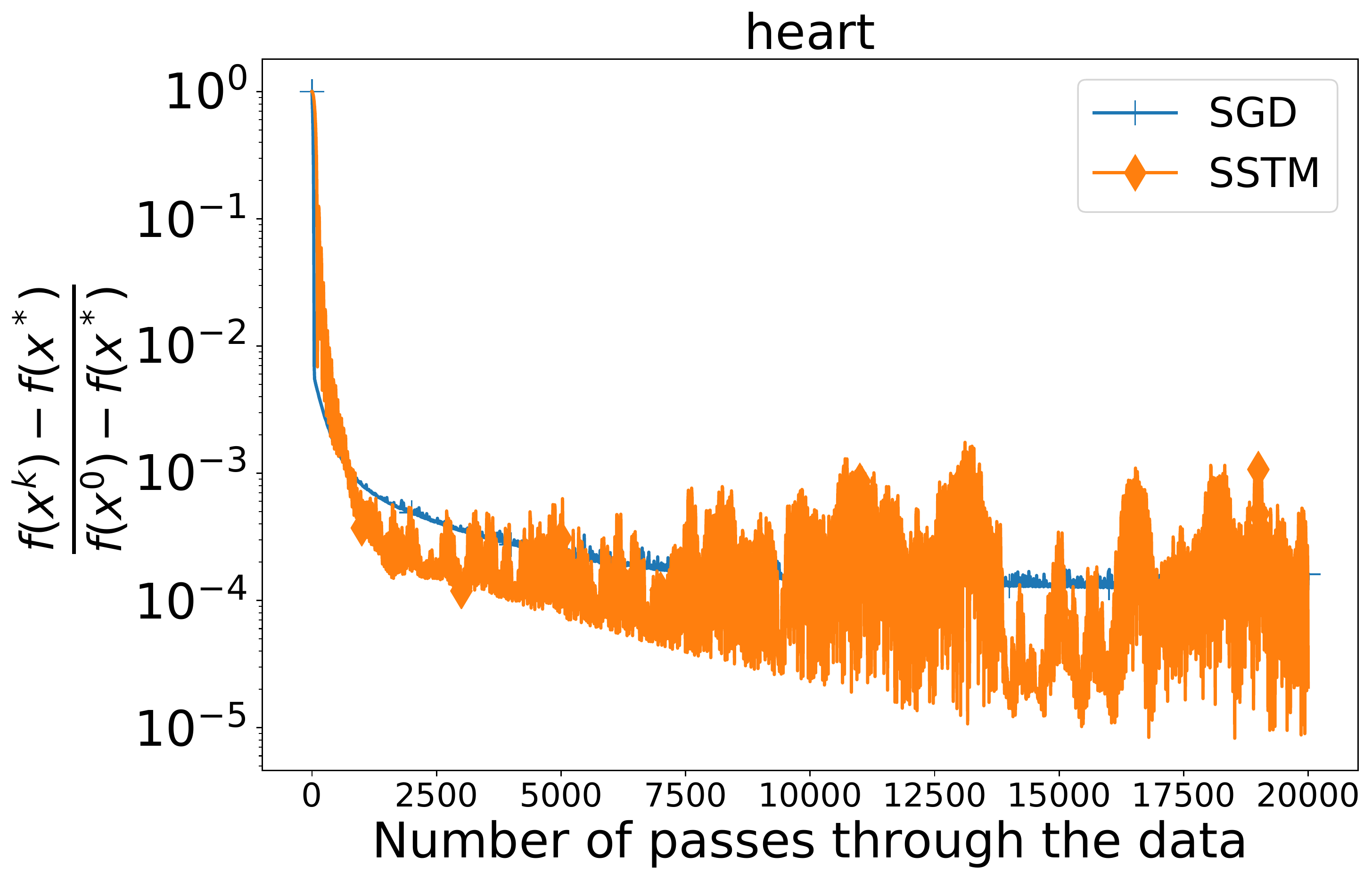}
    \includegraphics[width=0.32\textwidth]{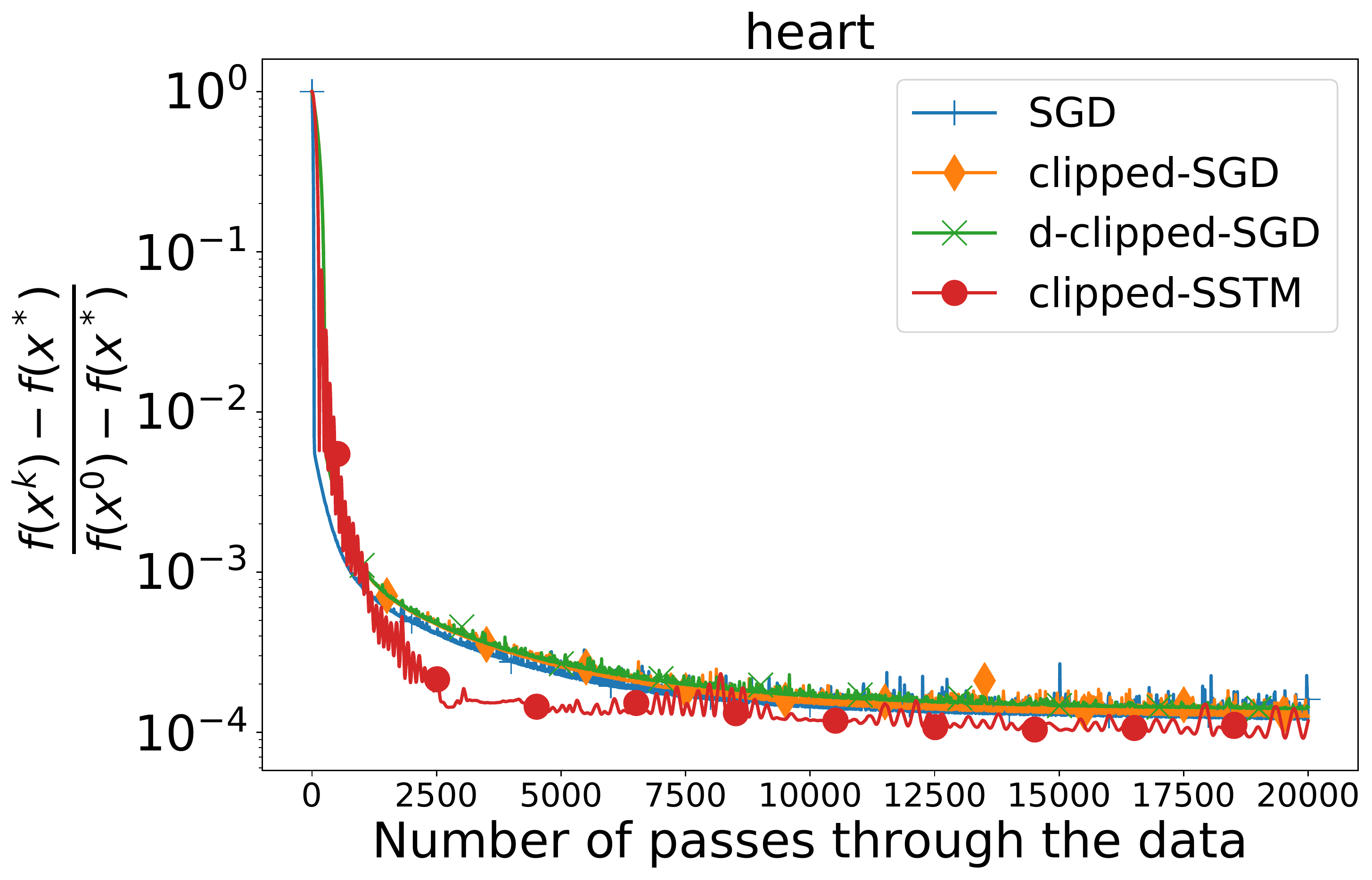}
    \caption{Trajectories of {\tt SGD}, {\tt clipped-SGD}, {\tt SSTM} and {\tt clipped-SSTM} applied to solve logistic regression problem on {\tt heart} dataset.}
    \label{fig:heart_logreg}
\end{figure}
For all methods we used constant batchsizes $m$, stepsizes and clipping levels were tuned, see Section~\ref{sec:additional_logreg} for the details. In our experiments we also consider {\tt clipped-SGD} with periodically decreasing clipping level $\lambda$ ({\tt d-clipped-SGD} in Figures), i.e.\ the method starts with some initial clipping level $\lambda_0$ and after every $l$ epochs or, equivalently, after every $\lceil\nicefrac{rl}{m}\rceil$ iterations the clipping level is multiplied by some constant $\alpha\in(0,1)$. 
\begin{figure}[h]
    \centering
    \includegraphics[width=0.32\textwidth]{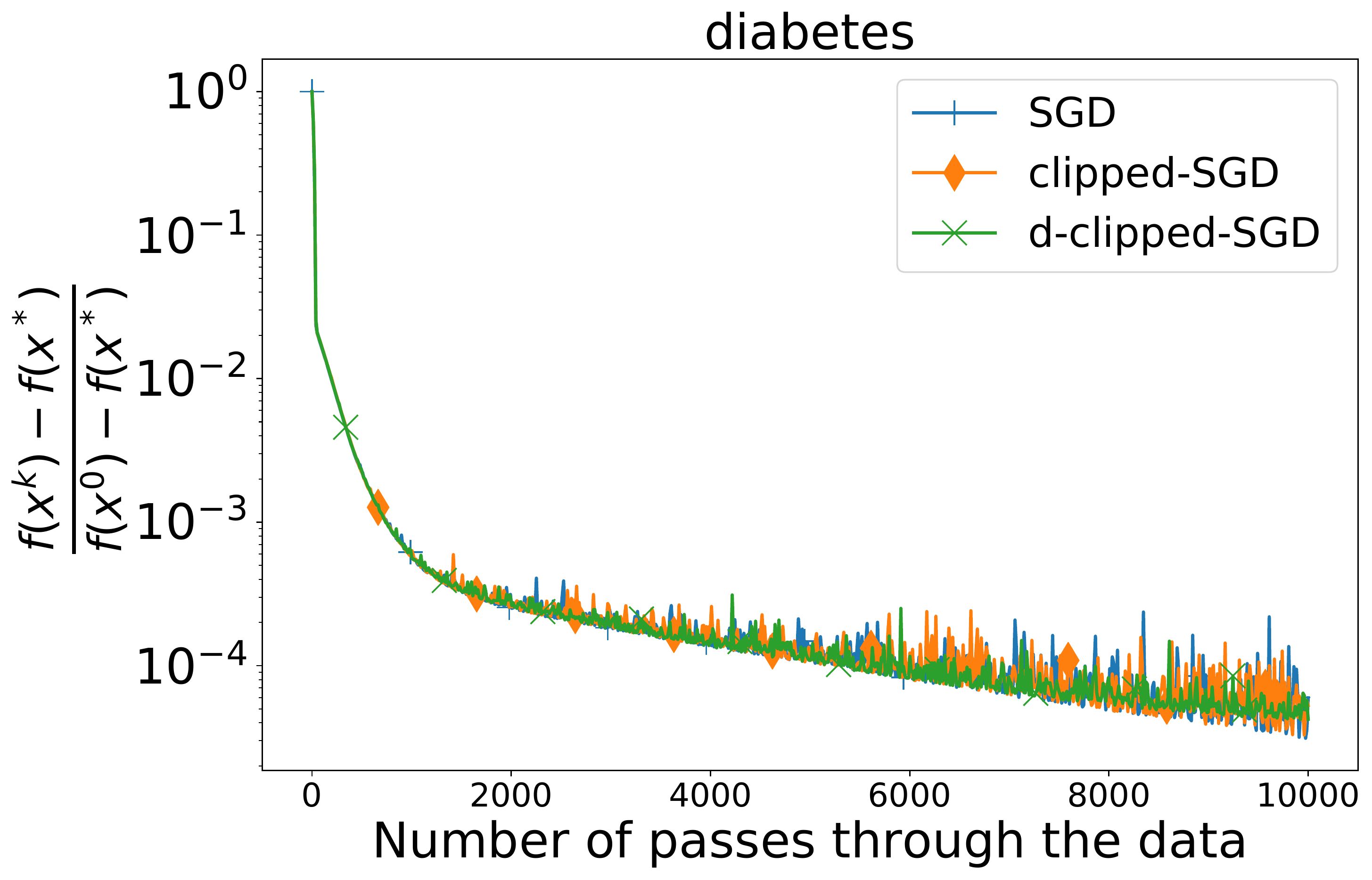}
    \includegraphics[width=0.32\textwidth]{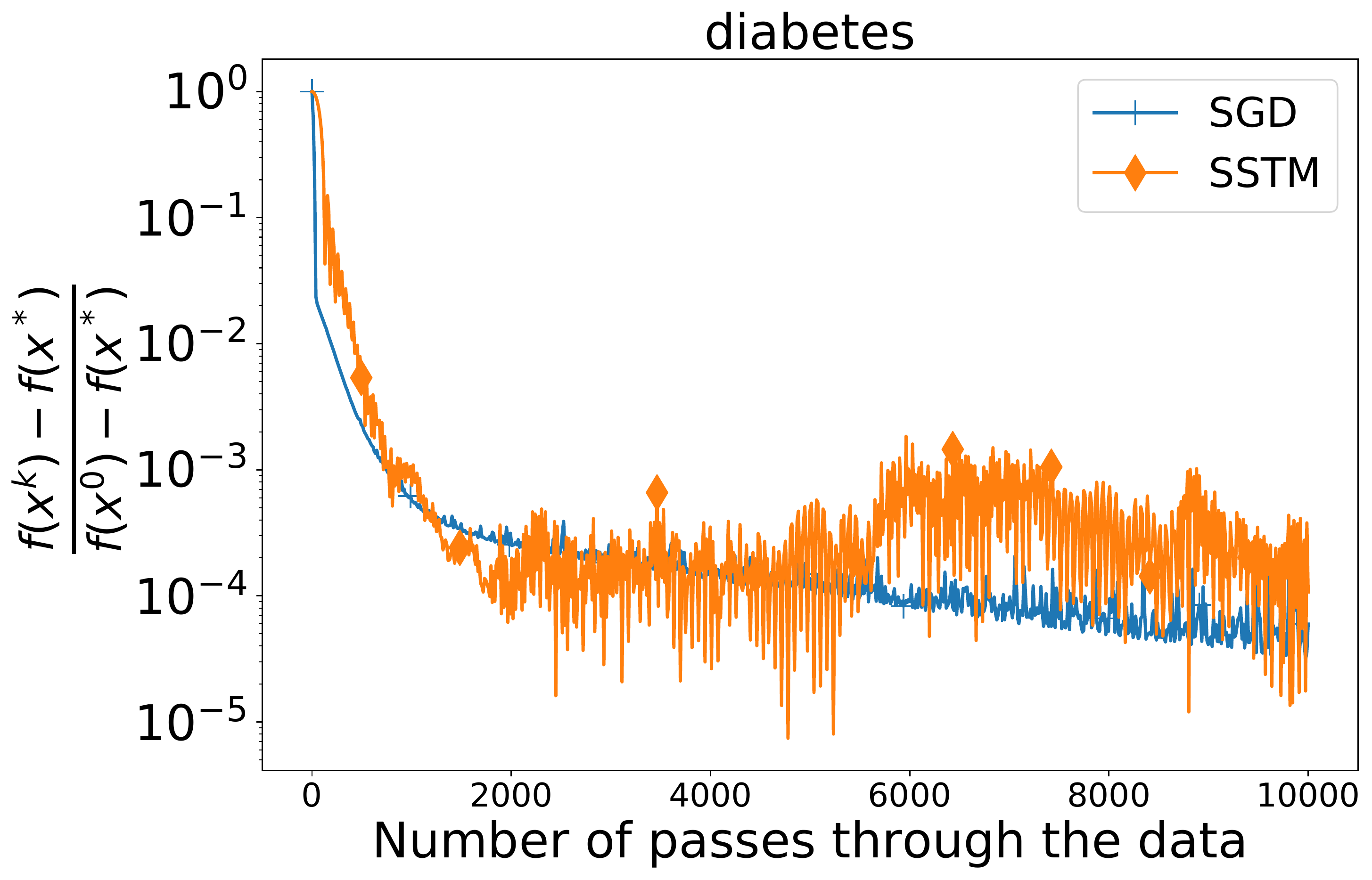}
    \includegraphics[width=0.32\textwidth]{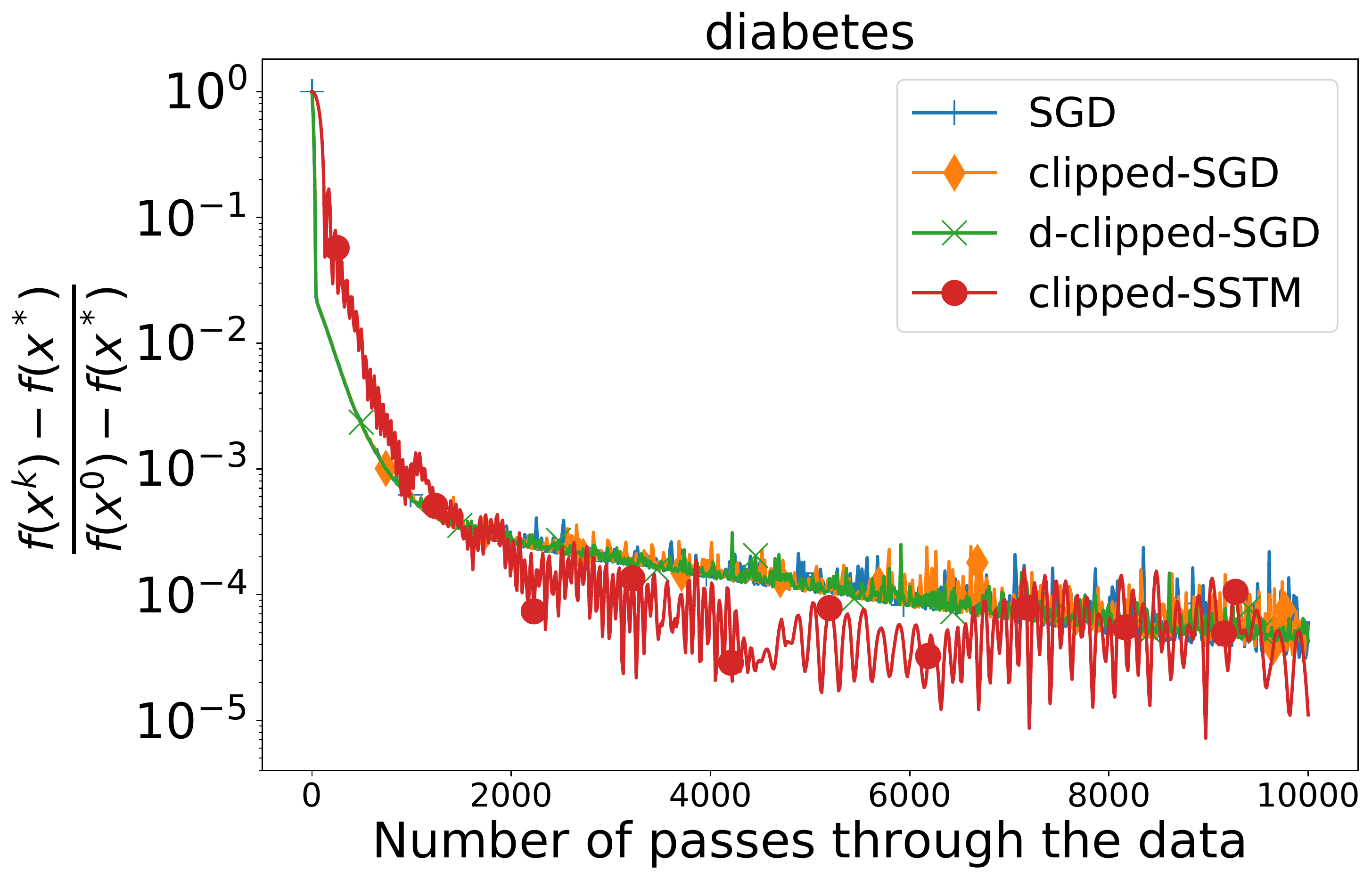}
    \caption{Trajectories of {\tt SGD}, {\tt clipped-SGD}, {\tt SSTM} and {\tt clipped-SSTM} applied to solve logistic regression problem on {\tt diabetes} dataset.}
    \label{fig:diabetes_logreg}
\end{figure}
\begin{figure}[h]
    \centering
    \includegraphics[width=0.32\textwidth]{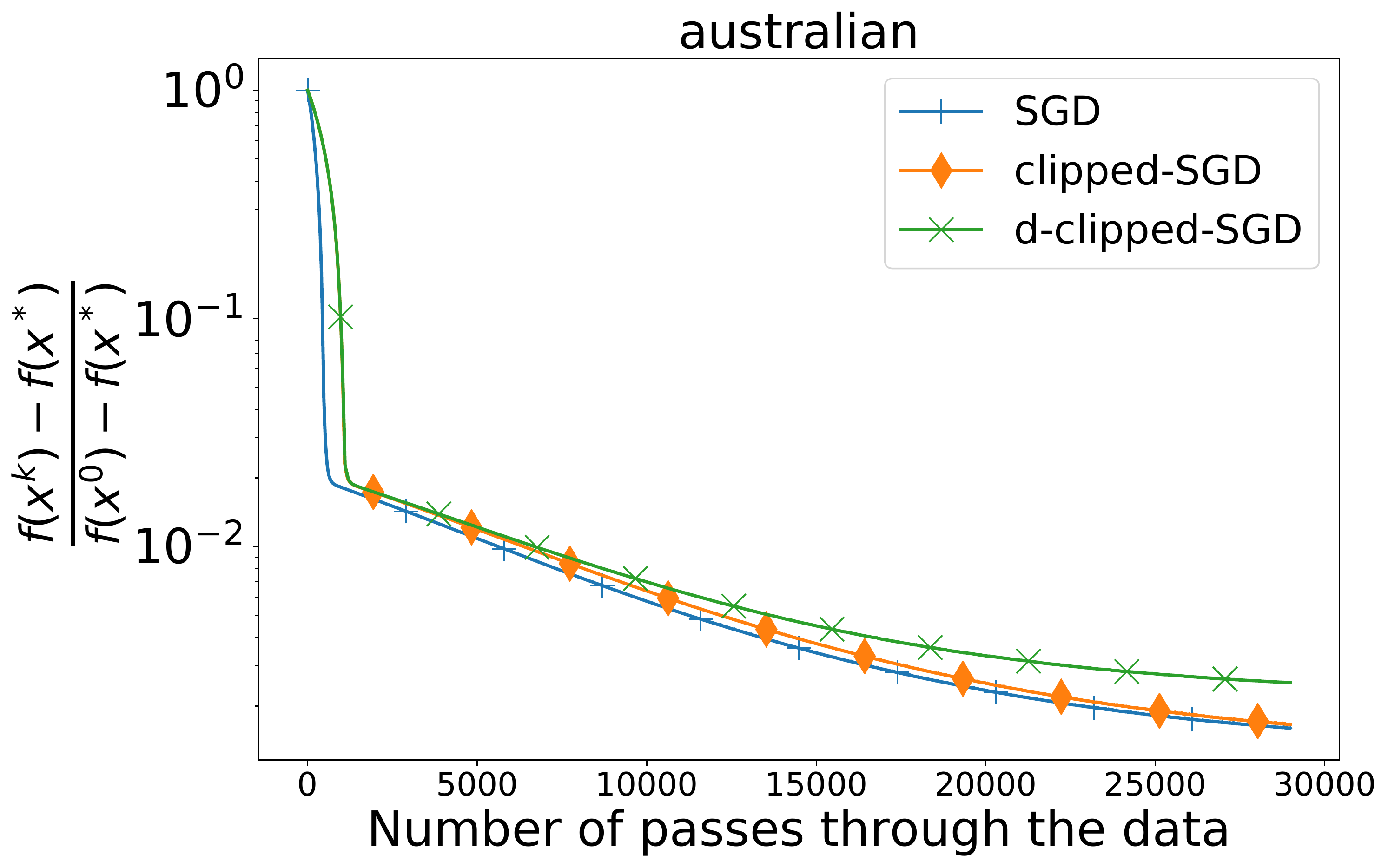}
    \includegraphics[width=0.32\textwidth]{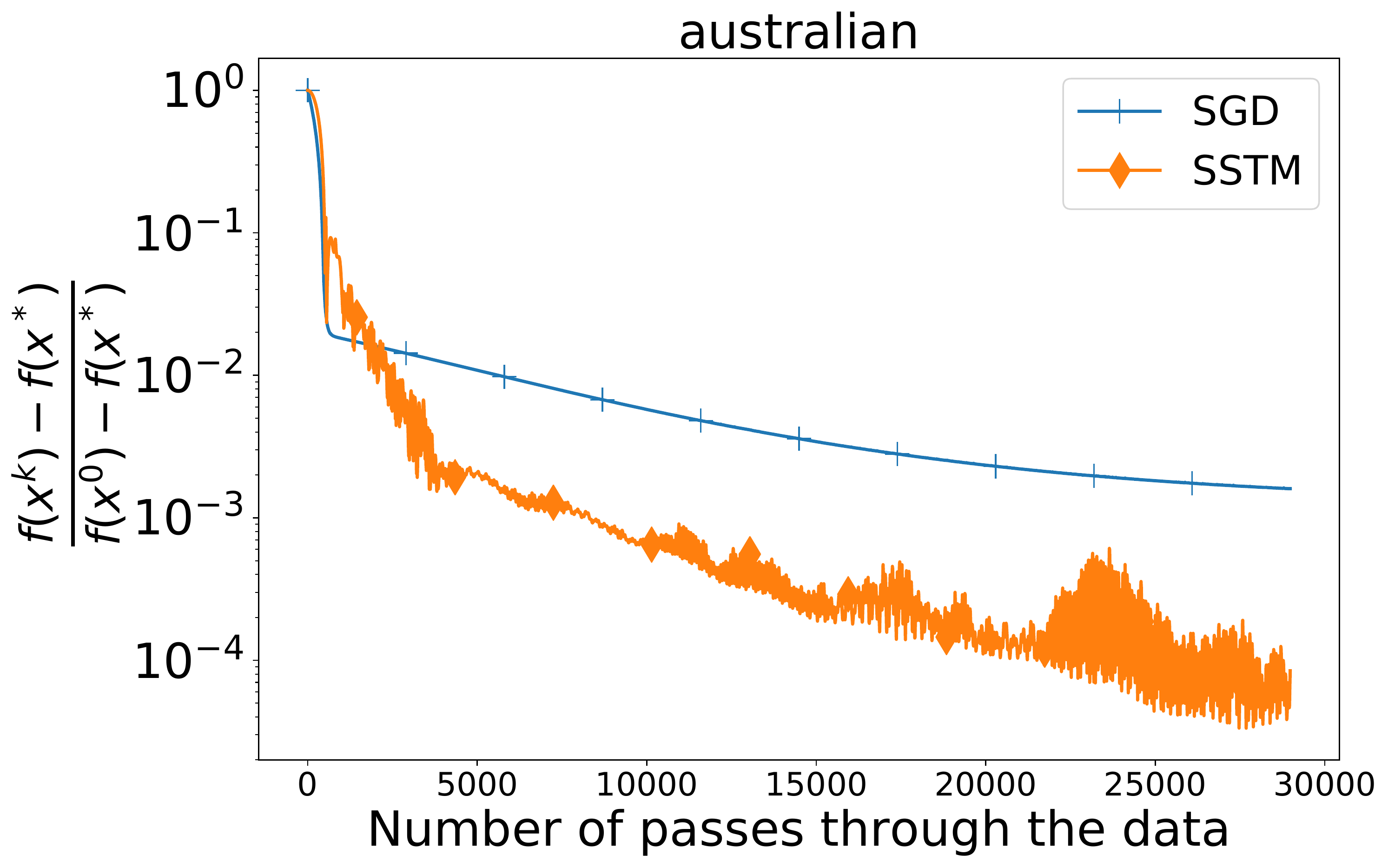}
    \includegraphics[width=0.32\textwidth]{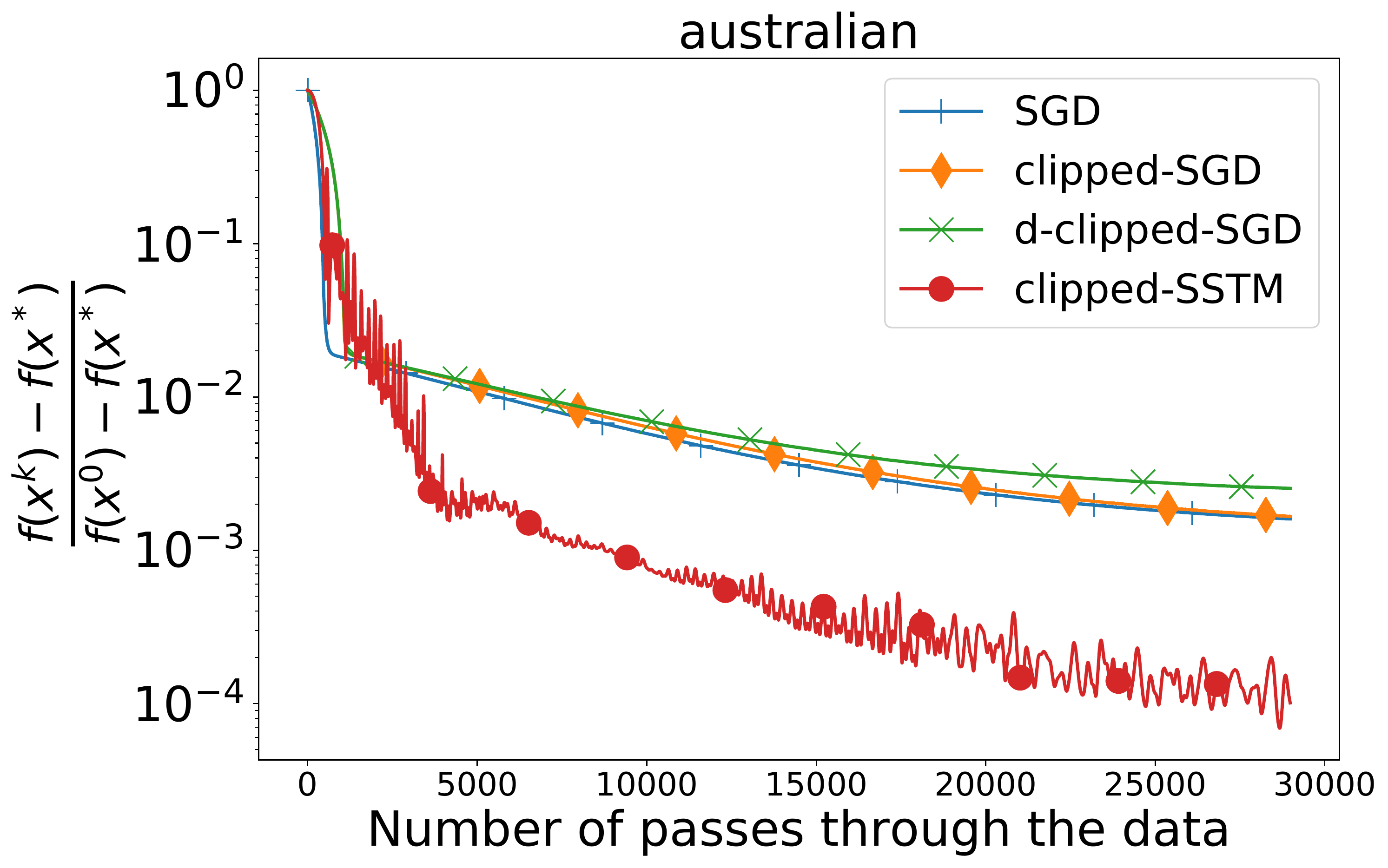}
    \caption{Trajectories of {\tt SGD}, {\tt clipped-SGD}, {\tt SSTM} and {\tt clipped-SSTM} applied to solve logistic regression problem on {\tt australian} dataset.}
    \label{fig:australian_logreg}
\end{figure}

Let us discuss the obtained numerical results. First of all, {\tt d-clipped-SGD} stabilizes the oscillations of {\tt SGD} even if the initial clipping level was high. In contrast, {\tt clipped-SGD} with too large clipping level $\lambda$ behaves similarly to {\tt SGD}. Secondly, we emphasize that due to the fact that we used small bathcsizes {\tt SSTM} has very large oscillations in comparison to {\tt SGD}. Actually, fast error/noise accumulation is a typical drawback of accelerated {\tt SGD} with small batchsizes \cite{kidambi2018insufficiency}. Moreover, deterministic accelerated and momentum-based methods often have non-monotone behavior (see \cite{danilova2018non} and references therein). However, to some extent {\tt clipped-SSTM} suffers from the first drawback less than {\tt SSTM} and has comparable convergence rate with {\tt SSTM}. Finally, in our experiments on {\tt heart} and {\tt australian} datasets {\tt clipped-SSTM} converges faster than {\tt SGD} and {\tt clipped-SGD} and oscillates little, while on {\tt diabetes} dataset it also converges faster than {\tt SGD}, but oscillates more if parameter $B$ is not fine-tuned.

We also want to mention that the behavior of {\tt SGD} on {\tt heart} and {\tt diabetes} datasets correlates with the insights from Section~\ref{sec:motivation} and our numerical study of the distribution of $\|\nabla f_i(x^*)\|_2$. Indeed, for {\tt heart} dataset {\tt SGD} has little oscillations since the distribution of $\|\nabla f_i(x^k) - \nabla f(x^k)\|_2$, where $x^k$ is the last iterate, is well concentrated near its mean and can be approximated by Gaussian distribution (see the details in Section~\ref{sec:additional_logreg}). In contrast, Figure~\ref{fig:diabetes_logreg} shows that {\tt SGD} oscillates more than in the previous example. One can explain such behavior using Figure~\ref{fig:noise_distrib} showing that the distribution of $\|\nabla f(x^*)\|_2$ has heavier tails than for {\tt heart} dataset.

However, we do not see any oscillations of {\tt SGD} for {\tt australian} dataset despite the fact that according to Figure~\ref{fig:noise_distrib} the distribution of $\|\nabla f_i(x^*)\|_2$ in this case has heavier tails than in previous examples. Actually, there is no contradiction and in this case it simply means that {\tt SGD} does not get close to the solution in terms of functional value, despite the fact that we used $\gamma = \nicefrac{1}{L}$. In Section~\ref{sec:additional_logreg} we present the results of different tests where we tried to use bigger stepsize $\gamma$ in order to reach oscillation region faster and show that in fact in that region {\tt SGD} oscillates significantly more, but clipping fixes this issue without spoiling the convergence rate.

\section{Discussion}\label{sec:discussion}

In this paper we close the gap in the theory of high-probability complexity bounds for stochastic optimization with heavy-tailed noise. In particular, we propose a new accelerated stochastic method~--- {\tt clipped-SSTM}~--- and prove the first accelerated high-probability complexity bounds for smooth convex stochastic optimization without light-tails assumption. Moreover, we extend our results to the strongly convex case and prove new complexity bounds outperforming the state-of-the-art results. Finally, we derive first high-probability complexity bounds for the popular method called {\tt clipped-SGD} in convex and strongly convex cases and conduct a numerical study of the considered methods.

However, our approach has several limitations. In particular, it significantly relies on the assumption that the optimization problem is defined on $\R^n$. Moreover, we do not consider regularized or composite problems like in \cite{nazin2019algorithms} and \cite{davis2019low}. However, in \cite{nazin2019algorithms} it is significant in the analysis that the set where the problem is defined is bounded and in \cite{davis2019low} the analysis works only for the strongly convex problems. It would also be interesting to generalize our approach to generally non-smooth problems using the trick from \cite{nesterov2015universal}.

\section*{Broader Impact}
Our contribution is primarily theoretical. Therefore, a broader impact discussion is not applicable.

\begin{ack}
The research of E.~Gorbunov and A.~Gasnikov was partially supported by the Ministry of Science and Higher Education of the Russian Federation (Goszadaniye) 075-00337-20-03. The research of Marina Danilova was funded by RFBR, project number 20-31-90073.
\end{ack}
{
\bibliography{references}

\begin{thebibliography}{10}

\bibitem{bennett1962probability}
George Bennett.
\newblock Probability inequalities for the sum of independent random variables.
\newblock {\em Journal of the American Statistical Association},
  57(297):33--45, 1962.

\bibitem{borovkov2002probabilities}
Aleksandr~Alekseevich Borovkov and Konstantin~Aleksandrovich Borovkov.
\newblock On probabilities of large deviations for random walks. i. regularly
  varying distribution tails.
\newblock {\em Theory of Probability \& Its Applications}, 46(2):193--213,
  2002.

\bibitem{burr1942cumulative}
Irving~W Burr.
\newblock Cumulative frequency functions.
\newblock {\em The Annals of mathematical statistics}, 13(2):215--232, 1942.

\bibitem{chang2011libsvm}
Chih-Chung Chang and Chih-Jen Lin.
\newblock Libsvm: A library for support vector machines.
\newblock {\em ACM transactions on intelligent systems and technology (TIST)},
  2(3):1--27, 2011.

\bibitem{danilova2018non}
Marina Danilova, Anastasiia Kulakova, and Boris Polyak.
\newblock Non-monotone behavior of the heavy ball method.
\newblock In Martin Bohner, Stefan Siegmund, Roman {\v{S}}imon~Hilscher, and
  Petr Stehl{\'i}k, editors, {\em Difference Equations and Discrete Dynamical
  Systems with Applications}, pages 213--230, Cham, 2020. Springer
  International Publishing.

\bibitem{davis2019stochastic}
Damek Davis and Dmitriy Drusvyatskiy.
\newblock Stochastic model-based minimization of weakly convex functions.
\newblock {\em SIAM Journal on Optimization}, 29(1):207--239, 2019.

\bibitem{davis2019low}
Damek Davis, Dmitriy Drusvyatskiy, Lin Xiao, and Junyu Zhang.
\newblock From low probability to high confidence in stochastic convex
  optimization.
\newblock {\em arXiv preprint arXiv:1907.13307}, 2019.

\bibitem{devlin2018bert}
Jacob Devlin, Ming-Wei Chang, Kenton Lee, and Kristina Toutanova.
\newblock Bert: Pre-training of deep bidirectional transformers for language
  understanding.
\newblock {\em arXiv preprint arXiv:1810.04805}, 2018.

\bibitem{devolder2011stochastic}
Olivier Devolder et~al.
\newblock Stochastic first order methods in smooth convex optimization.
\newblock Technical report, CORE, 2011.

\bibitem{dvurechenskii2018decentralize}
Pavel Dvurechenskii, Darina Dvinskikh, Alexander Gasnikov, Cesar Uribe, and
  Angelia Nedich.
\newblock Decentralize and randomize: Faster algorithm for wasserstein
  barycenters.
\newblock In {\em Advances in Neural Information Processing Systems}, pages
  10760--10770, 2018.

\bibitem{dvurechensky2016stochastic}
Pavel Dvurechensky and Alexander Gasnikov.
\newblock Stochastic intermediate gradient method for convex problems with
  stochastic inexact oracle.
\newblock {\em Journal of Optimization Theory and Applications},
  171(1):121--145, 2016.

\bibitem{dzhaparidze2001bernstein}
Kacha Dzhaparidze and JH~Van~Zanten.
\newblock On bernstein-type inequalities for martingales.
\newblock {\em Stochastic processes and their applications}, 93(1):109--117,
  2001.

\bibitem{elisseeff2002stability}
O~Bousquet~A Elisseeff and Olivier Bousquet.
\newblock Stability and generalization.
\newblock {\em Journal of Machine Learning Research}, 2:499--526, 2002.

\bibitem{freedman1975tail}
David~A Freedman et~al.
\newblock On tail probabilities for martingales.
\newblock {\em the Annals of Probability}, 3(1):100--118, 1975.

\bibitem{gasnikov2014stochastic}
Alexander Gasnikov, Pavel Dvurechensky, and Yurii Nesterov.
\newblock Stochastic gradient methods with inexact oracle.
\newblock {\em arXiv preprint arXiv:1411.4218}, 2014.

\bibitem{gasnikov2016universal}
Alexander Gasnikov and Yurii Nesterov.
\newblock Universal fast gradient method for stochastic composit optimization
  problems.
\newblock {\em arXiv:1604.05275}, 2016.

\bibitem{gehring2017convolutional}
Jonas Gehring, Michael Auli, David Grangier, Denis Yarats, and Yann~N Dauphin.
\newblock Convolutional sequence to sequence learning.
\newblock In {\em Proceedings of the 34th International Conference on Machine
  Learning-Volume 70}, pages 1243--1252. JMLR. org, 2017.

\bibitem{ghadimi2012optimal}
Saeed Ghadimi and Guanghui Lan.
\newblock Optimal stochastic approximation algorithms for strongly convex
  stochastic composite optimization i: A generic algorithmic framework.
\newblock {\em SIAM Journal on Optimization}, 22(4):1469--1492, 2012.

\bibitem{ghadimi2013optimal}
Saeed Ghadimi and Guanghui Lan.
\newblock Optimal stochastic approximation algorithms for strongly convex
  stochastic composite optimization, ii: shrinking procedures and optimal
  algorithms.
\newblock {\em SIAM Journal on Optimization}, 23(4):2061--2089, 2013.

\bibitem{ghadimi2013stochastic}
Saeed Ghadimi and Guanghui Lan.
\newblock Stochastic first-and zeroth-order methods for nonconvex stochastic
  programming.
\newblock {\em SIAM Journal on Optimization}, 23(4):2341--2368, 2013.

\bibitem{Goodfellow-et-al-2016}
Ian Goodfellow, Yoshua Bengio, and Aaron Courville.
\newblock {\em Deep Learning}.
\newblock MIT Press, 2016.
\newblock \url{http://www.deeplearningbook.org}.

\bibitem{gorbunov2019optimal}
Eduard Gorbunov, Darina Dvinskikh, and Alexander Gasnikov.
\newblock Optimal decentralized distributed algorithms for stochastic convex
  optimization.
\newblock {\em arXiv preprint arXiv:1911.07363}, 2019.

\bibitem{gorbunov2018accelerated}
Eduard Gorbunov, Pavel Dvurechensky, and Alexander Gasnikov.
\newblock An accelerated method for derivative-free smooth stochastic convex
  optimization.
\newblock {\em arXiv preprint arXiv:1802.09022}, 2018.

\bibitem{gorbunov2019unified}
Eduard Gorbunov, Filip Hanzely, and Peter Richt{\'a}rik.
\newblock A unified theory of sgd: Variance reduction, sampling, quantization
  and coordinate descent.
\newblock {\em arXiv preprint arXiv:1905.11261}, 2019.

\bibitem{gower2019sgd}
Robert~Mansel Gower, Nicolas Loizou, Xun Qian, Alibek Sailanbayev, Egor
  Shulgin, and Peter Richt{\'a}rik.
\newblock Sgd: General analysis and improved rates.
\newblock In {\em International Conference on Machine Learning}, pages
  5200--5209, 2019.

\bibitem{hardt2015train}
Moritz Hardt, Benjamin Recht, and Yoram Singer.
\newblock Train faster, generalize better: Stability of stochastic gradient
  descent.
\newblock {\em arXiv preprint arXiv:1509.01240}, 2015.

\bibitem{hazan2014beyond}
Elad Hazan and Satyen Kale.
\newblock Beyond the regret minimization barrier: optimal algorithms for
  stochastic strongly-convex optimization.
\newblock {\em The Journal of Machine Learning Research}, 15(1):2489--2512,
  2014.

\bibitem{hazan2015beyond}
Elad Hazan, Kfir Levy, and Shai Shalev-Shwartz.
\newblock Beyond convexity: Stochastic quasi-convex optimization.
\newblock In {\em Advances in Neural Information Processing Systems}, pages
  1594--1602, 2015.

\bibitem{hsu2016loss}
Daniel Hsu and Sivan Sabato.
\newblock Loss minimization and parameter estimation with heavy tails.
\newblock {\em The Journal of Machine Learning Research}, 17(1):543--582, 2016.

\bibitem{jin2019short}
Chi Jin, Praneeth Netrapalli, Rong Ge, Sham~M Kakade, and Michael~I Jordan.
\newblock A short note on concentration inequalities for random vectors with
  subgaussian norm.
\newblock {\em arXiv preprint arXiv:1902.03736}, 2019.

\bibitem{juditsky2011first}
Anatoli Juditsky, Arkadi Nemirovski, et~al.
\newblock First order methods for nonsmooth convex large-scale optimization, i:
  general purpose methods.
\newblock {\em Optimization for Machine Learning}, pages 121--148, 2011.

\bibitem{juditsky2014deterministic}
Anatoli Juditsky and Yuri Nesterov.
\newblock Deterministic and stochastic primal-dual subgradient algorithms for
  uniformly convex minimization.
\newblock {\em Stochastic Systems}, 4(1):44--80, 2014.

\bibitem{kakade2009generalization}
Sham~M Kakade and Ambuj Tewari.
\newblock On the generalization ability of online strongly convex programming
  algorithms.
\newblock In {\em Advances in Neural Information Processing Systems}, pages
  801--808, 2009.

\bibitem{khaled2020better}
Ahmed Khaled and Peter Richt{\'a}rik.
\newblock Better theory for sgd in the nonconvex world.
\newblock {\em arXiv preprint arXiv:2002.03329}, 2020.

\bibitem{kidambi2018insufficiency}
Rahul Kidambi, Praneeth Netrapalli, Prateek Jain, and Sham Kakade.
\newblock On the insufficiency of existing momentum schemes for stochastic
  optimization.
\newblock In {\em 2018 Information Theory and Applications Workshop (ITA)},
  pages 1--9. IEEE, 2018.

\bibitem{krizhevsky2009cifar}
Alex Krizhevsky, Vinod Nair, and Geoffrey Hinton.
\newblock Cifar-10 and cifar-100 datasets.
\newblock {\em URl: https://www. cs. toronto. edu/kriz/cifar. html}, 6, 2009.

\bibitem{krizhevsky2012imagenet}
Alex Krizhevsky, Ilya Sutskever, and Geoffrey~E Hinton.
\newblock Imagenet classification with deep convolutional neural networks.
\newblock In {\em Advances in neural information processing systems}, pages
  1097--1105, 2012.

\bibitem{lan2012optimal}
Guanghui Lan.
\newblock An optimal method for stochastic composite optimization.
\newblock {\em Mathematical Programming}, 133(1-2):365--397, 2012.

\bibitem{levy2016power}
Kfir~Y Levy.
\newblock The power of normalization: Faster evasion of saddle points.
\newblock {\em arXiv preprint arXiv:1611.04831}, 2016.

\bibitem{martinet1970regularisation}
Bernard Martinet.
\newblock R{\'e}gularisation d’in{\'e}quations variationnelles par
  approximations successives. rev. fran{\c{c}}aise informat.
\newblock {\em Recherche Op{\'e}rationnelle}, 4:154--158, 1970.

\bibitem{martinet1972determination}
Bernard Martinet.
\newblock D{\'e}termination approch{\'e}e d’un point fixe d’une application
  pseudo-contractante.
\newblock {\em CR Acad. Sci. Paris}, 274(2):163--165, 1972.

\bibitem{mclaughlin2001compendium}
Michael~P McLaughlin.
\newblock {\em A compendium of common probability distributions}.
\newblock Michael P. McLaughlin, 2001.

\bibitem{menon2020can}
Aditya~Krishna Menon, Ankit~Singh Rawat, Sashank~J Reddi, and Sanjiv Kumar.
\newblock Can gradient clipping mitigate label noise?
\newblock In {\em International Conference on Learning Representations}, 2020.

\bibitem{merity2017regularizing}
Stephen Merity, Nitish~Shirish Keskar, and Richard Socher.
\newblock Regularizing and optimizing lstm language models.
\newblock {\em arXiv preprint arXiv:1708.02182}, 2017.

\bibitem{mikolov2012statistical}
Tom{\'a}{\v{s}} Mikolov.
\newblock Statistical language models based on neural networks.
\newblock {\em Presentation at Google, Mountain View, 2nd April}, 80, 2012.

\bibitem{moulines2011non}
Eric Moulines and Francis~R Bach.
\newblock Non-asymptotic analysis of stochastic approximation algorithms for
  machine learning.
\newblock In {\em Advances in Neural Information Processing Systems}, pages
  451--459, 2011.

\bibitem{nazin2019algorithms}
Aleksandr~Viktorovich Nazin, AS~Nemirovsky, Aleksandr~Borisovich Tsybakov, and
  AB~Juditsky.
\newblock Algorithms of robust stochastic optimization based on mirror descent
  method.
\newblock {\em Automation and Remote Control}, 80(9):1607--1627, 2019.

\bibitem{needell2016stochastic}
Deanna Needell, Nathan Srebro, and Rachel Ward.
\newblock Stochastic gradient descent, weighted sampling, and the randomized
  kaczmarz algorithm.
\newblock {\em Mathematical Programming}, 155(1-2):549--573, 2016.

\bibitem{nemirovski2009robust}
Arkadi Nemirovski, Anatoli Juditsky, Guanghui Lan, and Alexander Shapiro.
\newblock Robust stochastic approximation approach to stochastic programming.
\newblock {\em SIAM Journal on optimization}, 19(4):1574--1609, 2009.

\bibitem{nemirovski1978cesari}
Arkadi~S Nemirovski and David~Berkovich Yudin.
\newblock Cesari convergence of the gradient method of approximating saddle
  points of convex-concave functions.
\newblock In {\em Doklady Akademii Nauk}, volume 239, pages 1056--1059. Russian
  Academy of Sciences, 1978.

\bibitem{nemirovsky1983problem}
Arkadi~Semenovich Nemirovsky and David~Borisovich Yudin.
\newblock Problem complexity and method efficiency in optimization.
\newblock 1983.

\bibitem{nesterov2015universal}
Yu~Nesterov.
\newblock Universal gradient methods for convex optimization problems.
\newblock {\em Mathematical Programming}, 152(1-2):381--404, 2015.

\bibitem{nesterov2008confidence}
Yu~Nesterov and J-Ph Vial.
\newblock Confidence level solutions for stochastic programming.
\newblock {\em Automatica}, 44(6):1559--1568, 2008.

\bibitem{nesterov2018lectures}
Yurii Nesterov.
\newblock {\em Lectures on convex optimization}, volume 137.
\newblock Springer, 2018.

\bibitem{nguyen2018sgd}
Lam Nguyen, Phuong~Ha Nguyen, Marten Dijk, Peter Richtarik, Katya Scheinberg,
  and Martin Takac.
\newblock Sgd and hogwild! convergence without the bounded gradients
  assumption.
\newblock In {\em International Conference on Machine Learning}, pages
  3750--3758, 2018.

\bibitem{pascanu2013difficulty}
Razvan Pascanu, Tomas Mikolov, and Yoshua Bengio.
\newblock On the difficulty of training recurrent neural networks.
\newblock In {\em International conference on machine learning}, pages
  1310--1318, 2013.

\bibitem{peters2018deep}
Matthew~E Peters, Mark Neumann, Mohit Iyyer, Matt Gardner, Christopher Clark,
  Kenton Lee, and Luke Zettlemoyer.
\newblock Deep contextualized word representations.
\newblock {\em arXiv preprint arXiv:1802.05365}, 2018.

\bibitem{rakhlin2011making}
Alexander Rakhlin, Ohad Shamir, and Karthik Sridharan.
\newblock Making gradient descent optimal for strongly convex stochastic
  optimization.
\newblock {\em arXiv preprint arXiv:1109.5647}, 2011.

\bibitem{robbins1951stochastic}
Herbert Robbins and Sutton Monro.
\newblock A stochastic approximation method.
\newblock {\em The annals of mathematical statistics}, pages 400--407, 1951.

\bibitem{rockafellar1976monotone}
R~Tyrrell Rockafellar.
\newblock Monotone operators and the proximal point algorithm.
\newblock {\em SIAM journal on control and optimization}, 14(5):877--898, 1976.

\bibitem{shalev2014understanding}
Shai Shalev-Shwartz and Shai Ben-David.
\newblock {\em Understanding machine learning: From theory to algorithms}.
\newblock Cambridge university press, 2014.

\bibitem{shalev2009stochastic}
Shai Shalev-Shwartz, Ohad Shamir, Nathan Srebro, and Karthik Sridharan.
\newblock Stochastic convex optimization.
\newblock In {\em COLT}, 2009.

\bibitem{shalev2011pegasos}
Shai Shalev-Shwartz, Yoram Singer, Nathan Srebro, and Andrew Cotter.
\newblock Pegasos: Primal estimated sub-gradient solver for svm.
\newblock {\em Mathematical programming}, 127(1):3--30, 2011.

\bibitem{shapiro2014lectures}
Alexander Shapiro, Darinka Dentcheva, and Andrzej Ruszczy{\'n}ski.
\newblock {\em Lectures on stochastic programming: modeling and theory}.
\newblock SIAM, 2014.

\bibitem{simsekli2019tail}
Umut Simsekli, Levent Sagun, and Mert Gurbuzbalaban.
\newblock A tail-index analysis of stochastic gradient noise in deep neural
  networks.
\newblock {\em arXiv preprint arXiv:1901.06053}, 2019.

\bibitem{spokoiny2012parametric}
Vladimir Spokoiny et~al.
\newblock Parametric estimation. finite sample theory.
\newblock {\em The Annals of Statistics}, 40(6):2877--2909, 2012.

\bibitem{usmanova2017master}
Ilnura Usmanova.
\newblock Robust solutions to stochastic optimization problems.
\newblock {\em Master Thesis (MSIAM); Institut Polytechnique de Grenoble
  ENSIMAG, Laboratoire Jean Kuntzmann}, 2017.

\bibitem{vaswani2017attention}
Ashish Vaswani, Noam Shazeer, Niki Parmar, Jakob Uszkoreit, Llion Jones,
  Aidan~N Gomez, {\L}ukasz Kaiser, and Illia Polosukhin.
\newblock Attention is all you need.
\newblock In {\em Advances in neural information processing systems}, pages
  5998--6008, 2017.

\bibitem{weibull1951statistical}
Waloddi Weibull.
\newblock A statistical distribution function of wide applicability.
\newblock {\em Journal of Applied Mechanics}, 18:293--297, 1951.

\bibitem{zhang2019gradient}
Jingzhao Zhang, Tianxing He, Suvrit Sra, and Ali Jadbabaie.
\newblock Why gradient clipping accelerates training: A theoretical
  justification for adaptivity.
\newblock In {\em International Conference on Learning Representations}, 2020.

\bibitem{zhang2019adam}
Jingzhao Zhang, Sai~Praneeth Karimireddy, Andreas Veit, Seungyeon Kim,
  Sashank~J Reddi, Sanjiv Kumar, and Suvrit Sra.
\newblock Why adam beats sgd for attention models.
\newblock {\em arXiv preprint arXiv:1912.03194}, 2019.

\end{thebibliography}
}
\clearpage

\part*{Appendix\\ \Large Stochastic Optimization with Heavy-Tailed Noise via \\ Accelerated Gradient Clipping}
\appendix

{\small
\tableofcontents
}

\section{Notations and Definitions}\label{sec:appendix_notation}
We use $\< x, y >$ to define standard inner product between two vectors $x,y\in\R^n$, i.e.\ $\< x, y > \eqdef \sum_{i=1}^n x_iy_i$, where $x_i$ is $i$-th coordinate of vector $x$, $i=1,\ldots,n$. Standard Euclidean norm of vector $x\in\R^n$ is defined as $\|x\|_2 \eqdef \sqrt{\< x, x >}$. 

We use $\PP\{\cdot\}$ to define probability measure which is always known from the context, $\EE[\cdot]$ denotes mathematical expectation, $\EE_\xi[\cdot]$ is used to define conditional mathematical expectation with respect to the randomness coming from $\xi$ only and $\EE\left[\xi\mid \eta\right]$ denotes mathematical expectation of $\xi$ conditional on $\eta$. In our proofs, we also use $\EE_k[\cdot]$ to denote conditional mathematical expectation with respect to all randomness coming from $k$-th iteration. For $\PP$-measurable set $X$ we use $\obf_X$ to denote indicator of event $X$, i.e.\ 
\begin{equation}
    \obf_X = \begin{cases}1,& \text{if event } X \text{ holds},\\ 0,& \text{otherwise}. \end{cases} \label{eq:indicator_def}
\end{equation}

Next, we introduce some standard definitions.
\begin{definition}[$L$-smoothness]
    Function $f$ is called $L$-smooth on $\R^n$ with $L > 0$ when it is differentiable and its gradient is $L$-Lipschitz continuous on $\R^n$, i.e.\ 
    \begin{equation}
        \|\nabla f(x) - \nabla f(y)\|_2 \le L\|x - y\|_2,\quad \forall x,y\in \R^n.\label{eq:L_smoothness}
    \end{equation}
\end{definition}
It is well-known that $L$-smoothness implies (see \cite{nesterov2018lectures})
\begin{eqnarray}
    f(y) &\le& f(x) + \la\nabla f(x),y-x \ra + \frac{L}{2}\|y-x\|_2^2\quad \forall x,y\in \R^n,\label{eq:L_smoothness_cor}
\end{eqnarray}    
and if $f$ is additionally convex, then
\begin{eqnarray}
    \|\nabla f(x) - \nabla f(y)\|_2^2 &\le& 2L\left(f(x) - f(y) - \la\nabla f(y),x-y \ra\right)\quad \forall x,y\in \R^n.\label{eq:L_smoothness_cor_2}
\end{eqnarray}
Since in this paper we focus only on smooth optimization problems we introduce strong convexity in the following way.
\begin{definition}[$\mu$-strong convexity]
    Differentiable function $f$ is called $\mu$-strongly convex on $\R^n$ with $\mu \ge 0$ if for all $x,y\in\R^n$ 
    \begin{equation}
    f(x) \ge f(y) + \langle\nabla f(y), x-y \rangle + \frac{\mu}{2}\|x-y\|_2^2. \label{eq:str_cvx_def}
\end{equation}
\end{definition}
In particular, $\mu$-strong convexity implies that for all $x\in\R^n$
\begin{equation}
    f(x) - f(x^*) \ge \frac{\mu}{2}\|x-x^*\|_2^2. \label{eq:str_cvx_cor}
\end{equation}

Throughout the paper, we use $x^*$ to denote any solution of problem \eqref{eq:main_problem} assuming its existence. By the complexity of stochastic first-order method we always mean the total number of stochastic first-order oracle calls that the method needs in order to produce such a point $\hat x$ that $f(\hat x) - f(x^*) \le \varepsilon$ with probability at least $1-\beta$ for some $\varepsilon > 0$ and $\beta\in(0,1)$. Finally, in the complexity bounds we often use $R_0$ to denote $\|x^0 - x^*\|_2$ where $x^0$ is the starting point of the method.
\clearpage
\section{Related Work: Additional Details}
\subsection{Related Work on Non-Smooth Stochastic Optimization}\label{sec:non_smooth_appendix}
Here we present an overview of existing results in the convex non-smooth case, i.e.\ when $f$ is still convex but not necessarily $L$-smooth and the stochastic gradients have a bounded second moment: $\EE_\xi[\|\nabla f(x,\xi)\|_2^2] \le M^2$ for all $x\in\R^n$. Under additional assumption that the stochastic gradients have light-tailed distribution it was shown that {\tt SGD} \cite{nemirovski2009robust} has $O\left(\nicefrac{M^2 R_0^2\ln(\beta^{-1})}{\varepsilon^2}\right)$ complexity and if additionally $f$ is $\mu$-strongly convex it was shown in \cite{juditsky2011first,juditsky2014deterministic} that the restarted version of {\tt SGD} has $O\left(\nicefrac{M^2\ln\left(\beta^{-1}\ln(M^2\mu^{-1}\varepsilon^{-1})\right)}{\mu\varepsilon}\right)$ complexity (see also \cite{hazan2014beyond,kakade2009generalization,rakhlin2011making}). Moreover, removing logarithmical factors from these bounds we get the complexity bounds of these methods for the convergence in expectation, i.e.\ needed number of oracle calls to find such $\hat x$ that $\EE[f(\hat x)] - f(x^*) \le \varepsilon$. That is, under light tails assumption high-probability complexity bounds and complexity bounds in expectation for {\tt SGD} and {\tt restarted-SGD} differ only in logarithmical factors of $\nicefrac{1}{\beta}$.

Unfortunately, for these methods the situation changes dramatically when the stochastic gradients are heavy-tailed. To the best of our knowledge, the best know bounds in the literature with the same dependency on $\varepsilon$ are $O\left(\nicefrac{M^2R_0^2}{\beta^2\varepsilon^2}\right)$ and $O\left(\nicefrac{M^2}{\mu\beta\varepsilon}\right)$. One can obtain these bounds using complexity results for the convergence in expectation and Markov's inequality. However, it leads to significantly worse dependence on $\beta$: instead of $O(\ln(\beta^{-1}))$ we get $O(\beta^{-2})$ and $O(\beta^{-1})$ dependence on the confidence level $\beta$. Furthermore, based on the well-known results on the distribution of sum of i.i.d.\ random variables (see Section~\ref{sec:sum_iid_rand_vars}) in \cite{gasnikov2014stochastic} authors consider the case when the tails of the distribution of stochastic gradient satisfy $\PP\{\|\nabla f(x,\xi) - \nabla f(x)\|_2 > s\} = O(s^{-\alpha})$ for $\alpha > 2$ and give the following complexity bounds without formal proofs that {\tt SGD} for convex problems and {\tt restarted-SGD} for $\mu$-strongly convex problems have following complexities:
\begin{equation*} \label{HTconv}
    O\left(M^2R^2\max\left\{ \frac{\ln\left(\beta^{-1}\right)}{\varepsilon^2},\left(\frac{1}{\beta\varepsilon^{\alpha}}\right)^{\frac{2}{3\alpha - 2}}\right\}\right),
\end{equation*}
\begin{equation*} \label{HTstrconv}
     O\left(\max\left\{ \frac{M^2\ln\left(\beta^{-1}\ln \frac{M^2}{\mu\varepsilon}\right)}{\mu\varepsilon},\left(\frac{M^2}{\mu\varepsilon}\right)^{\frac{\alpha}{3\alpha - 2}}\left(\beta^{-1}\ln \frac{M^2}{\mu\varepsilon}\right)^{\frac{2}{3\alpha - 2}}\right\}\right).
\end{equation*}
The first terms in maximums above correspond to the Central Limit Theorem regime, while the second terms correspond to the heavy-tailed regime, see Section~\ref{sec:sum_iid_rand_vars}. These bounds show that heavy tailed distributions of the stochastic gradients significantly spoil complexity bounds of {\tt SGD} and {\tt restarted-SGD} when the confidence level $\beta$ is small enough.

\subsection{Related Work on Gradient Clipping}\label{sec:grad_clipping_extra}
As we mentioned Section~\ref{sec:motivation} {\tt clipped-SGD} \cite{Goodfellow-et-al-2016, mikolov2012statistical,pascanu2013difficulty,usmanova2017master} is known to be robust to the noise in stochastic gradients and performs better than {\tt SGD} in the vicinity of extremely steep cliffs. Zhang et al.\ \cite{zhang2019adam} analyse the convergence of {\tt clipped-SGD} \textit{in expectation} for strongly convex and non-convex objectives under assumption that $\EE[\|\nabla f(x,\xi)\|_2^\alpha]$ is bounded for some $\alpha\in (1,2]$. For $\alpha < 2$ this assumption covers some heavy-tailed distributions of stochastic gradients appearing in practice. Moreover, in \cite{zhang2019adam} authors conduct several numerical tests showing that in some real-world problems where the noise in stochastic gradients is heavy-tailed {\tt clipped-SGD} converges faster than {\tt SGD}. In \cite{zhang2019gradient} Zhang et al.\ found that {\tt clipped-GD} is able to converge in non-convex case to the stationary point under the relaxed smoothness assumption with $O(\varepsilon^{-2})$ rate while Gradient Descent ({\tt GD}) can fail to converge with the same rate in this setting. A very similar approach based on the normalization of {\tt GD} is studied in \cite{hazan2015beyond,levy2016power}.
\clearpage

\section{Basic Facts}\label{sec:basic_facts}
In this section we enumerate for convenience basic facts that we use many times in our proofs.

\textbf{Fenchel-Young inequality.} For all $a,b\in\R^n$ and $\lambda > 0$
\begin{equation}
    |\la a, b\ra| \le \frac{\|a\|_2^2}{2\lambda} + \frac{\lambda\|b\|_2^2}{2}.\label{eq:fenchel_young_inequality}
\end{equation}

\textbf{Squared norm of the sum.} For all $a,b\in\R^n$
\begin{equation}
    \|a+b\|_2^2 \le 2\|a\|_2^2 + 2\|b\|_2^2.\label{eq:squared_norm_sum}
\end{equation}

\textbf{Inner product representation.} For all $a,b\in\R^n$
\begin{equation}
    \la a, b\ra = \frac{1}{2}\left(\|a+b\|_2^2 - \|a\|_2^2 - \|b\|_2^2\right) \label{eq:inner_product_representation}
\end{equation}

\textbf{Variance decomposition.} If $\xi$ is a random vector in $\R^n$ with bounded second moment, then
\begin{equation}
    \EE\left[\|\xi+a\|_2^2\right] = \EE\left[\left\|\xi-\EE[\xi]\right\|_2^2\right] + \left\|\EE[\xi]+a\right\|_2^2 \label{eq:variance_decomposition}
\end{equation}
for any deterministic vector $a\in\R^n$. In particular, this implies
\begin{equation}
     \EE\left[\left\|\xi-\EE[\xi]\right\|_2^2\right] \le \EE\left[\|\xi+a\|_2^2\right] \label{eq:variance_decomposition_2}
\end{equation}
for any deterministic vector $a\in\R^n$.

\section{Auxiliary Results}\label{sec:aux_results}
\subsection{Bernstein Inequality}\label{sec:bernstein}
\begin{lemma}[Bernstein inequality for martingale differences \cite{bennett1962probability,dzhaparidze2001bernstein,freedman1975tail}]\label{lem:Bernstein_ineq}
    Let the sequence of random variables $\{X_i\}_{i\ge 1}$ form a martingale difference sequence, i.e.\ $\EE\left[X_i\mid X_{i-1},\ldots, X_1\right] = 0$ for all $i \ge 1$. Assume that conditional variances $\sigma_i^2\eqdef\EE\left[X_i^2\mid X_{i-1},\ldots, X_1\right]$ exist and are bounded and assume also that there exists deterministic constant $c>0$ such that $\|X_i\|_2 \le c$ almost surely for all $i\ge 1$. Then for all $b > 0$, $F > 0$ and $n\ge 1$
    \begin{equation}
        \PP\left\{\Big|\sum\limits_{i=1}^nX_i\Big| > b \text{ and } \sum\limits_{i=1}^n\sigma_i^2 \le F\right\} \le 2\exp\left(-\frac{b^2}{2F + \nicefrac{2cb}{3}}\right).
    \end{equation}
\end{lemma}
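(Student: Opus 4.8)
The plan is to use the standard exponential-supermartingale (Chernoff) method, adapted so that it interacts correctly with the random event $\{\sum_{i=1}^n \sigma_i^2 \le F\}$. First I would reduce to a one-sided bound: since $\{-X_i\}_{i\ge 1}$ is also a martingale difference sequence with the same conditional variances $\sigma_i^2$ and the same almost-sure bound $|X_i|\le c$, it suffices to prove
\[
\PP\left\{\sum_{i=1}^n X_i > b,\ \sum_{i=1}^n\sigma_i^2 \le F\right\} \le \exp\left(-\frac{b^2}{2F+\nicefrac{2cb}{3}}\right),
\]
and then apply the identical estimate to $-X_i$, combining the two tails by a union bound to produce the factor $2$.

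The key auxiliary estimate I would establish is a per-step bound on the conditional exponential moment: for every $\theta$ with $0 < \theta < \nicefrac{3}{c}$,
\[
\EE\left[\exp(\theta X_i)\mid X_{i-1},\ldots,X_1\right] \le \exp\left(\frac{\theta^2 \sigma_i^2/2}{1 - \theta c/3}\right).
\]
This follows by Taylor-expanding $e^{\theta X_i}$, using $\EE[X_i\mid X_{i-1},\ldots,X_1]=0$ to cancel the linear term, bounding the higher moments by $|\EE[X_i^k\mid\cdot]| \le c^{k-2}\sigma_i^2$ via $|X_i|\le c$, and summing the resulting series with the elementary inequality $\sum_{k\ge 2} x^k/k! \le (x^2/2)/(1-x/3)$ valid for $0\le x < 3$, followed by $1+u\le e^u$.

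With this in hand I would introduce the process
\[
Z_n \eqdef \exp\left(\theta\sum_{i=1}^n X_i - \frac{\theta^2/2}{1-\theta c/3}\sum_{i=1}^n\sigma_i^2\right),
\]
which the per-step bound turns into a supermartingale, so that $\EE[Z_n]\le Z_0 = 1$. On the event of interest the random compensator satisfies $\sum_{i=1}^n\sigma_i^2 \le F$, whence $Z_n \ge \exp\!\big(\theta b - \tfrac{\theta^2/2}{1-\theta c/3}F\big)$ there. Markov's inequality applied to $Z_n$ then yields $\PP\{\cdots\} \le \exp\!\big(-\theta b + \tfrac{\theta^2 F/2}{1-\theta c/3}\big)$, and I would finish by optimizing the free parameter, taking $\theta = b/(F + cb/3)$, which automatically satisfies $\theta c < 3$ and collapses the exponent to exactly $-b^2/(2F+\nicefrac{2cb}{3})$.

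The main obstacle is precisely the presence of the random variance event $\{\sum\sigma_i^2\le F\}$ inside the probability: one cannot directly optimize a Chernoff exponent against a random quantity. The device that resolves this is to build the variance proxy into the exponent of $Z_n$, so that the event converts the random compensator into the \emph{deterministic} lower bound for $Z_n$ used above; once that is done, the remainder is routine Bernstein series manipulation and a scalar optimization.
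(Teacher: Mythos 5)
Your proof is correct and complete: the reduction to a one-sided bound for $\{-X_i\}$, the per-step conditional MGF estimate $\EE[e^{\theta X_i}\mid X_{i-1},\ldots,X_1]\le\exp\bigl(\tfrac{\theta^2\sigma_i^2/2}{1-\theta c/3}\bigr)$ via $|\EE[X_i^k\mid\cdot]|\le c^{k-2}\sigma_i^2$ and the series inequality $\sum_{k\ge 2}x^k/k!\le\tfrac{x^2/2}{1-x/3}$ for $0\le x<3$, the supermartingale $Z_n$ with the predictable compensator $\sum_{i\le n}\sigma_i^2$ (which is what lets the random event $\{\sum_{i=1}^n\sigma_i^2\le F\}$ be converted into a deterministic lower bound on $Z_n$ before Markov's inequality), and the choice $\theta=b/(F+cb/3)$, which indeed satisfies $\theta c<3$ and collapses the exponent exactly to $-b^2/(2F+2cb/3)$, all check out. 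The paper itself gives no proof of this lemma, importing it from the cited references (Freedman; Dzhaparidze--van Zanten), and your argument is precisely the classical supermartingale proof found there, so there is nothing to flag beyond the trivial remark that $\|X_i\|_2$ in the statement just means $|X_i|$ for scalar variables.
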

\subsection{About the Sum of i.i.d.\ Random Variables with Heavy Tails}\label{sec:sum_iid_rand_vars}
In this section we present some classical results about the distribution of sum of i.i.d.\ random variables $\sum_{k=1}^N \xi_k$ with heavy tails \cite{borovkov2002probabilities}. As one can see from our proofs of main results for {\tt clipped-SSTM} and {\tt clipped-SGD} such sums play a central role in the analysis of convergence with high probability. Assume that $\{\xi_k\}$ is i.i.d. with $\EE [\xi_k] = 0$ and $\text{Var} [\xi_k] \eqdef \EE[(\xi_k - \EE[\xi_k])^2] = \sigma^2$. Assume also that $V(s) = \PP\left\{\xi_k \ge s\right\} = \Theta\left(s^{-\alpha}\right)$, where $\alpha > 2$. In this case 
\begin{equation*}
\PP\left\{\sum_{k=1}^N \xi_k \ge s\right\} \simeq 1 - \Phi\left(\frac{s}{\sqrt{\sigma^2N}}\right) + N\cdot V(s),
\end{equation*}
where $N \gg 1$ and $\Phi(x) = \frac{1}{2\pi}\int_{-\infty}^x \exp\left(-y^2/2\right)dy$. Since 
$$0.2\exp\left(-\frac{2x^2}{\pi}\right)\le 1-\Phi(x)\le\exp\left(-\frac{x^2}{2}\right),$$
we have\footnote{CLT = Central Limit Theorem.}
\begin{equation}\label{CLT}
\PP\left\{\sum_{k=1}^N \xi_k \ge s\right\} \simeq 1 - \Phi\left(\frac{s}{\sqrt{\sigma^2N}}\right), \quad s\le\sqrt{(\alpha - 2)\sigma^2N\ln N} \quad \text{(CLT regime)}
\end{equation}
and
\begin{equation}\label{HT}
\PP\left\{\sum_{k=1}^N \xi_k \ge s\right\} \simeq N\cdot V(s), \quad s > \sqrt{(\alpha - 2)\sigma^2N\ln N} \quad \text{(heavy-tailed regime).}
\end{equation}

This simple observation can play a significant role in deriving complexity results for non-smooth convex optimization under the assumption that stochastic gradients are heavy-tailed, see \cite{gasnikov2014stochastic} for the details.

\section{Technical Results}\label{sec:tech_results}

\begin{lemma}\label{lem:alpha_k}
    Consider two sequences of non-negative numbers $\{\alpha_k\}_{k\ge0}$ and $\{A_k\}_{k\ge 0}$ such that
    \begin{equation}
        \alpha_0 = A_0 = 0,\quad A_{k+1} = A_k + \alpha_{k+1},\quad \alpha_{k+1} = \frac{k+2}{2aL}\quad \forall k\ge0, \label{eq:alpha_k_definition}
    \end{equation}
    where $a,L>0$. Then for all $k \ge 0$
    \begin{eqnarray}
        A_{k+1} &=& \frac{(k+1)(k+4)}{4aL},\label{eq:A_k+1_formula}\\
        A_{k+1} &\ge& aL\alpha_{k+1}^2. \label{eq:A_k+1_alpha_k+1_rel}
    \end{eqnarray}
\end{lemma}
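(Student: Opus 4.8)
The plan is to establish both formulas directly from the recurrence by induction, exploiting the fact that $\alpha_{k+1}=\frac{k+2}{2aL}$ is given explicitly in closed form. Since $A_{k+1}=\sum_{j=1}^{k+1}\alpha_j=\frac{1}{2aL}\sum_{j=1}^{k+1}(j+1)$, the formula \eqref{eq:A_k+1_formula} amounts to evaluating the arithmetic sum $\sum_{j=1}^{k+1}(j+1)=\sum_{m=2}^{k+2}m=\frac{(k+2)(k+3)}{2}-1=\frac{(k+1)(k+4)}{2}$, which gives $A_{k+1}=\frac{(k+1)(k+4)}{4aL}$. I would present this either as this one-line telescoping computation or, equivalently, by induction: the base case $A_1=\alpha_1=\frac{2}{2aL}=\frac{1\cdot 4}{4aL}$ checks out, and the inductive step uses $A_{k+2}=A_{k+1}+\alpha_{k+2}=\frac{(k+1)(k+4)}{4aL}+\frac{k+3}{2aL}=\frac{(k+1)(k+4)+2(k+3)}{4aL}$, so it suffices to verify the polynomial identity $(k+1)(k+4)+2(k+3)=(k+2)(k+5)$, which follows since both sides equal $k^2+7k+10$.

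For the inequality \eqref{eq:A_k+1_alpha_k+1_rel}, I would substitute the two closed forms and reduce it to an elementary comparison. We have $aL\alpha_{k+1}^2=aL\cdot\frac{(k+2)^2}{4a^2L^2}=\frac{(k+2)^2}{4aL}$, so the claim $A_{k+1}\ge aL\alpha_{k+1}^2$ becomes $\frac{(k+1)(k+4)}{4aL}\ge\frac{(k+2)^2}{4aL}$, i.e.\ $(k+1)(k+4)\ge(k+2)^2$. Expanding both sides gives $k^2+5k+4\ge k^2+4k+4$, equivalently $k\ge 0$, which holds for all $k\ge 0$ by hypothesis. This is a strict inequality for $k\ge 1$ and equality-adjacent only through the common positive factor $\frac{1}{4aL}$, so the bound is clean.

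There is no serious obstacle here; the lemma is a routine closed-form computation, and the only things to be careful about are index bookkeeping (the sum starts at $j=1$ since $\alpha_0=0$) and keeping the factor $\frac{1}{4aL}$ consistent throughout. I would likely just write \eqref{eq:A_k+1_formula} via the telescoping sum and then derive \eqref{eq:A_k+1_alpha_k+1_rel} by plugging in and invoking $(k+1)(k+4)-(k+2)^2=k\ge 0$.
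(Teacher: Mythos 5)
Your proof is correct and follows essentially the same route as the paper: the paper also computes $A_{k+1}=\sum_{l=1}^{k+1}\alpha_l=\frac{1}{2aL}\sum_{l=1}^{k+1}(l+1)=\frac{(k+1)(k+4)}{4aL}$ and then derives \eqref{eq:A_k+1_alpha_k+1_rel} from the inequality $(k+1)(k+4)\ge(k+2)^2$, which is exactly your reduction to $k\ge 0$. Your added inductive verification is redundant but harmless.
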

\begin{proof}
    By definition of $A_{k+1}$ we have that
    \begin{eqnarray*}
        A_{k+1} &=& \sum\limits_{l=1}^{k+1}\alpha_l = \frac{1}{2aL}\sum\limits_{l=1}^{k+1} (l+1) = \frac{(k+1)(k+4)}{4aL}.
    \end{eqnarray*}
    Using $(k+1)(k+4)\ge (k+2)^2$ together with the inequality above we derive \eqref{eq:A_k+1_alpha_k+1_rel}.
\end{proof}

\clearpage

\section{Accelerated {\tt SGD} with Clipping: Exact Formulations and Missing Proofs}\label{sec:proofs_acc_method}
In this section we provide exact formulations of all the results that we have for {\tt clipped-SSTM} and {\tt R-clipped-SSTM} together with the full proofs.
\subsection{Convex Case}\label{sec:cvx_case_clipped_SSTM_app}
Recall that in order to compute $\clip(\nabla f(x,\Bxi), \lambda)$ one needs to get $m$ i.i.d.\ samples $\nabla f(x,\xi_1),\ldots,\nabla f(x,\xi_m)$, compute its average
\begin{equation}
    \nabla f(x,\Bxi) = \frac{1}{m}\sum\limits_{i=1}^m\nabla f(x,\xi_i), \label{eq:mini_batched_stoch_grad_clipped_SSTM}
\end{equation}
and then project the result $\nabla f(x,\Bxi)$ on the Euclidean ball with radius $\lambda$ and center at the origin. We also notice that
\begin{eqnarray}
    \EE_\xi[\nabla f(x,\Bxi)] &=& \nabla f(x),\label{eq:mini_batched_unbiasedness_clipped_SSTM}\\
    \EE_\xi\left[\left\|\nabla f(x,\Bxi) - \nabla f(x)\right\|_2^2\right] &\le& \frac{\sigma^2}{m}.\label{eq:mini_batched_bounded_variance_clipped_SSTM}
\end{eqnarray}

\subsubsection{Convergence Guarantees for {\tt clipped-SSTM}}\label{sec:convergence_cvx_case_clipped_SSTM}
Next theorem summarizes the main convergence result for {\tt clipped-SSTM}.
\begin{theorem}\label{thm:main_result_clipped_SSTM}
    Assume that function $f$ is convex and $L$-smooth. Then for all $\beta \in (0,1)$ and $N\ge 1$ such that
    \begin{equation}
        \ln\frac{4N}{\beta} \ge 2 \label{eq:beta_N_condition_clipped_SSTM}
    \end{equation}
    we have that after $N$ iterations of {\tt clipped-SSTM} with
    \begin{equation}
        m_k = \max\left\{1,\frac{6000\sigma^2 \alpha_{k+1}^2N\ln\frac{4N}{\beta}}{C^2R_0^2}, \frac{10368\sigma^2 \alpha_{k+1}^2N}{C^2R_0^2}\right\},\label{eq:bathces_clipped_SSTM}
    \end{equation}
    \begin{equation}
        B = \frac{CR_0}{8\ln\frac{4N}{\beta}},\quad a \ge \max\left\{1,\frac{16\ln\frac{4N}{\beta}}{C},36\left(2\ln\frac{4N}{\beta} + \sqrt{4\ln^2\frac{4N}{\beta} + 2\ln\frac{4N}{\beta}}\right)^2\right\}, \label{eq:B_a_parameters_clipped_SSTM}
    \end{equation}
    that with probability at least $1-\beta$
    \begin{equation}
        f(y^N) - f(x^*) \le \frac{2aLC^2R_0^2}{N(N+3)}, \label{eq:main_result_clipped_SSTM}
    \end{equation}
    where $R_0 = \|x^0 - x^*\|_2$ and
    \begin{equation}
        C = \sqrt{5}. \label{eq:C_definition_clipped_SSTM}
    \end{equation}
    In other words, if we choose $a$ to be equal to the maximum from \eqref{eq:B_a_parameters_clipped_SSTM}, then the method achieves $f(y^N) - f(x^*) \le \varepsilon$ with probability at least $1-\beta$ after $O\left(\sqrt{\frac{LR_0^2}{\varepsilon}}\ln\frac{LR_0^2}{\varepsilon\beta}\right)$ iterations and requires
    \begin{equation}
        O\left(\max\left\{\sqrt{\frac{LR_0^2}{\varepsilon}}, \frac{\sigma^2R_0^2}{\varepsilon^2}\right\}\ln\frac{LR_0^2}{\varepsilon\beta}\right)\text{ oracle calls.} \label{eq:clipped_SSTM_oracle_complexity}
    \end{equation}
\end{theorem}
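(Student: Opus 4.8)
The strategy is to reproduce the deterministic Similar Triangles Method analysis with the clipped gradient in place of the exact one, collect all randomness into additive noise terms, and control them with high probability by an induction on the iteration counter driven by the Bernstein inequality (Lemma~\ref{lem:Bernstein_ineq}). Writing $g^{k+1}=\tnabla f(x^{k+1},\Bxi^k)$ and $\theta^{k+1}=g^{k+1}-\nabla f(x^{k+1})$, I would start from the SSTM identities $\alpha_{k+1}(z^k-x^{k+1})=A_k(x^{k+1}-y^k)$ and $y^{k+1}-x^{k+1}=-\nicefrac{\alpha_{k+1}^2 g^{k+1}}{A_{k+1}}$. Combining $L$-smoothness \eqref{eq:L_smoothness_cor} at $(x^{k+1},y^{k+1})$, convexity of $f$ at $x^{k+1}$ tested against $y^k$ and $x^*$, and the expansion of $\|z^{k+1}-x^*\|_2^2$ from $z^{k+1}=z^k-\alpha_{k+1}g^{k+1}$, then using $A_{k+1}\ge aL\alpha_{k+1}^2$ (Lemma~\ref{lem:alpha_k}) and $a\ge2$ to discard the resulting nonpositive multiples of $\|g^{k+1}\|_2^2$ and $\|\nabla f(x^{k+1})\|_2^2$, I obtain
\[
\Psi_{k+1}\le\Psi_k-\alpha_{k+1}\la\theta^{k+1},z^k-x^*\ra+\alpha_{k+1}^2\|\theta^{k+1}\|_2^2,\qquad \Psi_k\eqdef A_k(f(y^k)-f(x^*))+\tfrac12\|z^k-x^*\|_2^2,
\]
so that telescoping with $A_0=0$ and $\Psi_0=\tfrac12R_0^2$ gives $\Psi_{k+1}\le\tfrac12R_0^2+\sum_{t=0}^k\rho_t$ with $\rho_t=-\alpha_{t+1}\la\theta^{t+1},z^t-x^*\ra+\alpha_{t+1}^2\|\theta^{t+1}\|_2^2$.

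\emph{Induction set-up.} I would prove by induction that the event $E_k=\{\Psi_t\le\nicefrac{C^2R_0^2}{2}\ \text{for all}\ t\le k\}$ holds with probability at least $1-\nicefrac{k\beta}{N}$; the base case is immediate since $\Psi_0=\tfrac12R_0^2\le\nicefrac{C^2R_0^2}{2}$. The structural fact that unlocks everything is $A_k y^k=\sum_{j=1}^k\alpha_j z^j$, so $y^k$ is a convex combination of $z^1,\dots,z^k$; hence on $E_k$ all of $\|z^t-x^*\|_2$, $\|y^t-x^*\|_2$, $\|x^{t+1}-x^*\|_2$ are $\le CR_0$. To certify that clipping acts in the low-bias regime I need $\|\nabla f(x^{k+1})\|_2\le\nicefrac{\lambda_{k+1}}{2}$, which I would get by combining the fine function-value bound $\|\nabla f(y^k)\|_2\le\sqrt{2L(f(y^k)-f(x^*))}\le CR_0\sqrt{\nicefrac{L}{A_k}}$ with $\|x^{k+1}-y^k\|_2\le\nicefrac{2\alpha_{k+1}CR_0}{A_{k+1}}$ (and handling $k=0$ directly via $\|\nabla f(x^0)\|_2\le LR_0$). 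This is precisely the step that dictates $a=\Theta(\ln^2(\nicefrac{N}{\beta}))$ and the branch $a\ge\nicefrac{16}{C}\ln\tfrac{4N}{\beta}$ of \eqref{eq:B_a_parameters_clipped_SSTM}. Under this gradient bound the clipped mini-batch estimator obeys $\|g^{k+1}\|_2\le\lambda_{k+1}$ almost surely, $\|\EE_k[g^{k+1}]-\nabla f(x^{k+1})\|_2=O(\nicefrac{\sigma^2}{\lambda_{k+1}m_k})$, and $\EE_k\|\theta^{k+1}\|_2^2=O(\nicefrac{\sigma^2}{m_k})$ by \eqref{eq:mini_batched_bounded_variance_clipped_SSTM}.

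\emph{Controlling the noise.} I would split $\theta^{k+1}=\theta^{k+1}_u+\theta^{k+1}_b$ into the zero-mean part $\theta^{k+1}_u=g^{k+1}-\EE_k[g^{k+1}]$ and the bias $\theta^{k+1}_b$, and likewise write $\|\theta^{k+1}\|_2^2=\EE_k\|\theta^{k+1}\|_2^2+(\|\theta^{k+1}\|_2^2-\EE_k\|\theta^{k+1}\|_2^2)$. This partitions $\sum_t\rho_t$ into two $\cF_t$-conditionally mean-zero (martingale-difference) sums, built from $\la\theta^{t+1}_u,z^t-x^*\ra$ and from the centered square, and two deterministic sums coming from $\theta^{t+1}_b$ and $\EE_t\|\theta^{t+1}\|_2^2$. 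On $E_k$ the two centered summands are bounded a.s.\ by multiples of $CR_0B$ and of $\lambda_{t+1}^2$ (using $\alpha_{t+1}\lambda_{t+1}=B$), and their conditional variances are controlled by the $\ln$-branch of the batch sizes \eqref{eq:bathces_clipped_SSTM}; Bernstein's inequality (whose side condition $\sum\sigma_i^2\le F$ is guaranteed on $E_k$) then shows each exceeds a small multiple of $R_0^2$ with probability $O(\nicefrac\beta N)$, the choice $B=\Theta(\nicefrac{R_0}{\ln(\nicefrac N\beta)})$ being exactly what makes the $\ln\tfrac{4N}\beta$ in the Bernstein exponent cancel. The two deterministic sums are bounded directly on $E_k$ by the $\ln$-free branch of \eqref{eq:bathces_clipped_SSTM}. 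Adding the four contributions, on $E_k$ and off the Bernstein failure events $\sum_{t=0}^k\rho_t\le\tfrac{C^2-1}{2}R_0^2=2R_0^2$ (as $C=\sqrt5$), so $\Psi_{k+1}\le\nicefrac{C^2R_0^2}{2}$ and $E_{k+1}$ holds; a union bound over the $N$ steps yields $\PP(E_N)\ge1-\beta$. On $E_N$ we have $A_N(f(y^N)-f(x^*))\le\nicefrac{C^2R_0^2}{2}$, and substituting $A_N=\nicefrac{N(N+3)}{4aL}$ from Lemma~\ref{lem:alpha_k} gives \eqref{eq:main_result_clipped_SSTM}; feeding the parameter choices into $\sum_k m_k$ produces the oracle complexity \eqref{eq:clipped_SSTM_oracle_complexity}.

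\emph{Main obstacle.} The genuinely delicate part is not any individual estimate but making the martingale arguments legitimate: the a.s.\ and conditional-variance bounds on $\theta^{k+1}$ hold only on the random event $E_k$, so the summands are bounded martingale differences only after truncation. I would therefore apply Bernstein to the truncated variables $\obf_{E_t}\cdot(\text{summand})$---which remain $\cF_t$-conditionally mean-zero because $E_t$ is $\cF_t$-measurable---and transfer the conclusions back to the untruncated sums on $E_k$, carrying the per-step failure budget $\nicefrac\beta N$ consistently through the truncation. By comparison, the smoothness/convexity algebra of the one-step inequality and the clipping bias/variance estimates are routine.
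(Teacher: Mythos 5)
Your plan is correct and, at the level of architecture, is the paper's own proof: the same induction on events $E_k$ with per-step failure budget $\nicefrac{\beta}{N}$, the same clipped-estimator bounds (Lemma~\ref{lem:main_stoch_lemma_clipped_SSTM}), the same certification $\|\nabla f(x^{k+1})\|_2\le\nicefrac{\lambda_{k+1}}{2}$ obtained from $f(y^k)-f(x^*)\le\nicefrac{C^2R_0^2}{2A_k}$ together with $\|x^{k+1}-y^k\|_2\le\nicefrac{2\alpha_{k+1}CR_0}{A_{k+1}}$ (which is exactly what dictates the choice of $a$ in \eqref{eq:B_a_parameters_clipped_SSTM}), and the same Bernstein ``either the variance sum exceeds $F$ or the sum is small'' device with the side condition verified on the induction event. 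The one substantive deviation is your one-step recursion. The paper's Lemma~\ref{lem:main_opt_lemma_clipped_SSTM} keeps three noise sums, including $\sum_k\alpha_{k+1}^2\la\theta_{k+1},\nabla f(x^{k+1})\ra$, which it later folds into the first Bernstein sum through the truncated vectors $\zeta_l$; you instead kill this cross term at the one-step level, and this does go through: retaining the negative multiple of $\|g^{k+1}\|_2^2$, the per-step remainder is $\alpha_{k+1}^2\left[\left(\tfrac12+\tfrac{1}{2a}\right)\|g^{k+1}\|_2^2-\la\nabla f(x^{k+1}),g^{k+1}\ra\right]$, and expanding $g^{k+1}=\nabla f(x^{k+1})+\theta^{k+1}$ and applying Fenchel--Young with parameter $a-1$ yields a coefficient $-\left(\tfrac12-\tfrac{1}{2a}\right)\le-\tfrac14$ on $\|\nabla f(x^{k+1})\|_2^2$ and at most $\tfrac12+\tfrac{1}{2(a-1)}\le 1$ on $\|\theta^{k+1}\|_2^2$ once $a\ge2$ (amply satisfied here). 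This buys a decomposition into four sums rather than the paper's five, since the distortion bound \eqref{eq:distortion_bound_clipped_SSTM} controls $\EE_{\Bxi^l}\|\theta_{l+1}\|_2^2$ directly without splitting off $\|\theta^b_{l+1}\|_2^2$; the paper's route, by contrast, avoids any Fenchel--Young loss in the leading deterministic part. Your truncation by $\obf_{E_t}$ is a legitimate variant of the paper's truncated vectors $\eta_l,\zeta_l$: since $E_t$ is measurable with respect to the past, conditional unbiasedness of the multiplied summands survives, which is the point you correctly flag as the delicate step. One bookkeeping slip: the bias inner-product sum is \emph{not} handled by the $\ln$-free branch of \eqref{eq:bathces_clipped_SSTM}. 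Since $\|\theta^b_{l+1}\|_2\le\nicefrac{4\sigma^2}{m_l\lambda_{l+1}}$ and $\nicefrac{1}{\lambda_{l+1}}=\nicefrac{\alpha_{l+1}}{B}$ carries a factor $\ln\tfrac{4N}{\beta}$ from \eqref{eq:B_a_parameters_clipped_SSTM}, that logarithm must be cancelled by the $6000$-branch of $m_l$ containing $\ln\tfrac{4N}{\beta}$, exactly as in the paper's bound for its term $\circledTwo$; the $\ln$-free $10368$-branch serves the $\EE_{\Bxi^l}\|\theta_{l+1}\|_2^2$ sum. This is a constants-level correction that does not affect the validity of your plan.
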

One can easily notice that multiplicative constant factors in formulas for $m_k$ and $a$ are too big and seem to be impractical, but in practice one can tune these constants to get good enough performance. That is, big constants in \eqref{eq:bathces_clipped_SSTM} and \eqref{eq:B_a_parameters_clipped_SSTM} are needed only in our analysis in order to get bound \eqref{eq:clipped_SSTM_oracle_complexity}.

Finally, when $\sigma^2$ is big then Theorem~\ref{thm:main_result_clipped_SSTM} says that at iteration $k$ {\tt clipped-SGD} requires large batchsizes $m_k \sim k^2 N$ (see \eqref{eq:bathces_clipped_SSTM}) which is proportional to $\varepsilon^{-\nicefrac{3}{2}}$ for last iterates. It can make the cost of one iteration extremely high, therefore, we consider different stepsize policies that remove this drawback.

\begin{corollary}\label{cor:different_stepsizes_clipped_SSTM}
    Let the assumptions of Theorem~\ref{thm:main_result_clipped_SSTM} hold.
    \begin{enumerate}
        \item \textbf{(Medium batchsize).} If $N$ and $\beta$ are such that $N\ln\frac{4N}{\beta}$ is bigger than the maximum from \eqref{eq:B_a_parameters_clipped_SSTM}, then for $a = N\ln\frac{4N}{\beta}$ we have 
        \begin{equation}
        m_k = \max\left\{1,\frac{6000\sigma^2 (k+2)^2}{4L^2NC^2R_0^2\ln\frac{4N}{\beta}}, \frac{10368\sigma^2 (k+2)^2}{4L^2C^2R_0^2N\ln^2\frac{4N}{\beta}}\right\}\label{eq:medium_bathces_clipped_SSTM}
    \end{equation}
    and the method achieves $f(y^N) - f(x^*) \le \varepsilon$ with probability at least $1-\beta$ after $O\left(\frac{LR_0^2}{\varepsilon}\ln\frac{LR_0^2}{\varepsilon\beta}\right)$ iterations and requires
    \begin{equation}
        O\left(\max\left\{\frac{LR_0^2}{\varepsilon}, \frac{\sigma^2R_0^2}{\varepsilon^2}\right\}\ln\frac{LR_0^2}{\varepsilon\beta}\right)\text{ oracle calls.} \label{eq:clipped_SSTM_oracle_complexity_medium_batches}
    \end{equation}
    \item \textbf{(Constant batchsize).} If $N$ and $\beta$ are such that $a_0 N^{\nicefrac{3}{2}}\sqrt{\ln\frac{4N}{\beta}}$ is bigger than the maximum from \eqref{eq:B_a_parameters_clipped_SSTM} for some positive constant $a_0$, then for $a = a_0 N^{\nicefrac{3}{2}}\sqrt{\ln\frac{4N}{\beta}}$ we have 
        \begin{equation}
        m_k = \max\left\{1,\frac{6000\sigma^2 (k+2)^2}{4a_0^2L^2N^2C^2R_0^2}, \frac{10368\sigma^2 (k+2)^2}{4a_0^2L^2C^2R_0^2N^2\ln\frac{4N}{\beta}}\right\}\label{eq:constant_bathces_clipped_SSTM}
    \end{equation}
    and the method achieves $f(y^N) - f(x^*) \le \varepsilon$ with probability at least $1-\beta$ after\newline $O\left(\frac{a_0^2L^2R_0^4}{\varepsilon^2}\ln\frac{a_0LR_0^2}{\varepsilon\beta}\right)$ iterations and requires
    \begin{equation}
        O\left(\max\left\{\frac{a_0^2L^2R_0^4}{\varepsilon^2}, \frac{\sigma^2R_0^2}{\varepsilon^2}\right\}\ln\frac{a_0LR_0^2}{\varepsilon\beta}\right)\text{ oracle calls.} \label{eq:clipped_SSTM_oracle_complexity_constant_batches}
    \end{equation}
    Finally, if $a_0 = \frac{\sigma}{LR_0}$, then $m_k = O(1)$ for $k=0,1,\ldots,N$ and {\tt clipped-SSTM} finds $\varepsilon$-solution with probability at least $1-\beta$ after $O\left(\frac{\sigma^2R_0^2}{\varepsilon^2}\ln\frac{\sigma R_0}{\varepsilon\beta}\right)$ iterations and requires $O(1)$ oracle calls per iteration.
    \end{enumerate}
\end{corollary}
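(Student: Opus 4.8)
The plan is to treat Corollary~\ref{cor:different_stepsizes_clipped_SSTM} as a direct specialization of Theorem~\ref{thm:main_result_clipped_SSTM}: in both items the stated hypothesis is precisely that the proposed value of $a$ dominates the maximum in \eqref{eq:B_a_parameters_clipped_SSTM}, so the theorem applies verbatim and delivers the guarantee \eqref{eq:main_result_clipped_SSTM} for that $a$ (with $B=\tfrac{CR_0}{8\ln(4N/\beta)}$ and the $m_k$ from \eqref{eq:bathces_clipped_SSTM}, which are unchanged). The remaining work is purely computational and splits into three routine steps for each item: (i) substitute $\alpha_{k+1}=\tfrac{k+2}{2aL}$ into the batch prescription \eqref{eq:bathces_clipped_SSTM} to recover the claimed formulas \eqref{eq:medium_bathces_clipped_SSTM} and \eqref{eq:constant_bathces_clipped_SSTM}; (ii) insert the chosen $a$ into the accuracy bound \eqref{eq:main_result_clipped_SSTM} and solve the resulting inequality for $N$ to obtain the iteration count; (iii) sum the per-iteration batches $\sum_{k=0}^{N-1}m_k$, using the elementary estimate $\sum_{k=0}^{N-1}(k+2)^2=\Theta(N^3)$, to obtain the oracle complexity.

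For the medium-batchsize item I would set $a=N\ln\tfrac{4N}{\beta}$, so that $\alpha_{k+1}^2=\tfrac{(k+2)^2}{4N^2L^2\ln^2(4N/\beta)}$; plugging this into the second and third arguments of the maximum in \eqref{eq:bathces_clipped_SSTM} reproduces \eqref{eq:medium_bathces_clipped_SSTM} after cancelling one and two powers of $\ln\tfrac{4N}{\beta}$ respectively. For the rate, \eqref{eq:main_result_clipped_SSTM} becomes $f(y^N)-f(x^*)\le \tfrac{2C^2LR_0^2\ln(4N/\beta)}{N+3}$, so requiring the right-hand side to be at most $\varepsilon$ amounts to $N=O\big(\tfrac{LR_0^2}{\varepsilon}\ln\tfrac{4N}{\beta}\big)$, which resolves to $N=O\big(\tfrac{LR_0^2}{\varepsilon}\ln\tfrac{LR_0^2}{\varepsilon\beta}\big)$. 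Finally $\sum_k m_k = O\big(N+\tfrac{\sigma^2 N^2}{L^2R_0^2\ln(4N/\beta)}\big)$, and substituting the value of $N$ (noting $\ln\tfrac{4N}{\beta}=\Theta(\ln\tfrac{LR_0^2}{\varepsilon\beta})$ for this $N$) yields \eqref{eq:clipped_SSTM_oracle_complexity_medium_batches}.

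The constant-batchsize item is identical in structure with $a=a_0N^{3/2}\sqrt{\ln(4N/\beta)}$, which gives $\alpha_{k+1}^2=\tfrac{(k+2)^2}{4a_0^2N^3L^2\ln(4N/\beta)}$ and hence \eqref{eq:constant_bathces_clipped_SSTM}; here the accuracy bound reads $f(y^N)-f(x^*)\le\tfrac{2a_0C^2LR_0^2\sqrt{\ln(4N/\beta)}}{\sqrt N}$, whose resolution produces $N=O\big(\tfrac{a_0^2L^2R_0^4}{\varepsilon^2}\ln\tfrac{a_0LR_0^2}{\varepsilon\beta}\big)$ and, after summing the batches, \eqref{eq:clipped_SSTM_oracle_complexity_constant_batches}. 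For the final claim I would take $a_0=\tfrac{\sigma}{LR_0}$, which makes the $\sigma^2$-dependent arguments of the maximum in \eqref{eq:constant_bathces_clipped_SSTM} equal to $\tfrac{6000(k+2)^2}{4C^2N^2}$ and $\tfrac{10368(k+2)^2}{4C^2N^2\ln(4N/\beta)}$; since $k+2\le N+1$ both are $O(1)$, so $m_k=O(1)$, and substituting $a_0^2L^2R_0^4=\sigma^2R_0^2$ and $a_0LR_0^2=\sigma R_0$ gives the stated $O\big(\tfrac{\sigma^2R_0^2}{\varepsilon^2}\ln\tfrac{\sigma R_0}{\varepsilon\beta}\big)$ iterations with $O(1)$ oracle calls each.

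The only non-mechanical point is step (ii): the bound on $N$ is self-referential because $\ln\tfrac{4N}{\beta}$ appears on both sides. I would handle this with the standard fact that $N\ge D\ln\tfrac{4N}{\beta}$ holds once $N=\Theta\big(D\ln\tfrac{D}{\beta}\big)$ (for $D$ bounded below by an absolute constant), and then confirm that for the resulting $N$ one indeed has $\ln\tfrac{4N}{\beta}=\Theta(\ln\tfrac{LR_0^2}{\varepsilon\beta})$, respectively $\Theta(\ln\tfrac{a_0LR_0^2}{\varepsilon\beta})$, so that the logarithmic factors in \eqref{eq:clipped_SSTM_oracle_complexity_medium_batches} and \eqref{eq:clipped_SSTM_oracle_complexity_constant_batches} emerge in the claimed form. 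Everything else reduces to substitution and the estimate $\sum_{k=0}^{N-1}(k+2)^2=\Theta(N^3)$.
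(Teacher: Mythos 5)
Your proposal is correct and follows essentially the same route as the paper's own proof: invoke Theorem~\ref{thm:main_result_clipped_SSTM} with the chosen $a$, substitute $\alpha_{k+1}=\frac{k+2}{2aL}$ into \eqref{eq:bathces_clipped_SSTM} to recover \eqref{eq:medium_bathces_clipped_SSTM} and \eqref{eq:constant_bathces_clipped_SSTM}, solve $\frac{2aLC^2R_0^2}{N(N+3)}\le\varepsilon$ for $N$, and sum $\sum_{k=0}^{N-1}m_k$ via $\sum_{k}(k+2)^2=\Theta(N^3)$, exactly as the paper does. Your explicit treatment of the self-referential bound on $N$ (resolving $\ln\frac{4N}{\beta}=\Theta(\ln\frac{LR_0^2}{\varepsilon\beta})$, respectively $\Theta(\ln\frac{a_0LR_0^2}{\varepsilon\beta})$) is the same point the paper handles through its ``if $\varepsilon$ is small enough'' conditions, so there is no gap.
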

In the first case batchsizes increase from $O(1)$ for $k=1$ to $O(\varepsilon^{-1})$ for $k=N$ and the overall complexity recovers the complexity of Robust Stochastic Mirror Descent ({\tt RSMD}) from \cite{nazin2019algorithms}. However, analysis from \cite{nazin2019algorithms} works only for the optimization problems on \textit{compact} convex sets, whereas our analysis handles an unconstrained optimization on $\R^n$. Despite the similarities of our approach and \cite{nazin2019algorithms}, it seems that the technique from \cite{nazin2019algorithms} cannot be generalized to obtain the complexity like in \eqref{eq:clipped_SSTM_oracle_complexity} due to the fast bias accumulation that appears because of the special truncation of stochastic gradients that is used in {\tt RSMD}.

In the second case the corollary establishes $\varepsilon^{-2}\ln(\varepsilon^{-1}\beta^{-1})$ rate for {\tt clipped-SSTM} with constant batchsizes, i.e.\ $m_k = O(1)$ for all $k$. The ability of {\tt clipped-SSTM} to converge with constant batchsizes makes it more practical and applicable for wider class of problems where it can be very expensive to compute large batchsizes, e.g.\ training deep neural networks. Moreover, when $\sigma$ is not too small, i.e.\ $\sigma^2 \ge L\varepsilon$, this rate is optimal (up to logarithmical factors) and also recovers the rate of {\tt RSMD}.

Finally, setting
\begin{eqnarray}
    a' &=& \max\left\{1,\frac{16\ln\frac{4N}{\beta}}{C},36\left(2\ln\frac{4N}{\beta} + \sqrt{4\ln^2\frac{4N}{\beta} + 2\ln\frac{4N}{\beta}}\right)^2\right\},\notag\\
    a &=& \max\left\{a', \frac{\sigma N^{\nicefrac{3}{2}}}{LR_0}\sqrt{\ln\frac{4N}{\beta}}\right\}\label{eq:clipped_sstm_formula_steps_coeff}
\end{eqnarray}
and $m_k$ as in \eqref{eq:bathces_clipped_SSTM}, we get $m_k = O(1)$ for $k = 0,1,\ldots, N$ and derive the following result.
\begin{corollary}\label{cor:clipped_sstm_small_stepsize_const_batch}
    Let the assumptions of Theorem~\ref{thm:main_result_clipped_SSTM} hold, $a$ is chosen as in \eqref{eq:clipped_sstm_formula_steps_coeff} and $m_k$ is computed via \eqref{eq:bathces_clipped_SSTM}. Then {\tt clipped-SSTM} achieves $f(y^N) - f(x^*) \le \varepsilon$ with probability at least $1-\beta$ after
    \begin{equation}
        O\left(\max\left\{\sqrt{\frac{LR_0^2}{\varepsilon}}, \frac{\sigma^2R_0^2}{\varepsilon^2}\right\}\ln\frac{LR_0^2+\sigma R_0}{\varepsilon\beta}\right)\text{ iterations/oracle calls.} \notag
    \end{equation}
\end{corollary}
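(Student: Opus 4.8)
The plan is to deduce everything from Theorem~\ref{thm:main_result_clipped_SSTM}. The first thing to check is that the choice \eqref{eq:clipped_sstm_formula_steps_coeff} is admissible: there $a$ is defined as the maximum of $a'$, which is \emph{exactly} the lower bound on $a$ required by \eqref{eq:B_a_parameters_clipped_SSTM}, and an extra term $\frac{\sigma N^{\nicefrac{3}{2}}}{LR_0}\sqrt{\ln\frac{4N}{\beta}}$. Since enlarging $a$ beyond the required threshold preserves the hypotheses of the theorem, the guarantee $f(y^N)-f(x^*) \le \frac{2aLC^2R_0^2}{N(N+3)}$ from \eqref{eq:main_result_clipped_SSTM} holds for this $a$ together with the batchsizes \eqref{eq:bathces_clipped_SSTM} computed from it.

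Next I would show that the extra term forces $m_k = O(1)$ for every $k$. Because $a$ is a maximum, we always have $a \ge \frac{\sigma N^{\nicefrac{3}{2}}}{LR_0}\sqrt{\ln\frac{4N}{\beta}}$, equivalently $a^2L^2R_0^2 \ge \sigma^2 N^3\ln\frac{4N}{\beta}$. Substituting $\alpha_{k+1} = \frac{k+2}{2aL}$ and $(k+2)^2 \le (N+1)^2 \le 4N^2$ into \eqref{eq:bathces_clipped_SSTM}, the first $\sigma$-dependent term is bounded by an absolute constant directly, and the second by an absolute constant after using $\ln\frac{4N}{\beta} \ge 2$ from \eqref{eq:beta_N_condition_clipped_SSTM}. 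Hence $m_k = O(1)$, the per-iteration oracle cost is constant, and the total oracle complexity equals the iteration count up to constants.

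It then remains to turn the accuracy bound into an iteration count by inserting the two candidate values of $a$ separately. When $a = a' = O(\ln^2\frac{4N}{\beta})$ the bound reads $f(y^N)-f(x^*) = O\left(\frac{LR_0^2\ln^2\frac{4N}{\beta}}{N^2}\right)$, and requiring this to be at most $\varepsilon$ gives $N \gtrsim \sqrt{\frac{LR_0^2}{\varepsilon}}\,\ln\frac{4N}{\beta}$. When $a = \frac{\sigma N^{\nicefrac{3}{2}}}{LR_0}\sqrt{\ln\frac{4N}{\beta}}$, the factor $N^{\nicefrac{3}{2}}$ cancels against $N(N+3)$, the bound becomes $O\left(\frac{\sigma R_0\sqrt{\ln\frac{4N}{\beta}}}{\sqrt{N}}\right)$, and requiring this $\le \varepsilon$ gives $N \gtrsim \frac{\sigma^2R_0^2}{\varepsilon^2}\ln\frac{4N}{\beta}$. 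Taking the maximum of the two conditions and using $m_k = O(1)$ yields the claimed iteration/oracle count up to the logarithmic prefactor.

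The main obstacle is resolving these two self-referential inequalities, in which $N$ appears inside $\ln\frac{4N}{\beta}$ on the right-hand side. The standard remedy is a logarithmic-inversion argument: if $N \ge c\,g\,\ln\frac{4N}{\beta}$ for a suitable constant $c$, then $N = O\left(g\ln\frac{g}{\beta}\right)$ with $g \in \{\sqrt{LR_0^2/\varepsilon},\ \sigma^2R_0^2/\varepsilon^2\}$. Feeding in each $g$ produces the logarithms $\ln\frac{LR_0^2}{\varepsilon\beta}$ and $\ln\frac{\sigma R_0}{\varepsilon\beta}$ respectively, and both are bounded above by the single factor $\ln\frac{LR_0^2+\sigma R_0}{\varepsilon\beta}$ since $LR_0^2 \le LR_0^2+\sigma R_0$ and $\sigma R_0 \le LR_0^2 + \sigma R_0$. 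This last merging step is bookkeeping rather than a genuine difficulty; the only care required is to keep the constants inside each logarithm consistent so that both the $\sqrt{LR_0^2/\varepsilon}$ and the $\sigma^2R_0^2/\varepsilon^2$ regimes share the same logarithmic prefactor.
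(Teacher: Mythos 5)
Your proposal is correct and follows essentially the same route as the paper: the paper's proof also observes that $a \ge \frac{\sigma N^{\nicefrac{3}{2}}}{LR_0}$ forces $m_k = O(1)$, splits into the two cases according to which term attains the maximum in \eqref{eq:clipped_sstm_formula_steps_coeff}, obtains $O\left(\sqrt{\nicefrac{LR_0^2}{\varepsilon}}\ln\frac{LR_0^2}{\varepsilon\beta}\right)$ and $O\left(\frac{\sigma^2R_0^2}{\varepsilon^2}\ln\frac{\sigma R_0}{\varepsilon\beta}\right)$ respectively, and merges the logarithms into $\ln\frac{LR_0^2+\sigma R_0}{\varepsilon\beta}$. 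The only difference is presentational: the paper simply invokes Theorem~\ref{thm:main_result_clipped_SSTM} and the constant-batchsize case of Corollary~\ref{cor:different_stepsizes_clipped_SSTM} for the two cases, whereas you re-derive those rates inline (including the self-referential logarithmic inversion), which is exactly the computation hidden in those cited results.
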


\subsubsection{Sketch of the Proof of Theorem~\ref{thm:main_result_clipped_SSTM}}\label{sec:sketch_clipped_SSTM}
We start with the following lemma that is pretty standard in the analysis of Stochastic Similar Triangles Method, e.g.\ see the proof of Theorem~1 from \cite{dvurechenskii2018decentralize}.
\begin{lemma}\label{lem:main_opt_lemma_clipped_SSTM}
    Let $f$ be a convex $L$-smooth function and let stepsize parameter $a$ satisfy $a\ge 1$. Then after $N \ge 0$ iterations of {\tt clipped-SSTM} for all $z\in \R^n$ we have
    \begin{eqnarray}
        A_N\left(f(y^N) - f(z)\right) &\le& \frac{1}{2}\|z^0 - z\|_2^2 - \frac{1}{2}\|z^{N} - z\|_2^2 + \sum\limits_{k=0}^{N-1}\alpha_{k+1}\left\la \theta_{k+1}, z - z^{k}\right\ra\notag\\
        &&\quad + \sum\limits_{k=0}^{N-1}\alpha_{k+1}^2\left\|\theta_{k+1}\right\|_2^2 + \sum\limits_{k=0}^{N-1}\alpha_{k+1}^2\left\la\theta_{k+1},\nabla f(x^{k+1})\right\ra,\label{eq:main_opt_lemma_clipped_SSTM}\\
        \theta_{k+1} &\eqdef& \tnabla f(x^{k+1},\Bxi^k) - \nabla f(x^{k+1}).\label{eq:theta_k+1_def_clipped_SSTM}
    \end{eqnarray}
\end{lemma}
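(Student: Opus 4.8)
The plan is to establish the key inequality \eqref{eq:main_opt_lemma_clipped_SSTM} by tracking the standard potential-function argument for the Similar Triangles Method, but carefully isolating the clipping error $\theta_{k+1}$ as defined in \eqref{eq:theta_k+1_def_clipped_SSTM}. First I would expand the update $z^{k+1} = z^k - \alpha_{k+1}\tnabla f(x^{k+1},\Bxi^k)$ and compute $\|z^{k+1} - z\|_2^2$ for an arbitrary comparison point $z\in\R^n$. Writing $\tnabla f(x^{k+1},\Bxi^k) = \nabla f(x^{k+1}) + \theta_{k+1}$, the cross term splits into a ``clean'' piece involving $\nabla f(x^{k+1})$ and noise pieces carrying $\theta_{k+1}$. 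Concretely, from $\|z^{k+1}-z\|_2^2 = \|z^k - z\|_2^2 - 2\alpha_{k+1}\la \nabla f(x^{k+1})+\theta_{k+1}, z^k - z\ra + \alpha_{k+1}^2\|\nabla f(x^{k+1})+\theta_{k+1}\|_2^2$, I would rearrange to express $\alpha_{k+1}\la \nabla f(x^{k+1}), z^k - z\ra$ in terms of the decrease in $\tfrac12\|z^\bullet - z\|_2^2$ plus residual terms.

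Next I would handle the deterministic core. The defining geometry of the method (the relations $A_{k+1}x^{k+1} = A_k y^k + \alpha_{k+1}z^k$ and $A_{k+1}y^{k+1} = A_k y^k + \alpha_{k+1}z^{k+1}$ from Algorithm~\ref{alg:clipped-SSTM}) lets me convert the inner product $\la \nabla f(x^{k+1}), x^{k+1} - z\ra$ into a telescoping functional decrease. Using $L$-smoothness in the form \eqref{eq:L_smoothness_cor} to bound $f(y^{k+1})$ in terms of $f(x^{k+1})$ and the step $y^{k+1}-x^{k+1} = \tfrac{\alpha_{k+1}}{A_{k+1}}(z^{k+1}-z^k)$, together with convexity $f(x^{k+1}) \le f(z) + \la \nabla f(x^{k+1}), x^{k+1}-z\ra$, I would derive the one-step inequality $A_{k+1}(f(y^{k+1})-f(z)) \le A_k(f(y^k)-f(z)) + \tfrac12\|z^k-z\|_2^2 - \tfrac12\|z^{k+1}-z\|_2^2 + (\text{noise terms})$. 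The quadratic step-size bound needs $A_{k+1}\ge aL\alpha_{k+1}^2$ from \eqref{eq:A_k+1_alpha_k+1_rel} in Lemma~\ref{lem:alpha_k} (hence the assumption $a\ge 1$) so that the $\tfrac{L}{2}\|y^{k+1}-x^{k+1}\|_2^2$ term is absorbed into the telescoping $-\tfrac12\|z^{k+1}-z\|_2^2$.

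The noise bookkeeping is the step requiring the most care, and it is where I expect the main obstacle. After substituting $\tnabla = \nabla f(x^{k+1}) + \theta_{k+1}$, three types of residual appear: a linear-in-$\theta$ term of the form $-\alpha_{k+1}\la\theta_{k+1}, z^k - z\ra$ (which after sign handling becomes the $\alpha_{k+1}\la\theta_{k+1}, z-z^k\ra$ summand in \eqref{eq:main_opt_lemma_clipped_SSTM}), a pure quadratic term $\alpha_{k+1}^2\|\theta_{k+1}\|_2^2$ coming from expanding $\|\nabla f(x^{k+1})+\theta_{k+1}\|_2^2$, and a cross term $\alpha_{k+1}^2\la\theta_{k+1},\nabla f(x^{k+1})\ra$ from the same expansion. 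The delicate point is that I must keep these terms \emph{exactly} rather than bounding them, because at this stage no probabilistic estimate on $\theta_{k+1}$ is invoked; the separation into bias and martingale-difference contributions (and the Bernstein estimate of Lemma~\ref{lem:Bernstein_ineq}) is deferred to the high-probability analysis in the full proof of Theorem~\ref{thm:main_result_clipped_SSTM}. So the whole lemma is purely algebraic/deterministic once the identity for $\|z^{k+1}-z\|_2^2$ is in hand.

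Finally I would telescope the one-step inequality from $k=0$ to $N-1$. Since $\alpha_0 = A_0 = 0$ and $z^0 = x^0$, the sum of the telescoping $\tfrac12\|z^k-z\|_2^2 - \tfrac12\|z^{k+1}-z\|_2^2$ collapses to $\tfrac12\|z^0-z\|_2^2 - \tfrac12\|z^N-z\|_2^2$, the functional terms collapse to $A_N(f(y^N)-f(z))$ on the left, and the three noise residuals accumulate into exactly the three sums displayed in \eqref{eq:main_opt_lemma_clipped_SSTM}. This yields the claimed bound for all $z\in\R^n$, completing the proof.
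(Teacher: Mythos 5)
Your proposal is correct and takes essentially the same potential-function route as the paper: the same geometric identities $A_{k+1}x^{k+1}=A_ky^k+\alpha_{k+1}z^k$ and $y^{k+1}-x^{k+1}=\frac{\alpha_{k+1}}{A_{k+1}}(z^{k+1}-z^k)$, the same absorption of the smoothness quadratic via $A_{k+1}\ge aL\alpha_{k+1}^2$ with $a\ge 1$, and the same purely deterministic, exact tracking of $\theta_{k+1}$ with the probabilistic analysis deferred. The only cosmetic difference is bookkeeping: you produce the terms $\alpha_{k+1}^2\|\theta_{k+1}\|_2^2$ and $\alpha_{k+1}^2\la\theta_{k+1},\nabla f(x^{k+1})\ra$ per iteration by expanding $\|z^{k+1}-z\|_2^2$ (where, to get the stated coefficients, half of each comes from the expansion and half from the smoothness remainder, with the $\|\nabla f(x^{k+1})\|_2^2$ pieces cancelling), whereas the paper carries $\alpha_{k+1}\la\theta_{k+1},z-z^{k+1}\ra$ through the telescoping and splits it at the end using $z^k-z^{k+1}=\alpha_{k+1}\tnabla f(x^{k+1},\Bxi^k)$ --- the two computations are algebraically identical.
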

That is, if $z = x^*$, then the result above gives a preliminary upper bound for $A_N(f(y^N) - f(x^*))$. The first and the second terms in the r.h.s.\ of \eqref{eq:main_opt_lemma_clipped_SSTM} come from the analysis of Similar Triangles Method \cite{gasnikov2016universal} and three last terms have a stochastic nature. In particular, they explicitly depend on differences $\theta_{k+1} = \tnabla f(x^{k+1},\Bxi^k) - \nabla f(x^{k+1})$ between clipped mini-batched stochastic gradients and full gradients at $x^{k+1}$, so, if $\tnabla f(x^{k+1},\Bxi^k) = \nabla f(x^{k+1})$ with probability $1$, then we easily get needed convergence rate. However, we are interested in the more general case and, as a consequence, to continue the proof, we need to find a good enough upper bound for the last three terms from \eqref{eq:main_opt_lemma_clipped_SSTM}. In other words, we need to show that choosing parameters $a$, $m_k$ and $\lambda_{k+1}$ properly we can upper bound these terms by something that coincides with $\|z^0 - x^*\|_2^2$ up to numerical multiplicative constant. The proof of convergence result for {\tt RSMD} from \cite{nazin2019algorithms} where authors provide upper bound for similar sums hints that Bernstein's inequality (see Lemma~\ref{lem:Bernstein_ineq}) applied to estimate these terms can help us to reach our goal. In order to apply Bernstein's inequality one should derive tight bounds for such characteristics of $\tnabla f(x^{k+1},\Bxi^k)$ as upper bounds for the magnitude, bias, variance and distortion and the next lemma provides us with this.
\begin{lemma}\label{lem:main_stoch_lemma_clipped_SSTM}
    For all $k\ge 0$ the following inequality holds:
    \begin{equation}
        \left\|\tnabla f(x^{k+1},\Bxi^{k}) - \EE_{\Bxi^k}\left[\tnabla f(x^{k+1},\Bxi^{k})\right]\right\|_2 \le 2\lambda_{k+1}.\label{eq:magnitude_bound_clipped_SSTM}
    \end{equation}
    Moreover, if $\|\nabla f(x^{k+1})\|_2 \le \frac{\lambda_{k+1}}{2}$ for some $k\ge 0$, then for this $k$ we have:
    \begin{eqnarray}
        \left\|\EE_{\Bxi^k}\left[\tnabla f(x^{k+1},\Bxi^{k})\right] - \nabla f(x^{k+1})\right\|_2 &\le& \frac{4\sigma^2}{m_k\lambda_{k+1}},\label{eq:bias_bound_clipped_SSTM}\\
        \EE_{\Bxi^k}\left[\left\|\tnabla f(x^{k+1},\Bxi^{k}) - \nabla f(x^{k+1})\right\|_2^2\right] &\le& \frac{18\sigma^2}{m_k},\label{eq:distortion_bound_clipped_SSTM}\\
        \EE_{\Bxi^k}\left[\left\|\tnabla f(x^{k+1},\Bxi^{k}) - \EE_{\Bxi^k}\left[\tnabla f(x^{k+1},\Bxi^{k})\right]\right\|_2^2\right] &\le& \frac{18\sigma^2}{m_k}. \label{eq:variance_bound_clipped_SSTM}
    \end{eqnarray}
\end{lemma}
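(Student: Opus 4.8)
The plan is to establish the four estimates in the order stated, keeping in mind that \eqref{eq:magnitude_bound_clipped_SSTM} holds unconditionally while \eqref{eq:bias_bound_clipped_SSTM}--\eqref{eq:variance_bound_clipped_SSTM} exploit the assumption $\|\nabla f(x^{k+1})\|_2 \le \nicefrac{\lambda_{k+1}}{2}$. Throughout I abbreviate $g \eqdef \nabla f(x^{k+1},\Bxi^k)$, $\tg \eqdef \tnabla f(x^{k+1},\Bxi^k) = \clip(g,\lambda_{k+1})$, $\nabla f \eqdef \nabla f(x^{k+1})$, and $\lambda \eqdef \lambda_{k+1}$, and recall from \eqref{eq:mini_batched_unbiasedness_clipped_SSTM}--\eqref{eq:mini_batched_bounded_variance_clipped_SSTM} that $\EE_{\Bxi^k}[g] = \nabla f$ and $\EE_{\Bxi^k}[\|g - \nabla f\|_2^2] \le \nicefrac{\sigma^2}{m_k}$. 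For the magnitude bound \eqref{eq:magnitude_bound_clipped_SSTM}, the definition \eqref{eq:clip_operator} of the clipping operator is precisely the Euclidean projection onto the ball of radius $\lambda$, so $\|\tg\|_2 \le \lambda$ holds deterministically; by Jensen's inequality $\|\EE_{\Bxi^k}[\tg]\|_2 \le \lambda$ as well, and the triangle inequality then gives $\|\tg - \EE_{\Bxi^k}[\tg]\|_2 \le 2\lambda$. Note this argument uses no assumption on $\nabla f$.

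The heart of the proof, and the step I expect to be the main obstacle, is the bias bound \eqref{eq:bias_bound_clipped_SSTM}: clipping is a biased operation, and I must show that the bias it introduces is \emph{second order} in the noise, of size $\nicefrac{\sigma^2}{m_k\lambda}$ rather than the naive first-order size $\nicefrac{\sigma}{\sqrt{m_k}}$. Using $\EE_{\Bxi^k}[g] = \nabla f$ I first rewrite $\EE_{\Bxi^k}[\tg] - \nabla f = \EE_{\Bxi^k}[\tg - g]$, and observe that $\tg - g$ vanishes unless $\|g\|_2 > \lambda$, in which case $\|\tg - g\|_2 = \|g\|_2 - \lambda$. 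The key deterministic implication is that the assumption $\|\nabla f\|_2 \le \nicefrac{\lambda}{2}$ forces the clipping event $\{\|g\|_2 > \lambda\}$ to be contained in the large-deviation event $\{\|g - \nabla f\|_2 > \nicefrac{\lambda}{2}\}$, and on that event $\|g\|_2 - \lambda \le \|g\|_2 - \|\nabla f\|_2 \le \|g - \nabla f\|_2$. Combining this with the elementary Markov-type inequality $\mathbbm{1}_{\{\|g - \nabla f\|_2 > \lambda/2\}} \le \nicefrac{2\|g - \nabla f\|_2}{\lambda}$ yields the pointwise bound $\|\tg - g\|_2 \le \nicefrac{2\|g - \nabla f\|_2^2}{\lambda}$, and taking expectations together with \eqref{eq:mini_batched_bounded_variance_clipped_SSTM} gives a bias of order $\nicefrac{\sigma^2}{m_k\lambda}$, comfortably within the stated constant.

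For the distortion bound \eqref{eq:distortion_bound_clipped_SSTM} I would split $\tg - \nabla f = (\tg - g) + (g - \nabla f)$, apply the squared-norm-of-sum inequality \eqref{eq:squared_norm_sum}, and reuse the pointwise estimate $\|\tg - g\|_2 \le \|g - \nabla f\|_2$ established above (valid on the clipping event, and trivial off it), so that both squared terms are controlled by $\EE_{\Bxi^k}[\|g - \nabla f\|_2^2] \le \nicefrac{\sigma^2}{m_k}$; the stated constant $18$ leaves ample slack. Finally, the variance bound \eqref{eq:variance_bound_clipped_SSTM} is an immediate consequence of \eqref{eq:distortion_bound_clipped_SSTM} via the variance decomposition \eqref{eq:variance_decomposition_2}, applied to the random vector $\tg$ with deterministic shift $-\nabla f$: replacing the true centering point $\EE_{\Bxi^k}[\tg]$ by the deterministic vector $\nabla f$ only increases the second moment, so the variance of $\tg$ is bounded by its distortion, inheriting the same constant. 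In summary, once the bias estimate is in place, the distortion and variance bounds follow routinely from the same pointwise inequality and the basic facts of Section~\ref{sec:basic_facts}.
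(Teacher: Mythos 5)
Your proposal is correct, and on the two substantive estimates it takes a genuinely different route from the paper. For the magnitude bound \eqref{eq:magnitude_bound_clipped_SSTM} and the variance bound \eqref{eq:variance_bound_clipped_SSTM} you argue exactly as the paper does (projection plus Jensen plus triangle inequality, and the variance decomposition \eqref{eq:variance_decomposition_2} with the deterministic shift $-\nabla f(x^{k+1})$, respectively). For the bias and distortion bounds, however, the paper works stochastically: it introduces the indicators $\chi_k = \obf_{\|\nabla f(x^{k+1},\Bxi^k)\|_2 > \lambda_{k+1}}$ and $\eta_k = \obf_{\|\nabla f(x^{k+1},\Bxi^k)-\nabla f(x^{k+1})\|_2 > \nicefrac{\lambda_{k+1}}{2}}$, shows $\chi_k \le \eta_k$, bounds $\EE_{\Bxi^k}[\eta_k]$ by Markov's inequality, and controls the cross term $\EE_{\Bxi^k}\left[\|\nabla f(x^{k+1},\Bxi^k)-\nabla f(x^{k+1})\|_2\,\eta_k\right]$ by Cauchy--Schwarz in $L_2$; for the distortion it uses the Minkowski ($L_2$ triangle) inequality on the clipped/unclipped split, arriving at the constants $4$ and $\left(\sqrt{10}+1\right)^2 \le 18$. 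You instead establish purely \emph{deterministic} pointwise inequalities before taking any expectation: under $\|\nabla f(x^{k+1})\|_2 \le \nicefrac{\lambda_{k+1}}{2}$ the clipping event lies inside $\{\|g-\nabla f\|_2 > \nicefrac{\lambda_{k+1}}{2}\}$, on which $\|\tg - g\|_2 = \|g\|_2 - \lambda_{k+1} \le \|g - \nabla f\|_2$, and combining with the pointwise bound $\obf_{\{\|g-\nabla f\|_2 > \lambda_{k+1}/2\}} \le \nicefrac{2\|g-\nabla f\|_2}{\lambda_{k+1}}$ gives $\|\tg - g\|_2 \le \nicefrac{2\|g-\nabla f\|_2^2}{\lambda_{k+1}}$; a single expectation then yields the bias bound (with constant $2$ in place of $4$), and the split $\tg - \nabla f = (\tg - g) + (g - \nabla f)$ with \eqref{eq:squared_norm_sum} yields the distortion bound with constant $4$ in place of $18$. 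Both arguments hinge on the same key geometric observation about the half clipping level, but yours trades the indicator/Cauchy--Schwarz/Markov machinery for one deterministic estimate, which shortens the bookkeeping and sharpens the constants (harmless here, since the stated constants only need to be upper bounds and the paper does not optimize them).
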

Clearly, clipping introduces a bias in $\tnabla f(x^{k+1},\Bxi^k)$ which influences the convergence of the method. Hence, the clipping level $\lambda_{k+1}$ should be chosen in a very accurate way. Below we informally describe what does it mean and present the sketch of the remaining part of the proof.

Imagine the ideal situation: $\nabla f(x^{k+1},\Bxi^k) = \nabla f(x^{k+1})$ with probability $1$ for all $k$, i.e.\ we have an access to the full gradients at points $x^{k+1}$. Then it is natural to choose $\lambda_{k+1}$ in such a way that $\clip(\nabla f(x^{k+1}), \lambda^{k+1}) = \nabla f(x^{k+1})$ in order to recover Similar Triangles Method ({\tt STM}) that converges with optimal rate in the deterministic case. In other words, one can pick $\lambda_{k+1}$ such that $\|\nabla f(x^{k+1})\|_2 \le \lambda_{k+1}$ and get an optimal method. Since we know that in this case the method should converge with $O(\nicefrac{1}{k^2})$ rate in terms of $f(x^k) - f(x^*)$ one can expect that the gradient's norm decays with $O(\nicefrac{1}{k})$ rate, so, one can choose $\lambda_{k+1}$ to be proportional to $\nicefrac{1}{k}$. It is exactly what we do when we define $\lambda_{k+1}$ as $\nicefrac{B}{\alpha_{k+1}}$.

The ideal case described above gives a good insight on how to choose $\lambda_{k+1}$ in the general case and can be described as follows: if we want to prevent our gradient estimator $\tnabla f(x^{k+1},\Bxi^k)$ from large deviations from $\nabla f(x^{k+1})$ with high probability, then it is needed to choose $\lambda_{k+1}$ such that $\|\nabla f(x^k)\|_2 \le c\lambda_{k+1}$ with high probability where $c < 1$ is some positive number. This choice guarantees that with high probability clipped mini-batched gradient $\tnabla f(x^{k+1},\Bxi^k)$ cannot deviates from $\nabla f(x^{k+1})$ significantly and, as a consequence, the convergence rate of {\tt clipped-SSTM} in terms of the number of iterations needed to achieve the desired accuracy of the solution with high probability becomes similar to the convergence rate of {\tt STM} up to some logarithmical factors depending on the confidence level. 

In particular, we choose $\lambda_{k+1}$ such that $\|\nabla f(x^{k+1})\|_2 \le \nicefrac{\lambda_{k+1}}{2}$ with high probability. Moreover, we derive this relation by induction via refined estimation of the three last terms from the r.h.s.\ of \eqref{eq:main_opt_lemma_clipped_SSTM} that is based on the new variant of advanced recurrences technique from \cite{gorbunov2019optimal, gorbunov2018accelerated}. The main trick there is in showing by induction that sequence $\|z^k - x^*\|_2$ is bounded by some constant multiplied by $\|x^0 - x^*\|_2$ and in deriving $\|\nabla f(x^{k+1})\|_2 \le \nicefrac{\lambda_{k+1}}{2}$ simultaneously for all $k=0,1,\ldots,N$. With such bounds and Lemma~\ref{lem:main_stoch_lemma_clipped_SSTM} in hand, it is possible to apply Bernstein's inequality to three sums from the r.h.s.\ of \eqref{eq:main_opt_lemma_clipped_SSTM} since all summands are bounded \textit{with high probability}. After applying Bernstein's inequality we adjust parameters $\alpha_{k+1}$ and $m_k$ in such a way that after rearranging the terms in the obtained upper bounds we get that r.h.s.\ in \eqref{eq:main_opt_lemma_clipped_SSTM} (with $z=x^*$) is smaller than $\|x^0 - x^*\|_2^2$ up to some multiplicative numerical constant. This finishes the proof.

To conclude, the key tools in our analysis are Bernstein's inequality (see Lemma~\ref{lem:Bernstein_ineq}) and advanced recurrences technique \cite{gorbunov2019optimal,gorbunov2018accelerated} that helps us to show boundedness of $\|z^N - x^*\|_2$ and $\|\nabla f(x^{k+1})\|_2 \le \nicefrac{\lambda_{k+1}}{2}$ with high probability. We provide detailed proofs of presented result in the Appendix (see Section~\ref{sec:proofs_accelerated}).

\subsection{Strongly Convex Case}\label{sec:str_cvx_clipped_SSTM}
In this section we assume additionally that $f(x)$ is $\mu$-strongly convex. For this case we modify Algorithm~\ref{alg:clipped-SSTM} and propose a new method called Restarted Clipped Similar Triangles Method ({\tt R-clipped-SSTM}), see Algorithm~\ref{alg:R-clipped-SSTM}.
\begin{algorithm}[h]
\caption{Restarted Clipped Stochastic Similar Triangles Method ({\tt R-clipped-SSTM})}
\label{alg:R-clipped-SSTM}   
\begin{algorithmic}[1]
\Require starting point $x^0$, number of iterations $N_0$ of {\tt clipped-SSTM}, number of {\tt clipped-SSTM} runs, batchsizes $\{m_k^0\}_{k=0}^{N_0-1}$, $\{m_k^1\}_{k=0}^{N_0-1}$, \ldots, $\{m_k^\tau\}_{k=0}^{N_0-1}$, stepsize parameter $a$, clipping parameters $\{B_t\}_{t=0}$
\State Set $\hat x^0 = x^0$
\For{$t = 0,1,\ldots, \tau-1$}
\State Run {\tt clipped-SSTM} (Algorithm~\ref{alg:clipped-SSTM}) for $N_0$ iterations with batchsizes $\{m_k^t\}_{k=1}^{N_0}$, stepsize parameter $a$, clipping parameter $B_t$ and starting point $\hat x^t$. Define the output of {\tt clipped-SSTM} by $\hat x^{t+1}$.
\EndFor
\Ensure $\hat x^{\tau}$ 
\end{algorithmic}
\end{algorithm}
At each iteration {\tt R-clipped-SSTM} runs {\tt clipped-SSTM} for $N_0$ iterations from the current point $\hat x^k$ and use its output as next iterate $\hat x^{k+1}$. In literature this approach is known as the restarts technique \cite{dvurechensky2016stochastic, juditsky2011first, juditsky2014deterministic, nemirovsky1983problem}. Choosing $N_0$ and parameters $m_k$, $a$ and $B$ in a proper way one can get an accelerated method for strongly convex objectives. Theorem below states the main convergence result for {\tt R-clipped-SSTM}.
\begin{theorem}\label{thm:main_result_R_clipped_SSTM}
    Assume that $f$ is $\mu$-strongly convex and $L$-smooth. If we choose $\beta \in (0,1)$, $\tau$ and $N_0\ge 1$ such that
    \begin{equation}
        \ln\frac{4N_0\tau}{\beta} \ge 2,\quad N_0 \ge C\sqrt{\frac{8aL}{\mu}}, \label{eq:beta_N_tau_condition_R_clipped_SSTM}
    \end{equation}
    and
    \begin{equation}
        m_k^t = \max\left\{1,\frac{6000\cdot 2^{t}\sigma^2 \alpha_{k+1}^2N_0\ln\frac{4N_0\tau}{\beta}}{C^2R^2}, \frac{10368\cdot 2^{t}\sigma^2 \alpha_{k+1}^2N_0}{C^2R^2}\right\},\label{eq:bathces_R_clipped_SSTM}
    \end{equation}
    \begin{equation}
        B_t = \frac{CR}{8\cdot 2^t\ln\frac{4N_0\tau}{\beta}}, \label{eq:B_parameter_R_clipped_SSTM}
    \end{equation}
    \begin{equation}
        a \ge \max\left\{1,\frac{16\ln\frac{4N_0\tau}{\beta}}{C},36\left(2\ln\frac{4N_0\tau}{\beta} + \sqrt{4\ln^2\frac{4N_0\tau}{\beta} + 2\ln\frac{4N_0\tau}{\beta}}\right)^2\right\}, \label{eq:a_parameter_R_clipped_SSTM}
    \end{equation} 
    where $R = \sqrt{\frac{2(f(x^0) - f(x^*))}{\mu}}$ and $C = \sqrt{5}$, then we have that after $\tau$ runs of {\tt clipped-SSTM} in {\tt R-clipped-SSTM} the inequality
    \begin{equation}
        f(\hat x^\tau) - f(x^*) \le 2^{-\tau} \left(f(x^0)-f(x^*)\right) \label{eq:main_result_R_clipped_SSTM}
    \end{equation}
    holds with probability at least $1-\beta$. That is, if we choose $a$ to be equal to the maximum from \eqref{eq:a_parameter_R_clipped_SSTM} and $N_0 \le C_1\sqrt{\frac{8aL}{\mu}}$ with some numerical constant $C_1 \ge C$, then the method achieves $f(\hat x^{\tau}) - f(x^*) \le \varepsilon$ with probability at least $1-\beta$ after
    \begin{equation}
        O\left(\sqrt{\frac{L}{\mu}}\ln\left(\frac{\mu R^2}{\varepsilon}\right)\ln\left(\frac{L}{\mu\beta}\ln\frac{\mu R^2}{\varepsilon}\right)\right)\text{ iterations (in total)}
    \end{equation}
    of {\tt clipped-SSTM} and requires
    \begin{equation}
        O\left(\max\left\{\sqrt{\frac{L}{\mu}}\ln\frac{\mu R^2}{\varepsilon}, \frac{\sigma^2}{\mu\varepsilon}\right\}\ln\left(\frac{L}{\mu\beta}\ln\frac{\mu R^2}{\varepsilon}\right)\right)\text{ oracle calls.} \label{eq:R_clipped_SSTM_oracle_complexity}
    \end{equation}
\end{theorem}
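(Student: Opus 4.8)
The plan is to obtain \eqref{eq:main_result_R_clipped_SSTM} by invoking the single-run guarantee of Theorem~\ref{thm:main_result_clipped_SSTM} exactly $\tau$ times and chaining the estimates through a geometric-decrease induction, passing back and forth between function-value suboptimality and squared distance to $x^*$ via strong convexity. Throughout I would write $r_t = f(\hat x^t)-f(x^*)$, $r_0 = f(x^0)-f(x^*)$, and $R_t = 2^{-t/2}R$ with $R=\sqrt{2r_0/\mu}$, so that $R_0 = R$ and $R_t^2 = 2^{-t}R^2 = 2\cdot 2^{-t}r_0/\mu$. As a preliminary step I would record the mild restatement of Theorem~\ref{thm:main_result_clipped_SSTM} in which $R_0$ is replaced by any deterministic \emph{upper bound} $\rho \ge \|x^0-x^*\|_2$: with the parameters \eqref{eq:bathces_clipped_SSTM}--\eqref{eq:B_a_parameters_clipped_SSTM} evaluated at $R_0 = \rho$, the conclusion $f(y^N)-f(x^*)\le \tfrac{2aLC^2\rho^2}{N(N+3)}$ still holds with probability at least $1-\beta$. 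This is exactly what the restart scheme needs, because after a restart only an upper bound on $\|\hat x^t-x^*\|_2$ is available, not its exact value. The required monotonicity is immediate from the sketch in Section~\ref{sec:sketch_clipped_SSTM}: the guarantee is already stated in terms of $R_0^2$, enlarging $R_0$ enlarges the clipping radius $B$ (so $\|\nabla f(x^{k+1})\|_2\le \lambda_{k+1}/2$ is only easier to maintain), and the internal Bernstein estimates are calibrated to the $R_0^2$ scale, so replacing the true distance by a larger $\rho$ merely loosens the bound.

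The core is the induction on the hypothesis $H_t:\ r_t \le 2^{-t}r_0$. The base case $H_0$ is trivial. Assuming $H_t$, strong convexity \eqref{eq:str_cvx_cor} gives $\|\hat x^t-x^*\|_2^2 \le 2r_t/\mu \le 2\cdot 2^{-t}r_0/\mu = R_t^2$, so $R_t$ is a valid upper bound on the starting distance of the $t$-th run. The prescribed parameters \eqref{eq:bathces_R_clipped_SSTM}--\eqref{eq:a_parameter_R_clipped_SSTM} are, up to the geometric rescaling of the starting-distance estimate across restarts, exactly those of the restated base theorem evaluated at $R_0 = R_t$ with confidence level $\beta/\tau$; this is why the batchsizes carry the factor $2^t$, the clipping parameter $B_t$ is rescaled geometrically, and every logarithm reads $\ln\tfrac{4N_0\tau}{\beta}$. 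Applying the restated theorem, with probability at least $1-\beta/\tau$ the $t$-th run returns $\hat x^{t+1}$ with $r_{t+1}\le \tfrac{2aLC^2 R_t^2}{N_0(N_0+3)}$. Using $N_0\ge C\sqrt{8aL/\mu}$, hence $N_0(N_0+3)\ge N_0^2 \ge 8C^2aL/\mu$, the right-hand side is at most $\tfrac{2aLC^2R_t^2\mu}{8C^2aL}=\tfrac{\mu R_t^2}{4}=2^{-(t+1)}r_0$, which is precisely $H_{t+1}$.

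A union bound over the $\tau$ runs then shows that all $\tau$ induction steps succeed simultaneously with probability at least $1-\beta$, which yields \eqref{eq:main_result_R_clipped_SSTM}. To read off \eqref{eq:R_clipped_SSTM_oracle_complexity} I would take $\tau = \lceil\log_2(r_0/\varepsilon)\rceil = O\!\left(\ln\tfrac{\mu R^2}{\varepsilon}\right)$ and $N_0=O(\sqrt{aL/\mu})$ with $a=O\!\left(\ln^2\tfrac{4N_0\tau}{\beta}\right)$; the total iteration count $\tau N_0$ matches the stated bound, and the oracle count $\sum_{t}\sum_k m_k^t$ is a geometric series in $t$ dominated by its last term, which after substituting $N_0^2\asymp aL/\mu$ collapses to $O\!\left(\tfrac{\sigma^2}{\mu\varepsilon}\ln(\tfrac{L}{\mu\beta}\ln\tfrac{\mu R^2}{\varepsilon})\right)$. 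Taking the maximum of this with the $\tau N_0$ contribution gives \eqref{eq:R_clipped_SSTM_oracle_complexity}.

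The main obstacle is the probabilistic chaining across restarts. Since $\hat x^t$ is a random variable produced by the clipped, stochastic dynamics of the earlier runs, Theorem~\ref{thm:main_result_clipped_SSTM} cannot be applied naively, as its statement presumes a fixed starting point with a known $R_0$. The clean remedy is to condition on the filtration generated by the first $t$ runs, restrict to the event that all previous steps succeeded (which forces $\|\hat x^t-x^*\|_2\le R_t$), and invoke the restated upper-bound version conditionally; because the $t$-th run draws \emph{fresh} i.i.d.\ samples, its conditional failure probability is at most $\beta/\tau$ regardless of the past. Making this conditional independence and measurability rigorous, and checking that the parameters fixed in advance from the single global over-estimate $R$ remain admissible at every restart even though the true distances shrink, is the delicate part; once it is in place, the geometric induction and the union bound are routine.
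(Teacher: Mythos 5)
Your proposal is correct and follows essentially the same route as the paper's proof: restate Theorem~\ref{thm:main_result_clipped_SSTM} with $R_0$ replaced by the strong-convexity upper bound (rescaled geometrically across restarts), apply the per-restart guarantee with confidence $\beta/\tau$, chain the halving of $f(\hat x^t)-f(x^*)$ by induction using $N_0 \ge C\sqrt{\nicefrac{8aL}{\mu}}$ so that $\nicefrac{2aLC^2R_t^2}{N_0(N_0+3)} \le \nicefrac{\mu R_t^2}{4}$, and conclude via a union bound and the same iteration/oracle counting (with the oracle sum a geometric series in $t$ dominated by its last term). Your explicit treatment of the conditioning on the filtration generated by earlier runs makes rigorous a step the paper's proof leaves implicit (``by induction one can show\dots''), but it does not change the argument.
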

In other words, {\tt R-clipped-SSTM} has the same convergence rate as optimal stochastic methods for strongly convex problems like Multi-Staged {\tt AC-SA} ({\tt MS-AC-SA}) \cite{ghadimi2013optimal} or Stochastic Similar Triangles Method for strongly convex problems ({\tt SSTM{\_}sc}) \cite{gasnikov2016universal,gorbunov2019optimal}. Moreover, in Theorem~\ref{thm:main_result_R_clipped_SSTM} we \textit{do not assume} that stochastic gradients are sampled from sub-Gaussian distribution while corresponding results for {\tt MS-AC-SA} and {\tt SSTM{\_}sc} are substantially based on the light tails assumption. Our bound outperforms the state-of-the-art result from \cite{davis2019low} in terms of the dependence on $\ln\frac{L}{\mu}$. It is worth to mention here that using special restarts technique Nazin et al. \cite{nazin2019algorithms} generalize their method ({\tt RSMD}) for the strongly convex case, but since {\tt RSMD} is not accelerated their approach gives only non-accelerated convergence rate.

We also emphasize that big numerical factors in formulas for $m_k^t$ and $a$ are needed only in our analysis and in practice they can be tuned. However, when $\sigma^2$ is big bathsizes $m_k^t$ become of the order $k^2\varepsilon^{-1}$. It can make the cost of one iteration extremely high, therefore, as for {\tt clipped-SSTM} we consider a different stepsize policy removing this drawback.
\begin{corollary}\label{cor:stepsizes_r_clipped_SSTM}
    Let the assumptions of Theorem~\ref{thm:main_result_R_clipped_SSTM} hold. Assume that conditions \eqref{eq:beta_N_tau_condition_R_clipped_SSTM}, \eqref{eq:bathces_R_clipped_SSTM}, \eqref{eq:B_parameter_R_clipped_SSTM} and \eqref{eq:a_parameter_R_clipped_SSTM} are satisfied for
    \begin{equation}
        a = \Theta\left(\frac{\sigma^4\ln^2\frac{N_0\tau}{\beta}}{L\mu\varepsilon^2}\right),\quad N_0 = \Theta\left(\sqrt{\frac{aL}{\mu}}\right). \label{eq:N_0_a_choice}
    \end{equation}
    Then after $\tau = \lceil\ln(\nicefrac{\mu R^2}{2\varepsilon})\rceil$ runs of {\tt clipped-SSTM} in {\tt R-clipped-SSTM} the method achieves $f(\hat x^\tau) - f(x^*) \le \varepsilon$ with probability at least $1 - \beta$. Moreover, the total number of iterations of {\tt clipped-SSTM} equals
    \begin{equation}
        O\left(\frac{\sigma^2}{\mu\varepsilon}\ln\left(\frac{\mu R^2}{\varepsilon}\right)\ln\left(\frac{\sigma^2}{\mu\varepsilon\beta}\ln\frac{\mu R^2}{\varepsilon}\right)\right) \label{eq:complexity_r_clipped_SSTM_small_steps}
    \end{equation}
    with $m_k^t = O(1)$ for all $k = 0,1,\ldots, N_0-1$, $t=0,1,\ldots,\tau-1$.
\end{corollary}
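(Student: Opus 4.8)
The plan is to treat Corollary~\ref{cor:stepsizes_r_clipped_SSTM} as a specialization of Theorem~\ref{thm:main_result_R_clipped_SSTM}: its only new content is that the aggressive choice of the stepsize parameter $a$ in \eqref{eq:N_0_a_choice} shrinks the per-iteration stepsizes $\alpha_{k+1} = (k+2)/(2aL)$ enough to collapse all batchsizes in \eqref{eq:bathces_R_clipped_SSTM} to $O(1)$, while keeping the total iteration budget controlled. Accordingly I would organize the argument in four steps: (i) verify that the prescribed $a$ and $N_0$ satisfy the hypotheses of Theorem~\ref{thm:main_result_R_clipped_SSTM}; (ii) invoke its conclusion \eqref{eq:main_result_R_clipped_SSTM} and use the choice of $\tau$ to obtain the accuracy guarantee; (iii) substitute $a$, $N_0$, $\tau$ into \eqref{eq:bathces_R_clipped_SSTM} and show $m_k^t = O(1)$; and (iv) multiply $\tau$ by $N_0$ to read off the complexity, identifying iterations with oracle calls since the batchsizes are constant.

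For step (i), from $a = \Theta(\sigma^4\ln^2(N_0\tau/\beta)/(L\mu\varepsilon^2))$ and $N_0 = \Theta(\sqrt{aL/\mu})$ one gets $N_0 = \Theta(\sigma^2\ln(N_0\tau/\beta)/(\mu\varepsilon))$; choosing the hidden constant in $N_0 = \Theta(\sqrt{aL/\mu})$ no smaller than $C\sqrt{8}$ makes $N_0 \ge C\sqrt{8aL/\mu}$ hold, which is the second requirement in \eqref{eq:beta_N_tau_condition_R_clipped_SSTM}. The lower bound on $a$ in \eqref{eq:a_parameter_R_clipped_SSTM} is $\Theta(\ln^2(4N_0\tau/\beta))$, so the prescribed $a$ dominates it precisely in the heavy-noise regime $\sigma^4 \gtrsim L\mu\varepsilon^2$; this is exactly the regime in which the constant-batchsize statement is non-vacuous and the $\sigma^2/(\mu\varepsilon)$ term dominates, and I would state it as a standing assumption. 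With the hypotheses in force, Theorem~\ref{thm:main_result_R_clipped_SSTM} gives $f(\hat x^\tau) - f(x^*) \le 2^{-\tau}(f(x^0) - f(x^*))$ with probability at least $1-\beta$, and since $f(x^0) - f(x^*) = \mu R^2/2$ and $\tau = \lceil\ln(\mu R^2/(2\varepsilon))\rceil$ forces $2^{-\tau}(f(x^0)-f(x^*)) \le \varepsilon$, the accuracy claim follows.

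Step (iii) is the heart of the computation. Using $\alpha_{k+1}^2 N_0 = O(N_0^3/(a^2L^2))$, the relation $N_0^2 = \Theta(aL/\mu)$ (so that $N_0^3/(a^2L^2) = \Theta(1/(\sqrt{aL}\,\mu^{3/2}))$), and the bound $2^t \le 2^\tau = \Theta(\mu R^2/\varepsilon)$, both nontrivial arguments of the maximum in \eqref{eq:bathces_R_clipped_SSTM} are $O(2^\tau\sigma^2\ln(4N_0\tau/\beta)/(\sqrt{aL}\,\mu^{3/2}R^2))$. Inserting $\sqrt{aL} = \Theta(\sigma^2\ln(N_0\tau/\beta)/(\sqrt{\mu}\,\varepsilon))$ makes the logarithm coming from the batchsize formula cancel the one hidden in $a \propto \ln^2$, while the factor $\mu R^2/\varepsilon$ from $2^\tau$ cancels against $\sqrt{aL}\,\mu^{3/2}R^2 = \Theta(\sigma^2\,\mu R^2\ln/\varepsilon)$, leaving $m_k^t = O(1)$ uniformly in $k$ and $t$. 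This cancellation is the main obstacle: it relies on the exponents $a \propto \varepsilon^{-2}$, $N_0 \propto \sqrt{a}$, and $2^\tau \propto \varepsilon^{-1}$ lining up exactly, and one must carry the polylogarithmic factors honestly to confirm they do not disturb the $\Theta(1)$ conclusion.

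Finally, for step (iv), the total number of {\tt clipped-SSTM} iterations is $\tau N_0 = \Theta(\ln(\mu R^2/\varepsilon)) \cdot \Theta(\sigma^2\ln(N_0\tau/\beta)/(\mu\varepsilon))$. Substituting $N_0\tau = \Theta(\frac{\sigma^2}{\mu\varepsilon}\ln\frac{\mu R^2}{\varepsilon})$ into the inner logarithm and absorbing the resulting iterated logarithms yields $\ln(N_0\tau/\beta) = O(\ln(\frac{\sigma^2}{\mu\varepsilon\beta}\ln\frac{\mu R^2}{\varepsilon}))$, which reproduces \eqref{eq:complexity_r_clipped_SSTM_small_steps}; because every $m_k^t = O(1)$, the oracle-call count equals the iteration count up to a constant, completing the proof.
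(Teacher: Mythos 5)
Your proposal is correct and takes essentially the same route as the paper's own proof: you invoke Theorem~\ref{thm:main_result_R_clipped_SSTM} for the restart accuracy guarantee, resolve the implicit system \eqref{eq:N_0_a_choice} into explicit expressions for $a$ and $N_0$, establish $m_k^t = O(1)$ through exactly the cancellation the paper performs (bounding the batchsize by its value at $t=\tau-1$, $k=N_0-1$ and using $N_0^2=\Theta(\nicefrac{aL}{\mu})$ together with $\sqrt{aL}=\Theta\left(\nicefrac{\sigma^2\ln(\nicefrac{N_0\tau}{\beta})}{(\sqrt{\mu}\,\varepsilon)}\right)$ and $2^\tau = \Theta(\nicefrac{\mu R^2}{\varepsilon})$), and read off the complexity as $N_0\tau$. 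One harmless inherited slip: for $2^{-\tau}\left(f(x^0)-f(x^*)\right)\le\varepsilon$ one needs $\tau=\left\lceil\log_2\frac{\mu R^2}{2\varepsilon}\right\rceil$ (as in the paper's proof) rather than the corollary's $\left\lceil\ln\frac{\mu R^2}{2\varepsilon}\right\rceil$, a base-of-logarithm discrepancy already present in the paper's statement and immaterial to the stated $O(\cdot)$ bounds.
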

When $\sigma^2$ is big the obtained bound is comparable with bounds for {\tt restarted-RSMD} and {\tt proxBoost}, see Table~\ref{tab:str_cvx_case_comparison}.

\subsection{Proofs}\label{sec:proofs_accelerated}

\subsubsection{Proof of Lemma~\ref{lem:main_opt_lemma_clipped_SSTM}}
Using $z^{k+1} = z^k - \alpha_{k+1}\tnabla f(x^{k+1},\Bxi^k)$ we get that for all $z\in \R^n$
    \begin{eqnarray}
        \alpha_{k+1}\left\la\tnabla f(x^{k+1},\Bxi^k), z^k - z \right\ra &=& \alpha_{k+1}\left\la \tnabla f(x^{k+1},\Bxi^{k}), z^k - z^{k+1}\right\ra\notag\\
        &&\quad+ \alpha_{k+1}\left\la \tnabla f(x^{k+1},\Bxi^{k}), z^{k+1} - z\right\ra\notag\\
        &=& \alpha_{k+1}\left\la \tnabla f(x^{k+1},\Bxi^{k}), z^k - z^{k+1}\right\ra + \left\la z^{k+1}-z^k, z - z^{k+1}\right\ra \notag\\
        &\overset{\eqref{eq:inner_product_representation}}{\le}& \alpha_{k+1}\left\la \tnabla f(x^{k+1},\Bxi^{k}), z^k - z^{k+1}\right\ra - \frac{1}{2}\|z^k - z^{k+1}\|_2^2 \notag\\
        &&\quad+ \frac{1}{2}\|z^k - z\|_2^2 - \frac{1}{2}\|z^{k+1}-z\|_2^2.\label{eq:main_olc_clipped_SSTM_technical_2}
    \end{eqnarray}
    Next, we notice that
    \begin{equation}
        y^{k+1} = \frac{A_k y^k + \alpha_{k+1}z^{k+1}}{A_{k+1}} = \frac{A_k y^k + \alpha_{k+1}z^{k}}{A_{k+1}} + \frac{\alpha_{k+1}}{A_{k+1}}\left(z^{k+1}-z^k\right) = x^{k+1} + \frac{\alpha_{k+1}}{A_{k+1}}\left(z^{k+1}-z^k\right) \label{eq:main_olc_clipped_SSTM_technical_3}
    \end{equation}
    which implies:
    \begin{eqnarray}
        \alpha_{k+1}\left\la\tnabla f(x^{k+1},\Bxi^k), z^k - z \right\ra &\overset{\eqref{eq:theta_k+1_def_clipped_SSTM},\eqref{eq:main_olc_clipped_SSTM_technical_2}}{\le}& \alpha_{k+1}\left\la \nabla f(x^{k+1}), z^{k} - z^{k+1}\right\ra - \frac{1}{2}\|z^k - z^{k+1}\|_2^2 \notag\\
        &&\quad + \alpha_{k+1}\left\la \theta_{k+1}, z^{k} - z^{k+1}\right\ra+ \frac{1}{2}\|z^k - z\|_2^2 - \frac{1}{2}\|z^{k+1}-z\|_2^2\notag\\
        &\overset{\eqref{eq:main_olc_clipped_SSTM_technical_3}}{=}& A_{k+1}\left\la \nabla f(x^{k+1}), x^{k+1} - y^{k+1}\right\ra - \frac{1}{2}\|z^k - z^{k+1}\|_2^2 \notag\\
        &&\quad + \alpha_{k+1}\left\la \theta_{k+1}, z^{k} - z^{k+1}\right\ra + \frac{1}{2}\|z^k - z\|_2^2 - \frac{1}{2}\|z^{k+1}-z\|_2^2\notag\\
        &\overset{\eqref{eq:L_smoothness_cor}}{\le}& A_{k+1}\left(f(x^{k+1}) - f(y^{k+1})\right) + \frac{A_{k+1}L}{2}\|x^{k+1}-y^{k+1}\|_2^2\notag\\
        &&\quad - \frac{1}{2}\|z^k - z^{k+1}\|_2^2 + \alpha_{k+1}\left\la \theta_{k+1}, z^{k} - z^{k+1}\right\ra\notag\\
        &&\quad + \frac{1}{2}\|z^k - z\|_2^2 - \frac{1}{2}\|z^{k+1}-z\|_2^2\notag\\
        &\overset{\eqref{eq:main_olc_clipped_SSTM_technical_3}}{=}& A_{k+1}\left(f(x^{k+1}) - f(y^{k+1})\right) + \frac{1}{2}\left(\frac{\alpha_{k+1}^2L}{A_{k+1}} - 1\right)\|z^k - z^{k+1}\|_2^2\notag\\
        &&\quad + \alpha_{k+1}\left\la \theta_{k+1}, z^{k} - z^{k+1}\right\ra + \frac{1}{2}\|z^k - z\|_2^2 - \frac{1}{2}\|z^{k+1}-z\|_2^2.\notag
    \end{eqnarray}
    Since $A_{k+1} \ge aL\alpha_{k+1}^2$ (see Lemma~\ref{lem:alpha_k}) and $a\ge 1$ we can continue our derivations:
    \begin{eqnarray}
        \alpha_{k+1}\left\la\tnabla f(x^{k+1},\Bxi^k), z^k - z \right\ra &\le& A_{k+1}\left(f(x^{k+1}) - f(y^{k+1})\right) + \alpha_{k+1}\left\la \theta_{k+1}, z^{k} - z^{k+1}\right\ra\notag\\
        &&\quad + \frac{1}{2}\|z^k - z\|_2^2 - \frac{1}{2}\|z^{k+1}-z\|_2^2. \label{eq:main_olc_clipped_SSTM_technical_4}
    \end{eqnarray}
    Next, due to convexity of $f$ we have
    \begin{eqnarray}
        \left\la\tnabla f(x^{k+1},\Bxi^k), y^k - x^{k+1}\right\ra &\overset{\eqref{eq:theta_k+1_def_clipped_SSTM}}{=}& \left\la\nabla f(x^{k+1}), y^k - x^{k+1}\right\ra + \left\la \theta_{k+1}, y^k - x^{k+1}\right\ra\notag\\
        &\le& f(y^k) - f(x^{k+1}) + \left\la \theta_{k+1}, y^k - x^{k+1}\right\ra.\label{eq:main_olc_clipped_SSTM_technical_5}
    \end{eqnarray}
    By definition of $x^{k+1}$ we have $x^{k+1} = \frac{A_k y^k + \alpha_{k+1} z^k}{A_{k+1}}$ which implies
    \begin{equation}
        \alpha_{k+1}\left(x^{k+1} - z^k\right) = A_k\left(y^k - x^{k+1}\right)\label{eq:main_olc_clipped_SSTM_technical_6}
    \end{equation}
    since $A_{k+1} = A_k + \alpha_{k+1}$. Putting all together we derive that
    \begin{eqnarray*}
        \alpha_{k+1}\left\la\tnabla f(x^{k+1},\Bxi^k), x^{k+1} - z \right\ra &=& \alpha_{k+1}\left\la\tnabla f(x^{k+1},\Bxi^k), x^{k+1} - z^k \right\ra \\
        &&\quad+ \alpha_{k+1}\left\la\tnabla f(x^{k+1},\Bxi^k), z^k - z \right\ra\\
        &\overset{\eqref{eq:main_olc_clipped_SSTM_technical_6}}{=}& A_{k}\left\la\tnabla f(x^{k+1},\Bxi^k), y^{k} - x^{k+1} \right\ra\\
        &&\quad+ \alpha_{k+1}\left\la\tnabla f(x^{k+1},\Bxi^k), z^k - z \right\ra\\
        &\overset{\eqref{eq:main_olc_clipped_SSTM_technical_5},\eqref{eq:main_olc_clipped_SSTM_technical_4}}{\le}& A_k\left(f(y^k)-f(x^{k+1})\right) + A_k\left\la \theta_{k+1}, y^k - x^{k+1}\right\ra\\
        &&\quad + A_{k+1}\left(f(x^{k+1}) - f(y^{k+1})\right) + \alpha_{k+1}\left\la \theta_{k+1}, z^{k} - z^{k+1}\right\ra\\
        &&\quad + \frac{1}{2}\|z^k - z\|_2^2 - \frac{1}{2}\|z^{k+1}-z\|_2^2\\
        &\overset{\eqref{eq:main_olc_clipped_SSTM_technical_6}}{=}& A_kf(y^k) - A_{k+1}f(y^{k+1}) + \alpha_{k+1}\left\la \theta_{k+1}, x^{k+1}-z^k\right\ra\\
        &&\quad + \alpha_{k+1}f(x^{k+1}) + \alpha_{k+1}\left\la \theta_{k+1}, z^{k} - z^{k+1}\right\ra\\
        &&\quad + \frac{1}{2}\|z^k - z\|_2^2 - \frac{1}{2}\|z^{k+1}-z\|_2^2\\
        &\le& A_kf(y^k) - A_{k+1}f(y^{k+1}) + \alpha_{k+1}f(x^{k+1})\\
        &&\quad + \alpha_{k+1}\left\la \theta_{k+1}, x^{k+1} - z^{k+1}\right\ra\\
        &&\quad + \frac{1}{2}\|z^k - z\|_2^2 - \frac{1}{2}\|z^{k+1}-z\|_2^2.
    \end{eqnarray*}
    Rearranging the terms we get
    \begin{eqnarray*}
        A_{k+1}f(y^{k+1}) - A_kf(y^k) &\le& \alpha_{k+1}\left(f(x^{k+1}) + \left\la\tnabla f(x^{k+1},\Bxi^k), z-x^{k+1}\right\ra\right) + \frac{1}{2}\|z^k - z\|_2^2\\
        &&\quad - \frac{1}{2}\|z^{k+1} - z\|_2^2 + \alpha_{k+1}\left\la \theta_{k+1}, x^{k+1} - z^{k+1}\right\ra\\
        &\overset{\eqref{eq:theta_k+1_def_clipped_SSTM}}{=}& \alpha_{k+1}\left(f(x^{k+1}) + \left\la\nabla f(x^{k+1}), z-x^{k+1}\right\ra\right)\\
        &&\quad + \alpha_{k+1}\left\la\theta_{k+1}, z-x^{k+1}\right\ra + \frac{1}{2}\|z^k - z\|_2^2 - \frac{1}{2}\|z^{k+1} - z\|_2^2\\
        &&\quad + \alpha_{k+1}\left\la \theta_{k+1}, x^{k+1} - z^{k+1}\right\ra \\
        &\le& \alpha_{k+1}f(z) + \frac{1}{2}\|z^k - z\|_2^2 - \frac{1}{2}\|z^{k+1} - z\|_2^2 + \alpha_{k+1}\left\la \theta_{k+1}, z - z^{k+1}\right\ra
    \end{eqnarray*}
    where in the last inequality we use the convexity of $f$. Taking into account $A_0 = \alpha_0 = 0$ and $A_{N} = \sum_{k=0}^{N-1}\alpha_{k+1}$ we sum up these inequalities for $k=0,\ldots,N-1$ and get
    \begin{eqnarray*}
        A_Nf(y^N) &\le& A_N f(z) + \frac{1}{2}\|z^0 - z\|_2^2 - \frac{1}{2}\|z^{N} - z\|_2^2 + \sum\limits_{k=0}^{N-1}\alpha_{k+1}\left\la \theta_{k+1}, z - z^{k+1}\right\ra\\
        &=& A_N f(z) + \frac{1}{2}\|z^0 - z\|_2^2 - \frac{1}{2}\|z^{N} - z\|_2^2 + \sum\limits_{k=0}^{N-1}\alpha_{k+1}\left\la \theta_{k+1}, z - z^{k}\right\ra\\
        &&\quad + \sum\limits_{k=0}^{N-1}\alpha_{k+1}^2\left\la\theta_{k+1},\tnabla f(x^{k+1},\Bxi^k) \right\ra\\
        &\overset{\eqref{eq:theta_k+1_def_clipped_SSTM}}{=}& A_N f(z) + \frac{1}{2}\|z^0 - z\|_2^2 - \frac{1}{2}\|z^{N} - z\|_2^2 + \sum\limits_{k=0}^{N-1}\alpha_{k+1}\left\la \theta_{k+1}, z - z^{k}\right\ra\\
        &&\quad + \sum\limits_{k=0}^{N-1}\alpha_{k+1}^2\left\|\theta_{k+1}\right\|_2^2 + \sum\limits_{k=0}^{N-1}\alpha_{k+1}^2\left\la\theta_{k+1},\nabla f(x^{k+1})\right\ra
    \end{eqnarray*}
    which concludes the proof.

\subsubsection{Proof of Lemma~\ref{lem:main_stoch_lemma_clipped_SSTM}}
\textbf{Proof of \eqref{eq:magnitude_bound_clipped_SSTM}.} By definition of $\tnabla f(x^{k+1},\Bxi^{k})$ we have that $\|\tnabla f(x^{k+1},\Bxi^{k})\|_2 \le \lambda_{k+1}$ and, as a consequence, $\left\|\EE_{\Bxi^k}[\tnabla f(x^{k+1},\Bxi^{k})]\right\|_2 \le \lambda_{k+1}$. Using this we get
    \begin{equation*}
        \left\|\tnabla f(x^{k+1},\Bxi^{k}) - \EE_{\Bxi^k}\left[\tnabla f(x^{k+1},\Bxi^{k})\right]\right\|_2 \le \left\|\tnabla f(x^{k+1},\Bxi^{k})\right\|_2 + \left\|\EE_{\Bxi^k}\left[\tnabla f(x^{k+1},\Bxi^{k})\right]\right\|_2 \le 2\lambda_{k+1}.
    \end{equation*}
    
    \textbf{Proof of \eqref{eq:bias_bound_clipped_SSTM}.} In order to prove this bound we introduce following indicator random variables:
    \begin{equation}
        \chi_k \eqdef \obf_{\|\nabla f(x^{k+1},\Bxi^k)\|_2 > \lambda_{k+1}},\quad \eta_k \eqdef \obf_{\|\nabla f(x^{k+1},\Bxi^k) - \nabla f(x^{k+1})\|_2 > \frac{1}{2}\lambda_{k+1}}. \label{eq:chi_eta_def_clipped_SSTM}
    \end{equation}
    From the assumptions of the lemma, we have that $\|\nabla f(x^{k+1})\|_2 \le \frac{\lambda_{k+1}}{2}$ which implies
    \begin{eqnarray*}
        \left\|\nabla f(x^{k+1},\Bxi^k)\right\|_2 &\le& \left\|\nabla f(x^{k+1},\Bxi^k) - \nabla f(x^{k+1})\right\|_2 + \left\|\nabla f(x^{k+1})\right\|_2\\
        &\le& \left\|\nabla f(x^{k+1},\Bxi^k) - \nabla f(x^{k+1})\right\|_2 + \frac{\lambda_{k+1}}{2},
    \end{eqnarray*}
    hence
    \begin{equation}
        \chi_k \le \eta_k. \label{eq:chi_smaller_eta_clipped_SSTM}
    \end{equation}
    The introduced notation helps us to rewrite $\tnabla f(x^{k+1},\Bxi^k)$ in the following way:
    \begin{eqnarray}
        \tnabla f(x^{k+1},\Bxi^k) &=& \nabla f(x^{k+1},\Bxi^k)(1-\chi_k) + \frac{\lambda_{k+1}}{\left\|\nabla f(x^{k+1},\Bxi)\right\|_2}\nabla f(x^{k+1},\Bxi^k)\chi_k\label{eq:stoch_grad_representation_1_clipped_SSTM}\\
        &=& \nabla f(x^{k+1},\Bxi^k) + \left(\frac{\lambda_{k+1}}{\left\|\nabla f(x^{k+1},\Bxi^k)\right\|_2} - 1\right)\nabla f(x^{k+1},\Bxi^k)\chi_k. \label{eq:stoch_grad_representation_2_clipped_SSTM}
    \end{eqnarray}
    We use this representation to obtain the following inequality:
    \begin{eqnarray}
        \left\|\EE_{\Bxi^k}\left[\tnabla f(x^{k+1},\Bxi^{k})\right] - \nabla f(x^{k+1})\right\|_2 &\overset{\eqref{eq:mini_batched_unbiasedness_clipped_SSTM},\eqref{eq:stoch_grad_representation_2_clipped_SSTM}}{=}& \left\|\EE_{\Bxi^k}\left[\left(\frac{\lambda_{k+1}}{\left\|\nabla f(x^{k+1},\Bxi^k)\right\|_2} - 1\right)\nabla f(x^{k+1},\Bxi^k)\chi_k\right]\right\|_2\notag\\
        &\le& \EE_{\Bxi^k}\left[\left\|\nabla f(x^{k+1},\Bxi^k)\right\|_2\cdot\left|\frac{\lambda_{k+1}}{\left\|\nabla f(x^{k+1},\Bxi^k)\right\|_2} - 1\right|\chi_k\right]\notag\\
        &\overset{\eqref{eq:chi_eta_def_clipped_SSTM}}{=}& \EE_{\Bxi^k}\left[\left\|\nabla f(x^{k+1},\Bxi^k)\right\|_2\cdot\left(1-\frac{\lambda_{k+1}}{\left\|\nabla f(x^{k+1},\Bxi^k)\right\|_2}\right)\chi_k\right]\notag\\
        &\overset{\eqref{eq:chi_eta_def_clipped_SSTM}}{\le}& \EE_{\Bxi^k}\left[\left\|\nabla f(x^{k+1},\Bxi^k)\right\|_2\chi_k\right]\notag\\
        &\overset{\eqref{eq:chi_smaller_eta_clipped_SSTM}}{\le}& \EE_{\Bxi^k}\left[\left\|\nabla f(x^{k+1},\Bxi^k)\right\|_2\eta_k\right]\notag\\
        &\le& \EE_{\Bxi^k}\left[\left\|\nabla f(x^{k+1},\Bxi^k) - \nabla f(x^{k+1})\right\|_2\eta_k\right]\notag\\
        &&\quad + \left\|\nabla f(x^{k+1})\right\|_2\EE_{\Bxi^k}\left[\eta_k\right]\notag\\
        &\le& \sqrt{\EE_{\Bxi^k}\left[\left\|\nabla f(x^{k+1},\Bxi^k) - \nabla f(x^{k+1})\right\|_2^2\right]\EE_{\Bxi^k}[\eta_k^2]}\notag\\
        &&\quad + \left\|\nabla f(x^{k+1})\right\|_2\EE_{\Bxi^k}\left[\eta_k\right]\notag\\
        &\overset{\eqref{eq:mini_batched_bounded_variance_clipped_SSTM}}{\le}& \frac{\sigma}{\sqrt{m_k}}\sqrt{\EE_{\Bxi^k}\left[\eta_k^2\right]} + \frac{\lambda_{k+1}}{2}\EE_{\Bxi^k}\left[\eta_k\right]. \label{eq:main_stoch_lemma_clipped_SSTM_technical_1}
    \end{eqnarray}
    Next, we derive an upper bound for the expectation of $\eta_k$ using Markov's inequality:
    \begin{eqnarray}
        \EE_{\Bxi^k}\left[\eta_k\right] &=& \EE_{\Bxi^k}\left[\eta_k^2\right] = \PP_{\Bxi^k}\{\eta_k = 1\}\notag\\
        &\overset{\eqref{eq:chi_eta_def_clipped_SSTM}}{=}& \PP_{\Bxi^k}\left\{\left\|\nabla f(x^{k+1},\Bxi^k) - \nabla f(x^{k+1})\right\|_2 > \frac{\lambda_{k+1}}{2}\right\} \notag\\
        &\le& \frac{4\EE_{\Bxi^k}\left[\left\|\nabla f(x^{k+1},\Bxi^k) - \nabla f(x^{k+1})\right\|_2^2\right]}{\lambda_{k+1}^2} \overset{\eqref{eq:mini_batched_bounded_variance_clipped_SSTM}}{\le} \frac{4\sigma^2}{m_k\lambda_{k+1}^2}. \label{eq:eta_expectation_bound_clipped_SSTM}
    \end{eqnarray}
    Putting all together we derive \eqref{eq:bias_bound_clipped_SSTM}:
    \begin{eqnarray*}
        \left\|\EE_{\Bxi^k}\left[\tnabla f(x^{k+1},\Bxi^{k})\right] - \nabla f(x^{k+1})\right\|_2 &\overset{\eqref{eq:main_stoch_lemma_clipped_SSTM_technical_1},\eqref{eq:eta_expectation_bound_clipped_SSTM}}{\le}& \frac{2\sigma^2}{m_k\lambda_{k+1}} + \frac{\lambda_{k+1}}{2}\cdot\frac{4\sigma^2}{m_k\lambda_{k+1}^2} = \frac{4\sigma^2}{m_k\lambda_{k+1}}.
    \end{eqnarray*}
    
    \textbf{Proof of \eqref{eq:distortion_bound_clipped_SSTM}.} Recall that in the space of random variables with finite second moment, i.e.\ in $L_2$, one can introduce a norm as $\sqrt{\EE|X|^2}$ for an arbitrary random variable $X$ from this space. Using triangle inequality for this norm we get
    \begin{eqnarray*}
        \sqrt{\EE_{\Bxi^k}\left[\left\|\nabla f(x^{k+1},\Bxi^{k}) - \nabla f(x^{k+1})\right\|_2^2\right]} &\overset{\eqref{eq:stoch_grad_representation_1_clipped_SSTM}}{\le}& \sqrt{\EE_{\Bxi^k}\left[\left\|\frac{\lambda_{k+1}\nabla f(x^{k+1},\Bxi^k)}{\left\|\nabla f(x^{k+1},\Bxi^k)\right\|_2} - \nabla f(x^{k+1})\right\|_2^2\chi_k^2\right]}\\
        &&\quad +  \sqrt{\EE_{\Bxi^k}\left[\left\|\nabla f(x^{k+1},\Bxi^{k}) - \nabla f(x^{k+1})\right\|_2^2(1-\chi_k)^2\right]}\\
        &\overset{\eqref{eq:squared_norm_sum}}{\le}& \sqrt{\EE_{\Bxi^k}\left[\left(2\left\|\frac{\lambda_{k+1}\nabla f(x^{k+1},\Bxi^k)}{\left\|\nabla f(x^{k+1},\Bxi^k)\right\|_2}\right\|_2^2 + 2\left\|\nabla f(x^{k+1})\right\|_2^2\right)\chi_k^2\right]}\\
        &&\quad +\sqrt{\EE_{\Bxi^k}\left[\left\|\nabla f(x^{k+1},\Bxi^{k}) - \nabla f(x^{k+1})\right\|_2^2\right]}\\
        &\overset{\eqref{eq:mini_batched_bounded_variance_clipped_SSTM}}{\le}& \sqrt{\frac{5}{2}}\lambda_{k+1}\sqrt{\EE_{\Bxi^k}\left[\chi_k^2\right]} + \frac{\sigma}{\sqrt{m_k}}\\
        &\overset{\eqref{eq:chi_smaller_eta_clipped_SSTM},\eqref{eq:eta_expectation_bound_clipped_SSTM}}{\le}& \sqrt{\frac{5}{2}}\lambda_{k+1}\cdot\frac{2\sigma}{\sqrt{m_k}\lambda_{k+1}} + \frac{\sigma}{\sqrt{m_k}} = \left(\sqrt{10}+1\right)\frac{\sigma}{\sqrt{m_k}}\\
        &\le& \frac{\sqrt{18}\sigma}{\sqrt{m_k}}.
    \end{eqnarray*}
    
     \textbf{Proof of \eqref{eq:variance_bound_clipped_SSTM}.} To derive \eqref{eq:variance_bound_clipped_SSTM} we use \eqref{eq:distortion_bound_clipped_SSTM}:
     \begin{eqnarray*}
         \EE_{\Bxi^k}\left[\left\|\tnabla f(x^{k+1},\Bxi^{k}) - \EE_{\Bxi^k}\left[\tnabla f(x^{k+1},\Bxi^{k})\right]\right\|_2^2\right] &\overset{\eqref{eq:variance_decomposition_2}}{\le}& \EE_{\Bxi^k}\left[\left\|\tnabla f(x^{k+1},\Bxi^{k}) - \nabla f(x^{k+1})\right\|_2^2\right]\\
         &\overset{\eqref{eq:distortion_bound_clipped_SSTM}}{\le}& \frac{18\sigma^2}{m_k}.
     \end{eqnarray*}

\subsubsection{Proof of Theorem~\ref{thm:main_result_clipped_SSTM}}
 Lemma~\ref{lem:main_opt_lemma_clipped_SSTM} implies that the inequality
    \begin{eqnarray}
        A_N\left(f(y^N) - f(x^*)\right) &\le& \frac{1}{2}\|z^0 - x^*\|_2^2 - \frac{1}{2}\|z^{N} - x^*\|_2^2 + \sum\limits_{k=0}^{N-1}\alpha_{k+1}\left\la \theta_{k+1}, x^* - z^{k}\right\ra\notag\\
        &&\quad + \sum\limits_{k=0}^{N-1}\alpha_{k+1}^2\left\|\theta_{k+1}\right\|_2^2 + \sum\limits_{k=0}^{N-1}\alpha_{k+1}^2\left\la\theta_{k+1},\nabla f(x^{k+1})\right\ra,\label{eq:main_thm_clipped_SSTM_technical_0}\\
        \theta_{k+1} &\eqdef& \tnabla f(x^{k+1},\Bxi^k) - \nabla f(x^{k+1})\label{eq:main_thm_clipped_SSTM_technical_theta}
    \end{eqnarray}
    holds for all $N\ge 0$. Taking into account that $f(y^N) - f(x^*) \ge 0$ for all $y^N$ and using new notation $R_k \eqdef \|z^k - x^*\|_2$, $\widetilde{R}_0 = R_0$, $\widetilde{R}_{k+1} = \max\{\widetilde{R}_k, R_{k+1}\}$ we derive that for all $k\ge 0$
    \begin{equation}
        R_k^2 \le R_0^2 + 2\sum\limits_{l=0}^{k-1}\alpha_{l+1}\left\la\theta_{l+1}, x^* - z^l\right\ra + 2\sum\limits_{l=0}^{k-1}\alpha_{l+1}^2\left\la\theta_{l+1}, \nabla f(x^{l+1})\right\ra + 2\sum\limits_{l=0}^{k-1}\alpha_{l+1}^2\|\theta_{l+1}\|_2^2. \label{eq:main_thm_clipped_SSTM_technical_1}
    \end{equation}
    
    First of all, we notice that for each $k\ge 0$ iterates $x^{k+1}, z^k, y^k$ lie in the ball $B_{\widetilde{R}_k}(x^*)$. We prove it using induction. Since $y^0 = z^0 = x^0$, $\widetilde{R}_0 = R_0 = \|z^0 - x^*\|_2$ and $x^1 = \frac{A_0y^0 + \alpha_1z^0}{A_1} = z^0$ we have that $x^{1}, z^0, y^0 \in B_{\widetilde{R}_0}(x^*)$. Next, assume that $x^{l}, z^{l-1}, y^{l-1} \in B_{\widetilde{R}_{l-1}}(x^*)$ for some $l\ge 1$. By definitions of $R_l$ and $\widetilde{R}_l$ we have that $z^l \in B_{R_{l}}(x^*)\subseteq B_{\widetilde{R}_{l}}(x^*)$. Since $y^l$ is a convex combination of $y^{l-1}\in B_{\widetilde{R}_{l-1}}(x^*)\subseteq B_{\widetilde{R}_{l}}(x^*)$, $z^l\in B_{\widetilde{R}_{l}}(x^*)$ and $B_{\widetilde{R}_{l}}(x^*)$ is a convex set we conclude that $y^l \in B_{\widetilde{R}_{l}}(x^*)$. Finally, since $x^{l+1}$ is a convex combination of $y^l$ and $z^l$ we have that $x^{l+1}$ lies in $B_{\widetilde{R}_{l}}(x^*)$ as well.
    
    The rest of the proof is based on the refined analysis of inequality \eqref{eq:main_thm_clipped_SSTM_technical_1}. In particular, via induction we prove that for all $k=0,1,\ldots, N$ with probability at least $1 - \frac{k\beta}{N}$ the following statement holds: inequalities
    \begin{eqnarray}
         R_t^2 &\overset{\eqref{eq:main_thm_clipped_SSTM_technical_1}}{\le}& R_0^2 + 2\sum\limits_{l=0}^{t-1}\alpha_{l+1}\left\la\theta_{l+1}, x^* - z^l\right\ra + 2\sum\limits_{l=0}^{t-1}\alpha_{l+1}^2\left\la\theta_{l+1}, \nabla f(x^{l+1})\right\ra + 2\sum\limits_{l=0}^{t-1}\alpha_{k+1}^2\|\theta_{l+1}\|_2^2\notag\\
         &\le& C^2R_0^2 \label{eq:main_thm_clipped_SSTM_technical_2}
    \end{eqnarray}
    hold for $t=0,1,\ldots,k$ simultaneously where $C$ is defined in \eqref{eq:C_definition_clipped_SSTM}. Let us define the probability event when this statement holds as $E_k$. Then, our goal is to show that $\PP\{E_k\} \ge 1 - \frac{k\beta}{N}$ for all $k = 0,1,\ldots,N$. For $t = 0$ inequality \eqref{eq:main_thm_clipped_SSTM_technical_2} holds with probability $1$ since $C \ge 1$, hence $\PP\{E_0\} = 1$. Next, assume that for some $k = T-1 \le N-1$ we have $\PP\{E_k\} = \PP\{E_{T-1}\} \ge 1 - \frac{(T-1)\beta}{N}$. Let us prove that $\PP\{E_{T}\} \ge 1 - \frac{T\beta}{N}$. First of all, probability event $E_{T-1}$ implies that
    \begin{eqnarray}
        f(y^t) - f(x^*) &\overset{\eqref{eq:main_thm_clipped_SSTM_technical_0}}{\le}& \frac{1}{A_t}\left(\frac{1}{2}R_0^2 + \sum\limits_{l=0}^{t-1}\alpha_{l+1}\left\la\theta_{l+1}, x^* - z^l+\alpha_{l+1}\nabla f(x^{l+1})\right\ra + \sum\limits_{l=0}^{t-1}\alpha_{k+1}^2\|\theta_{l+1}\|_2^2\right)\notag\\ &\overset{\eqref{eq:main_thm_clipped_SSTM_technical_2}}{\le}& \frac{C^2R_0^2}{2A_t}\label{eq:main_thm_clipped_SSTM_technical_3}
    \end{eqnarray}
    hold for $t=0,1,\ldots,T-1$. Then, inequalities
    \begin{eqnarray}
        \left\|\nabla f(x^{1})\right\|_2 &=& \left\|\nabla f(z^{0})\right\|_2 \overset{\eqref{eq:L_smoothness}}{\le} L\|z^0 - x^*\|_2 = \frac{1}{a}\cdot \frac{R_0}{\alpha_{1}},\notag\\
        \left\|\nabla f(x^{t+1})\right\|_2 &\le& \left\|\nabla f(x^{t+1}) - \nabla f(y^t)\right\|_2 + \left\|\nabla f(y^t)\right\|_2\notag\\
        &\overset{\eqref{eq:L_smoothness},\eqref{eq:L_smoothness_cor_2}}{\le}& L\|x^{t+1} - y^t\|_2 + \sqrt{2L(f(y^t) - f(x^*))}\notag\\
        &\overset{\eqref{eq:main_olc_clipped_SSTM_technical_6},\eqref{eq:main_thm_clipped_SSTM_technical_3}}{\le}& \frac{\alpha_{t+1}L}{A_t}\|x^{t+1}-z^k\|_2 + \sqrt{\frac{LC^2R_0^2}{A_t}}\notag\\
        &\overset{\eqref{eq:A_k+1_formula}}{\le}& \frac{2L(t+2)}{t(t+3)}\left(\|x^{k+1}-x^*\|_2 + \|x^*-z^k\|_2\right) + \frac{2LCR_0\sqrt{a}}{\sqrt{t(t+3)}}\notag\\
        &\le& \frac{4L(t+2)\widetilde{R}_k}{t(t+3)} + \frac{2LCR_0\sqrt{a}}{\sqrt{t(t+3)}}\notag\\
        &\overset{\eqref{eq:main_thm_clipped_SSTM_technical_2}}{\le}& \frac{2aLCR_0}{t+2}\left(\frac{2(t+2)^2}{at(t+3)} + \frac{t+2}{\sqrt{at(t+3)}}\right)\notag\\
        &\le& \frac{CR_0}{\alpha_{t+1}}\left(\frac{9}{2a} + \frac{3}{2\sqrt{a}}\right)\notag
    \end{eqnarray}
    hold for $t=1,\ldots,T-1$ where the last inequality follows from $\frac{(t+2)^2}{t(t+3)} \le \frac{(1+2)^2}{1(1+3)} = \frac{9}{4}$. Taking $a$ such that
    \begin{equation*}
        a \ge \frac{2R_0}{B}\quad \text{and}\quad \frac{9}{2a} + \frac{3}{2\sqrt{a}} \le \frac{B}{2CR_0}
    \end{equation*}
    we obtain that probability event $E_{T-1}$ implies
    \begin{eqnarray}
        \left\|\nabla f(x^{t+1})\right\|_2 &\le& \frac{B}{2\alpha_{t+1}} = \frac{\lambda_{t+1}}{2} \label{eq:main_thm_clipped_SSTM_technical_4}
    \end{eqnarray}
    for $t=0,\ldots,T-1$. Since $B = \frac{CR_0}{8\ln\frac{4N}{\beta}}$ we have to choose such $a$ that
    \begin{equation*}
        a \ge \frac{16\ln\frac{4N}{\beta}}{C}\quad \text{and}\quad \frac{9}{a} + \frac{3}{\sqrt{a}} \le \frac{1}{8\ln\frac{4N}{\beta}}.
    \end{equation*}
    Solving quadratic inequality 
    \begin{equation*}
        a - 24\sqrt{a}\ln\frac{4N}{\beta} - 72\ln\frac{4N}{\beta} \ge 0
    \end{equation*}
    w.r.t.\ $\sqrt{a}$ we get that $a$ should satisfy
    \begin{equation*}
         a \ge \max\left\{\frac{16\ln\frac{4N}{\beta}}{C},36\left(2\ln\frac{4N}{\beta} + \sqrt{4\ln^2\frac{4N}{\beta} + 2\ln\frac{4N}{\beta}}\right)^2\right\}.
    \end{equation*}
    
    Having inequalities \eqref{eq:main_thm_clipped_SSTM_technical_4} in hand we show in the rest of the proof that \eqref{eq:main_thm_clipped_SSTM_technical_2} holds for $t = T$ with big enough probability. First of all, we introduce new random variables:
    \begin{equation}
        \eta_l = \begin{cases}x^* - z^l,&\text{if } \|x^* - z^l\|_2 \le CR_0,\\ 0,&\text{otherwise,} \end{cases}\quad \text{and}\quad \zeta_l = \begin{cases}\nabla f(x^{l+1}),&\text{if } \|\nabla f(x^{l+1})\|_2 \le \frac{B}{2\alpha_{l+1}},\\ 0,&\text{otherwise,} \end{cases} \label{eq:main_thm_clipped_SSTM_technical_4_1}
    \end{equation}
    for $l=0,1,\ldots T-1$. Note that these random variables are bounded with probability $1$, i.e.\ with probability $1$ we have
    \begin{equation}
        \|\eta_l\|_2 \le CR_0\quad \text{and}\quad \|\zeta_l\|_2 \le \frac{B}{2\alpha_{l+1}}.\label{eq:main_thm_clipped_SSTM_technical_4_2}
    \end{equation}
    Secondly, we use the introduced notation and get that $E_{T-1}$ implies 
    \begin{eqnarray*}
         R_T^2 &\overset{\eqref{eq:main_thm_clipped_SSTM_technical_1},\eqref{eq:main_thm_clipped_SSTM_technical_2},\eqref{eq:main_thm_clipped_SSTM_technical_4},\eqref{eq:main_thm_clipped_SSTM_technical_4_1}}{\le}& R_0^2 + 2\sum\limits_{l=0}^{T-1}\alpha_{l+1}\left\la\theta_{l+1}, \eta_l\right\ra + 2\sum\limits_{l=0}^{T-1}\alpha_{l+1}^2\|\theta_{l+1}\|_2^2 + 2\sum\limits_{l=0}^{T-1}\alpha_{l+1}^2\left\la\theta_{l+1}, \zeta_l\right\ra\notag\\
        &=& R_0^2 + \sum\limits_{l=0}^{T-1}\alpha_{l+1}\left\la\theta_{l+1}, 2\eta_l + 2\alpha_{l+1}\zeta_l\right\ra + 2\sum\limits_{l=0}^{T-1}\alpha_{l+1}^2\|\theta_{l+1}\|_2^2.
    \end{eqnarray*}
    Finally, we do some preliminaries in order to apply Bernstein's inequality (see Lemma~\ref{lem:Bernstein_ineq}) and obtain that $E_{T-1}$ implies
    \begin{eqnarray}
        R_T^2 &\overset{\eqref{eq:squared_norm_sum}}{\le}& R_0^2 + \underbrace{\sum\limits_{l=0}^{T-1}\alpha_{l+1}\left\la\theta_{l+1}^u, 2\eta_l + 2\alpha_{l+1}\zeta_l\right\ra}_{\circledOne}+ \underbrace{\sum\limits_{l=0}^{T-1}\alpha_{l+1}\left\la\theta_{l+1}^b, 2\eta_l + 2\alpha_{l+1}\zeta_l\right\ra}_{\circledTwo}\notag\\
        &&\quad + \underbrace{\sum\limits_{l=0}^{T-1}4\alpha_{l+1}^2\left(\|\theta_{l+1}^u\|_2^2 - \EE_{\Bxi^l}\left[\|\theta_{l+1}^u\|_2^2\right]\right)}_{\circledThree} + \underbrace{\sum\limits_{l=0}^{T-1}4\alpha_{l+1}^2\EE_{\Bxi^l}\left[\|\theta_{l+1}^u\|_2^2\right]}_{\circledFour}\notag\\
        &&\quad + \underbrace{\sum\limits_{l=0}^{T-1}4\alpha_{l+1}^2\|\theta_{l+1}^b\|_2^2}_{\circledFive}\label{eq:main_thm_clipped_SSTM_technical_5}
    \end{eqnarray}
    where we introduce new notations:
    \begin{equation}
        \theta_{l+1}^u \eqdef \tnabla f(x^{l+1},\Bxi^l) - \EE_{\Bxi^l}\left[\tnabla f(x^{l+1},\Bxi^l)\right],\quad \theta_{l+1}^b \eqdef \EE_{\Bxi^l}\left[\tnabla f(x^{l+1},\Bxi^l)\right] - \nabla f(x^{l+1}),\label{eq:main_thm_clipped_SSTM_technical_6}
    \end{equation}
    \begin{equation}
        \theta_{l+1} \overset{\eqref{eq:theta_k+1_def_clipped_SSTM}}{=} \theta_{l+1}^u +\theta_{l+1}^b.\notag
    \end{equation}
    It remains to provide tight upper bounds for $\circledOne$, $\circledTwo$, $\circledThree$, $\circledFour$ and $\circledFive$, i.e.\ in the remaining part of the proof we show that $\circledOne+\circledTwo+\circledThree+\circledFour+\circledFive \le \delta C^2R_0^2$ for some $\delta < 1$.
    
    \textbf{Upper bound for $\circledOne$.} First of all, since $\EE_{\Bxi^l}[\theta_{l+1}^u] = 0$ summands in $\circledOne$ are conditionally unbiased:
    \begin{equation*}
        \EE_{\Bxi^l}\left[\alpha_{l+1}\left\la\theta_{l+1}^u, 2\eta_l + 2\alpha_{l+1}\zeta_l\right\ra\right] = 0.
    \end{equation*}
    Secondly, these summands are bounded with probability $1$:
    \begin{eqnarray*}
        \left|\alpha_{l+1}\left\la\theta_{l+1}^u, 2\eta_l + 2\alpha_{l+1}\zeta_l\right\ra\right| &\le& \alpha_{l+1}\|\theta_{l+1}^u\|_2\left\|2\eta_l + 2\alpha_{l+1}\zeta_l\right\|_2\\
        &\overset{\eqref{eq:magnitude_bound_clipped_SSTM},\eqref{eq:main_thm_clipped_SSTM_technical_4_2}}{\le}& 2\alpha_{l+1}\lambda_{l+1}\left(2CR_0 + B\right) = 2B(2CR_0+B)\\
        &=& \frac{C^2R_0^2}{2\ln\frac{4N}{\beta}} + \frac{C^2R_0^2}{32\ln^2\frac{4N}{\beta}}\\
        &\overset{\eqref{eq:beta_N_condition_clipped_SSTM}}{\le}& \frac{C^2R_0^2}{2\ln\frac{4N}{\beta}} + \frac{C^2R_0^2}{64\ln\frac{4N}{\beta}} \le \frac{33C^2R_0^2}{64\ln\frac{4N}{\beta}}.
    \end{eqnarray*}
    Finally, one can bound conditional variances $\sigma_l^2 \eqdef \EE_{\Bxi^l}\left[\alpha_{l+1}^2\left\la\theta_{l+1}^u, 2\eta_l + 2\alpha_{l+1}\zeta_l\right\ra^2\right]$ in the following way:
    \begin{eqnarray}
         \sigma_l^2 &\le& \EE_{\Bxi^l}\left[\alpha_{l+1}^2\left\|\theta_{l+1}^u\right\|_2^2 \left\|2\eta_l + 2\alpha_{l+1}\zeta_l\right\|_2^2\right]\notag\\
         &\overset{\eqref{eq:main_thm_clipped_SSTM_technical_4_2}}{\le}&\alpha_{l+1}^2\EE_{\Bxi^l}\left[\left\|\theta_{l+1}^u\right\|_2^2\right](2CR_0+B)^2.\label{eq:main_thm_clipped_SSTM_technical_7}
    \end{eqnarray}
    In other words, sequence $\left\{\alpha_{l+1}\left\la\theta_{l+1}^u, 2\eta_l + 2\alpha_{l+1}\zeta_l\right\ra\right\}_{l\ge 0}$ is bounded martingale difference sequence with bounded conditional variances $\{\sigma_l^2\}_{l\ge 0}$. Therefore, we can apply Bernstein's inequality, i.e.\ we apply Lemma~\ref{lem:Bernstein_ineq} with $X_l = \alpha_{l+1}\left\la\theta_{l+1}^u, 2\eta_l + 2\alpha_{l+1}\zeta_l\right\ra$, $c = \frac{33C^2R_0^2}{64\ln\frac{4N}{\beta}}$ and $F = \frac{c^2\ln\frac{4N}{\beta}}{18}$ and get that for all $b > 0$
    \begin{equation*}
        \PP\left\{\left|\sum\limits_{l=0}^{T-1}X_l\right| > b\text{ and } \sum\limits_{l=0}^{T-1}\sigma_l^2 \le F\right\} \le 2\exp\left(-\frac{b^2}{2F + \nicefrac{2cb}{3}}\right)
    \end{equation*}
    or, equivalently, with probability at least $1 - 2\exp\left(-\frac{b^2}{2F + \nicefrac{2cb}{3}}\right)$
    \begin{equation*}
        \text{either } \sum\limits_{l=0}^{T-1}\sigma_l^2 > F \quad \text{or} \quad \underbrace{\left|\sum\limits_{l=0}^{T-1}X_l\right|}_{|\circledOne|} \le b.
    \end{equation*}
    The choice of $F$ will be clarified further, let us now choose $b$ in such a way that $2\exp\left(-\frac{b^2}{2F + \nicefrac{2cb}{3}}\right) = \frac{\beta}{2N}$. This implies that $b$ is the positive root of the quadratic equation \begin{equation*}
        b^2 - \frac{2c\ln\frac{4N}{\beta}}{3}b - 2F\ln\frac{4N}{\beta} = 0,
    \end{equation*}
    hence
    \begin{eqnarray*}
         b &=& \frac{c\ln\frac{4N}{\beta}}{3} + \sqrt{\frac{c^2\ln^2\frac{4N}{\beta}}{9} + 2F\ln\frac{4N}{\beta}}\le\frac{c\ln\frac{4N}{\beta}}{3} + \sqrt{\frac{2c^2\ln^2\frac{4N}{\beta}}{9}}\\
         &=& \frac{1+\sqrt{2}}{3}c\ln\frac{4N}{\beta} \le \frac{33C^2R_0^2}{64}.
    \end{eqnarray*}
    That is, with probability at least $1 - \frac{\beta}{2N}$
     \begin{equation*}
        \underbrace{\text{either } \sum\limits_{l=0}^{T-1}\sigma_l^2 > F \quad \text{or} \quad \left|\circledOne\right| \le \frac{33C^2R_0^2}{64}}_{\text{probability event } E_{\circledOne}}.
    \end{equation*}
    Next, we notice that probability event $E_{T-1}$ implies that
    \begin{eqnarray*}
        \sum\limits_{l=0}^{T-1}\sigma_l^2  &\overset{\eqref{eq:main_thm_clipped_SSTM_technical_7}}{\le}& (2CR_0+B)^2\sum\limits_{l=0}^{T-1}\alpha_{l+1}^2\EE_{\Bxi^l}\left[\left\|\theta_{l+1}^u\right\|_2^2\right]\\
        &\overset{\eqref{eq:variance_bound_clipped_SSTM},\eqref{eq:main_thm_clipped_SSTM_technical_4}}{\le}& 18\sigma^2C^2R_0^2\left(2 + \frac{1}{8\ln\frac{4N}{\beta}}\right)^2\sum\limits_{l=0}^{T-1}\frac{\alpha_{l+1}^2}{m_l}\\
        &\overset{\eqref{eq:beta_N_condition_clipped_SSTM},\eqref{eq:bathces_clipped_SSTM}}{\le}& 18\sigma^2C^2R_0^2 \left(2 + \frac{1}{16}\right)^2\sum\limits_{l=0}^{T-1}\frac{\alpha_{l+1}^2C^2R_0^2}{6000\sigma^2\alpha_{l+1}^2N\ln\frac{4N}{\beta}}\\
        &\overset{T\le N}{\le}& \frac{18\left(2+\frac{1}{16}\right)^2}{6000\ln\frac{4N}{\beta}} C^4R_0^4\sum\limits_{l=0}^{N-1}\frac{1}{N} \le \frac{c^2\ln\frac{4N}{\beta}}{18} = F,
    \end{eqnarray*}
    where the last inequality follows from $c = \frac{33C^2R_0^2}{64\ln\frac{4N}{\beta}}$ and simple arithmetic.
    
    \textbf{Upper bound for $\circledTwo$.} First of all, we notice that probability event $E_{T-1}$ implies
    \begin{eqnarray*}
        \alpha_{l+1}\left\la\theta_{l+1}^b, 2\eta_l + 2\alpha_{l+1}\zeta_l\right\ra &\le& \alpha_{l+1}\left\|\theta_{l+1}^b\right\|_2\left\|2\eta_l + 2\alpha_{l+1}\zeta_l\right\|_2\\
        &\overset{\eqref{eq:bias_bound_clipped_SSTM},\eqref{eq:main_thm_clipped_SSTM_technical_4_2}}{\le}& \alpha_{l+1}\cdot\frac{4\sigma^2}{m_l\lambda_{l+1}}\left(2CR_0 + B\right)\\
        &=& \frac{32\alpha_{l+1}^2\sigma^2\ln\frac{4N}{\beta}}{m_l CR_0}\left(2CR_0 + \frac{CR_0}{8\ln\frac{4N}{\beta}}\right)\\
        &\overset{\eqref{eq:beta_N_condition_clipped_SSTM},\eqref{eq:bathces_clipped_SSTM}}{\le}& \frac{32\alpha_{l+1}^2\sigma^2C^2R_0^2\ln\frac{4N}{\beta}}{6000\alpha_{l+1}^2N\sigma^2\ln\frac{4N}{\beta}}\left(2+ \frac{1}{16}\right)\\
        &=& \frac{11C^2R_0^2}{1000N}.
    \end{eqnarray*}
    This implies that
    \begin{eqnarray*}
        \circledTwo &=& \sum\limits_{l=0}^{T-1}\alpha_{l+1}\left\la\theta_{l+1}^b,2\eta_l + 2\alpha_{l+1}\zeta_l\right\ra \overset{T\le N}{\le} \frac{11C^2R_0^2}{1000}.
    \end{eqnarray*}
    
    \textbf{Upper bound for $\circledThree$.} We derive the upper bound for $\circledThree$ using the same technique as for $\circledOne$. First of all, we notice that the summands in $\circledThree$ are conditionally independent:
    \begin{equation*}
        \EE_{\Bxi^l}\left[4\alpha_{l+1}^2\left(\|\theta_{l+1}^u\|_2^2 - \EE_{\Bxi^l}\left[\|\theta_{l+1}^u\|_2^2\right]\right)\right] = 0.
    \end{equation*}
    Secondly, the summands are bounded with probability $1$:
    \begin{eqnarray}
        \left|4\alpha_{l+1}^2\left(\|\theta_{l+1}^u\|_2^2 - \EE_{\Bxi^l}\left[\|\theta_{l+1}^u\|_2^2\right]\right)\right| &\le& 4\alpha_{l+1}^2\left(\|\theta_{l+1}^u\|_2^2 + \EE_{\Bxi^l}\left[\|\theta_{l+1}^u\|_2^2\right]\right)\notag\\
        &\overset{\eqref{eq:magnitude_bound_clipped_SSTM}}{\le}& 4\alpha_{l+1}^2\left(4\lambda_{l+1}^2 + 4\lambda_{l+1}^2\right)\notag\\
        &=& 32B^2 = \frac{C^2R_0^2}{2\ln^2\frac{4N}{\beta}} \overset{\eqref{eq:beta_N_condition_clipped_SSTM}}{\le} \frac{C^2R_0^2}{4\ln\frac{4N}{\beta}} \eqdef c_1.\label{eq:main_thm_clipped_SSTM_technical_8}
    \end{eqnarray}
    Finally, one can bound conditional variances $\hat \sigma_l^2 \eqdef \EE_{\Bxi^l}\left[\left|4\alpha_{l+1}^2\left(\|\theta_{l+1}^u\|_2^2 - \EE_{\Bxi^l}\left[\|\theta_{l+1}^u\|_2^2\right]\right)\right|^2\right]$ in the following way:
    \begin{eqnarray}
         \hat \sigma_l^2 &\overset{\eqref{eq:main_thm_clipped_SSTM_technical_8}}{\le}& c_1\EE_{\Bxi^l}\left[\left|4\alpha_{l+1}^2\left(\|\theta_{l+1}^u\|_2^2 - \EE_{\Bxi^l}\left[\|\theta_{l+1}^u\|_2^2\right]\right)\right|\right]\notag\\
         &\le& 4c_1\alpha_{l+1}^2\EE_{\Bxi^l}\left[\|\theta_{l+1}^u\|_2^2 + \EE_{\Bxi^l}\left[\|\theta_{l+1}^u\|_2^2\right]\right] = 8c_1\alpha_{l+1}^2\EE_{\Bxi^l}\left[\|\theta_{l+1}^u\|_2^2\right].\label{eq:main_thm_clipped_SSTM_technical_9}
    \end{eqnarray}
    In other words, sequence $\left\{4\alpha_{l+1}^2\left(\|\theta_{l+1}^u\|_2^2 - \EE_{\Bxi^l}\left[\|\theta_{l+1}^u\|_2^2\right]\right)\right\}_{l\ge 0}$ is bounded martingale difference sequence with bounded conditional variances $\{\hat \sigma_l^2\}_{l\ge 0}$. Therefore, we can apply Bernstein's inequality, i.e.\ we apply Lemma~\ref{lem:Bernstein_ineq} with $X_l = \hat X_l = 4\alpha_{l+1}^2\left(\|\theta_{l+1}^u\|_2^2 - \EE_{\Bxi^l}\left[\|\theta_{l+1}^u\|_2^2\right]\right)$, $c = c_1 = \frac{C^2R_0^2}{4\ln\frac{4N}{\beta}}$ and $F = F_1 = \frac{c_1^2\ln\frac{4N}{\beta}}{18}$ and get that for all $b > 0$
    \begin{equation*}
        \PP\left\{\left|\sum\limits_{l=0}^{T-1}\hat X_l\right| > b\text{ and } \sum\limits_{l=0}^{T-1}\hat \sigma_l^2 \le F_1\right\} \le 2\exp\left(-\frac{b^2}{2F_1 + \nicefrac{2c_1b}{3}}\right)
    \end{equation*}
    or, equivalently, with probability at least $1 - 2\exp\left(-\frac{b^2}{2F_1 + \nicefrac{2c_1b}{3}}\right)$
    \begin{equation*}
        \text{either } \sum\limits_{l=0}^{T-1}\hat\sigma_l^2 > F_1 \quad \text{or} \quad \underbrace{\left|\sum\limits_{l=0}^{T-1}\hat X_l\right|}_{|\circledThree|} \le b.
    \end{equation*}
    As in our derivations of the upper bound for $\circledOne$ we choose such $b$ that $2\exp\left(-\frac{b^2}{2F_1 + \nicefrac{2c_1b}{3}}\right) = \frac{\beta}{2N}$, i.e.\
    \begin{eqnarray*}
         b &=& \frac{c_1\ln\frac{4N}{\beta}}{3} + \sqrt{\frac{c_1^2\ln^2\frac{4N}{\beta}}{9} + 2F_1\ln\frac{4N}{\beta}}\le\frac{1+\sqrt{2}}{3}c_1\ln\frac{4N}{\beta} \le \frac{C^2R_0^2}{4}.
    \end{eqnarray*}
    That is, with probability at least $1 - \frac{\beta}{2N}$
     \begin{equation*}
        \underbrace{\text{either } \sum\limits_{l=0}^{T-1}\hat\sigma_l^2 > F_1 \quad \text{or} \quad \left|\circledThree\right| \le \frac{C^2R_0^2}{4}}_{\text{probability event } E_{\circledThree}}.
    \end{equation*}
    Next, we notice that probability event $E_{T-1}$ implies that
    \begin{eqnarray*}
        \sum\limits_{l=0}^{T-1}\hat\sigma_l^2 &\overset{\eqref{eq:main_thm_clipped_SSTM_technical_9}}{\le}& 8c_1\sum\limits_{l=0}^{T-1}\alpha_{l+1}^2\EE_{\Bxi^l}\left[\left\|\theta_{l+1}^u\right\|_2^2\right]\\
        &\overset{\eqref{eq:variance_bound_clipped_SSTM},\eqref{eq:main_thm_clipped_SSTM_technical_4}}{\le}& c_1\sum\limits_{l=0}^{T-1}\frac{144\sigma^2\alpha_{l+1}^2}{m_l}\\
        &\overset{\eqref{eq:bathces_clipped_SSTM}}{\le}& c_1\sum\limits_{l=0}^{T-1}\frac{144\sigma^2\alpha_{l+1}^2C^2R_0^2}{10368\sigma^2\alpha_{l+1}^2N}\\
        &\overset{T\le N}{\le}& c_1\cdot \underbrace{\frac{C^2R_0^2}{4\ln\frac{4N}{\beta}}}_{c_1} \cdot\frac{\ln\frac{4N}{\beta}}{18} = F_1.
    \end{eqnarray*}
    
    \textbf{Upper bound for $\circledFour$.} The probability event $E_{T-1}$ implies
    \begin{eqnarray*}
        \circledFour &=& \sum\limits_{l=0}^{T-1}4\alpha_{l+1}^2\EE_{\Bxi^l}\left[\|\theta_{l+1}^u\|_2^2\right] \overset{\eqref{eq:variance_bound_clipped_SSTM},\eqref{eq:main_thm_clipped_SSTM_technical_4}}{\le} \sum\limits_{l=0}^{T-1}\frac{72\alpha_{l+1}^2\sigma^2}{m_l}\overset{\eqref{eq:bathces_clipped_SSTM}}{\le} \sum\limits_{l=0}^{T-1}\frac{72\alpha_{l+1}^2\sigma^2C^2R_0^2}{10368\alpha_{l+1}^2\sigma^2 N}\\
        &\overset{T\le N}{\le}& \frac{C^2R_0^2}{144}.
    \end{eqnarray*}
    
    \textbf{Upper bound for $\circledFive$.} Again, we use corollaries of probability event $E_{T-1}$:
    \begin{eqnarray*}
        \circledFive &=& \sum\limits_{l=0}^{T-1}4\alpha_{l+1}^2\|\theta_{l+1}^b\|_2^2 \overset{\eqref{eq:bias_bound_clipped_SSTM},\eqref{eq:main_thm_clipped_SSTM_technical_4}}{\le} \sum\limits_{l=0}^{T-1}\frac{64\alpha_{l+1}^2\sigma^4}{m_l^2\lambda_{l+1}^2} = \frac{1}{B^2}\sum\limits_{l=0}^{T-1}\frac{64\alpha_{l+1}^4\sigma^4}{m_l^2}\\
        &\overset{\eqref{eq:bathces_clipped_SSTM}}{\le}& \frac{64\ln^2\frac{4N}{\beta}}{C^2R_0^2}\sum\limits_{l=0}^{T-1}\frac{64\alpha_{l+1}^4\sigma^4C^4R_0^4}{6000^2\sigma^4\alpha_{l+1}^4 N^2\ln^2\frac{4N}{\beta}}\\
        &\overset{T\le N}{\le}& \frac{16C^2R_0^2}{140625}.
    \end{eqnarray*}
    Now we summarize all bound that we have: probability event $E_{T-1}$ implies
    \begin{eqnarray*}
        R_T^2 &\overset{\eqref{eq:main_thm_clipped_SSTM_technical_1}}{\le}& R_0^2 + 2\sum\limits_{l=0}^{T-1}\alpha_{l+1}\left\la\theta_{l+1}, x^* - z^l\right\ra + 2\sum\limits_{l=0}^{k-1}\alpha_{l+1}^2\left\la\theta_{l+1}, \nabla f(x^{l+1})\right\ra + 2\sum\limits_{l=0}^{T-1}\alpha_{l+1}^2\|\theta_{l+1}\|_2^2\\
        &\overset{\eqref{eq:main_thm_clipped_SSTM_technical_5}}{\le}& R_0^2 + \circledOne + \circledTwo + \circledThree + \circledFour + \circledFive,\\
        \circledTwo &\le& \frac{11C^2R_0^2}{1000},\quad \circledFour \le \frac{C^2R_0^2}{144},\quad \circledFive \le \frac{16C^2R_0^2}{140625},\\
        \sum\limits_{l=0}^{T-1}\sigma_l^2 &\le& F,\quad \sum\limits_{l=0}^{T-1}\hat\sigma_l^2 \le F_1
    \end{eqnarray*}
    and
    \begin{equation*}
        \PP\{E_{T-1}\} \ge 1 - \frac{(T-1)\beta}{N},\quad \PP\{E_\circledOne\} \ge 1 - \frac{\beta}{2N},\quad \PP\{E_\circledThree\} \ge 1 - \frac{\beta}{2N},
    \end{equation*}
    where
    \begin{eqnarray*}
        E_{\circledOne} &=& \left\{\text{either } \sum\limits_{l=0}^{T-1}\sigma_l^2 > F \quad \text{or} \quad \left|\circledOne\right| \le \frac{33C^2R_0^2}{64}\right\},\\
        E_{\circledThree} &=& \left\{\text{either } \sum\limits_{l=0}^{T-1}\hat\sigma_l^2 > F_1 \quad \text{or} \quad \left|\circledThree\right| \le \frac{C^2R_0^2}{4}\right\}.
    \end{eqnarray*}
     Taking into account these inequalities we get that probability event $E_{T-1}\cap E_\circledOne \cap E_\circledThree$ implies
   \begin{eqnarray}
         R_T^2 &\overset{\eqref{eq:main_thm_clipped_SSTM_technical_1}}{\le}& R_0^2 + 2\sum\limits_{l=0}^{T-1}\alpha_{l+1}\left\la\theta_{l+1}, x^* - z^l\right\ra + 2\sum\limits_{l=0}^{k-1}\alpha_{l+1}^2\left\la\theta_{l+1}, \nabla f(x^{l+1})\right\ra + 2\sum\limits_{l=0}^{T-1}\alpha_{l+1}^2\|\theta_{l+1}\|_2^2\notag\\
         &\le& R_0^2 + \left(\frac{33}{64} + \frac{11}{1000} + \frac{1}{4} + \frac{1}{144} + \frac{16}{140625}\right)C^2R_0^2\notag\\
         &\le& \left(1 + \frac{4}{5}C^2\right)R_0^2 \overset{\eqref{eq:C_definition_clipped_SSTM}}{\le} C^2R_0^2.\label{eq:main_thm_clipped_SSTM_technical_10}
    \end{eqnarray}
    Moreover, using union bound we derive
    \begin{equation}
        \PP\left\{E_{T-1}\cap E_\circledOne \cap E_\circledThree\right\} = 1 - \PP\left\{\overline{E}_{T-1}\cup\overline{E}_\circledOne\cup\overline{E}_\circledThree\right\} \ge 1 - \frac{T\beta}{N}.\label{eq:main_thm_clipped_SSTM_technical_11}
    \end{equation}
    That is, by definition of $E_T$ and $E_{T-1}$ we have proved that
    \begin{eqnarray*}
        \PP\{E_T\} &\overset{\eqref{eq:main_thm_clipped_SSTM_technical_10}}{\ge}& \PP\left\{E_{T-1}\cap E_\circledOne \cap E_\circledThree\right\} \overset{\eqref{eq:main_thm_clipped_SSTM_technical_11}}{\ge} 1 - \frac{T\beta}{N},
    \end{eqnarray*}
    which implies that for all $k = 0,1,\ldots, N$ we have $\PP\{E_k\} \ge 1 - \frac{k\beta}{N}$. Then, for $k = N$ we have that with probability at least $1-\beta$
    \begin{eqnarray*}
        A_N\left(f(y^N) - f(x^*)\right) &\overset{\eqref{eq:main_thm_clipped_SSTM_technical_0}}{\le}& \frac{1}{2}\|z^0 - z\|_2^2 - \frac{1}{2}\|z^{N} - z\|_2^2 + \sum\limits_{k=0}^{N-1}\alpha_{k+1}\left\la \theta_{k+1}, z - z^{k}\right\ra\notag\\
        &&\quad + \sum\limits_{k=0}^{N-1}\alpha_{k+1}^2\left\|\theta_{k+1}\right\|_2^2 + \sum\limits_{k=0}^{N-1}\alpha_{k+1}^2\left\la\theta_{k+1},\nabla f(x^{k+1})\right\ra\\
        &\overset{\eqref{eq:main_thm_clipped_SSTM_technical_2}}{\le}& \frac{C^2R_0^2}{2}.
    \end{eqnarray*}
    Since $A_{N} = \frac{N(N+3)}{4aL}$ (see Lemma~\ref{lem:alpha_k}) we get that with probability at least $1-\beta$
    \begin{equation*}
        f(y^N) - f(x^*) \le \frac{2aLC^2R_0^2}{N(N+3)}.
    \end{equation*}
    In other words, {\tt clipped-SSTM} with $a = \max\left\{1,\frac{16\ln\frac{4N}{\beta}}{C},36\left(2\ln\frac{4N}{\beta} + \sqrt{4\ln^2\frac{4N}{\beta} + 2\ln\frac{4N}{\beta}}\right)^2\right\} = 36\left(2\ln\frac{4N}{\beta} + \sqrt{4\ln^2\frac{4N}{\beta} + 2\ln\frac{4N}{\beta}}\right)^2$ achieves $f(y^N) - f(x^*) \le \varepsilon$ with probability at least $1-\beta$ after $O\left(\sqrt{\frac{LR_0^2}{\varepsilon}}\ln\frac{LR_0^2}{\varepsilon\beta}\right)$ iterations and requires
    \begin{eqnarray*}
         \sum\limits_{k=0}^{N-1}m_k &\overset{\eqref{eq:bathces_clipped_SSTM}}{=}& \sum\limits_{k=0}^{N-1}O\left(\max\left\{1,\frac{\sigma^2\alpha_{k+1}^2N\ln\frac{N}{\beta}}{R_0^2}\right\}\right)\\
         &=& O\left(\max\left\{N,\sum\limits_{k=0}^{N-1}\frac{\sigma^2(k+2)^2N\ln\frac{N}{\beta}}{a^2L^2R_0^2}\right\}\right)\\
         &\overset{\eqref{eq:B_a_parameters_clipped_SSTM}}{=}& O\left(\max\left\{N,\frac{\sigma^2N^4}{\ln^3\frac{N}{\beta}L^2R_0^2}\right\}\right)\\
         &=& O\left(\max\left\{\sqrt{\frac{LR_0^2}{\varepsilon}},\frac{\sigma^2R_0^2}{\varepsilon^2}\right\}\ln\frac{LR_0^2}{\varepsilon\beta}\right).
    \end{eqnarray*}
    oracle calls.

\subsubsection{Proof of Corollary~\ref{cor:different_stepsizes_clipped_SSTM}}
Theorem~\ref{thm:main_result_clipped_SSTM} implies that with probability at least $1-\beta$
\begin{equation}
    f(y^N) - f(x^*) \overset{\eqref{eq:main_result_clipped_SSTM}}{\le} \frac{2aLC^2R_0^2}{N(N+3)}, \label{eq:cor1_technical1}
\end{equation}
where $a$ satisfies
\begin{equation}
    a \overset{\eqref{eq:B_a_parameters_clipped_SSTM}}{\ge} \max\left\{1,\frac{16\ln\frac{4N}{\beta}}{C},36\left(2\ln\frac{4N}{\beta} + \sqrt{4\ln^2\frac{4N}{\beta} + 2\ln\frac{4N}{\beta}}\right)^2\right\} \eqdef \hat a,\label{eq:cor1_technical2}
\end{equation}
$\alpha_{k+1} = \frac{k+2}{2aL}$ and batchsizes $m_k$ are chosen according to \eqref{eq:bathces_clipped_SSTM}:
\begin{eqnarray}
    m_k &\overset{\eqref{eq:bathces_clipped_SSTM}}{=}& \max\left\{1,\frac{1185\sigma^2 \alpha_{k+1}^2N\ln\frac{4N}{\beta}}{C^2R_0^2}, \frac{10368\sigma^2 \alpha_{k+1}^2N}{C^2R_0^2}\right\}\notag\\
    &=& \max\left\{1,\frac{1185\sigma^2 (k+2)^2N\ln\frac{4N}{\beta}}{4a^2L^2C^2R_0^2}, \frac{10368\sigma^2 (k+2)^2N}{4a^2L^2C^2R_0^2}\right\}.\label{eq:cor1_technical3}
\end{eqnarray}
We consider two different options for $a$.
\begin{enumerate}
    \item If $N\ln\frac{4N}{\beta}$ is bigger than $\hat a$, then we take $a = N\ln\frac{4N}{\beta}$ which implies that
    \begin{equation*}
        m_k = \max\left\{1,\frac{1185\sigma^2 (k+2)^2}{4L^2NC^2R_0^2\ln\frac{4N}{\beta}}, \frac{10368\sigma^2 (k+2)^2}{4L^2C^2R_0^2N\ln^2\frac{4N}{\beta}}\right\} = O\left(\max\left\{1,\frac{\sigma^2(k+2)^2}{L^2R_0^2N\ln\frac{4N}{\beta}}\right\}\right)
    \end{equation*}
    and with probability at least $1-\beta$
    \begin{equation}
         f(y^N) - f(x^*) \le \frac{2LC^2R_0^2\ln\frac{4N}{\beta}}{N+3}.\label{eq:cor1_technical4}
    \end{equation}
    That is, if $\varepsilon$ is small enough to satisfy $\frac{LR_0^2}{\varepsilon}\ln\frac{LR_0^2}{\varepsilon\beta} \ge C_1\ln^2\frac{LR_0^2}{\varepsilon\beta}$ for some constant $C_1$, then due to \eqref{eq:cor1_technical4} we have that after
    \begin{equation*}
        N = O\left(\frac{LR_0^2}{\varepsilon}\ln\frac{LR_0^2}{\varepsilon\beta}\right)\text{ iterations}
    \end{equation*}
    of {\tt clipped-SSTM} we obtain such point $y^N$ that with probability at least $1-\beta$ inequality $f(y^N) - f(x^*) \le \varepsilon$ holds and the method requires
    \begin{eqnarray*}
        \sum\limits_{k=0}^{N-1}m_k &=& \sum\limits_{k=0}^{N-1}O\left(\max\left\{1,\frac{\sigma^2(k+2)^2}{L^2R_0^2N\ln\frac{4N}{\beta}}\right\}\right)\\
        &=&O\left(\max\left\{N, \frac{\sigma^2N^2}{L^2R_0^2\ln\frac{4N}{\beta}}\right\}\right) = O\left(\max\left\{\frac{LR_0^2}{\varepsilon}, \frac{\sigma^2 R_0^2}{\varepsilon^2}\right\}\ln\frac{LR_0^2}{\varepsilon\beta}\right)
    \end{eqnarray*}
    stochastic first-order oracle calls.
    \item If $a_0N^{\nicefrac{3}{2}}\sqrt{\ln\frac{4N}{\beta}}$ is bigger than $\hat a$ for some $a_0 > 0$, then we take $a = a_0N^{\nicefrac{3}{2}}\sqrt{\ln\frac{4N}{\beta}}$ which implies that
    \begin{equation*}
        m_k = \max\left\{1,\frac{1185\sigma^2 (k+2)^2}{4a_0^2L^2N^2C^2R_0^2}, \frac{10368\sigma^2 (k+2)^2}{4a_0^2L^2C^2R_0^2N^2\sqrt{\ln\frac{4N}{\beta}}}\right\} = O\left(\max\left\{1,\frac{\sigma^2(k+2)^2}{a_0^2L^2R_0^2N^2}\right\}\right)
    \end{equation*}
    and with probability at least $1-\beta$
    \begin{equation}
         f(y^N) - f(x^*) \le \frac{2a_0LC^2R_0^2\sqrt{N\ln\frac{4N}{\beta}}}{N+3}.\label{eq:cor1_technical5}
    \end{equation}
    That is, if $\varepsilon$ is small enough to satisfy $\frac{a_0^3L^3R_0^6}{\varepsilon^3}\left(\ln\frac{LR_0^2}{\varepsilon\beta}\right)^{\nicefrac{3}{2}} \ge C_2\ln^2\frac{LR_0^2}{\varepsilon\beta}$ for some constant $C_2$, then due to \eqref{eq:cor1_technical5} we have that after
    \begin{equation*}
        N = O\left(\frac{a_0^2L^2R_0^4}{\varepsilon^2}\ln\frac{a_0^2L^2R_0^4}{\varepsilon^2\beta}\right) = O\left(\frac{a_0^2L^2R_0^4}{\varepsilon^2}\ln\frac{a_0LR_0^2}{\varepsilon\beta}\right)\text{ iterations}
    \end{equation*}
    of {\tt clipped-SSTM} we obtain such point $y^N$ that with probability at least $1-\beta$ inequality $f(y^N) - f(x^*) \le \varepsilon$ holds and the method requires
    \begin{eqnarray*}
        \sum\limits_{k=0}^{N-1}m_k &=& \sum\limits_{k=0}^{N-1}O\left(\max\left\{1,\frac{\sigma^2(k+2)^2}{a_0^2L^2R_0^2N^2}\right\}\right)\\
        &=&O\left(\max\left\{N, \frac{\sigma^2N}{a_0^2L^2R_0^2}\right\}\right) = O\left(\max\left\{\frac{a_0^2L^2R_0^4}{\varepsilon^2}, \frac{\sigma^2 R_0^2}{\varepsilon^2}\right\}\ln\frac{a_0LR_0^2}{\varepsilon\beta}\right)
    \end{eqnarray*}
    stochastic first-order oracle calls. Finally, if all assumptions on $N$, $\beta$ and $\varepsilon$ hold for $a_0 = \frac{\sigma}{LR_0}$, then for all $k=0,1,\ldots,N-1$
    \begin{equation*}
        m_k = O\left(\max\left\{1,\frac{\sigma^2(k+2)^2}{a_0^2L^2R_0^2N^2}\right\}\right) = O\left(\max\left\{1,\frac{(k+2)^2}{N^2}\right\}\right) = O(1),
    \end{equation*}
    i.e.\ one iteration of {\tt clipped-SSTM} requires $O(1)$ oracle calls, and $f(y^N) - f(x^*) \le \varepsilon$ with probability at least $1-\beta$ after
    \begin{equation*}
        N = O\left(\frac{\sigma^2R_0^2}{\varepsilon^2}\ln\frac{\sigma R_0}{\varepsilon\beta}\right)\text{ iterations.}
    \end{equation*}
\end{enumerate}

\subsubsection{Proof of Corollary~\ref{cor:clipped_sstm_small_stepsize_const_batch}}
Recall that
\begin{eqnarray*}
    a' &=& \max\left\{1,\frac{16\ln\frac{4N}{\beta}}{C},36\left(2\ln\frac{4N}{\beta} + \sqrt{4\ln^2\frac{4N}{\beta} + 2\ln\frac{4N}{\beta}}\right)^2\right\},\notag\\
    a &=& \max\left\{a', \frac{\sigma N^{\nicefrac{3}{2}}}{LR_0}\sqrt{\ln\frac{4N}{\beta}}\right\},\quad \alpha_{k+1} = \frac{k+2}{2aL},\\
    m_k &=& \max\left\{1,\frac{6000\sigma^2 \alpha_{k+1}^2N\ln\frac{4N}{\beta}}{C^2R_0^2}, \frac{10368\sigma^2 \alpha_{k+1}^2N}{C^2R_0^2}\right\}.
\end{eqnarray*}
Since $a \ge \frac{\sigma N^{\nicefrac{3}{2}}}{LR_0}$ we have that $m_k = O(1)$. Next, there are two possible situations.
\begin{enumerate}
    \item If $a = a'$, then we are in the settings of Theorem~\ref{thm:main_result_clipped_SSTM}. This means that {\tt clipped-SSTM} achieves $f(y^N) - f(x^*) \le \varepsilon$ with probability at least $1-\beta$ after 
    \begin{equation}
        O\left(\max\left\{\sqrt{\frac{LR_0^2}{\varepsilon}}, \frac{\sigma^2R_0^2}{\varepsilon^2}\right\}\ln\frac{LR_0^2}{\varepsilon\beta}\right)\text{ oracle calls.} \notag
    \end{equation}
    \item If $a = \frac{\sigma N^{\nicefrac{3}{2}}}{LR_0}\sqrt{\ln\frac{4N}{\beta}}$, then we are in the settings of Corollary~\ref{cor:different_stepsizes_clipped_SSTM} which implies that {\tt clipped-SSTM} achieves $f(y^N) - f(x^*) \le \varepsilon$ with probability at least $1-\beta$ after 
    \begin{equation}
       O\left(\frac{\sigma^2R_0^2}{\varepsilon^2}\ln\frac{\sigma R_0}{\varepsilon\beta}\right)\text{ oracle calls.} \notag
    \end{equation}
\end{enumerate}
Finally, we combine these two cases and obtain that with $a = \max\left\{a', \frac{\sigma N^{\nicefrac{3}{2}}}{LR_0}\sqrt{\ln\frac{4N}{\beta}}\right\}$ {\tt clipped-SSTM} guarantees $f(y^N) - f(x^*) \le \varepsilon$ with probability at least $1-\beta$ after
\begin{eqnarray*}
    O\left(\max\left\{\max\left\{\sqrt{\frac{LR_0^2}{\varepsilon}}, \frac{\sigma^2R_0^2}{\varepsilon^2}\right\}\ln\frac{LR_0^2}{\varepsilon\beta},\frac{\sigma^2R_0^2}{\varepsilon^2}\ln\frac{\sigma R_0}{\varepsilon\beta}\right\}\right)&\\
    &\hspace{-2cm}= O\left(\max\left\{\sqrt{\frac{LR_0^2}{\varepsilon}}, \frac{\sigma^2R_0^2}{\varepsilon^2}\right\}\ln\frac{LR_0^2+\sigma R_0}{\varepsilon\beta}\right)
\end{eqnarray*}
iterations/oracle calls.

\subsubsection{Proof of Theorem~\ref{thm:main_result_R_clipped_SSTM}}\label{sec:proof_R_clipped_SSTM}
First of all, consider behavior of {\tt clipped-SSTM} during the first run in {\tt R-clipped-SSTM}. We notice that the proof of Theorem~\ref{thm:main_result_clipped_SSTM} will be valid if we substitute $R_0$ everywhere by its upper bound $R$. From $\mu$-strong convexity of $f$ we have
\begin{equation*}
    R_0^2 = \|x^0 - x^*\|_2^2 \overset{\eqref{eq:str_cvx_cor}}{\le} \frac{2}{\mu}\left(f(x^0) - f(x^*)\right),
\end{equation*}
therefore, one can choose $R = \sqrt{\frac{2}{\mu}\left(f(x^0) - f(x^*)\right)}$. It implies that after $N_0$ iterations of {\tt clipped-SSTM} we have
\begin{equation*}
    f(y^{N_0}) - f(x^*) \le \frac{2aC^2LR^2}{N_0(N_0+3)} = \frac{4aC^2L}{N_0^2\mu} (f(x^0) - f(x^*)).
\end{equation*}
with probability at least $1-\frac{\beta}{\tau}$, hence with the same probability $f(y^{N_0}) - f(x^*) \le \frac{1}{2}(f(x^0) - f(x^*))$ since $N_0 \ge C\sqrt{\frac{8aL}{\mu}}$. In other words, with probability at least $1-\frac{\beta}{\tau}$
\begin{equation*}
    f(\hat x^1) - f(x^*) \le \frac{1}{2}\left(f(x^0) - f(x^*)\right) = \frac{1}{4}\mu R^2.
\end{equation*}
Then, by induction one can show that for arbitrary $k\in\{0,1,\ldots,\tau-1\}$ the inequality
\begin{equation*}
    f(\hat x^{k+1}) - f(x^*) \le \frac{1}{2}\left(f(\hat x^k) - f(x^*)\right)
\end{equation*}
holds with probability at least $1-\frac{\beta}{\tau}$. Therefore, these inequalities hold simultaneously with probability at least $1 - \beta$. Using this we derive that inequality
\begin{equation*}
    f(\hat x^{\tau}) - f(x^*) \le \frac{1}{2}\left(f(\hat x^{\tau-1}) - f(x^*)\right) \le \frac{1}{2^2}\left(f(\hat x^{\tau-2}) - f(x^*)\right) \le \ldots \le \frac{1}{2^\tau}\left(f(x^0) - f(x^*)\right) = \frac{\mu R^2}{2^{\tau+1}}
\end{equation*}
holds with probability $\ge 1- \beta$. That is, after $\tau = \left\lceil\log_2\frac{\mu R^2}{2\varepsilon}\right\rceil$ restarts {\tt R-clipped-SSTM} generates such a point $\hat x^{\tau}$ that $f(\hat x^{\tau}) - f(x^*) \le \varepsilon$ with probability at least $1-\beta$. Moreover, if $a$ equals the maximum from \eqref{eq:a_parameter_R_clipped_SSTM} and $N_0 \le C_1\sqrt{\frac{8aL}{\mu}}$ with some numerical constant $C_1 \ge C$, then $a \sim \left(\ln\frac{N_0\tau}{\beta}\right)^2$, the total number of iterations of {\tt clipped-SSTM} equals
\begin{equation*}
    N_0\tau = O\left(\sqrt{\frac{L}{\mu}}\ln\left(\frac{\mu R^2}{\varepsilon}\right)\ln\left(\frac{L}{\mu\beta}\ln\frac{\mu R^2}{\varepsilon}\right)\right)
\end{equation*}
and the overall number of stochastic first-order oracle calls is
\begin{eqnarray*}
    \sum\limits_{t=0}^{\tau-1}\sum\limits_{k=0}^{N_0-1}m_k^t &=& \sum\limits_{t=0}^{\tau-1}\sum\limits_{k=0}^{N_0-1}O\left(\max\left\{1, \frac{2^{t}\sigma^2 \alpha_{k+1}^2N_0\ln\frac{4N_0\tau}{\beta}}{R^2}\right\}\right)\\
    &=& \sum\limits_{t=0}^{\tau-1}\sum\limits_{k=0}^{N_0-1}O\left(\max\left\{1, \frac{2^{t}\sigma^2 (k+2)^2N_0}{\ln^3\frac{4N_0\tau}{\beta}L^2R^2}\right\}\right)\\
    &=& O\left(\max\left\{N_0\tau, \frac{\sigma^2 2^\tau N_0^4}{\ln^3\frac{4N_0\tau}{\beta}L^2R^2}\right\}\right)\\
    &=& O\left(\max\left\{\sqrt{\frac{L}{\mu}}\ln\left(\frac{\mu R^2}{\varepsilon}\right), \frac{\sigma^2}{\mu\varepsilon}\right\} \ln\left(\frac{L}{\mu\beta}\ln\frac{\mu R^2}{\varepsilon}\right)\right).
\end{eqnarray*}

\subsubsection{Proof of Corollary~\ref{cor:stepsizes_r_clipped_SSTM}}
Similarly to the proof of Theorem~\ref{thm:main_result_R_clipped_SSTM} (see the previous subsection) we derive that under assumptions of the corollary after $\tau = \left\lceil\log_2\frac{\mu R^2}{2\varepsilon}\right\rceil$ restarts {\tt R-clipped-SSTM} generates such a point $\hat x^\tau$ that $f(\hat x^\tau) - f(x^*) \le \varepsilon$ with probability at least $1 - \beta$. Moreover, $a$ and $N_0$ satisfy the following system of inequalities
\begin{equation}
    a = \Theta\left(\frac{\sigma^4\ln^2\frac{N_0\tau}{\beta}}{L\mu\varepsilon^2}\right),\quad N_0 = \Theta\left(\sqrt{\frac{aL}{\mu}}\right)\label{eq:cor_R_clipped_SSTM_tech1}
\end{equation}
which is consistent and implies that
\begin{equation}
    a = \Theta\left(\frac{\sigma^4}{L\mu\varepsilon}\ln^2\left(\frac{\sigma^2}{\mu\varepsilon\beta}\ln\frac{\mu R^2}{\varepsilon}\right)\right),\quad N_0 = \Theta\left(\frac{\sigma^2}{\mu\varepsilon}\ln\left(\frac{\sigma^2}{\mu\varepsilon\beta}\ln\frac{\mu R^2}{\varepsilon}\right)\right).\label{eq:cor_R_clipped_SSTM_tech2}
\end{equation}
Then, for all $k= 0,1,\ldots, N_0-1$ and $t=0,1,\ldots,\tau-1$ batchsizes satisfy
\begin{eqnarray*}
    m_k^t \le m_{N_0-1}^{\tau-1} &=& O\left(\max\left\{1, \frac{2^\tau \sigma^2\alpha_{N_0}^2N_0\ln\frac{N_0\tau}{\beta}}{R^2}\right\}\right)\\
    &=& O\left(\max\left\{1, \frac{\mu R^2\sigma^2 N_0^3\ln\frac{N_0\tau}{\beta}}{a^2L^2\varepsilon R^2}\right\}\right) \overset{\eqref{eq:cor_R_clipped_SSTM_tech1},\eqref{eq:cor_R_clipped_SSTM_tech2}}{=} O(1),
\end{eqnarray*}
i.e.\ the algorithm requires $O(1)$ oracle calls per iteration. Finally, the total number of iterations is
\begin{equation*}
    N_0\tau = O\left(\frac{\sigma^2}{\mu\varepsilon}\ln\left(\frac{\mu R^2}{\varepsilon}\right)\ln\left(\frac{\sigma^2}{\mu\varepsilon\beta}\ln\frac{\mu R^2}{\varepsilon}\right)\right).
\end{equation*}

\clearpage

\section{{\tt SGD} with Clipping: Exact Formulations and Missing Proofs}\label{sec:missing_proofs_SGD}
In this section we provide exact formulations of all the results that we have for {\tt clipped-SGD} and {\tt R-clipped-SGD} together with the full proofs.
\subsection{Convex Case}\label{sec:cvx_case_clipped_SGD}
We start with the case when $f(x)$ is convex and $L$-smooth and, as before, we assume that at each point $x\in \R^n$ function $f$ is accessible only via stochastic gradients $\nabla f(x,\xi)$ such that \eqref{eq:bounded_variance_clipped_SSTM} holds. Next theorem summarizes the main convergence result for {\tt clipped-SGD} in this case.
\begin{theorem}\label{thm:main_result_clipped_SGD}
    Assume that function $f$ is convex and $L$-smooth. Then for all $\beta \in (0,1)$ and $N\ge 1$ such that
    \begin{equation}
        \ln\frac{4N}{\beta} \ge 2 \label{eq:beta_N_condition_clipped_SGD}
    \end{equation}
    we have that after $N$ iterations of {\tt clipped-SGD} with
    \begin{equation}
        \lambda = 2LCR_0,\quad m_k = m = \max\left\{1,\frac{27N\sigma^2}{2(CR_0)^2L^2\ln\frac{4N}{\beta}}\right\},\label{eq:bathces_clipped_SGD}
    \end{equation}
    where $R_0 = \|x^0 - x^*\|_2$ and stepsize
    \begin{equation}
        \gamma = \frac{1}{80L\ln\frac{4N}{\beta}},\label{eq:step_size_clipped_SGD}
    \end{equation}
    that with probability at least $1-\beta$
    \begin{equation}
        f(\Bar{x}^N) - f(x^*) \le \frac{80LC^2R_0^2\ln\frac{4N}{\beta}}{N}, \label{eq:main_result_clipped_SGD}
    \end{equation}
    where $\Bar{x}^N = \frac{1}{N}\sum_{k=0}^{N-1}x^k$ and
    \begin{equation}
        C = \sqrt{2}. \label{eq:C_definition_clipped_SGD}
    \end{equation}
    In other words, the method achieves $f(\Bar{x}^N) - f(x^*) \le \varepsilon$ with probability at least $1-\beta$ after $O\left(\frac{LR_0^2}{\varepsilon}\ln\frac{LR_0^2}{\varepsilon\beta}\right)$ iterations and requires
    \begin{equation}
        O\left(\max\left\{\frac{LR_0^2}{\varepsilon}, \frac{\sigma^2R_0^2}{\varepsilon^2}\right\}\ln\frac{LR_0^2}{\varepsilon\beta}\right)\text{ oracle calls.} \label{eq:clipped_SGD_oracle_complexity}
    \end{equation}
\end{theorem}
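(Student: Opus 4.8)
The plan is to mirror the proof of Theorem~\ref{thm:main_result_clipped_SSTM}, replacing the accelerated coupling by the plain one-step expansion of {\tt clipped-SGD}. Writing $\theta_k \eqdef \tnabla f(x^k,\Bxi^k) - \nabla f(x^k)$ and using $x^{k+1} = x^k - \gamma\tnabla f(x^k,\Bxi^k)$, I would first expand
\begin{equation*}
\|x^{k+1}-x^*\|_2^2 = \|x^k-x^*\|_2^2 - 2\gamma\langle\nabla f(x^k),x^k-x^*\rangle - 2\gamma\langle\theta_k,x^k-x^*\rangle + \gamma^2\|\tnabla f(x^k,\Bxi^k)\|_2^2.
\end{equation*}
Convexity gives $\langle\nabla f(x^k),x^k-x^*\rangle \ge f(x^k)-f(x^*)$, while \eqref{eq:squared_norm_sum} together with \eqref{eq:L_smoothness_cor_2} (and $\nabla f(x^*)=0$) yields $\|\tnabla f(x^k,\Bxi^k)\|_2^2 \le 4L(f(x^k)-f(x^*)) + 2\|\theta_k\|_2^2$. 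Since $\gamma \le \nicefrac{1}{160L}$ by \eqref{eq:beta_N_condition_clipped_SGD} and \eqref{eq:step_size_clipped_SGD}, the factor $1-2\gamma L$ is at least $\nicefrac{79}{80}$, so after telescoping, using $\|x^N-x^*\|_2^2\ge 0$ and Jensen's inequality $f(\bar x^N)-f(x^*)\le \tfrac1N\sum_{k=0}^{N-1}(f(x^k)-f(x^*))$, I obtain
\begin{equation*}
f(\bar x^N)-f(x^*) \le \frac{1}{2\gamma(1-2\gamma L)N}\left(R_0^2 - 2\gamma\sum_{k=0}^{N-1}\langle\theta_k,x^k-x^*\rangle + 2\gamma^2\sum_{k=0}^{N-1}\|\theta_k\|_2^2\right),
\end{equation*}
together with the companion recursion $R_t^2 \le R_0^2 - 2\gamma\sum_{l<t}\langle\theta_l,x^l-x^*\rangle + 2\gamma^2\sum_{l<t}\|\theta_l\|_2^2$ (with $R_t\eqdef\|x^t-x^*\|_2$, obtained by dropping the nonpositive descent term), the exact analogue of \eqref{eq:main_thm_clipped_SSTM_technical_1}.

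The remaining task is to show that, with probability at least $1-\beta$, the bracketed quantity is at most $C^2R_0^2$ with $C=\sqrt2$. As in the SSTM proof I would run an induction over $T=0,1,\dots,N$ on the event $E_T=\{R_t^2\le C^2R_0^2\ \text{for all}\ t\le T\}$, proving $\PP\{E_T\}\ge 1-\nicefrac{T\beta}{N}$. The crucial point is that $E_{T-1}$ forces $\|\nabla f(x^t)\|_2 = \|\nabla f(x^t)-\nabla f(x^*)\|_2 \le L\|x^t-x^*\|_2 \le LCR_0 = \nicefrac{\lambda}{2}$ for every $t\le T-1$, precisely the hypothesis of Lemma~\ref{lem:main_stoch_lemma_clipped_SSTM}. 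Hence (reading $x^{k+1},\lambda_{k+1}$ there as $x^t,\lambda$) the clipped mini-batched gradient obeys the magnitude bound \eqref{eq:magnitude_bound_clipped_SSTM}, the bias bound \eqref{eq:bias_bound_clipped_SSTM} and the variance bound \eqref{eq:variance_bound_clipped_SSTM}. This is exactly where the \emph{constant} clipping level $\lambda=2LCR_0$ is legitimated: unlike in {\tt clipped-SSTM}, no decay of $\lambda$ is needed because it is the iterates, not a vanishing gradient, that are kept bounded.

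Next I would split $\theta_l=\theta_l^u+\theta_l^b$ into its conditionally unbiased part and its bias as in \eqref{eq:main_thm_clipped_SSTM_technical_6}, replace $x^*-x^l$ by a truncated version $\eta_l$ (bounded by $CR_0$ on $E_{T-1}$) so the summands are bounded almost surely, and apply Bernstein's inequality (Lemma~\ref{lem:Bernstein_ineq}) to the two martingale-difference sums $\sum_l\langle\theta_l^u,\eta_l\rangle$ and $\sum_l(\|\theta_l^u\|_2^2-\EE_{\Bxi^l}\|\theta_l^u\|_2^2)$, bounding the bias-driven and conditional-variance sums deterministically via the batchsize \eqref{eq:bathces_clipped_SGD}. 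The batchsize $m$ is tuned so that $\sum_l$ of the conditional variances stays below the Bernstein threshold, and $\gamma$ so that each $2\gamma$- and $2\gamma^2$-weighted piece is a small multiple of $C^2R_0^2$; summing the five pieces (the analogues of $\circledOne$--$\circledFive$) bounds the stochastic error by $R_0^2=(C^2-1)R_0^2$, giving $R_T^2\le C^2R_0^2$ on $E_{T-1}\cap E_{\circledOne}\cap E_{\circledThree}$, and a union bound closes the induction. The main obstacle is the same self-referential coupling as in the accelerated case: controlling $\theta_l$ requires $\|x^l-x^*\|_2\le CR_0$, which is itself the claim being proved, so the Bernstein constants must be arranged so that the accumulated stochastic error consumes strictly less than the slack $C^2-1=1$ in $R_0^2$-units.

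Finally, taking $T=N$ makes the bracket $\le C^2R_0^2$ with probability $\ge 1-\beta$, whence $f(\bar x^N)-f(x^*)\le \nicefrac{C^2R_0^2}{(2\gamma(1-2\gamma L)N)}\le \nicefrac{80LC^2R_0^2\ln(\nicefrac{4N}{\beta})}{N}$ after substituting \eqref{eq:step_size_clipped_SGD}. Setting the right-hand side to $\varepsilon$ gives $N=O(\nicefrac{LR_0^2}{\varepsilon}\ln\nicefrac{LR_0^2}{\varepsilon\beta})$ iterations, and multiplying by $m=O(\max\{1,\nicefrac{N\sigma^2}{R_0^2L^2\ln(\nicefrac{N}{\beta})}\})$ from \eqref{eq:bathces_clipped_SGD} yields the oracle complexity \eqref{eq:clipped_SGD_oracle_complexity}.
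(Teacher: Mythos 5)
Your proposal is correct and follows essentially the same route as the paper's own proof of Theorem~\ref{thm:main_result_clipped_SGD}: the same one-step expansion and companion recursion $R_t^2 \le R_0^2 - 2\gamma\sum_{l<t}\langle\theta_l, x^l-x^*\rangle + 2\gamma^2\sum_{l<t}\|\theta_l\|_2^2$, the same induction over the events $\{R_t^2 \le C^2R_0^2,\ t\le T\}$ with $\PP\{E_T\}\ge 1-\nicefrac{T\beta}{N}$, the same observation that $E_{T-1}$ yields $\|\nabla f(x^t)\|_2 \le LCR_0 = \nicefrac{\lambda}{2}$ so that Lemma~\ref{lem:main_stoch_lemma_clipped_SSTM} applies with the constant clipping level, and the same five-term decomposition with Bernstein's inequality on the two martingale-difference sums and deterministic bounds on the bias and variance terms via \eqref{eq:bathces_clipped_SGD}. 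Your accounting of the error budget (five pieces summing to at most $(C^2-1)R_0^2 = R_0^2$, the paper takes $\nicefrac{C^2R_0^2}{10}$ per piece) and the final complexity computation also match the paper's argument.
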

To the best of our knowledge, it is the first result for {\tt clipped-SGD} establishing non-trivial complexity guarantees for the convergence with high probability. One can find the full proof in Section~\ref{sec:proof_cvx_SGD}.

\subsection{Strongly Convex Case}\label{sec:str_cvx_case_clipped_SGD}
Next, we consider the situation when $f$ is additionally $\mu$-strongly convex and propose a restarted version of {\tt clipped-SGD} ({\tt R-clipped-SGD}), see Algorithm~\ref{alg:R-clipped-SGD}.
\begin{algorithm}[h]
\caption{Restarted Clipped Stochastic Gradient Descent ({\tt R-clipped-SGD})}
\label{alg:R-clipped-SGD}   
\begin{algorithmic}[1]
\Require starting point $x^0$, number of iterations $N_0$ of {\tt clipped-SGD}, number $\tau$ of {\tt clipped-SGD} runs, batchsizes $m^0$, $m^1$, \ldots, $m^\tau$
\State Set $\hat x^0 = x^0$, stepsize $\gamma > 0$
\For{$t = 0,1,\ldots, \tau-1$}
\State Run {\tt clipped-SGD} (Algorithm~\ref{alg:clipped-SGD}) for $N_0$ iterations with constant batchsizes $m^t$, stepsize $\gamma$ and starting point $\hat x^t$. Define the output of {\tt clipped-SGD} by $\hat x^{t+1}$.
\EndFor
\Ensure $\hat x^{\tau}$ 
\end{algorithmic}
\end{algorithm}
For this method we prove the following result.
\begin{theorem}\label{thm:main_result_R_clipped_SGD}
    Assume that $f$ is $\mu$-strongly convex and $L$-smooth. If we choose $\beta \in (0,1)$, $\tau$ and $N_0\ge 1$ such that
    \begin{equation}
        \ln\frac{4N_0\tau}{\beta} \ge 2,\quad \frac{N_0}{\ln \frac{4N_0\tau}{\beta}} \ge \frac{320C^2L}{\mu}, \label{eq:beta_N_tau_condition_R_clipped_SGD}
    \end{equation}
    and
     \begin{equation}
        m^t = \max\left\{1,\frac{27 \cdot 2^tN_0\sigma^2}{2(CR)^2L^2\ln\frac{4N_0\tau}{\beta}}\right\},\label{eq:bathces_R_clipped_SGD}
    \end{equation}
    where $R = \sqrt{\frac{2(f(x^0) - f(x^*))}{\mu}}$ and $C = \sqrt{2}$, then we have that after $\tau$ runs of {\tt clipped-SGD} in {\tt R-clipped-SGD} the inequality
    \begin{equation}
        f(\hat x^\tau) - f(x^*) \le 2^{-\tau} \left(f(x^0)-f(x^*)\right) \label{eq:main_result_R_clipped_SGD}
    \end{equation}
    holds with probability at least $1-\beta$. That is, if we choose $\frac{N_0}{\ln \frac{4N_0\tau}{\beta}} \le \frac{C_1L}{\mu}$ with some numerical constant $C_1 \ge 320C^2$, then the method achieves $f(\hat x^{\tau}) - f(x^*) \le \varepsilon$ with probability at least $1-\beta$ after
    \begin{equation}
        O\left(\frac{L}{\mu}\ln\left(\frac{\mu R^2}{\varepsilon}\right)\ln\left(\frac{L}{\mu\beta}\ln\frac{\mu R^2}{\varepsilon}\right)\right)\text{ iterations (in total)}
    \end{equation}
    of {\tt clipped-SGD} and requires
    \begin{equation}
        O\left(\max\left\{\frac{L}{\mu}\ln\frac{\mu R^2}{\varepsilon}, \frac{\sigma^2}{\mu\varepsilon}\right\}\ln\left(\frac{L}{\mu\beta}\ln\frac{\mu R^2}{\varepsilon}\right)\right)\text{ oracle calls.} \label{eq:R_clipped_SGD_oracle_complexity}
    \end{equation}
\end{theorem}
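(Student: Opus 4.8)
The plan is to mirror the restart argument used for {\tt R-clipped-SSTM} (Theorem~\ref{thm:main_result_R_clipped_SSTM}), replacing the accelerated convex guarantee by the convex guarantee for {\tt clipped-SGD} from Theorem~\ref{thm:main_result_clipped_SGD}. First I would record that by $\mu$-strong convexity $R_0^2 = \|x^0 - x^*\|_2^2 \overset{\eqref{eq:str_cvx_cor}}{\le} \frac{2}{\mu}(f(x^0) - f(x^*)) = R^2$, so $R$ is a valid upper bound on the initial distance to the solution. The crucial robustness fact I would verify up front is that the proof of Theorem~\ref{thm:main_result_clipped_SGD} remains valid verbatim when $R_0$ is replaced everywhere by any upper bound on $\|x^0 - x^*\|_2$: the inductive control of $\|x^k - x^*\|_2$ together with the bias/variance estimates only become easier to satisfy, and the clipping level $\lambda = 2LC\tilde R$ and the batchsize built from $\tilde R$ are monotone in the chosen design radius $\tilde R$.

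Next I would analyze a single run. Running {\tt clipped-SGD} for $N_0$ steps from $\hat x^t$ with design radius $R_t \eqdef 2^{-t/2}R$, clipping level $\lambda_t = 2LCR_t$, stepsize $\gamma$, batchsize $m^t$ from \eqref{eq:bathces_R_clipped_SGD}, and with confidence parameter $\beta/\tau$ (which turns $\ln\frac{4N_0}{\beta}$ into $\ln\frac{4N_0\tau}{\beta}$), Theorem~\ref{thm:main_result_clipped_SGD} gives, with probability at least $1 - \beta/\tau$,
\begin{equation*}
    f(\hat x^{t+1}) - f(x^*) \le \frac{80LC^2 R_t^2 \ln\frac{4N_0\tau}{\beta}}{N_0}.
\end{equation*}
I would then feed in $R_t^2 \le \frac{2}{\mu}(f(\hat x^t) - f(x^*))$ (valid, by \eqref{eq:str_cvx_cor}, on the event that the first $t$ runs succeeded) to turn the right-hand side into $\frac{160 L C^2 \ln\frac{4N_0\tau}{\beta}}{\mu N_0}(f(\hat x^t) - f(x^*))$; the condition $\frac{N_0}{\ln\frac{4N_0\tau}{\beta}} \ge \frac{320C^2 L}{\mu}$ from \eqref{eq:beta_N_tau_condition_R_clipped_SGD} makes this factor at most $\tfrac12$, yielding the per-run contraction $f(\hat x^{t+1}) - f(x^*) \le \tfrac12(f(\hat x^t) - f(x^*))$. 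The scaling $m^t \propto 2^t$ is precisely what keeps the batchsize appropriate to the shrinking radius $R_t$ (which decays like $2^{-t/2}$) consistent across runs.

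I would then close the induction by a union bound: intersecting the $\tau$ per-run success events, each holding with probability $\ge 1 - \beta/\tau$ conditionally on the fresh i.i.d.\ samples of that run, all of them hold simultaneously with probability $\ge 1 - \beta$, on which event $f(\hat x^\tau) - f(x^*) \le 2^{-\tau}(f(x^0) - f(x^*))$, establishing \eqref{eq:main_result_R_clipped_SGD}. Choosing $\tau = \lceil \log_2\frac{\mu R^2}{2\varepsilon}\rceil$ and $N_0 = \Theta(\frac{L}{\mu}\ln\frac{4N_0\tau}{\beta})$ then gives the total iteration count $N_0\tau$, and summing the geometric series $\sum_{t=0}^{\tau-1} N_0 m^t$ with $m^t \propto 2^t$ (so that $2^\tau \sim \frac{\mu R^2}{\varepsilon}$) gives the oracle complexity \eqref{eq:R_clipped_SGD_oracle_complexity}. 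The main obstacle is the conditional/independence bookkeeping across restarts: the starting point $\hat x^t$ of run $t$ is random and its distance to $x^*$ feeds into both the clipping level and the batchsize, so one must apply Theorem~\ref{thm:main_result_clipped_SGD} conditionally on $\hat x^t$ and deterministically dominate $R_t$ by $2^{-t/2}R$ on the good event before the union bound can be invoked; it is the monotonicity of the convex-case proof in the design radius, verified in the first paragraph, that makes this conditioning legitimate.
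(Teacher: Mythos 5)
Your proposal is correct and follows essentially the same route as the paper's proof: replacing $R_0$ by the strong-convexity bound $R$, applying Theorem~\ref{thm:main_result_clipped_SGD} per run with confidence $\beta/\tau$, extracting the per-run contraction $f(\hat x^{t+1})-f(x^*)\le \tfrac12(f(\hat x^t)-f(x^*))$ from the condition $\frac{N_0}{\ln\frac{4N_0\tau}{\beta}}\ge\frac{320C^2L}{\mu}$, taking a union bound over the $\tau$ runs, and summing the geometric series $\sum_t N_0 m^t$ with $2^\tau\sim\frac{\mu R^2}{\varepsilon}$ for the oracle complexity. Your explicit tracking of the shrinking design radius $R_t=2^{-t/2}R$ (matching $m^t\propto 2^t$) and the conditional bookkeeping across restarts merely fleshes out what the paper compresses into ``the proof of Theorem~\ref{thm:main_result_clipped_SGD} will be valid if we substitute $R_0$ everywhere by its upper bound'' and ``by induction one can show.''
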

This theorem implies that {\tt R-clipped-SGD} has the same complexity as the restarted version of {\tt RSMD} from \cite{nazin2019algorithms} up to the difference in logarithmical factors. We notice that the main difference between our result and one from \cite{nazin2019algorithms} is that we do not need to assume that the optimization problem is considered on the bounded set. 

However, in order to get \eqref{eq:R_clipped_SGD_oracle_complexity} {\tt R-clipped-SGD} requires to know strong convexity parameter $\mu$. In order to remove this drawback we analyse {\tt clipped-SGD} for the strongly convex case and get the following result.
\begin{theorem}\label{thm:main_result_clipped_SGD_2}
    Assume that function $f$ is $\mu$-strongly convex and $L$-smooth. Then for all $\beta \in (0,1)$ and $N\ge 1$ such that
    \begin{equation}
        \ln\frac{4N}{\beta} \ge 2 \label{eq:beta_N_condition_clipped_SGD_2}
    \end{equation}
    we have that after $N$ iterations of {\tt clipped-SGD} with
    \begin{equation}
        \lambda_l = 4 \sqrt{L (1-\gamma\mu)^l r_0},\quad m_k = \max\left\{1,\frac{27N\sigma^2}{16Lr_0(1-\gamma\mu)^k\ln\frac{4N}{\beta}}\right\},\label{eq:bathces_clipped_SGD_2}
    \end{equation}
    where $r_0=f(x^0)-f(x^*)$ and stepsize
    \begin{equation}
        \gamma = \frac{1}{81L\ln\frac{4N}{\beta}},\label{eq:step_size_clipped_SGD_2}
    \end{equation}
    that with probability at least $1-\beta$
    \begin{equation}
        f(x^N) - f(x^*) \le 2(1-\gamma\mu)^N(f(x^0)-f(x^*)). \label{eq:main_result_clipped_SGD_2}
    \end{equation}
    In other words, the method achieves $f(x^N) - f(x^*) \le \varepsilon$ with probability at least $1-\beta$ after $ O\left(\frac{L}{\mu}\ln\left(\frac{r_0}{\varepsilon}\right)\ln\left(\frac{L}{\mu\beta}\ln\frac{r_0}{\varepsilon}\right)\right)$ iterations and requires
    \begin{equation}
        O\left(\max\left\{\frac{L}{\mu}, \frac{\sigma^2}{\mu\varepsilon}\cdot \frac{L}{\mu}\right\}\ln\left(\frac{r_0}{\varepsilon}\right)\ln\left(\frac{L}{\mu\beta}\ln\frac{r_0}{\varepsilon}\right)\right)\text{ oracle calls.} \label{eq:clipped_SGD_oracle_complexity_2}
    \end{equation}
\end{theorem}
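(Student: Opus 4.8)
The plan is to run the same ``induction $+$ Bernstein'' machinery used for \texttt{clipped-SSTM} in Theorem~\ref{thm:main_result_clipped_SSTM}, but now to drive a \emph{geometric} recursion on the functional suboptimality $\Delta_k \eqdef f(x^k) - f(x^*)$ rather than on the squared distance to $x^*$. First I would establish the one-step descent estimate: starting from \eqref{eq:L_smoothness_cor} with $x^{k+1} = x^k - \gamma\tnabla f(x^k,\Bxi^k)$ and writing $\tnabla f(x^k,\Bxi^k) = \nabla f(x^k) + \theta_k$ with $\theta_k$ as in \eqref{eq:theta_k+1_def_clipped_SSTM}, and using the Polyak--{\L}ojasiewicz inequality $\|\nabla f(x^k)\|_2^2 \ge 2\mu\Delta_k$ implied by \eqref{eq:str_cvx_def}, I obtain
\begin{equation*}
    \Delta_{k+1} \le \left(1 - 2\gamma\mu\left(1-\tfrac{L\gamma}{2}\right)\right)\Delta_k - \gamma(1-L\gamma)\left\la\nabla f(x^k),\theta_k\right\ra + \frac{L\gamma^2}{2}\|\theta_k\|_2^2 .
\end{equation*}
Since $L\gamma \le \tfrac{1}{162} < 1$ under \eqref{eq:beta_N_condition_clipped_SGD_2} and \eqref{eq:step_size_clipped_SGD_2}, the contraction factor is at most $1-\gamma\mu$; dividing by $(1-\gamma\mu)^{k+1}$ and telescoping yields a recursion for the normalized iterates $\widetilde\Delta_k \eqdef \Delta_k(1-\gamma\mu)^{-k}$ in which the noise appears as a sum whose summand weights $(1-\gamma\mu)^{-(k+1)}$ do not depend on the running horizon, so that the martingale structure is clean.

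The heart of the argument is then an induction, parallel to the proof of Theorem~\ref{thm:main_result_clipped_SSTM}, over the events $E_T = \{\widetilde\Delta_t \le 2r_0 \text{ for } t=0,\ldots,T\}$, equivalently $\Delta_t \le 2(1-\gamma\mu)^t r_0$. The crucial observation is that $E_{T-1}$ calibrates the clipping level correctly: from \eqref{eq:L_smoothness_cor_2} with $y = x^*$ one has $\|\nabla f(x^k)\|_2^2 \le 2L\Delta_k \le 4L(1-\gamma\mu)^k r_0$, hence $\|\nabla f(x^k)\|_2 \le 2\sqrt{L(1-\gamma\mu)^k r_0} = \lambda_k/2$, which is exactly the hypothesis of Lemma~\ref{lem:main_stoch_lemma_clipped_SSTM}. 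That lemma then supplies the magnitude bound $\|\theta_k^u\|_2 \le 2\lambda_k$ together with $\|\theta_k^b\|_2 \le \nicefrac{4\sigma^2}{m_k\lambda_k}$ and $\EE_{\Bxi^k}[\|\theta_k^u\|_2^2] \le \nicefrac{18\sigma^2}{m_k}$, where $\theta_k = \theta_k^u + \theta_k^b$ is the martingale/bias decomposition \eqref{eq:main_thm_clipped_SSTM_technical_6}. I would split the normalized noise sum into five groups mirroring $\circledOne$--$\circledFive$: the weighted inner product against $\theta_k^u$ and the centered weighted $\|\theta_k^u\|_2^2$ are controlled by Bernstein's inequality (Lemma~\ref{lem:Bernstein_ineq}), while the bias inner product, the conditional-mean term $\EE_{\Bxi^k}[\|\theta_k^u\|_2^2]$, and the bias-squared term are bounded deterministically on $E_{T-1}$.

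The main bookkeeping obstacle is verifying that the parameter schedule makes every group collapse to a constant multiple of $r_0$. Here the decreasing clipping radius $\lambda_k \propto (1-\gamma\mu)^{k/2}$ is the key design: it exactly cancels the growing weight $(1-\gamma\mu)^{-(k+1)}$, so that each inner-product martingale summand has magnitude $\lesssim \gamma(1-\gamma\mu)^{-(k+1)}\lambda_k^2 \asymp \gamma L r_0$, uniformly in $k$, and the increasing batchsize schedule \eqref{eq:bathces_clipped_SGD_2} is tuned so the total conditional variance over the $N$ terms is $\asymp \nicefrac{r_0^2}{\ln\frac{4N}{\beta}}$. Feeding these into Bernstein with per-application failure probability $\nicefrac{\beta}{2N}$ produces deviations of order $r_0$, and the explicit constants $27,81$ are exactly what is needed so that the five groups sum to at most $r_0$, giving $\widetilde\Delta_T \le \widetilde\Delta_0 + r_0 = 2r_0$ on $E_{T-1}$ intersected with the two Bernstein events. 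A union bound gives $\PP\{E_T\} \ge 1 - \nicefrac{T\beta}{N}$, and taking $T=N$ yields \eqref{eq:main_result_clipped_SGD_2} with probability at least $1-\beta$. Finally, the complexity \eqref{eq:clipped_SGD_oracle_complexity_2} follows by solving $2(1-\gamma\mu)^N r_0 \le \varepsilon$ for $N$ and summing the geometric series $\sum_{k=0}^{N-1} m_k$, whose dominant term $\propto (1-\gamma\mu)^{-N} \asymp \nicefrac{r_0}{\varepsilon}$ produces the $\nicefrac{\sigma^2}{\mu\varepsilon}\cdot\nicefrac{L}{\mu}$ contribution.
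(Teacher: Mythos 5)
Your proposal is correct and follows essentially the same route as the paper's proof: the same smoothness-plus-PL one-step estimate, the same induction on the events $\{f(x^t)-f(x^*)\le 2(1-\gamma\mu)^t r_0\}$ which calibrates $\|\nabla f(x^t)\|_2\le\nicefrac{\lambda_t}{2}$ and activates Lemma~\ref{lem:main_stoch_lemma_clipped_SSTM}, the same five-term decomposition with Bernstein's inequality for the two martingale groups and deterministic bounds for the bias terms, and the same geometric-series count of $\sum_k m_k$ for the oracle complexity. Your normalization $\widetilde\Delta_k=\Delta_k(1-\gamma\mu)^{-k}$ is only a cosmetic rescaling of the paper's horizon-dependent weights $(1-\gamma\mu)^{T-1-l}$ (the two formulations differ by the overall factor $(1-\gamma\mu)^{-T}$), and the almost-sure boundedness of the Bernstein summands that the paper secures explicitly via the truncated variables $\zeta_l$ in \eqref{eq:main_thm_clipped_SGD_technical_4_1_str} is used implicitly in your plan when you mirror $\circledOne$--$\circledFive$.
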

Unfortunately, our approach leads to worse complexity bound than we have for {\tt R-clipped-SGD}: in the second term of the maximum in \eqref{eq:clipped_SGD_oracle_complexity_2} we get an extra factor $\nicefrac{L}{\mu}$ that can be large. Nevertheless, to the best of our knowledge it is the first non-trivial complexity result for {\tt clipped-SGD} that guarantees convergence with high probability. One can find the full proof of Theorem~\ref{thm:main_result_clipped_SGD_2} in Section~\ref{sec:proof_str_cvx_clipped_SGD}.

\subsection{Proofs}
\subsubsection{Proof of Theorem~\ref{thm:main_result_clipped_SGD}}\label{sec:proof_cvx_SGD}
 Since $f(x)$ is convex and $L$-smooth, we get the following inequality:
    \begin{eqnarray*}
        \|x^{k+1} - x^*\|_2^2 &=& \|x^k - \gamma \tnabla f(x^k,\Bxi^k) - x^*\|_2^2 = \|x^k - x^*\|^2_2 + \gamma^2\|\tnabla f(x^k,\Bxi^k)\|^2_2 - 2\gamma \left\la x^k-x^*, g^k\right\ra\\
        &=& 
        \|x^k-x^*\|^2_2 + \gamma^2 \|\nabla f(x^k) + \theta_k\|^2_2 - 2\gamma \left\la x^k-x^*, \nabla f(x^k) + \theta_k \right\ra\\
        &\overset{\eqref{eq:squared_norm_sum}}{\le}& \|x^k-x^*\|^2_2 + 2 \gamma^2 \|\nabla f(x^k)\|^2_2 + 2\gamma^2 \|\theta_k\|^2_2 - 2\gamma \left\la x^k-x^*, \nabla f(x^k) + \theta_k\right\ra\\
        &\overset{\eqref{eq:L_smoothness_cor_2}}{\le}& \|x^k-x^*\|^2_2 + 4 \gamma^2 L \left( f(x^k) - f(x^*) \right) + 2\gamma^2 \|\theta_k\|^2_2 - 2\gamma \left\la x^k-x^*, \nabla f(x^k) + \theta_k\right\ra\\
        &\le& \|x^k-x^*\|^2_2 + (4 \gamma^2 L - 2\gamma)\left( f(x^k) - f(x^*) \right) + 2\gamma^2 \|\theta_k\|^2_2 - 2\gamma \left\la x^k-x^*, \theta_k \right\ra,
    \end{eqnarray*}
    where $\theta_k = \tnabla f(x^k, \Bxi^k) - \nabla f(x^k)$ and the last inequality follows from the convexity of $f$. Using notation $R_k \eqdef \|x^k - x^*\|_2$ we derive that for all $k\ge 0$
    \begin{equation*}
        R_{k+1}^2 \le R_k^2 + (4 \gamma^2 L - 2\gamma)\left( f(x^k) - f(x^*) \right) + 2\gamma^2 \|\theta_k\|^2_2 - 2\gamma \left\la x^k-x^*, \theta_k \right\ra.
    \end{equation*}
    Let us define $A = \left( 2\gamma - 4 \gamma^2 L \right)$, then
    \begin{equation*}
        A \left( f(x^k) - f(x^*) \right) \le R_k^2 - R_{k+1}^2 + 2\gamma^2 \|\theta_k\|^2_2 - 2\gamma \left\la x^k-x^*, \theta_k \right\ra.
    \end{equation*}
    Summing up these inequalities for $k=0, \dots, N-1 $ we obtain
    \begin{eqnarray*}
        \frac{A}{N} \sum\limits_{k=0}^{N-1}\left[ f(x^k) - f(x^*) \right] &\le& \frac{1}{N} \sum\limits_{k=0}^{N-1}\left( R_k^2 - R_{k+1}^2\right) + \frac{2\gamma^2}{N} \sum\limits_{k=0}^{N-1} \|\theta_k\|^2_2 - \frac{2\gamma^2}{N} \sum\limits_{k=0}^{N-1} \left\la x^k-x^*, \theta_k \right\ra\\
        &=& \frac{1}{N}\left( R_0^2 - R_{N}^2\right) +  \frac{2\gamma^2}{N} \sum\limits_{k=0}^{N-1} \|\theta_k\|^2_2 - \frac{2\gamma^2}{N} \sum\limits_{k=0}^{N-1} \left\la x^k-x^*, \theta_k \right\ra.
    \end{eqnarray*}
    Noticing that for $\Bar{x}^N = \frac{1}{N}\sum\limits_{k=0}^{N-1}x^k$ Jensen's inequality gives $f(\Bar{x}^N) = f\left(\frac{1}{N}\sum\limits_{k=0}^{N-1}x^k\right) \le \frac{1}{N}\sum\limits_{k=0}^{N-1}f(x^k)$ we have
    \begin{equation}
        A N \left( f(\Bar{x}^N) - f(x^*) \right) \le R_0^2 - R_{N}^2 + 2\gamma^2 \sum\limits_{k=0}^{N-1}\|\theta_k\|^2_2 - 2\gamma \sum\limits_{k=0}^{N-1} \left\la x^k-x^*, \theta_k \right\ra.
        \label{eq:main_thm_clipped_SGD_technical_0}
    \end{equation}
    Taking into account that $f(\Bar{x}^N) - f(x^*) \ge 0$ and changing the indices we get that for all $k\ge 0$
    \begin{equation}
         R_k^2 \le R_{0}^2 + 2\gamma^2 \sum\limits_{l=0}^{k-1}\|\theta_l\|^2_2 - 2\gamma \sum\limits_{l=0}^{k-1} \left\la x^l-x^*, \theta_k \right\ra.
         \label{eq:main_thm_clipped_SGD_technical_1}
    \end{equation}
    The remaining part of the proof is based on the analysis of inequality \eqref{eq:main_thm_clipped_SGD_technical_1}. In particular, via induction we prove that for all $k=0,1,\ldots, N$ with probability at least $1 - \frac{k\beta}{N}$ the following statement holds: inequalities
    \begin{eqnarray}
         R_t^2 \overset{\eqref{eq:main_thm_clipped_SGD_technical_1}}{\le} R_{0}^2 + 2\gamma^2 \sum\limits_{l=0}^{t-1}\|\theta_k\|^2_2 - 2\gamma \sum\limits_{l=0}^{t-1} \left\la x^k-x^*, \theta_k \right\ra \le C^2R_0^2 \label{eq:main_thm_clipped_SGD_technical_2}
    \end{eqnarray}
    hold for $t=0,1,\ldots,k$ simultaneously where $C$ is defined in \eqref{eq:C_definition_clipped_SGD}. Let us define the probability event when this statement holds as $E_k$. Then, our goal is to show that $\PP\{E_k\} \ge 1 - \frac{k\beta}{N}$ for all $k = 0,1,\ldots,N$. For $t = 0$ inequality \eqref{eq:main_thm_clipped_SGD_technical_2} holds with probability $1$ since $C \ge 1$. Next, assume that for some $k = T-1 \le N-1$ we have $\PP\{E_k\} = \PP\{E_{T-1}\} \ge 1 - \frac{(T-1)\beta}{N}$. Let us prove that $\PP\{E_{T}\} \ge 1 - \frac{T\beta}{N}$. First of all, probability event $E_{T-1}$ implies that
    \begin{eqnarray}
        f(\Bar{x}^N) - f(x^*) \overset{\eqref{eq:main_thm_clipped_SGD_technical_0}}{\le} \frac{1}{AN}\left(R_0^2 +  2\gamma^2 \sum\limits_{k=0}^{N-1}\|\theta_k\|^2_2 - 2\gamma \sum\limits_{k=0}^{N-1} \left\la x^k-x^*, \theta_k \right\ra\right)\notag \overset{\eqref{eq:main_thm_clipped_SGD_technical_2}}{\le} \frac{C^2R_0^2}{AN}\label{eq:main_thm_clipped_SGD_technical_3}
    \end{eqnarray}
    hold for $t=0,1,\ldots,T-1$. Since $f$ is $L$-smooth, we have that probability event $E_{T-1}$ implies
    \begin{eqnarray}
        \left\|\nabla f(x^{t})\right\|_2 \le L\|x^t - x^*\|_2 \le LCR_0 = \frac{\lambda}{2} \label{eq:main_thm_clipped_SGD_technical_4}
    \end{eqnarray}
    for $t=0,\ldots,T-1$, where the clipping level is defined as
    \begin{eqnarray}
        \lambda = 2LCR_0. \label{eq:main_thm_clipped_SGD_technical_4_0}
    \end{eqnarray}
    Having inequalities \eqref{eq:main_thm_clipped_SGD_technical_4} in hand we show in the rest of the proof that \eqref{eq:main_thm_clipped_SGD_technical_2} holds for $t = T$ with big enough probability. First of all, we introduce new random variables:
    \begin{equation}
        \eta_l = \begin{cases}x^* - z^l,&\text{if } \|x^* - z^l\|_2 \le CR_0,\\ 0,&\text{otherwise,} \end{cases}\quad \label{eq:main_thm_clipped_SGD_technical_4_1}
    \end{equation}
    for $l=0,1,\ldots T-1$. Note that these random variables are bounded with probability $1$, i.e.\ with probability $1$ we have
    \begin{equation}
        \|\eta_l\|_2 \le CR_0.\label{eq:main_thm_clipped_SGD_technical_4_2}
    \end{equation}
    Secondly, we use the introduced notation and get that $E_{T-1}$ implies 
    \begin{eqnarray*}
         R_T^2 &\overset{\eqref{eq:main_thm_clipped_SGD_technical_1},\eqref{eq:main_thm_clipped_SGD_technical_2},\eqref{eq:main_thm_clipped_SGD_technical_4},\eqref{eq:main_thm_clipped_SGD_technical_4_1}}{\le}& R_0^2 + 2\gamma \sum\limits_{l=0}^{T-1}\left\la\theta_{l}, \eta_l\right\ra + 2\gamma^2\sum\limits_{l=0}^{T-1}\|\theta_{l+1}\|_2^2 \notag.
    \end{eqnarray*}
    Finally, we do some preliminaries in order to apply Bernstein's inequality (see Lemma~\ref{lem:Bernstein_ineq}) and obtain that $E_{T-1}$ implies
    \begin{eqnarray}
        R_T^2 &\overset{\eqref{eq:squared_norm_sum}}{\le}& R_0^2 + \underbrace{2\gamma\sum\limits_{l=0}^{T-1}\left\la\theta_{l}^u, \eta_l\right\ra}_{\circledOne}+ \underbrace{2\gamma\sum\limits_{l=0}^{T-1}\left\la\theta_{l}^b, \eta_l \right\ra}_{\circledTwo}\notag + \underbrace{4\gamma^2\sum\limits_{l=0}^{T-1}\left(\|\theta_{l}^u\|_2^2 - \EE_{\Bxi^l}\left[\|\theta_{l}^u\|_2^2\right]\right)}_{\circledThree}\\
        &&\quad  + \underbrace{4\gamma^2\sum\limits_{l=0}^{T-1}\EE_{\Bxi^l}\left[\|\theta_{l}^u\|_2^2\right]}_{\circledFour} + \underbrace{4\gamma^2\sum\limits_{l=0}^{T-1}\|\theta_{l}^b\|_2^2}_{\circledFive}\label{eq:main_thm_clipped_SGD_technical_5}
    \end{eqnarray}
    where we introduce new notations:
    \begin{equation}
        \theta_{l}^u \eqdef \tnabla f(x^{l},\Bxi^l) - \EE_{\Bxi^l}\left[\tnabla f(x^{l},\Bxi^l)\right],\quad \theta_{l}^b \eqdef \EE_{\Bxi^l}\left[\tnabla f(x^{l},\Bxi^l)\right] - \nabla f(x^{l}),\label{eq:main_thm_clipped_SGD_technical_6}
    \end{equation}
    \begin{equation}
        \theta_{l} = \theta_{l}^u +\theta_{l}^b.\notag
    \end{equation}
    It remains to provide tight upper bounds for $\circledOne$, $\circledTwo$, $\circledThree$, $\circledFour$ and $\circledFive$, i.e.\ in the remaining part of the proof we show that $\circledOne+\circledTwo+\circledThree+\circledFour+\circledFive \le \delta C^2R_0^2$ for some $\delta < 1$.
    
    \textbf{Upper bound for $\circledOne$.} First of all, since $\EE_{\Bxi^l}[\theta_{l}^u] = 0$ summands in $\circledOne$ are conditionally unbiased:
    \begin{equation*}
        \EE_{\Bxi^l}\left[2\gamma \left\la \theta_l^u, \eta_l\right\ra\right] = 0.
    \end{equation*}
    Secondly, these summands are bounded with probability $1$:
    \begin{eqnarray*}
        \left|2\gamma \left\la \theta_l^u, \eta_l\right\ra\right| &\le& 2\gamma\|\theta_{l}^u\|_2\left\|\eta_l\right\|_2 \overset{\eqref{eq:magnitude_bound_clipped_SSTM},\eqref{eq:main_thm_clipped_SGD_technical_4_2}}{\le} 4\gamma\lambda CR_0 \overset{\eqref{eq:main_thm_clipped_SGD_technical_4_0}}{=} 8\gamma (CR_0)^2L.
    \end{eqnarray*}
    Finally, one can bound conditional variances $\sigma_l^2 \eqdef \EE_{\Bxi^l}\left[4\gamma^2\left\la\theta_{l}^u, \eta_l\right\ra^2\right]$ in the following way:
    \begin{eqnarray}
         \sigma_l^2 &\le& \EE_{\Bxi^l}\left[4\gamma^2\left\|\theta_{l}^u\right\|_2^2 \left\|\eta_l\right\|_2^2\right] \overset{\eqref{eq:main_thm_clipped_SGD_technical_4_2}}{\le} 4\gamma^2(CR_0)^2\EE_{\Bxi^l}\left[\left\|\theta_{l}^u\right\|_2^2\right].\notag
    \end{eqnarray}
    In other words, sequence $\left\{2\gamma \left\la \theta_l^u, \eta_l\right\ra\right\}_{l\ge 0}$ is a bounded martingale difference sequence with bounded conditional variances $\{\sigma_l^2\}_{l\ge 0}$.  Therefore, we can apply Bernstein's inequality, i.e.\ we apply Lemma~\ref{lem:Bernstein_ineq} with $X_l = 2\gamma \left\la \theta_l^u, \eta_l\right\ra$, $c = 8\gamma (CR_0)^2L$ and $F = \frac{c^2\ln\frac{4N}{\beta}}{6}$ and get that for all $b > 0$
    \begin{equation*}
        \PP\left\{\left|\sum\limits_{l=0}^{T-1}X_l\right| > b\text{ and } \sum\limits_{l=0}^{T-1}\sigma_l^2 \le F\right\} \le 2\exp\left(-\frac{b^2}{2F + \nicefrac{2cb}{3}}\right)
    \end{equation*}
    or, equivalently, with probability at least $1 - 2\exp\left(-\frac{b^2}{2F + \nicefrac{2cb}{3}}\right)$
    \begin{equation*}
        \text{either } \sum\limits_{l=0}^{T-1}\sigma_l^2 > F \quad \text{or} \quad \underbrace{\left|\sum\limits_{l=0}^{T-1}X_l\right|}_{|\circledOne|} \le b.
    \end{equation*}
    The choice of $F$ will be clarified further, let us now choose $b$ in such a way that $2\exp\left(-\frac{b^2}{2F + \nicefrac{2cb}{3}}\right) = \frac{\beta}{2N}$. This implies that $b$ is the positive root of the quadratic equation \begin{equation*}
        b^2 - \frac{2c\ln\frac{4N}{\beta}}{3}b - 2F\ln\frac{4N}{\beta} = 0,
    \end{equation*}
    hence
    \begin{eqnarray*}
         b &=& \frac{c\ln\frac{4N}{\beta}}{3} + \sqrt{\frac{c^2\ln^2\frac{4N}{\beta}}{9} + 2F\ln\frac{4N}{\beta}}=\frac{c\ln\frac{4N}{\beta}}{3} + \sqrt{\frac{4c^2\ln^2\frac{4N}{\beta}}{9}}\\
         &=& c\ln\frac{4N}{\beta} = 8\gamma (CR_0)^2L\ln\frac{4N}{\beta}.
    \end{eqnarray*}
    That is, with probability at least $1 - \frac{\beta}{2N}$
     \begin{equation*}
        \underbrace{\text{either } \sum\limits_{l=0}^{T-1}\sigma_l^2 > F \quad \text{or} \quad \left|\circledOne\right| \le 8\gamma (CR_0)^2L\ln\frac{4N}{\beta}}_{\text{probability event } E_{\circledOne}}.
    \end{equation*}
    Next, we notice that probability event $E_{T-1}$ implies that
    \begin{eqnarray*}
        \sum\limits_{l=0}^{T-1}\sigma_l^2 &\le& 4\gamma^2(CR_0)^2\sum\limits_{l=0}^{T-1}\EE_{\Bxi^l}\left[\|\theta_l^u\|_2^2\right] \overset{\eqref{eq:variance_bound_clipped_SSTM}}{\le} 72\gamma^2 (CR_0)^2 \sigma^2 \frac{T}{m}\\
        &\overset{\eqref{eq:bathces_clipped_SGD}}{\le}& 72\gamma^2 (CR_0)^2 \sigma^2\frac{2T(CR_0)^2L^2\ln\frac{4N}{\beta}}{27N\sigma^2}\\
        &\overset{T\le N}{\le}& \frac{16}{3}\gamma^2 (CR_0)^4 L^2 \ln\frac{4N}{\beta} \le \frac{c^2\ln\frac{4N}{\beta}}{6} = F,
    \end{eqnarray*}
    where the last inequality follows from $c = 8\gamma (CR_0)^2L$ and simple arithmetic.
    
     \textbf{Upper bound for $\circledTwo$.} First of all, we notice that probability event $E_{T-1}$ implies
     \begin{eqnarray*}
        2\gamma\left\la\theta_{l}^b, \eta_l \right\ra &\le& 2\gamma\left\|\theta_{l}^b\right\|_2\left\|\eta_l\right\|_2\overset{\eqref{eq:bias_bound_clipped_SSTM},\eqref{eq:main_thm_clipped_SGD_technical_4_2}}{\le} 2\gamma\frac{4\sigma^2}{m \lambda}CR_0 \overset{\eqref{eq:main_thm_clipped_SGD_technical_4_0}}{=} \frac{4\gamma \sigma^2}{Lm}.
    \end{eqnarray*}
    This implies that
    \begin{eqnarray*}
        \circledTwo &=& 2\gamma\sum\limits_{l=0}^{T-1}\left\la\theta_{l}^b,\eta_l\right\ra \overset{T\le N}{\le} \frac{4\gamma N \sigma^2}{mL}.
    \end{eqnarray*}
    
     \textbf{Upper bound for $\circledThree$.} We derive the upper bound for $\circledThree$ using the same technique as for $\circledOne$. First of all, we notice that the summands in $\circledThree$ are conditionally independent:
    \begin{equation*}
        \EE_{\Bxi^l}\left[4\gamma^2\left(\|\theta_{l}^u\|_2^2 - \EE_{\Bxi^l}\left[\|\theta_{l}^u\|_2^2\right]\right)\right] = 0.
    \end{equation*}
    Secondly, the summands are bounded with probability $1$:
    \begin{eqnarray}
        \left|4\gamma^2\left(\|\theta_{l}^u\|_2^2 - \EE_{\Bxi^l}\left[\|\theta_{l}^u\|_2^2\right]\right)\right| &\le& 4\gamma^2\left(\|\theta_{l}^u\|_2^2 + \EE_{\Bxi^l}\left[\|\theta_{l}^u\|_2^2\right]\right)\overset{\eqref{eq:magnitude_bound_clipped_SSTM}}{\le} 4\gamma^2\left(4\lambda^2 + 4\lambda^2\right)\notag\\
        &\overset{\eqref{eq:main_thm_clipped_SGD_technical_4_0}}{=}& 128 \gamma^2 (CR_0)^2 L^2  \eqdef c_1.\label{eq:main_thm_clipped_SGD_technical_8}
    \end{eqnarray}
    Finally, one can bound conditional variances $\hat \sigma_l^2 \eqdef \EE_{\Bxi^l}\left[\left|4\gamma^2\left(\|\theta_{l}^u\|_2^2 - \EE_{\Bxi^l}\left[\|\theta_{l}^u\|_2^2\right]\right)\right|^2\right]$ in the following way:
    \begin{eqnarray}
         \hat \sigma_l^2 &\overset{\eqref{eq:main_thm_clipped_SGD_technical_8}}{\le}& c_1\EE_{\Bxi^l}\left[\left|4\gamma^2\left(\|\theta_{l}^u\|_2^2 - \EE_{\Bxi^l}\left[\|\theta_{l}^u\|_2^2\right]\right)\right|\right]\notag\\
         &\le& 4\gamma^2c_1\EE_{\Bxi^l}\left[\|\theta_{l}^u\|_2^2 + \EE_{\Bxi^l}\left[\|\theta_{l}^u\|_2^2\right]\right] = 8\gamma^2c_1\EE_{\Bxi^l}\left[\|\theta_{l}^u\|_2^2\right].\label{eq:main_thm_clipped_SGD_technical_9}
    \end{eqnarray}
    In other words, sequence $\left\{4\gamma^2\left(\|\theta_{l}^u\|_2^2 - \EE_{\Bxi^l}\left[\|\theta_{l}^u\|_2^2\right]\right)\right\}_{l\ge 0}$ is a bounded martingale difference sequence with bounded conditional variances $\{\hat \sigma_l^2\}_{l\ge 0}$. Therefore, we can apply Bernstein's inequality, i.e.\ we apply Lemma~\ref{lem:Bernstein_ineq} with $X_l = \hat X_l = 4\gamma^2\left(\|\theta_{l}^u\|_2^2 - \EE_{\Bxi^l}\left[\|\theta_{l}^u\|_2^2\right]\right)$, $c = c_1 = 128 \gamma^2 (CR_0)^2 L^2$ and $F = F_1 = \frac{c_1^2\ln\frac{4N}{\beta}}{6}$ and get that for all $b > 0$
    \begin{equation*}
        \PP\left\{\left|\sum\limits_{l=0}^{T-1}\hat X_l\right| > b\text{ and } \sum\limits_{l=0}^{T-1}\hat \sigma_l^2 \le F_1\right\} \le 2\exp\left(-\frac{b^2}{2F_1 + \nicefrac{2c_1b}{3}}\right)
    \end{equation*}
    or, equivalently, with probability at least $1 - 2\exp\left(-\frac{b^2}{2F_1 + \nicefrac{2c_1b}{3}}\right)$
    \begin{equation*}
        \text{either } \sum\limits_{l=0}^{T-1}\hat\sigma_l^2 > F_1 \quad \text{or} \quad \underbrace{\left|\sum\limits_{l=0}^{T-1}\hat X_l\right|}_{|\circledThree|} \le b.
    \end{equation*}
    As in our derivations of the upper bound for $\circledOne$ we choose such $b$ that $2\exp\left(-\frac{b^2}{2F_1 + \nicefrac{2c_1b}{3}}\right) = \frac{\beta}{2N}$, i.e.\
    \begin{eqnarray*}
         b &=& \frac{c_1\ln\frac{4N}{\beta}}{3} + \sqrt{\frac{c_1^2\ln^2\frac{4N}{\beta}}{9} + 2F_1\ln\frac{4N}{\beta}} = c_1\ln\frac{4N}{\beta} = 128\gamma^2 (CR_0)^2L^2\ln\frac{4N}{\beta} .
    \end{eqnarray*}
    That is, with probability at least $1 - \frac{\beta}{2N}$
     \begin{equation*}
        \underbrace{\text{either } \sum\limits_{l=0}^{T-1}\hat\sigma_l^2 > F_1 \quad \text{or} \quad \left|\circledThree\right| \le 128\gamma^2 (CR_0)^2L^2\ln\frac{4N}{\beta}}_{\text{probability event } E_{\circledThree}}.
    \end{equation*}
    Next, we notice that probability event $E_{T-1}$ implies that
    \begin{eqnarray*}
        \sum\limits_{l=0}^{T-1}\hat\sigma_l^2 &\overset{\eqref{eq:main_thm_clipped_SGD_technical_9}}{\le}& 8\gamma^2 c_1\sum\limits_{l=0}^{T-1}\EE_{\Bxi^l}\left[\left\|\theta_{l}^u\right\|_2^2\right]\overset{\eqref{eq:variance_bound_clipped_SSTM}}{\le} 144\gamma^2 c_1 \sigma^2\frac{T}{m}\\
        &\overset{\eqref{eq:bathces_clipped_SGD}}{\le}& 
        \frac{32}{3}\gamma^2 c_1 (CR_0)^2L^2 \frac{T}{N}\ln\frac{4N}{\beta}\\
        &\overset{T\le N}{\le}& \frac{c_1^2\ln\frac{4N}{\beta}}{6} \le F_1.
    \end{eqnarray*}
    
    \textbf{Upper bound for $\circledFour$.} The probability event $E_{T-1}$ implies
    \begin{eqnarray*}
        \circledFour &=& 4\gamma^2\sum\limits_{l=0}^{T-1}\EE_{\Bxi^l}\left[\|\theta_{l}^u\|_2^2\right] \overset{\eqref{eq:variance_bound_clipped_SSTM}}{\le} 72\gamma^2\sigma^2\sum\limits_{l=0}^{T-1}\frac{1}{m} \overset{T\le N}{\le} \frac{72\gamma^2N\sigma^2}{m}.
    \end{eqnarray*}
    
    \textbf{Upper bound for $\circledFive$.} Again, we use corollaries of probability event $E_{T-1}$:
    \begin{eqnarray*}
        \circledFive &=& 4\gamma^2\sum\limits_{l=0}^{T-1}\|\theta_{l}^b\|_2^2 \overset{\eqref{eq:bias_bound_clipped_SSTM}}{\le} 64 \gamma^2 \sigma^4\frac{T}{m^2\lambda^2} \overset{\eqref{eq:main_thm_clipped_SGD_technical_4_0}}{=} \frac{64\gamma^2 \sigma^4}{4(CR_0)^2L^2}\cdot\frac{T}{m^2}\overset{T\le N}{\le} \frac{16\gamma^2 N \sigma^4}{(CR_0)^2 L^2 m^2}.
    \end{eqnarray*}
    
    Now we summarize all bound that we have: probability event $E_{T-1}$ implies
    \begin{eqnarray*}
         R_T^2 &\overset{\eqref{eq:main_thm_clipped_SGD_technical_1}}{\le}& R_{0}^2 + 2\gamma^2 \sum\limits_{l=0}^{T-1}\|\theta_l\|^2_2 - 2\gamma \sum\limits_{l=0}^{T-1} \left\la x^l-x^*, \theta_l \right\ra\\
        &\overset{\eqref{eq:main_thm_clipped_SGD_technical_5}}{\le}& R_0^2 + \circledOne + \circledTwo + \circledThree + \circledFour + \circledFive,\\
        \circledTwo &\le& \frac{4\gamma N \sigma^2}{mL},\quad \circledFour \le \frac{72\gamma^2N\sigma^2}{m},\quad \circledFive \le \frac{16\gamma^2 N \sigma^4}{(CR_0)^2 L^2 m^2},\\
        \sum\limits_{l=0}^{T-1}\sigma_l^2 &\le& F,\quad \sum\limits_{l=0}^{T-1}\hat\sigma_l^2 \le F_1
    \end{eqnarray*}
    and
    \begin{equation*}
        \PP\{E_{T-1}\} \ge 1 - \frac{(T-1)\beta}{N},\quad \PP\{E_\circledOne\} \ge 1 - \frac{\beta}{2N},\quad \PP\{E_\circledThree\} \ge 1 - \frac{\beta}{2N},
    \end{equation*}
    where
    \begin{eqnarray*}
        E_{\circledOne} &=& \left\{\text{either } \sum\limits_{l=0}^{T-1}\sigma_l^2 > F \quad \text{or} \quad \left|\circledOne\right| \le 8\gamma (CR_0)^2L\ln\frac{4N}{\beta}\right\},\\
        E_{\circledThree} &=& \left\{\text{either } \sum\limits_{l=0}^{T-1}\hat\sigma_l^2 > F_1 \quad \text{or} \quad \left|\circledThree\right| \le 128\gamma^2 (CR_0)^2L^2\ln\frac{4N}{\beta}\right\}.
    \end{eqnarray*}
    Taking into account these inequalities and our assumptions on $m$ and $\gamma$ (see \eqref{eq:bathces_clipped_SGD} and \eqref{eq:step_size_clipped_SGD}) we get that probability event $E_{T-1}\cap E_\circledOne \cap E_\circledThree$ implies
    \begin{eqnarray}
         R_T^2 &\overset{\eqref{eq:main_thm_clipped_SGD_technical_1}}{\le}& R_{0}^2 + 2\gamma^2 \sum\limits_{l=0}^{T-1}\|\theta_l\|^2_2 - 2\gamma \sum\limits_{l=0}^{T-1} \left\la x^l-x^*, \theta_l \right\ra\notag\\
         &\le& R_0^2 + \left(\frac{1}{10} + \frac{1}{10} + \frac{1}{10} + \frac{1}{10} + \frac{1}{10}\right)C^2R_0^2 \le \left(1 + \frac{1}{2}C^2\right)R_0^2 \overset{\eqref{eq:C_definition_clipped_SGD}}{\le} C^2R_0^2.\label{eq:main_thm_clipped_SGD_technical_10}
    \end{eqnarray}
    Moreover, using union bound we derive
    \begin{equation}
        \PP\left\{E_{T-1}\cap E_\circledOne \cap E_\circledThree\right\} = 1 - \PP\left\{\overline{E}_{T-1}\cup\overline{E}_\circledOne\cup\overline{E}_\circledThree\right\} \ge 1 - \frac{T\beta}{N}.\label{eq:main_thm_clipped_SGD_technical_11}
    \end{equation}
    That is, by definition of $E_T$ and $E_{T-1}$ we have proved that
    \begin{eqnarray*}
        \PP\{E_T\} &\overset{\eqref{eq:main_thm_clipped_SGD_technical_10}}{\ge}& \PP\left\{E_{T-1}\cap E_\circledOne \cap E_\circledThree\right\} \overset{\eqref{eq:main_thm_clipped_SGD_technical_11}}{\ge} 1 - \frac{T\beta}{N},
    \end{eqnarray*}
    which implies that for all $k = 0,1,\ldots, N$ we have $\PP\{E_k\} \ge 1 - \frac{k\beta}{N}$. Then, for $k = N$ we have that with probability at least $1-\beta$
    \begin{eqnarray*}
        ANf(\Bar{x}^N) - f(x^*) \overset{\eqref{eq:main_thm_clipped_SGD_technical_0}}{\le} R_0^2 +  2\gamma^2 \sum\limits_{k=0}^{N-1}\|\theta_k\|^2_2 - 2\gamma \sum\limits_{k=0}^{N-1} \left\la x^k-x^*, \theta_k \right\ra \overset{\eqref{eq:main_thm_clipped_SGD_technical_2}}{\le} C^2R_0^2.
    \end{eqnarray*}
    Since $A = 2\gamma\left( 1 - 2 \gamma L \right)$ and $1-\gamma L \ge \frac{1}{2}$ we get that with probability at least $1-\beta$
    \begin{eqnarray*}
        f(\Bar{x}^N) - f(x^*) &\le& \frac{C^2R_0^2}{AN} \le \frac{C^2R_0^2}{ \gamma N} \overset{\eqref{eq:step_size_clipped_SGD}}{\le} \frac{80 C^2R_0^2L \ln\frac{4N}{\beta}}{N}.
    \end{eqnarray*}
    In other words, {\tt clipped-SGD} achieves $f(\Bar{x}^N) - f(x^*) \le \varepsilon$ with probability at least $1-\beta$ after $O\left(\frac{LR_0^2}{\varepsilon}\ln\frac{LR_0^2}{\varepsilon\beta}\right)$ iterations and requires
    \begin{eqnarray*}
         \sum\limits_{k=0}^{N-1}m_k &\overset{\eqref{eq:bathces_clipped_SGD}}{=}& \sum\limits_{k=0}^{N-1}O\left(\max\left\{1,\frac{N\sigma^2}{C^2R_0^2L^2\ln\frac{N}{\beta}}\right\}\right) = O\left(\max\left\{N,\frac{N^2\sigma^2}{C^2R_0^2L^2\ln\frac{N}{\beta}}\right\}\right)\\
         &=& O\left(\max\left\{\frac{LR_0^2}{\varepsilon},\frac{\sigma^2R_0^2}{\varepsilon^2}\right\}\ln\frac{LR_0^2}{\varepsilon\beta}\right)
    \end{eqnarray*}
    oracle calls.

\subsubsection{Proof of Theorem~\ref{thm:main_result_R_clipped_SGD}}
First of all, consider behavior of {\tt clipped-SGD} during the first run in {\tt R-clipped-SGD}. We notice that the proof of Theorem~\ref{thm:main_result_clipped_SGD} will be valid if we substitute $R_0$ everywhere by its upper bound $R$. From $\mu$-strong convexity of $f$ we have
\begin{equation*}
    R_0^2 = \|x^0 - x^*\|_2^2 \overset{\eqref{eq:str_cvx_cor}}{\le} \frac{2}{\mu}\left(f(x^0) - f(x^*)\right),
\end{equation*}
therefore, one can choose $R = \sqrt{\frac{2}{\mu}\left(f(x^0) - f(x^*)\right)}$. It implies that after $N_0$ iterations of {\tt clipped-SGD} we have
\begin{equation*}
    f(\Bar{x}^{N_0}) - f(x^*) \le \frac{80LC^2R^2\ln{\frac{4N_0\tau}{\beta}}}{N_0} = \frac{160LC^2R^2\ln{\frac{4N_0\tau}{\beta}}}{N_0\mu} (f(x^0) - f(x^*)).
\end{equation*}
with probability at least $1-\frac{\beta}{\tau}$, hence with the same probability $f(\Bar{x}^{N_0}) - f(x^*) \le \frac{1}{2}(f(x^0) - f(x^*))$ since $\frac{N_0}{\ln \frac{4N_0\tau}{\beta}} \ge \frac{320C^2L}{\mu}$. In other words, with probability at least $1-\frac{\beta}{\tau}$
\begin{equation*}
    f(\hat x^1) - f(x^*) \le \frac{1}{2}\left(f(x^0) - f(x^*)\right) = \frac{1}{4}\mu R^2.
\end{equation*}
Then, by induction one can show that for arbitrary $k\in\{0,1,\ldots,\tau-1\}$ the inequality
\begin{equation*}
    f(\hat x^{k+1}) - f(x^*) \le \frac{1}{2}\left(f(\hat x^k) - f(x^*)\right)
\end{equation*}
holds with probability at least $1-\frac{\beta}{\tau}$. Therefore, these inequalities hold simultaneously with probability at least $1 - \beta$. Using this we derive that inequality
\begin{eqnarray*}
    f(\hat x^{\tau}) - f(x^*) &\le& \frac{1}{2}\left(f(\hat x^{\tau-1}) - f(x^*)\right) \le \frac{1}{2^2}\left(f(\hat x^{\tau-2}) - f(x^*)\right) \le \ldots \le \frac{1}{2^\tau}\left(f(x^0) - f(x^*)\right)\\
    &=& \frac{\mu R^2}{2^{\tau+1}}
\end{eqnarray*}
holds with probability $\ge 1- \beta$. That is, after $\tau = \left\lceil\log_2\frac{\mu R^2}{2\varepsilon}\right\rceil$ restarts {\tt R-clipped-SGD} generates such point $\hat x^{\tau}$ that $f(\hat x^{\tau}) - f(x^*) \le \varepsilon$ with probability at least $1-\beta$. Moreover, if $\frac{N_0}{\ln \frac{4N_0\tau}{\beta}} \le \frac{C_1L}{\mu}$ with some numerical constant $C_1 \ge 320C^2$, then the total number of iterations of {\tt clipped-SGD} equals
\begin{equation*}
    N_0\tau = O\left(\frac{L}{\mu}\ln\left(\frac{\mu R^2}{\varepsilon}\right)\ln\left(\frac{L}{\mu\beta}\ln\frac{\mu R^2}{\varepsilon}\right)\right)
\end{equation*}
and the overall number of stochastic first-order oracle calls is
\begin{eqnarray*}
    \sum\limits_{t=0}^{\tau-1}N_0m^t &=& \sum\limits_{t=0}^{\tau-1}O\left(\max\left\{N_0,\frac{  2^tN_0^2\sigma^2}{R^2L^2\ln\frac{4N_0\tau}{\beta}}\right\}\right)\\
    &=& O\left(\max\left\{N_0\tau,\sum\limits_{t=0}^{\tau-1}\frac{  2^tN_0^2\sigma^2}{R^2L^2\ln\frac{4N_0\tau}{\beta}}\right\}\right)\\
    &=& O\left(\max\left\{\frac{L}{\mu}\ln\left(\frac{\mu R^2}{\varepsilon}\right), \frac{\sigma^2}{\mu\varepsilon}\right\} \ln\left(\frac{L}{\mu\beta}\ln\frac{\mu R^2}{\varepsilon}\right)\right).
\end{eqnarray*}

\subsubsection{Proof of Theorem~\ref{thm:main_result_clipped_SGD_2}}\label{sec:proof_str_cvx_clipped_SGD}
Since $f$ is $L$-smooth we have
    \begin{eqnarray}
        f(x^{k+1}) &\le& f(x^k) - \gamma\left\la \nabla f(x^k), \tnabla f(x^{k},\Bxi^k)\right\ra + \frac{L\gamma^2}{2}\|\tnabla f(x^{k},\Bxi^k)\|_2^2\notag\\
        &\overset{\eqref{eq:squared_norm_sum}}{\le}& f(x^k) - \gamma\|\nabla f(x^k)\|^2_2 - \gamma \left\la \nabla f(x^k), \theta_k\right\ra + L\gamma^2 \|\nabla f(x^k)\|^2_2 + L\gamma^2\|\theta_k\|^2_2\notag\\
        &=& f(x^k) - \gamma(1-L\gamma)\|\nabla f(x^k)\|_2^2 - \gamma \left\la \nabla f(x^k), \theta_k\right\ra + L\gamma^2\|\theta_k\|^2_2\notag\\
        &\le& f(x^k) - \frac{\gamma}{2}\|\nabla f(x^k)\|_2^2 - \gamma \left\la \nabla f(x^k), \theta_k\right\ra + L\gamma^2\|\theta_k\|_2^2,\notag\\
        \theta_{k} &\eqdef& \tnabla f(x^{k},\Bxi^k) - \nabla f(x^{k})\label{eq:main_thm_clipped_SGD_technical_theta_2_2}
    \end{eqnarray}
where in the last inequality we use $1-\gamma L \ge \frac{1}{2}$. Next, $\mu$-strong convexity of $f$ implies $\|\nabla f(x^k)\|_2^2 \ge 2\mu (f(x^k)-f(x^*))$ and
    \begin{eqnarray*}
        f(x^{k+1})-f(x^*)) &\le& f(x^k) - f(x^*) - \gamma\mu (f(x^k)-f(x^*)) - \gamma \left\la \nabla f(x^k), \theta_k \right\ra + L\gamma^2 \|\theta_k\|^2_2\\
        &=& (1-\gamma\mu)(f(x^k)-f(x^*)) - \gamma \left\la \nabla f(x^k), \theta_k\right\ra + L\gamma^2\|\theta_k\|_2^2.
    \end{eqnarray*}
Unrolling the recurrence we obtain
\begin{eqnarray}
        f(x^N)-f(x^*)) 
        &\le& (1-\gamma\mu)^N(f(x^0)-f(x^*)) + \gamma \sum\limits_{l=0}^{N-1} (1-\gamma\mu)^{N-1-l} \left\la-\nabla f(x^l), \theta_l \right\ra\notag\\
        && + L\gamma^2 \sum\limits_{l=0}^{N-1} (1-\gamma\mu)^{N-1-l}\|\theta_l\|_2^2,\label{eq:main_thm_clipped_SGD_technical_0_2}
\end{eqnarray}
for all $N\ge 0$. Using notation $r_k \eqdef f(x^k)-f(x^*)$ we rewrite this inequality in the following form:
    \begin{equation}
        r_k \le (1-\gamma\mu)^k r_0 + \gamma \sum\limits_{l=0}^{k-1} (1-\gamma\mu)^{k-1-l} \left\la-\nabla f(x^l), \theta_l \right\ra + L\gamma^2 \sum\limits_{l=0}^{k-1} (1-\gamma\mu)^{k-1-l}\|\theta_l\|_2^2. \label{eq:main_thm_clipped_SGD_technical_1_2}
    \end{equation}
    The rest of the proof is based on the refined analysis of inequality \eqref{eq:main_thm_clipped_SGD_technical_1_2}. In particular, via induction we prove that for all $k=0,1,\ldots, N$ with probability at least $1 - \frac{k\beta}{N}$ the following statement holds: inequalities
    \begin{eqnarray}
         r_t &\overset{\eqref{eq:main_thm_clipped_SGD_technical_1_2}}{\le}& (1-\gamma\mu)^t r_0 + \gamma \sum\limits_{l=0}^{t-1} (1-\gamma\mu)^{t-1-l} \left\la-\nabla f(x^l), \theta_l \right\ra + L\gamma^2 \sum\limits_{l=0}^{t-1} (1-\gamma\mu)^{t-1-l}\|\theta_l\|_2^2\notag\\
         &\le& 2(1-\gamma\mu)^t r_0 \label{eq:main_thm_clipped_SGD_technical_2_2}
    \end{eqnarray}
    hold for $t=0,1,\ldots,k$ simultaneously. Let us define the probability event when this statement holds as $E_k$. Then, our goal is to show that $\PP\{E_k\} \ge 1 - \frac{k\beta}{N}$ for all $k = 0,1,\ldots,N$. For $t = 0$ inequality \eqref{eq:main_thm_clipped_SGD_technical_2_2} holds with probability $1$ since $2(1-\gamma\mu)^0 \ge 1$, hence $\PP\{E_0\} = 1$. Next, assume that for some $k = T-1 \le N-1$ we have $\PP\{E_k\} = \PP\{E_{T-1}\} \ge 1 - \frac{(T-1)\beta}{N}$. Let us prove that $\PP\{E_{T}\} \ge 1 - \frac{T\beta}{N}$. First of all, probability event $E_{T-1}$ implies that
     \begin{eqnarray}
        f(x^t) - f(x^*) &\overset{\eqref{eq:main_thm_clipped_SGD_technical_2_2}}{\le}& 2(1-\gamma\mu)^t r_0\label{eq:main_thm_clipped_SGD_technical_3_str}
    \end{eqnarray}
     hold for $t=0,1,\ldots,T-1$. Since $f$ is $L$-smooth, we have that probability event $E_{T-1}$ implies
    \begin{eqnarray}
        \left\|\nabla f(x^{l})\right\|_2 \le \sqrt{2L(f(x^l)-f(x^*))} \le \sqrt{4L(1-\gamma\mu)^lr_0} = \frac{\lambda_l}{2} \label{eq:main_thm_clipped_SGD_technical_4_str}
    \end{eqnarray}
    for $t=0,\ldots,T-1$ and 
    \begin{eqnarray}
        \lambda_l = 4 \sqrt{L (1-\gamma\mu)^l r_0}. \label{eq:main_thm_clipped_SGD_technical_4_0_str}
    \end{eqnarray}
    Having inequalities \eqref{eq:main_thm_clipped_SGD_technical_4_str} in hand we show in the rest of the proof that \eqref{eq:main_thm_clipped_SGD_technical_2_2} holds for $t = T$ with big enough probability. First of all, we introduce new random variables:
    \begin{equation}
        \zeta_l = \begin{cases}-\nabla f(x^{l+1}),&\text{if } \|\nabla f(x^{l+1})\|_2 \le \frac{\lambda_l}{2},\\ 0,&\text{otherwise,} \end{cases} \label{eq:main_thm_clipped_SGD_technical_4_1_str}
    \end{equation}
    for $l=0,1,\ldots T-1$. Note that these random variables are bounded with probability $1$, i.e.\ with probability $1$ we have
    \begin{equation}
        \|\zeta_l\|_2 \le \frac{\lambda_l}{2}.\label{eq:main_thm_clipped_SGD_technical_4_2_str}
    \end{equation}
    Secondly, we use the introduced notation and get that $E_{T-1}$ implies 
    \begin{eqnarray*}
         r_T &\overset{\eqref{eq:main_thm_clipped_SGD_technical_1_2},\eqref{eq:main_thm_clipped_SGD_technical_2_2},\eqref{eq:main_thm_clipped_SGD_technical_4_str},\eqref{eq:main_thm_clipped_SGD_technical_4_1_str}}{\le}& (1-\gamma\mu)^Tr_0 + \gamma \sum\limits_{l=0}^{T-1}(1-\gamma\mu)^{T-1-l}\left\la \zeta_l, \theta_l \right\ra\\
         &&+ L \gamma^2 \sum\limits_{l=0}^{T-1} (1-\gamma\mu)^{T-1-l}\|\theta_l\|_2^2\notag.
    \end{eqnarray*}
    Finally, we do some preliminaries in order to apply Bernstein's inequality (see Lemma~\ref{lem:Bernstein_ineq}) and obtain that $E_{T-1}$ implies
    \begin{eqnarray}
        r_T &\overset{\eqref{eq:squared_norm_sum}}{\le}& (1-\gamma\mu)^Tr_0 + \underbrace{\gamma\sum\limits_{l=0}^{T-1}(1-\gamma\mu)^{T-1-l}\left\la\theta_{l}^u, \zeta_l\right\ra}_{\circledOne}+ \underbrace{\gamma\sum\limits_{l=0}^{T-1}(1-\gamma\mu)^{T-1-l}\left\la\theta_{l}^b, \zeta_l \right\ra}_{\circledTwo}\notag\\
        &&+ \underbrace{2L\gamma^2\sum\limits_{l=0}^{T-1}(1-\gamma\mu)^{T-1-l}\left(\|\theta_{l}^u\|_2^2 - \EE_{\Bxi^l}\left[\|\theta_{l}^u\|_2^2\right]\right)}_{\circledThree}\notag\\
        &&+ \underbrace{2L\gamma^2\sum\limits_{l=0}^{T-1}(1-\gamma\mu)^{T-1-l}\EE_{\Bxi^l}\left[\|\theta_{l}^u\|_2^2\right]}_{\circledFour} + \underbrace{2L\gamma^2\sum\limits_{l=0}^{T-1}(1-\gamma\mu)^{T-1-l}\|\theta_{l}^b\|_2^2}_{\circledFive}\label{eq:main_thm_clipped_SGD_technical_5_str}
    \end{eqnarray}
    where we introduce new notations:
    \begin{equation}
        \theta_{l}^u \eqdef \tnabla f(x^{l},\Bxi^l) - \EE_{\Bxi^l}\left[\tnabla f(x^{l},\Bxi^l)\right],\quad \theta_{l}^b \eqdef \EE_{\Bxi^l}\left[\tnabla f(x^{l},\Bxi^l)\right] - \nabla f(x^{l}),\label{eq:main_thm_clipped_SGD_technical_6_str}
    \end{equation}
    \begin{equation}
        \theta_{l} = \theta_{l}^u +\theta_{l}^b.\notag
    \end{equation}
    It remains to provide tight upper bounds for $\circledOne$, $\circledTwo$, $\circledThree$, $\circledFour$ and $\circledFive$, i.e.\ in the remaining part of the proof we show that $\circledOne+\circledTwo+\circledThree+\circledFour+\circledFive \le (1-\gamma\mu)^Tr_0$.
    
     \textbf{Upper bound for $\circledOne$.} First of all, since $\EE_{\Bxi^l}[\theta_{l}^u] = 0$ summands in $\circledOne$ are conditionally unbiased:
    \begin{equation*}
        \EE_{\Bxi^l}\left[\gamma (1-\gamma\mu)^{T-1-l}\left\la \theta_l^u, \zeta_l\right\ra\right] = 0.
    \end{equation*}
    Secondly, these summands are bounded with probability $1$:
    \begin{eqnarray*}
        \left|\gamma (1-\gamma\mu)^{T-1-l}\left\la \theta_l^u, \zeta_l\right\ra\right| &\le& \gamma(1-\gamma\mu)^{T-1-l}\|\theta_{l}^u\|_2\left\|\zeta_l\right\|_2\\
        &\overset{\eqref{eq:magnitude_bound_clipped_SSTM},\eqref{eq:main_thm_clipped_SGD_technical_4_2_str}}{\le}& \gamma (1-\gamma\mu)^{T-1-l} \lambda_l^2 \overset{\eqref{eq:main_thm_clipped_SGD_technical_4_0_str}}{=} 16\gamma Lr_0(1-\gamma\mu)^{T-1}.
    \end{eqnarray*}
     Finally, one can bound conditional variances $\sigma_l^2 \eqdef \EE_{\Bxi^l}\left[\gamma^2 (1-\gamma\mu)^{2(T-1-l)}\left\la \theta_l^u, \zeta_l\right\ra^2\right]$ in the following way:
    \begin{eqnarray}
         \sigma_l^2 &\le& \EE_{\Bxi^l}\left[\gamma^2 (1-\gamma\mu)^{2(T-1-l)} \|\theta_l^u\|_2^2, \|\zeta_l\|_2^2\right]\overset{\eqref{eq:main_thm_clipped_SGD_technical_4_2_str}}{\le}\gamma^2 (1-\gamma\mu)^{2(T-1-l)}\frac{\lambda^2}{4}\EE_{\Bxi^l}\left[\left\|\theta_{l}^u\right\|_2^2\right]\notag\\
         &\overset{\eqref{eq:main_thm_clipped_SGD_technical_4_0_str}}{\le}& 4\gamma^2 Lr_0 (1-\gamma\mu)^{2(T-1)-l}\EE_{\Bxi^l}\left[\left\|\theta_{l}^u\right\|_2^2\right].\label{eq:main_thm_clipped_SGD_technical_7_str}
    \end{eqnarray}
    In other words, sequence $\left\{\gamma (1-\gamma\mu)^{T-1-l}\left\la \theta_l^u, \zeta_l\right\ra\right\}_{l\ge 0}$ is a bounded martingale difference sequence with bounded conditional variances $\{\sigma_l^2\}_{l\ge 0}$. Therefore, we can apply Bernstein's inequality, i.e.\ we apply Lemma~\ref{lem:Bernstein_ineq} with $X_l = \gamma (1-\gamma\mu)^{T-1-l}\left\la \theta_l^u, \zeta_l\right\ra$, $c = 16\gamma Lr_0(1-\gamma\mu)^{T-1}$ and $F = \frac{c^2\ln\frac{4N}{\beta}}{6}$ and get that for all $b > 0$
    \begin{equation*}
        \PP\left\{\left|\sum\limits_{l=0}^{T-1}X_l\right| > b\text{ and } \sum\limits_{l=0}^{T-1}\sigma_l^2 \le F\right\} \le 2\exp\left(-\frac{b^2}{2F + \nicefrac{2cb}{3}}\right)
    \end{equation*}
    or, equivalently, with probability at least $1 - 2\exp\left(-\frac{b^2}{2F + \nicefrac{2cb}{3}}\right)$
    \begin{equation*}
        \text{either } \sum\limits_{l=0}^{T-1}\sigma_l^2 > F \quad \text{or} \quad \underbrace{\left|\sum\limits_{l=0}^{T-1}X_l\right|}_{|\circledOne|} \le b.
    \end{equation*}
    The choice of $F$ will be clarified further, let us now choose $b$ in such a way that $2\exp\left(-\frac{b^2}{2F + \nicefrac{2cb}{3}}\right) = \frac{\beta}{2N}$. This implies that $b$ is the positive root of the quadratic equation \begin{equation*}
        b^2 - \frac{2c\ln\frac{4N}{\beta}}{3}b - 2F\ln\frac{4N}{\beta} = 0,
    \end{equation*}
    hence
    \begin{eqnarray*}
         b &=& \frac{c\ln\frac{4N}{\beta}}{3} + \sqrt{\frac{c^2\ln^2\frac{4N}{\beta}}{9} + 2F\ln\frac{4N}{\beta}}=\frac{c\ln\frac{4N}{\beta}}{3} + \sqrt{\frac{4c^2\ln^2\frac{4N}{\beta}}{9}}\\
         &=& c\ln\frac{4N}{\beta} = 16\gamma Lr_0(1-\gamma\mu)^{T-1}\ln\frac{4N}{\beta}.
    \end{eqnarray*}
    That is, with probability at least $1 - \frac{\beta}{2N}$
     \begin{equation*}
        \underbrace{\text{either } \sum\limits_{l=0}^{T-1}\sigma_l^2 > F \quad \text{or} \quad \left|\circledOne\right| \le 16\gamma Lr_0(1-\gamma\mu)^{T-1}\ln\frac{4N}{\beta}}_{\text{probability event } E_{\circledOne}}.
    \end{equation*}
    Next, we notice that probability event $E_{T-1}$ implies that
    \begin{eqnarray*}
        \sum\limits_{l=0}^{T-1}\sigma_l^2  &\overset{\eqref{eq:main_thm_clipped_SGD_technical_7_str}}{\le}& 4\gamma^2 Lr_0 \sigma^2 (1-\gamma \mu)^{2(T-1)} \sum\limits_{l=0}^{T-1}\EE_{\Bxi^l}\left[\left\|\theta_l^u\right\|_2^2\right]\\
        &\overset{\eqref{eq:variance_bound_clipped_SSTM}}{\le}&72\gamma^2 Lr_0 \sigma^2 (1-\gamma \mu)^{2(T-1)} \sum\limits_{l=0}^{T-1}\frac{1}{m_l (1-\gamma\mu)^l}\\
        &\overset{\eqref{eq:bathces_clipped_SGD_2}}{\le}& \frac{128}{3} \gamma^2L^2r_0^2(1-\gamma\mu)^{2(T-1)}\ln\frac{4N}{\beta} = \frac{c^2\ln\frac{4N}{\beta}}{6} = F,
    \end{eqnarray*}
    where the last inequality follows from $c = 16\gamma Lr_0(1-\gamma\mu)^{T-1}$ and simple arithmetic.
    
    \textbf{Upper bound for $\circledTwo$.} First of all, we notice that probability event $E_{T-1}$ implies
     \begin{eqnarray*}
        \gamma(1-\gamma\mu)^{T-1-l}\left\la\theta_{l}^b, \zeta_l \right\ra &\le& \gamma(1-\gamma\mu)^{T-1-l}\left\|\theta_{l}^b\right\|_2\left\|\zeta_l\right\|_2\\
        &\overset{\eqref{eq:bias_bound_clipped_SSTM},\eqref{eq:main_thm_clipped_SGD_technical_4_2_str}}{\le}&  \gamma(1-\gamma\mu)^{T-1-l}\frac{4\sigma^2}{m_l \lambda_{l}}\frac{\lambda_l}{2}\\
        &=& \frac{2\sigma^2 \gamma (1-\gamma\mu)^{T-1-l}\sigma^2}{m_l}\\
        &\overset{\eqref{eq:bathces_clipped_SGD_2}}{=}&
        \frac{64}{27}\frac{\gamma L r_0 (1-\gamma\mu)^{T-1}\ln \frac{4N}{\beta}}{N}.
    \end{eqnarray*}
    This implies that
    \begin{eqnarray*}
        \circledTwo &=& \sum\limits_{l=0}^{T-1}\gamma(1-\gamma\mu)^{T-1-l}\left\la\theta_{l}^b,\zeta_l\right\ra \overset{T\le N}{\le}\frac{64}{27}\gamma L r_0 (1-\gamma\mu)^{T-1}\ln \frac{4N}{\beta}.
    \end{eqnarray*}
    
    \textbf{Upper bound for $\circledThree$.} We derive the upper bound for $\circledThree$ using the same technique as for $\circledOne$. First of all, we notice that the summands in $\circledThree$ are conditionally independent:
    \begin{equation*}
        \EE_{\Bxi^l}\left[2L\gamma^2(1-\gamma\mu)^{T-1-l}\left(\|\theta_{l}^u\|_2^2 - \EE_{\Bxi^l}\left[\|\theta_{l}^u\|_2^2\right]\right)\right] = 0.
    \end{equation*}
     Secondly, the summands are bounded with probability $1$:
    \begin{eqnarray}
        \left|2L\gamma^2(1-\gamma\mu)^{T-1-l}\left(\|\theta_{l}^u\|_2^2 - \EE_{\Bxi^l}\left[\|\theta_{l}^u\|_2^2\right]\right)\right| &\le& 2L\gamma^2(1-\gamma\mu)^{T-1-l}\left(\|\theta_{l}^u\|_2^2 + \EE_{\Bxi^l}\left[\|\theta_{l}^u\|_2^2\right]\right)\notag\\
        &\overset{\eqref{eq:magnitude_bound_clipped_SSTM}}{\le}& 2L\gamma^2(1-\gamma\mu)^{T-1-l}\left(4\lambda_{l}^2 + 4\lambda_{l}^2\right)\notag\\
        &\overset{\eqref{eq:main_thm_clipped_SGD_technical_4_0_str}}{=}& 256 \gamma^2 L^2 r_0 (1-\gamma\mu)^{T-1} \eqdef c_1.\label{eq:main_thm_clipped_SGD_technical_8_str}
    \end{eqnarray}
    Finally, one can bound conditional variances $\hat \sigma_l^2 \eqdef \EE_{\Bxi^l}\left[\left|2L\gamma^2(1-\gamma\mu)^{T-1-l}\left(\|\theta_{l}^u\|_2^2 - \EE_{\Bxi^l}\left[\|\theta_{l}^u\|_2^2\right]\right)\right|^2\right]$ in the following way:
    \begin{eqnarray}
         \hat \sigma_l^2 &\overset{\eqref{eq:main_thm_clipped_SGD_technical_8_str}}{\le}& c_1\EE_{\Bxi^l}\left[\left|2L\gamma^2(1-\gamma\mu)^{T-1-l}\left(\|\theta_{l}^u\|_2^2 - \EE_{\Bxi^l}\left[\|\theta_{l}^u\|_2^2\right]\right)\right|\right]\notag\\
         &\le& 2L\gamma^2(1-\gamma\mu)^{T-1-l}c_1\EE_{\Bxi^l}\left[\|\theta_{l}^u\|_2^2 + \EE_{\Bxi^l}\left[\|\theta_{l}^u\|_2^2\right]\right] \notag\\
         &=& 4L\gamma^2(1-\gamma\mu)^{T-1-l}c_1\EE_{\Bxi^l}\left[\|\theta_{l}^u\|_2^2\right].\label{eq:main_thm_clipped_SGD_technical_9_str}
    \end{eqnarray}
    In other words, sequence $\left\{2L\gamma^2(1-\gamma\mu)^{T-1-l}\left(\|\theta_{l}^u\|_2^2 - \EE_{\Bxi^l}\left[\|\theta_{l}^u\|_2^2\right]\right)\right\}_{l\ge 0}$ is a bounded martingale difference sequence with bounded conditional variances $\{\hat \sigma_l^2\}_{l\ge 0}$. Therefore, we can apply Bernstein's inequality, i.e.\ we apply Lemma~\ref{lem:Bernstein_ineq} with $X_l = \hat X_l = 2L\gamma^2(1-\gamma\mu)^{T-1-l}\left(\|\theta_{l}^u\|_2^2 - \EE_{\Bxi^l}\left[\|\theta_{l}^u\|_2^2\right]\right)$, $c = c_1 = 256 \gamma^2 L^2 r_0 (1-\gamma\mu)^{T-1}$ and $F = F_1 = \frac{c_1^2\ln\frac{4N}{\beta}}{6}$ and get that for all $b > 0$
    \begin{equation*}
        \PP\left\{\left|\sum\limits_{l=0}^{T-1}\hat X_l\right| > b\text{ and } \sum\limits_{l=0}^{T-1}\hat \sigma_l^2 \le F_1\right\} \le 2\exp\left(-\frac{b^2}{2F_1 + \nicefrac{2c_1b}{3}}\right)
    \end{equation*}
    or, equivalently, with probability at least $1 - 2\exp\left(-\frac{b^2}{2F_1 + \nicefrac{2c_1b}{3}}\right)$
    \begin{equation*}
        \text{either } \sum\limits_{l=0}^{T-1}\hat\sigma_l^2 > F_1 \quad \text{or} \quad \underbrace{\left|\sum\limits_{l=0}^{T-1}\hat X_l\right|}_{|\circledThree|} \le b.
    \end{equation*}
    As in our derivations of the upper bound for $\circledOne$ we choose such $b$ that $2\exp\left(-\frac{b^2}{2F_1 + \nicefrac{2c_1b}{3}}\right) = \frac{\beta}{2N}$, i.e.\
    \begin{eqnarray*}
         b &=& \frac{c_1\ln\frac{4N}{\beta}}{3} + \sqrt{\frac{c_1^2\ln^2\frac{4N}{\beta}}{9} + 2F_1\ln\frac{4N}{\beta}} = c_1\ln\frac{4N}{\beta} = 256 \gamma^2 L^2 r_0 (1-\gamma\mu)^{T-1}\ln\frac{4N}{\beta}.
    \end{eqnarray*}
    That is, with probability at least $1 - \frac{\beta}{2N}$
     \begin{equation*}
        \underbrace{\text{either } \sum\limits_{l=0}^{T-1}\hat\sigma_l^2 > F_1 \quad \text{or} \quad \left|\circledThree\right| \le 256 \gamma^2 L^2 r_0 (1-\gamma\mu)^{T-1}\ln\frac{4N}{\beta}}_{\text{probability event } E_{\circledThree}}.
    \end{equation*}
     Next, we notice that probability event $E_{T-1}$ implies that
    \begin{eqnarray*}
        \sum\limits_{l=0}^{T-1}\hat\sigma_l^2 &\overset{\eqref{eq:main_thm_clipped_SGD_technical_9_str}}{\le}& 4L\gamma^2 (1-\gamma\mu)^{T-1}c_1 \sum\limits_{l=0}^{T-1}\frac{1}{(1-\gamma\mu)^{l}}\EE_{\Bxi^l}\left[\left\|\theta_{l}^u\right\|_2^2\right]\\
        &\overset{\eqref{eq:variance_bound_clipped_SSTM}}{\le}& 72L\gamma^2 (1-\gamma\mu)^{T-1}c_1\sigma^2 \sum\limits_{l=0}^{T-1}\frac{1}{(1-\gamma\mu)^{l}}\frac{1}{m_l}\overset{\eqref{eq:bathces_clipped_SGD_2},T\le N}{\le} \frac{c_1^2\ln\frac{4N}{\beta}}{6} = F_1.
    \end{eqnarray*}
    
    \textbf{Upper bound for $\circledFour$.} The probability event $E_{T-1}$ implies
    \begin{eqnarray*}
        \circledFour &=& 2L\gamma^2\sum\limits_{l=0}^{T-1}(1-\gamma\mu)^{T-1-l}\EE_{\Bxi^l}\left[\|\theta_{l}^u\|_2^2\right] \overset{\eqref{eq:variance_bound_clipped_SSTM}}{\le} 2L\gamma^2(1-\gamma\mu)^{T-1} \sum\limits_{l=0}^{T-1} \frac{1}{(1-\gamma\mu)^l}\frac{18\sigma^2}{m_l} \\
        &\overset{\eqref{eq:bathces_clipped_SGD_2},T\le N}{\le}& \frac{64}{3} \gamma^2 L^2 r_0 (1-\gamma\mu)^{T-1}\ln\frac{4N}{\beta}.
    \end{eqnarray*}
    
     \textbf{Upper bound for $\circledFive$.} Again, we use corollaries of probability event $E_{T-1}$:
    \begin{eqnarray*}
        \circledFive &=& 2L\gamma^2\sum\limits_{l=0}^{T-1}(1-\gamma\mu)^{T-1-l}\|\theta_{l}^b\|_2^2 \overset{\eqref{eq:bias_bound_clipped_SSTM}}{\le} 2L\gamma^2 (1-\gamma\mu)^{T-1} \sum\limits_{l=0}^{T-1}\frac{1}{(1-\gamma\mu)^l}\frac{16\sigma^4}{m_l^2\lambda_{l}^2} \\
        &\overset{\eqref{eq:main_thm_clipped_SGD_technical_4_0_str},\eqref{eq:bathces_clipped_SGD_2}}{=}& \frac{512}{729} \gamma^2L^2r_0 (1-\gamma\mu)^{T-1}\ln^2\frac{4N}{\beta}\sum\limits_{l=0}^{T-1}\frac{1}{N^2}\\
        &\overset{T\le N}{\le}& \frac{512}{729} \frac{\gamma^2L^2r_0(1-\gamma\mu)^{T-1}\ln^2\frac{4N}{\beta}}{N}.
    \end{eqnarray*}
     Now we summarize all bounds that we have: probability event $E_{T-1}$ implies
      \begin{eqnarray*}
        r_T &\overset{\eqref{eq:main_thm_clipped_SGD_technical_1_2}}{\le}& (1-\gamma\mu)^T r_0 + \gamma \sum\limits_{l=0}^{T-1} (1-\gamma\mu)^{T-1-l} \left\la-\nabla f(x^l), \theta_l \right\ra + L\gamma^2 \sum\limits_{l=0}^{T-1} (1-\gamma\mu)^{T-1-l}\|\theta_l\|_2^2\\
        &\overset{\eqref{eq:main_thm_clipped_SGD_technical_5_str}}{\le}& (1-\gamma\mu)^T r_0 + \circledOne + \circledTwo + \circledThree + \circledFour + \circledFive,\\
        \circledTwo &\le& \frac{32}{27}\gamma L r_0 (1-\gamma\mu)^{T-1}\ln \frac{4N}{\beta},\quad \circledFour \le \frac{64}{3} \gamma^2 L^2 r_0 (1-\gamma\mu)^{T-1}\ln\frac{4N}{\beta},\\ \circledFive &\le& \frac{512}{729} \frac{\gamma^2L^2r_0(1-\gamma\mu)^{T-1}\ln^2\frac{4N}{\beta}}{N},\quad
        \sum\limits_{l=0}^{T-1}\sigma_l^2 \le F,\quad \sum\limits_{l=0}^{T-1}\hat\sigma_l^2 \le F_1
    \end{eqnarray*}
    and
    \begin{equation*}
        \PP\{E_{T-1}\} \ge 1 - \frac{(T-1)\beta}{N},\quad \PP\{E_\circledOne\} \ge 1 - \frac{\beta}{2N},\quad \PP\{E_\circledThree\} \ge 1 - \frac{\beta}{2N},
    \end{equation*}
    where
    \begin{eqnarray*}
        E_{\circledOne} &=& \left\{\text{either } \sum\limits_{l=0}^{T-1}\sigma_l^2 > F \quad \text{or} \quad \left|\circledOne\right| \le 16\gamma Lr_0(1-\gamma\mu)^{T-1}\ln\frac{4N}{\beta}\right\},\\
        E_{\circledThree} &=& \left\{\text{either } \sum\limits_{l=0}^{T-1}\hat\sigma_l^2 > F_1 \quad \text{or} \quad \left|\circledThree\right| \le 256 \gamma^2 L^2 r_0 (1-\gamma\mu)^{T-1}\ln\frac{4N}{\beta}\right\}.
    \end{eqnarray*}
    Taking into account these inequalities and our assumptions on $m_k$ and $\gamma$ (see \eqref{eq:bathces_clipped_SGD_2} and \eqref{eq:step_size_clipped_SGD_2}) we get that probability event $E_{T-1}\cap E_\circledOne \cap E_\circledThree$ implies
    \begin{eqnarray}
         r_T &\overset{\eqref{eq:main_thm_clipped_SGD_technical_1_2}}{\le}& (1-\gamma\mu)^T r_0 + \gamma \sum\limits_{l=0}^{T-1} (1-\gamma\mu)^{T-1-l} \left\la-\nabla f(x^l), \theta_l \right\ra + L\gamma^2 \sum\limits_{l=0}^{T-1} (1-\gamma\mu)^{T-1-l}\|\theta_l\|_2^2\notag\\
         &\le& (1-\gamma\mu)^T r_0 + \left(\frac{1}{5} + \frac{1}{5} + \frac{1}{5} + \frac{1}{5} + \frac{1}{5}\right)(1-\gamma\mu)^T r_0 = 2(1-\gamma\mu)^T r_0 .\label{eq:main_thm_clipped_SGD_technical_10_str}
    \end{eqnarray}
     Moreover, using union bound we derive
    \begin{equation}
        \PP\left\{E_{T-1}\cap E_\circledOne \cap E_\circledThree\right\} = 1 - \PP\left\{\overline{E}_{T-1}\cup\overline{E}_\circledOne\cup\overline{E}_\circledThree\right\} \ge 1 - \frac{T\beta}{N}.\label{eq:main_thm_clipped_SGD_technical_11_str}
    \end{equation}
    That is, by definition of $E_T$ and $E_{T-1}$ we have proved that
    \begin{eqnarray*}
        \PP\{E_T\} &\overset{\eqref{eq:main_thm_clipped_SGD_technical_10_str}}{\ge}& \PP\left\{E_{T-1}\cap E_\circledOne \cap E_\circledThree\right\} \overset{\eqref{eq:main_thm_clipped_SGD_technical_11_str}}{\ge} 1 - \frac{T\beta}{N},
    \end{eqnarray*}
    which implies that for all $k = 0,1,\ldots, N$ we have $\PP\{E_k\} \ge 1 - \frac{k\beta}{N}$. Then, for $k = N$ we have that with probability at least $1-\beta$
    \begin{eqnarray}
        f(x^N)-f(x^*)) 
        &\overset{\eqref{eq:main_thm_clipped_SGD_technical_0_2}}{\le}& (1-\gamma\mu)^N(f(x^0)-f(x^*)) + \gamma \sum\limits_{l=0}^{N-1} (1-\gamma\mu)^{N-1-l} \left\la-\nabla f(x^l), \theta_l \right\ra\notag\\
        && + L\gamma^2 \sum\limits_{l=0}^{N-1} (1-\gamma\mu)^{N-1-l}\|\theta_l\|_2^2 \overset{\eqref{eq:main_thm_clipped_SGD_technical_2_2}}{\le} 2(1-\gamma\mu)^N(f(x^0)-f(x^*)).
    \end{eqnarray}
    As a result, we get that with probability at least $1-\beta$
    \begin{eqnarray*}
        f(x^N) - f(x^*) &\le& 2(1-\gamma\mu)^N(f(x^0)-f(x^*)) \le 2\exp\left(-\gamma\mu N\right)(f(x^0)-f(x^*))\\
        &\overset{\eqref{eq:step_size_clipped_SGD_2}}{\le}& 2\exp\left(-\frac{\mu N}{80L\ln\frac{4N}{\beta}}\right)(f(x^0)-f(x^*)).
    \end{eqnarray*}
    In other words, {\tt clipped-SGD} achieves $
    f(x^N) - f(x^*) \le \varepsilon$ with probability at least $1-\beta$ after
    \begin{eqnarray*}
    O\left(\frac{L}{\mu}\ln\left(\frac{r_0}{\varepsilon}\right)\ln\left(\frac{L}{\mu \beta}\ln\left(\frac{r_0}{\varepsilon}\right)\right)\right)\end{eqnarray*} 
    iterations, where $r_0=f(x^0)-f(x^*)$ and requires
    \begin{eqnarray*}
         \sum\limits_{k=0}^{N-1}m_k &\overset{\eqref{eq:bathces_clipped_SGD_2}}{=}& \sum\limits_{k=0}^{N-1}O\left(\max\left\{1,\frac{N\sigma^2}{Lr_0(1-\gamma\mu)^k\ln\frac{4N}{\beta}}\right\}\right)\\
         &\overset{\eqref{eq:step_size_clipped_SGD_2}}{=}& O\left(\max\left\{N,\frac{N\sigma^2}{\mu r_0 (1-\gamma\mu)^{N-1}}\right\}\right) = O\left(\max\left\{N,\frac{N\sigma^2}{\mu \varepsilon}\right\}\right)\\
         &=& O\left(\max\left\{\frac{L}{\mu},\frac{\sigma^2}{\mu\varepsilon}\cdot\frac{L}{\mu}\right\}\ln\left(\frac{r_0}{\varepsilon}\right)\ln\left(\frac{L}{\mu\beta}\ln\left(\frac{r_0}{\varepsilon}\right)\right)\right).
    \end{eqnarray*}
    oracle calls.

\clearpage

\section{Extra Experiments}\label{sec:extra_exp}

\subsection{Detailed Description of Experiments from Section~\ref{sec:motivation}}\label{sec:toy_details}
In this section we provide a detailed description of experiments from Section~\ref{sec:motivation} together with additional experiments. In these experiments we consider the following problem:
\begin{equation}
    \min\limits_{x\in\R^n} f(x),\quad f(x) = \nicefrac{\|x\|_2^2}{2} = \EE_\xi\left[f(x,\xi)\right],\quad f(x,\xi) = \nicefrac{\|x\|_2^2}{2} + \la \xi, x\ra\label{eq:toy_problem}
\end{equation}
where $\xi$ is a random vector with zero mean and bounded variance. Clearly, $f(x)$ is $\mu$-strongly convex and $L$-smooth with $\mu = L =1$. We assume that $\EE\left[\|\xi\|_2^2\right] \le \sigma^2$ for some non-negative number $\sigma$. Then, the stochastic gradient $\nabla f(x,\xi) = x + \xi$ satisfies conditions \eqref{eq:bounded_variance_clipped_SSTM} and the state-of-the-art theory (e.g.\ \cite{gorbunov2019unified, gower2019sgd}) says that after $k$ iterations of {\tt SGD} with constant stepsize $\gamma \le \nicefrac{1}{L} = 1$ we have $\EE\left[\|x^k - x^*\|_2^2\right] \le (1 - \gamma\mu)^k\|x^0 - x^*\|_2^2 + \nicefrac{\gamma\sigma^2}{\mu}.$
Taking into account that for our problem $x^* = 0$, $f(x) = \frac{1}{2}\|x\|_2^2$, $f(x^*) = 0$ and $\mu = 1$ we derive
\begin{equation}
    \EE\left[f(x^k) - f(x^*)\right] \le (1 - \gamma)^k\left(f(x^0) - f(x^*)\right) + \nicefrac{\gamma\sigma^2}{2}.\label{eq:conv_in_exp_toy}
\end{equation}
That is, for given $k$ the r.h.s.\ of the formula above depends only on the stepsize $\gamma$, initial suboptimality $f(x^0) - f(x^*)$ and the variance $\sigma$.

We emphasize that the obtained bound and the convergence in expectation itself does not imply non-trivial upper bound for $f(x^k) - f(x^*)$ with high-probability without additional assumptions on the distribution of random vector $\xi$. In fact, the trajectory of {\tt SGD} significantly depends on the distribution of $\xi$. To illustrate this we consider $3$ different distributions of $\xi$ with the same $\sigma$.
\begin{enumerate}
    \item In the first case we consider $\xi$ from standard normal distribution, i.e.\ $\xi$ is a Gaussian random vector with zero mean and covariance matrix $I$. Clearly, in this situation $\sigma^2 = n$.
    \item Next, we consider a random vector $\xi$ with i.i.d.\ components having Weibull distribution \cite{weibull1951statistical}. The cumulative distribution function (CDF) for Weibull distribution with parameters $c>0$ and $\alpha>0$ is
    \begin{equation}
        \text{CDF}_{W}(x) = \begin{cases}1 - \exp\left(-\left(\frac{x}{\alpha}\right)^c\right),& \text{if } x\ge 0,\\
        0,&\text{if } x < 0.\end{cases}\label{eq:weibull_distribution}
    \end{equation}
    There are explicit formulas for mean and variance for Weibull distribution:
    \begin{equation*}
        \text{mean} = \alpha\Gamma\left(1+\frac{1}{c}\right),\quad \text{variance} = \alpha^2\left(\Gamma\left(1 + \frac{2}{c}\right) - \left(\Gamma\left(1 + \frac{1}{c}\right)\right)^2\right),
    \end{equation*}
    where $\Gamma$ denotes the gamma function. Having these formulas one can easily shift and scale the distribution in order to get a random variable with zero mean and the variance equal $1$. 
    
    In our experiments, we take $c = 0.2$, 
    \begin{equation*}
        \alpha = \frac{1}{\sqrt{\Gamma\left(1 + \frac{2}{c}\right) - \left(\Gamma\left(1 + \frac{1}{c}\right)\right)^2}},
    \end{equation*}
    shift the distribution by $-\alpha\Gamma\left(1+\frac{1}{c}\right)$ and sample from the obtained distribution $n$ i.i.d.\ random variables to form $\xi$. Such a choice of parameters implies that $\EE[\xi] = 0$ and $\EE[\|\xi\|_2^2] = n$.
    \item Finally, we consider a random vector $\xi$ with i.i.d.\ components having Burr Type XII distribution \cite{burr1942cumulative} having the following cumulative distribution function
    \begin{equation}
        \text{CDF}_{B}(x) = \begin{cases}1 - \left(1+x^c\right)^{-d},& \text{if } x> 0,\\
        0,&\text{if } x \le 0,\end{cases}\label{eq:burr_distribution}
    \end{equation}
    where $c>0$ and $d > 0$ are the positive parameters. There are explicit formulas for mean and variance for Burr distribution:
    \begin{equation*}
        \text{mean} = \mu_1,\quad \text{variance} = -\mu_1^2 + \mu_2,
    \end{equation*}
    where the $r$-th moment (if exists) is defined as follows \cite{mclaughlin2001compendium}:
    \begin{equation*}
        \mu_r = d\text{B}\left(\frac{cd - r}{c}, \frac{c+r}{c}\right),
    \end{equation*}
    where $\text{B}$ denotes the beta function.
    
    In our experiments, we take $c = 1$ and $d=2.3$ and then apply shifts and scales similarly to the case with Weibull distribution. Again, such a choice of parameters implies that $\EE[\xi] = 0$ and $\EE[\|\xi\|_2^2] = n$.
\end{enumerate}

For all experiments we considered the dimension $n=100$, the stepsize $\gamma = 0.001$ and for {\tt clipped-SGD} we set $\lambda = 100$. The result of $10$ independent runs of {\tt SGD} and {\tt clipped-SGD} are presented in Figures~\ref{fig:tou_runs1}-\ref{fig:tou_runs5}. These numerical tests show that for Weibull and Burr Type XII distributions {\tt SGD} have significantly larger oscillations than for Gaussian distribution in all $10$ tests. In contrast, {\tt clipped-SGD} behaves much more robust in all $3$ cases during all $10$ runs without significant oscillations.

\begin{figure}[h]
    \centering
    \includegraphics[width=0.32\textwidth]{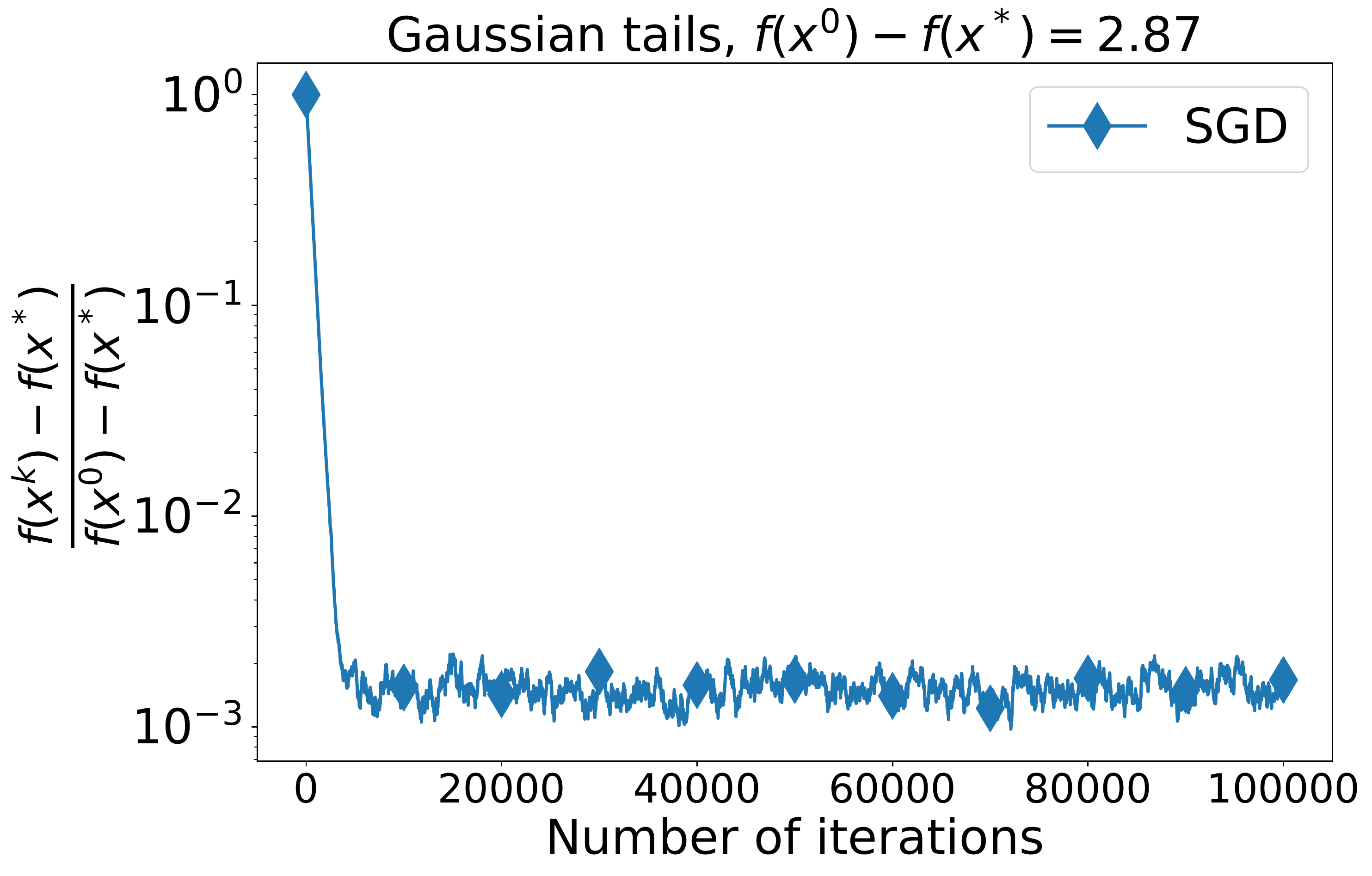}
    \includegraphics[width=0.32\textwidth]{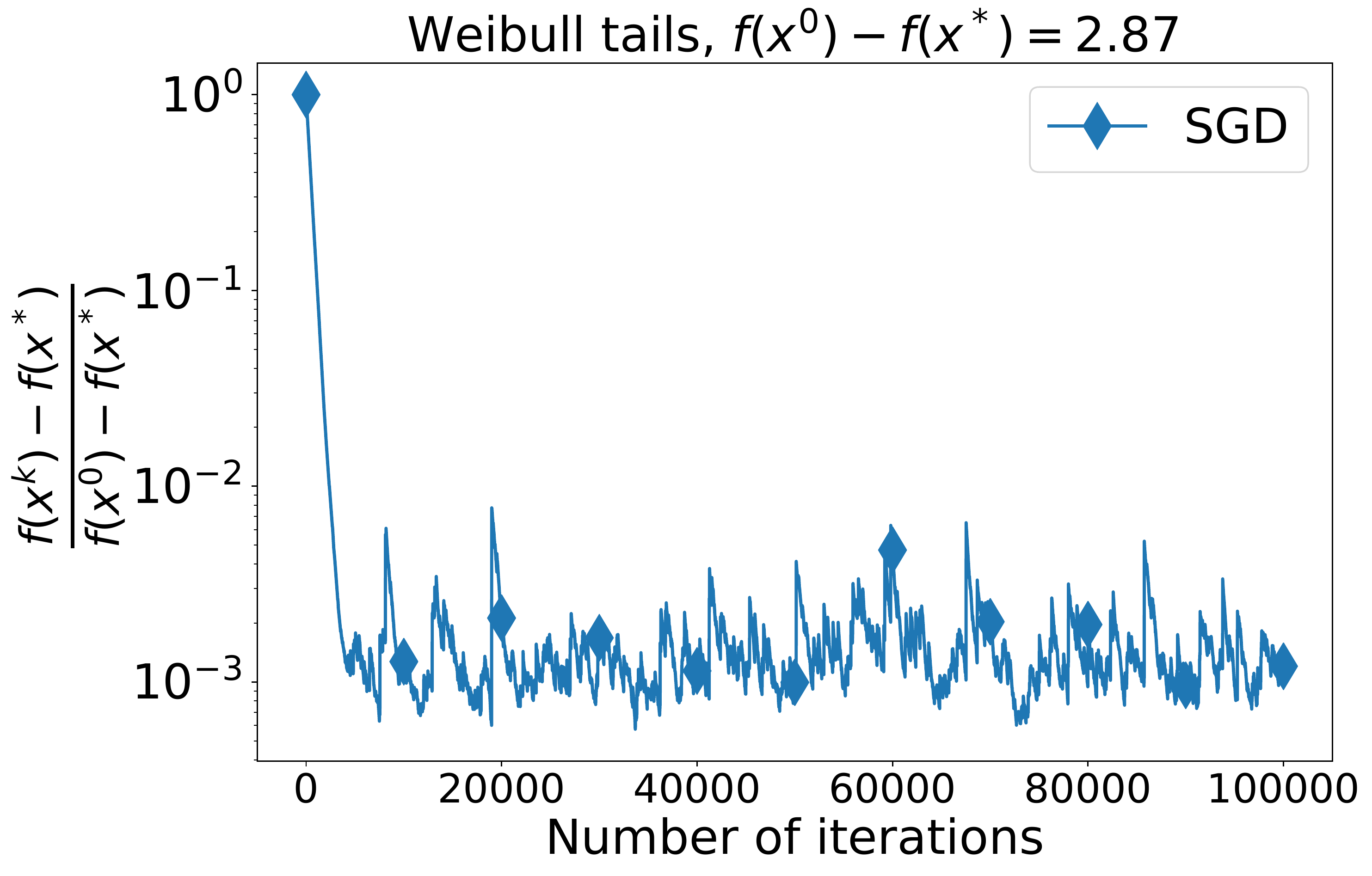}
    \includegraphics[width=0.32\textwidth]{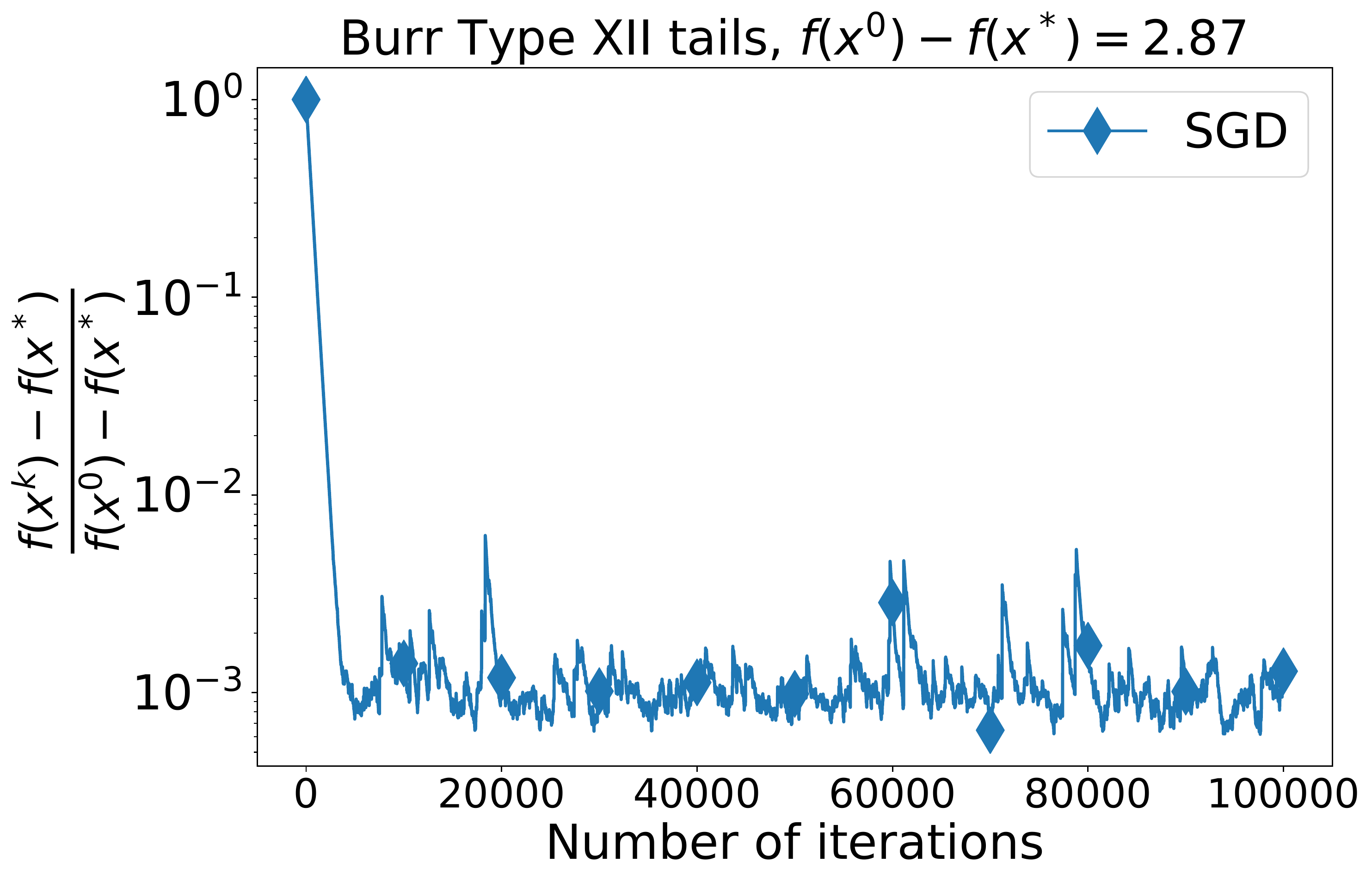}
    \includegraphics[width=0.32\textwidth]{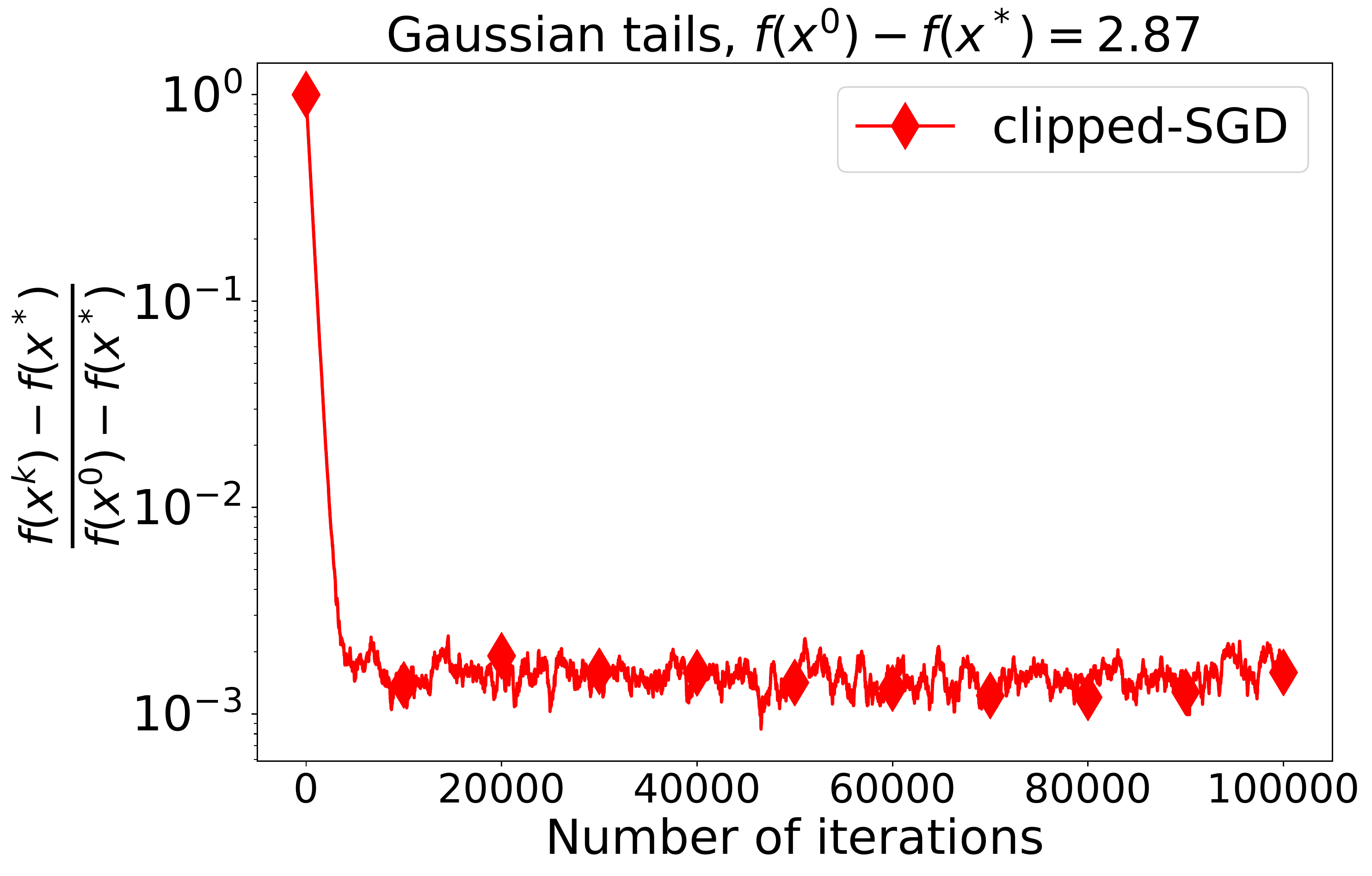}
    \includegraphics[width=0.32\textwidth]{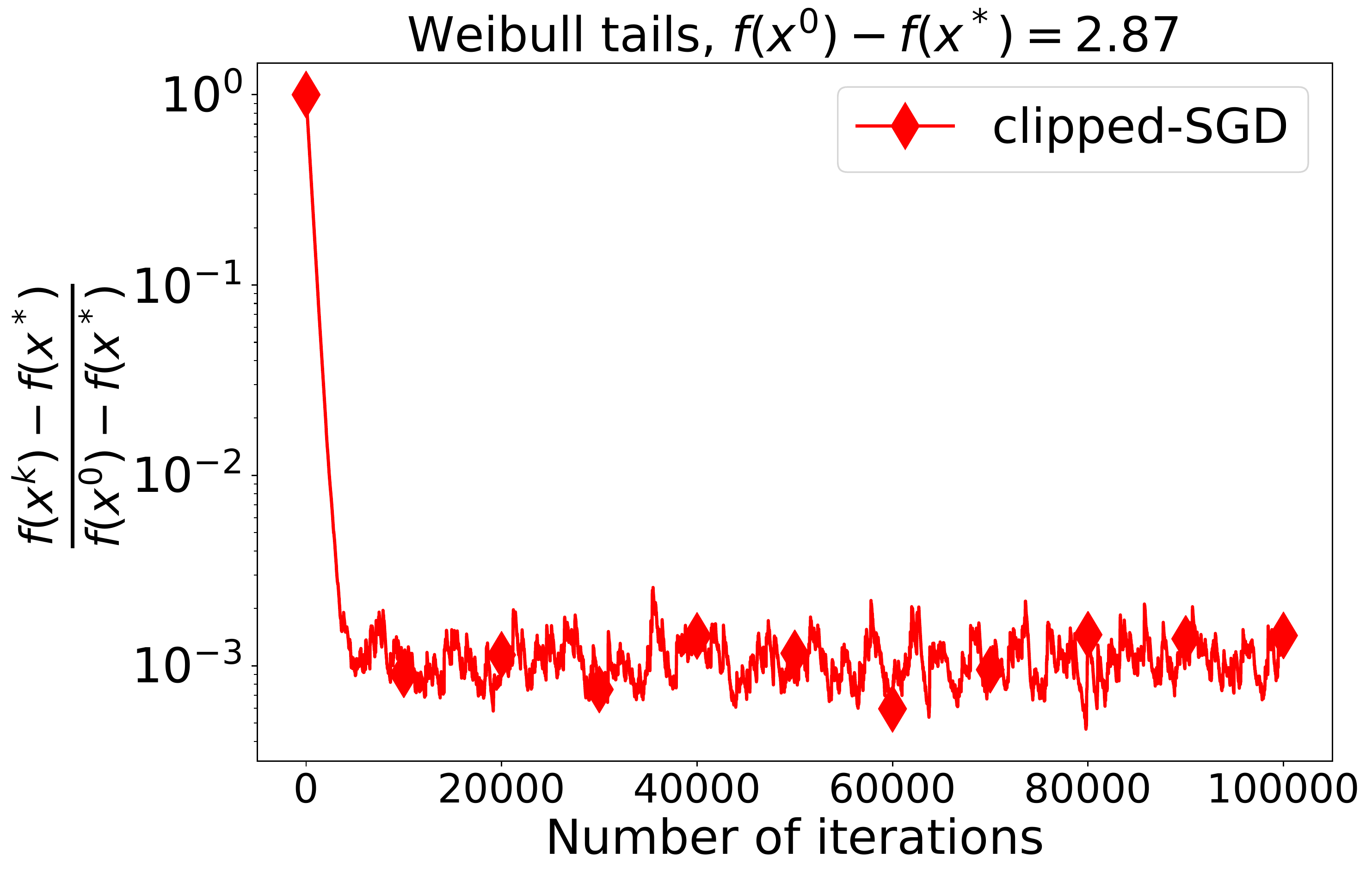}
    \includegraphics[width=0.32\textwidth]{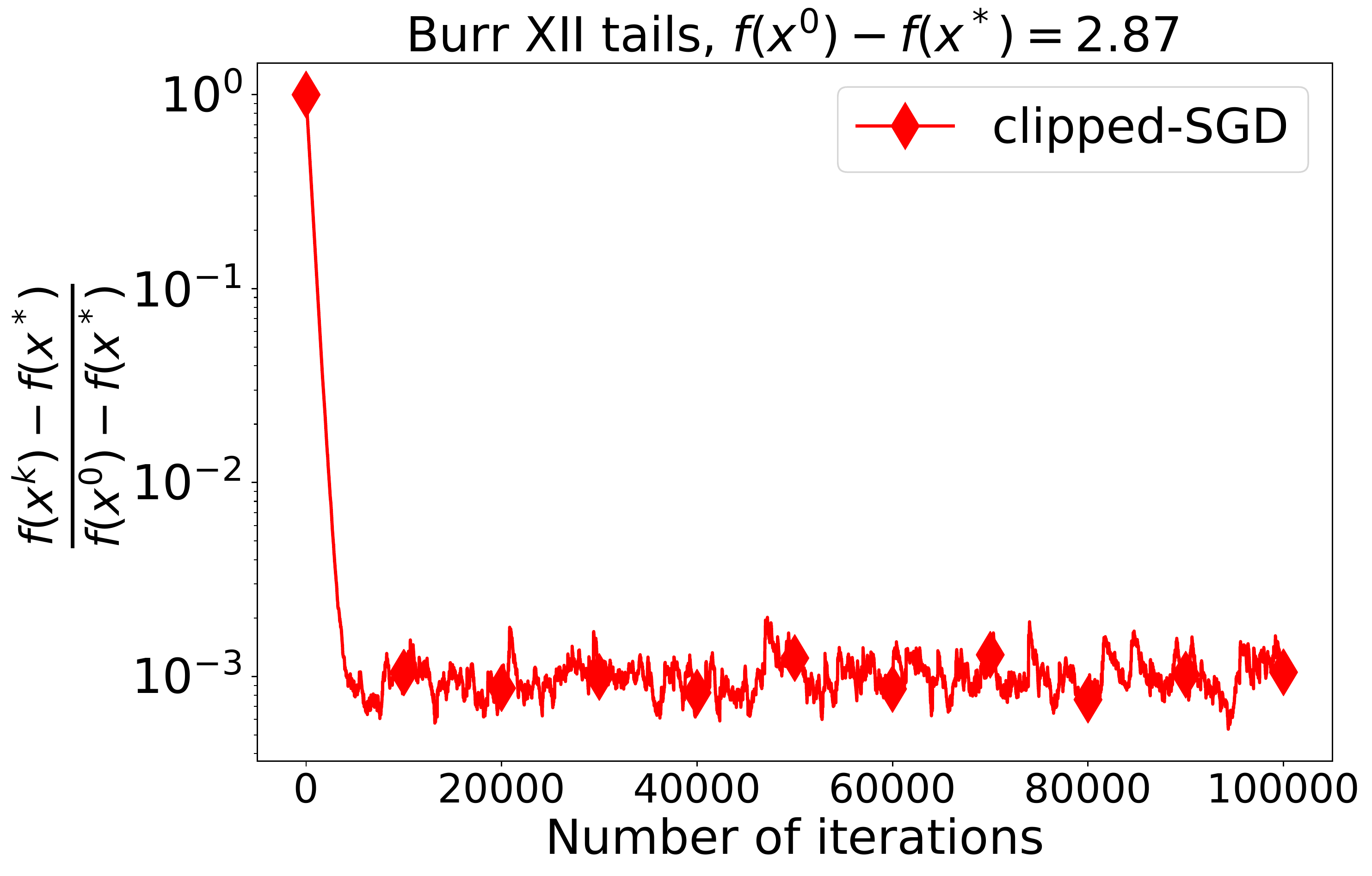}
    \includegraphics[width=0.32\textwidth]{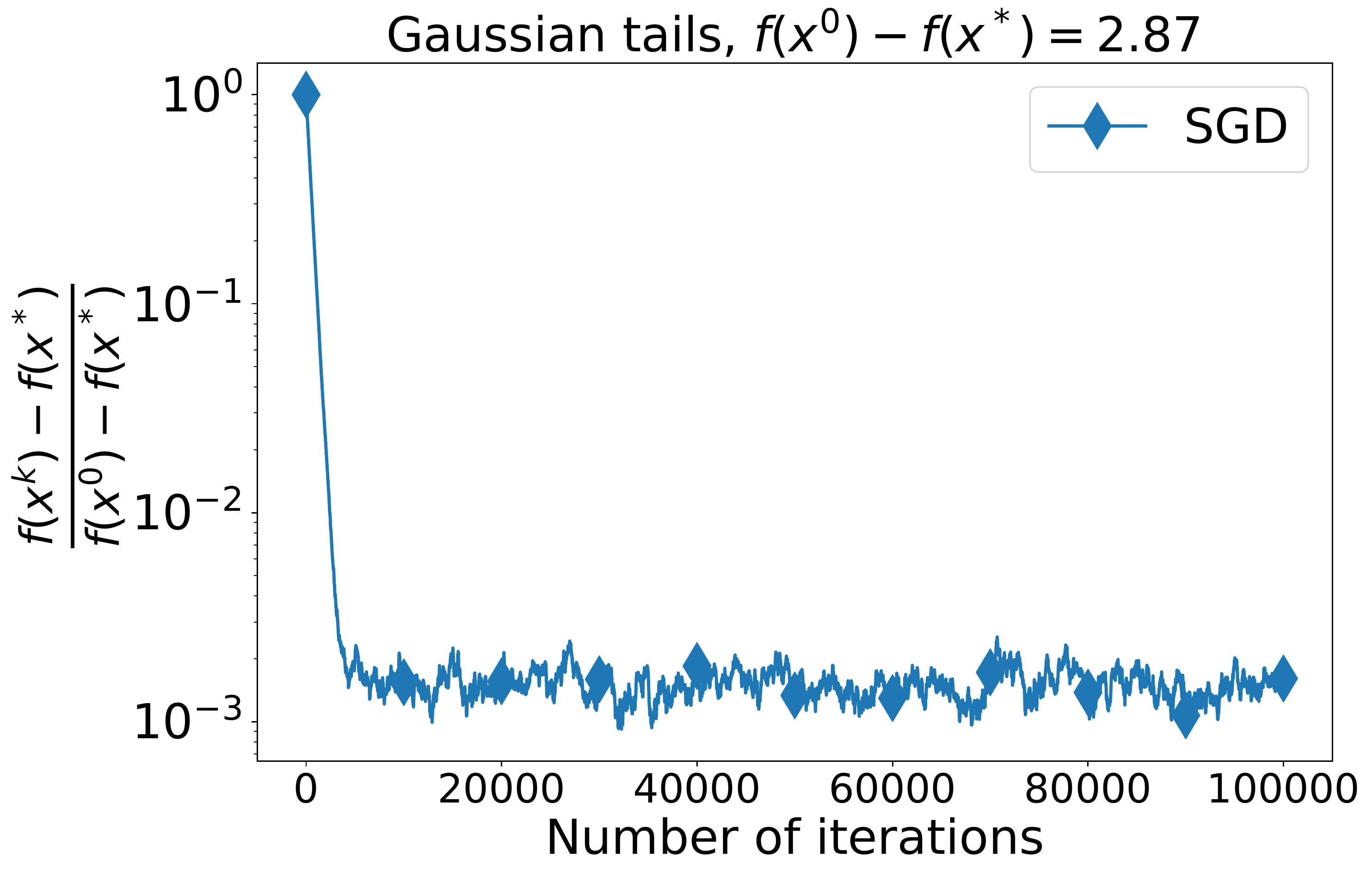}
    \includegraphics[width=0.32\textwidth]{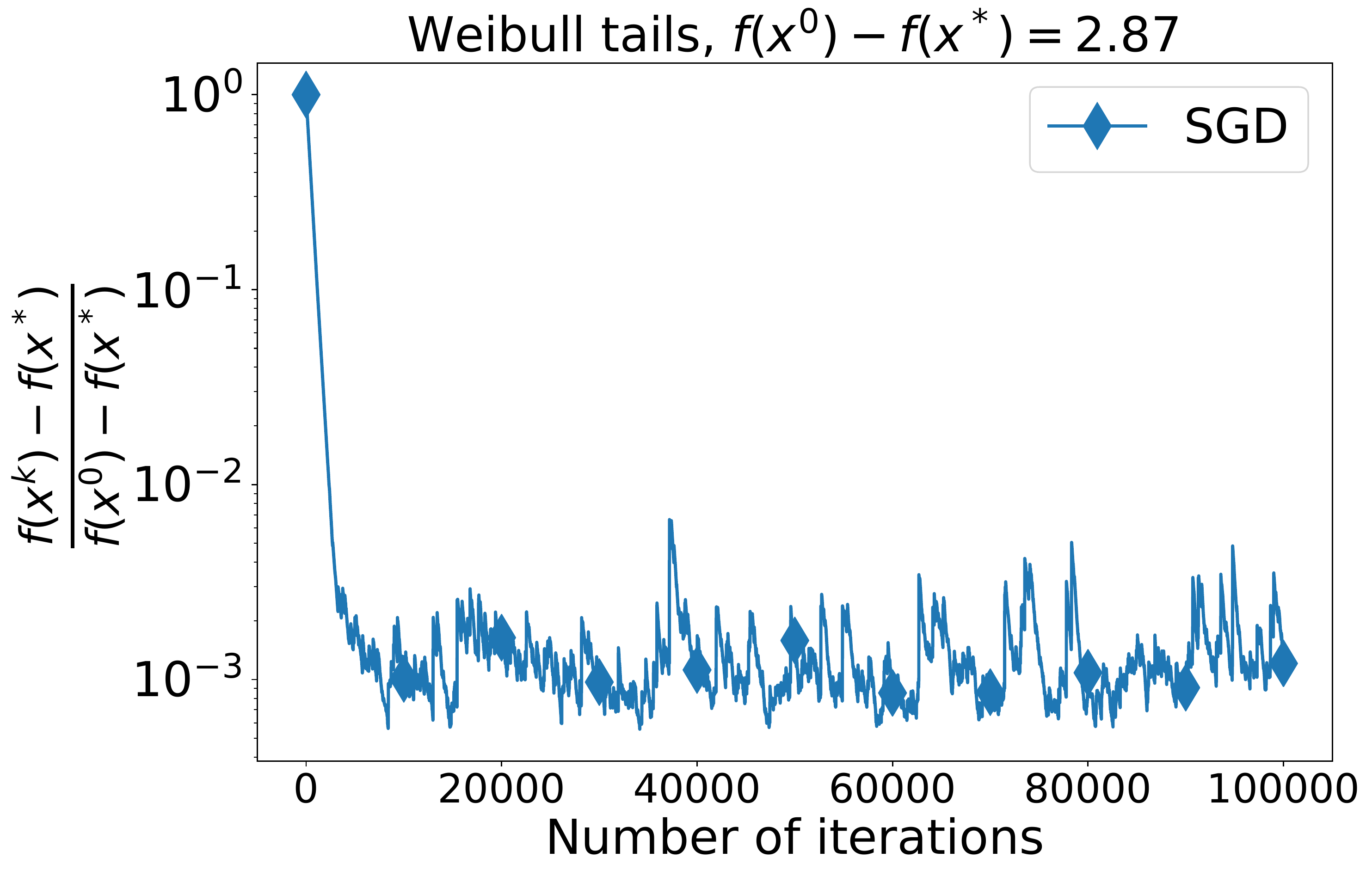}
    \includegraphics[width=0.32\textwidth]{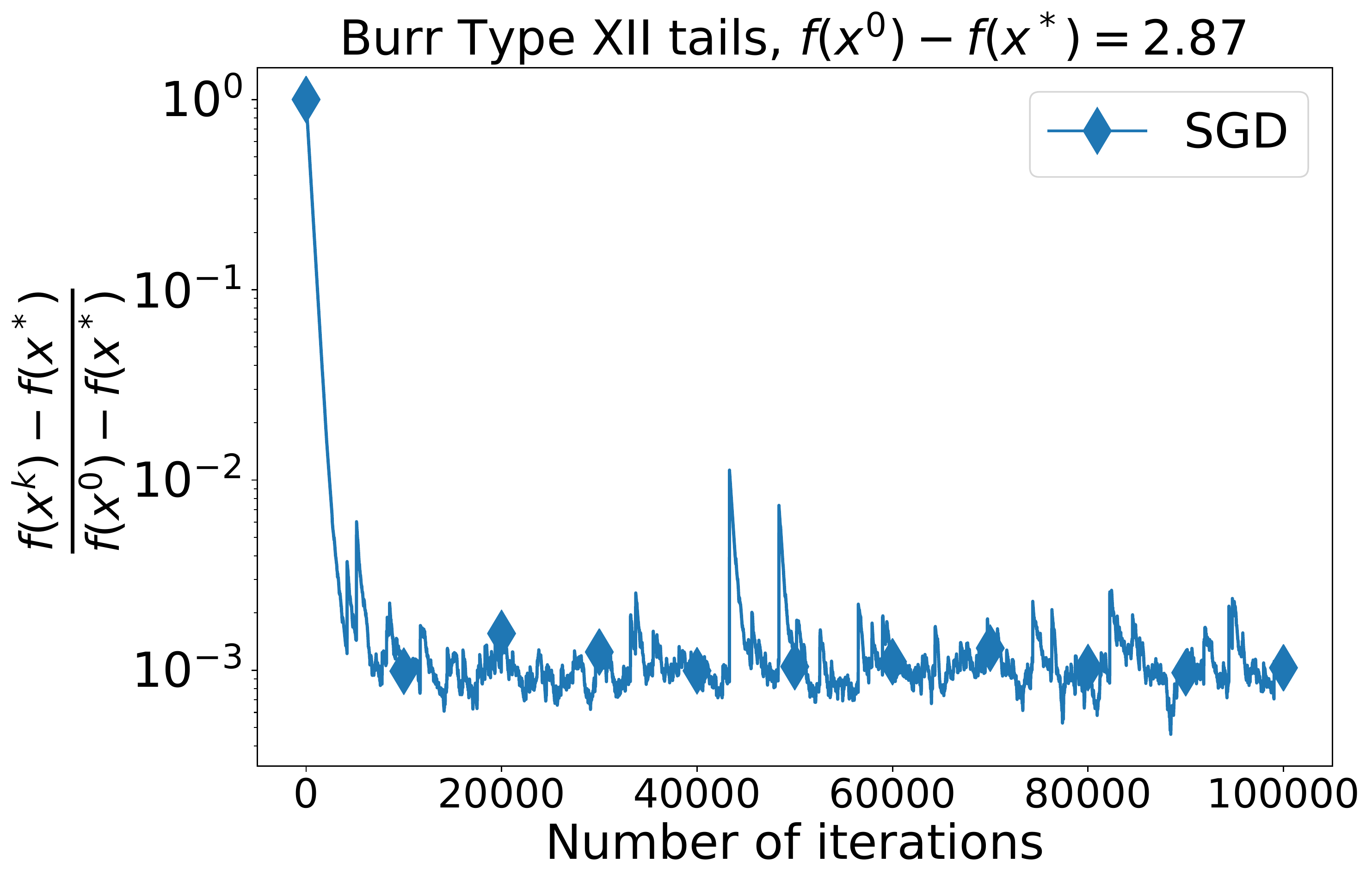}
    \includegraphics[width=0.32\textwidth]{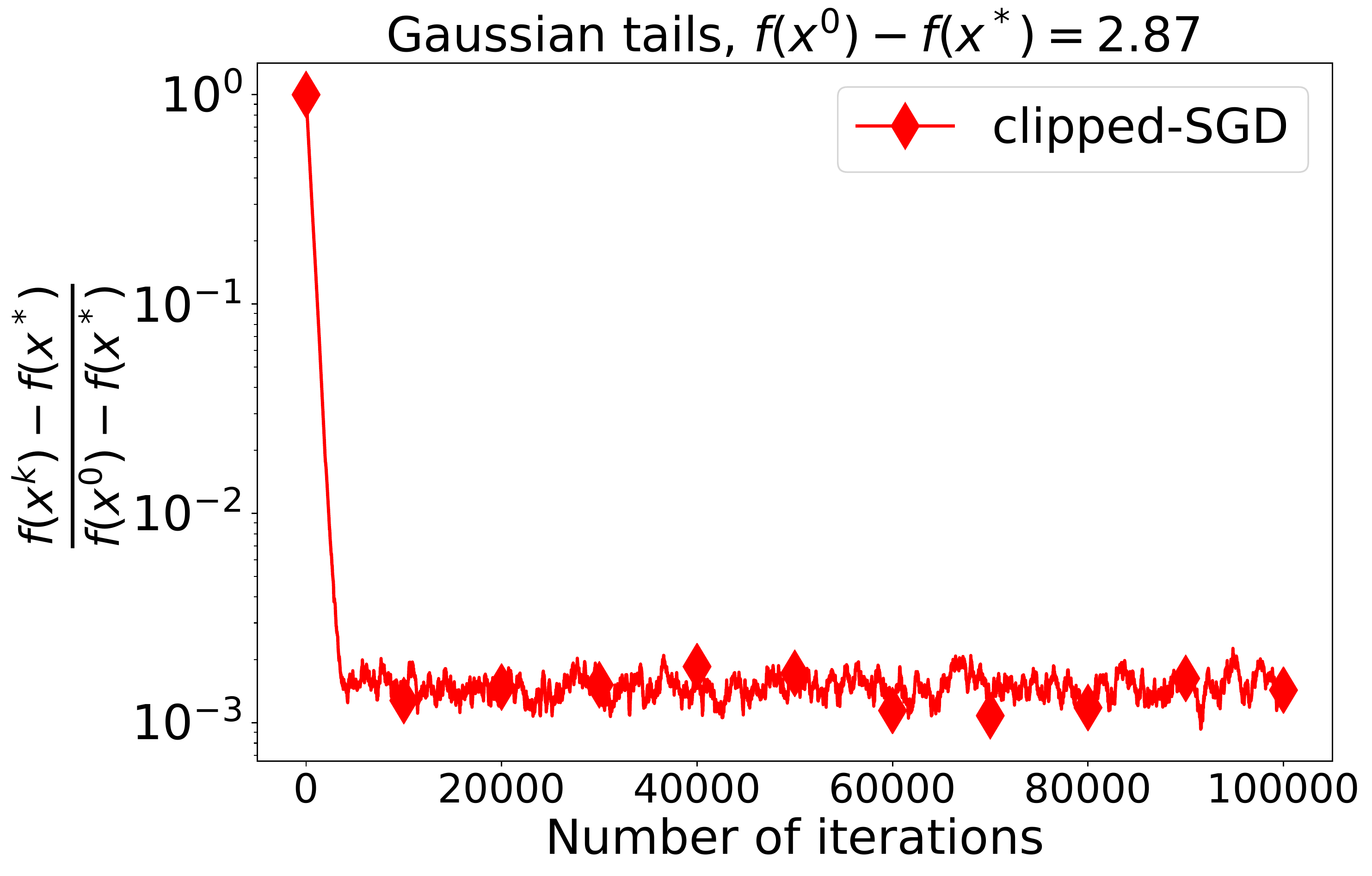}
    \includegraphics[width=0.32\textwidth]{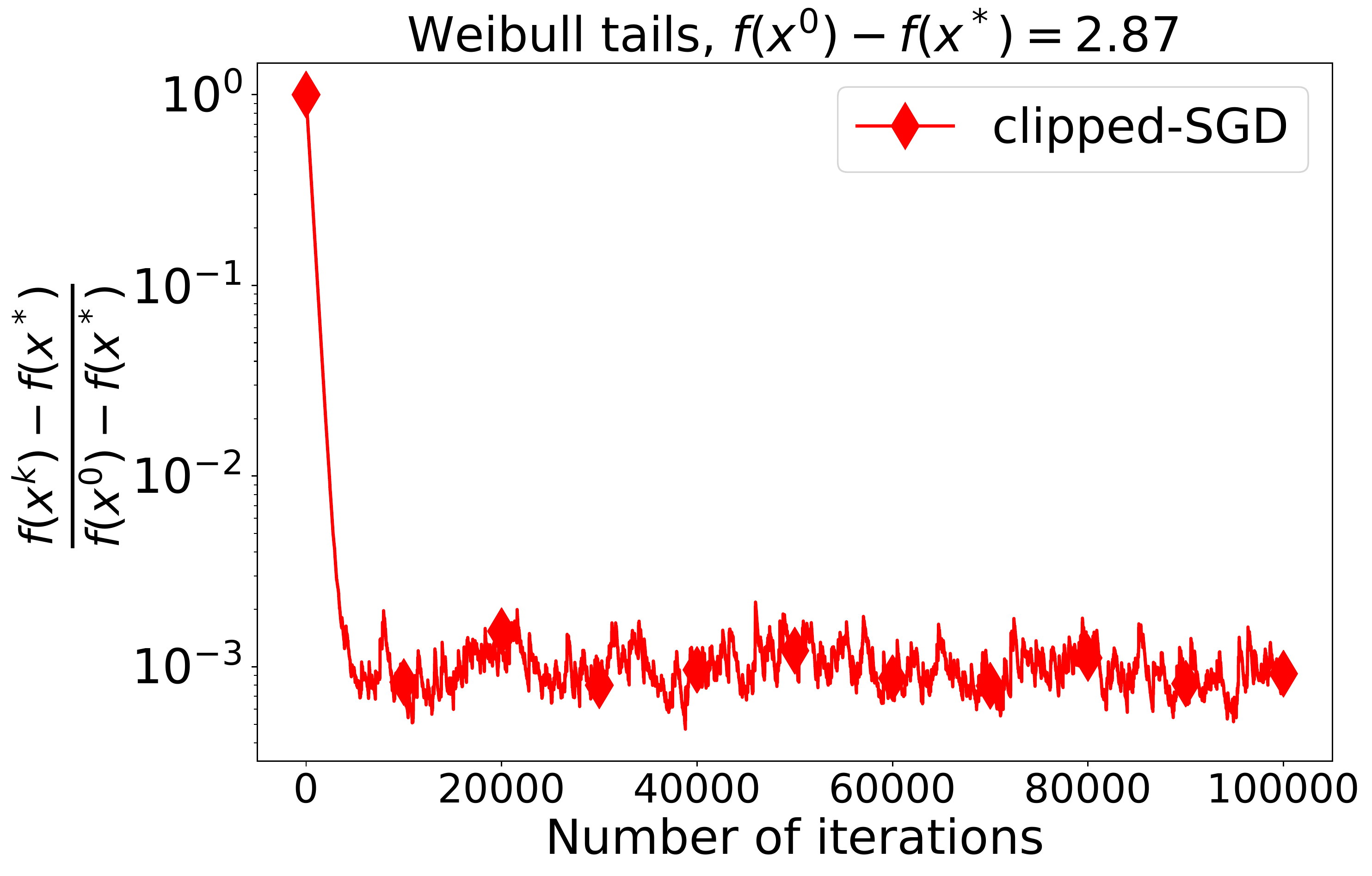}
    \includegraphics[width=0.32\textwidth]{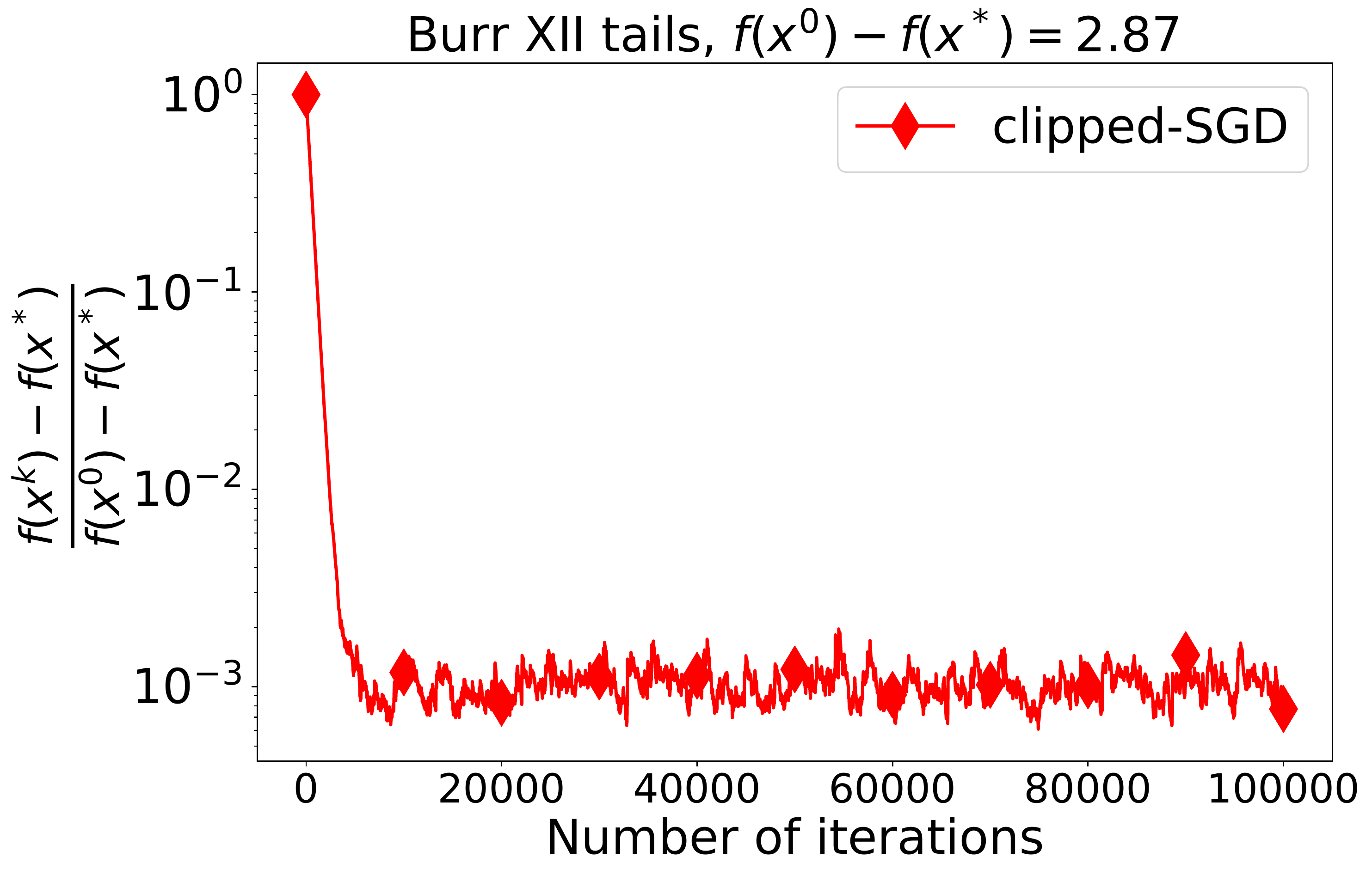}
    \caption{$2$ independent runs of {\tt SGD} (blue) and {\tt clipped-SGD} (red) applied to solve \eqref{eq:toy_problem} with $\xi$ having Gaussian (left column), Weibull (central column) and Burr Type XII (right column) tails.}
    \label{fig:tou_runs1}
\end{figure}

\begin{figure}[h]
    \centering
    \includegraphics[width=0.32\textwidth]{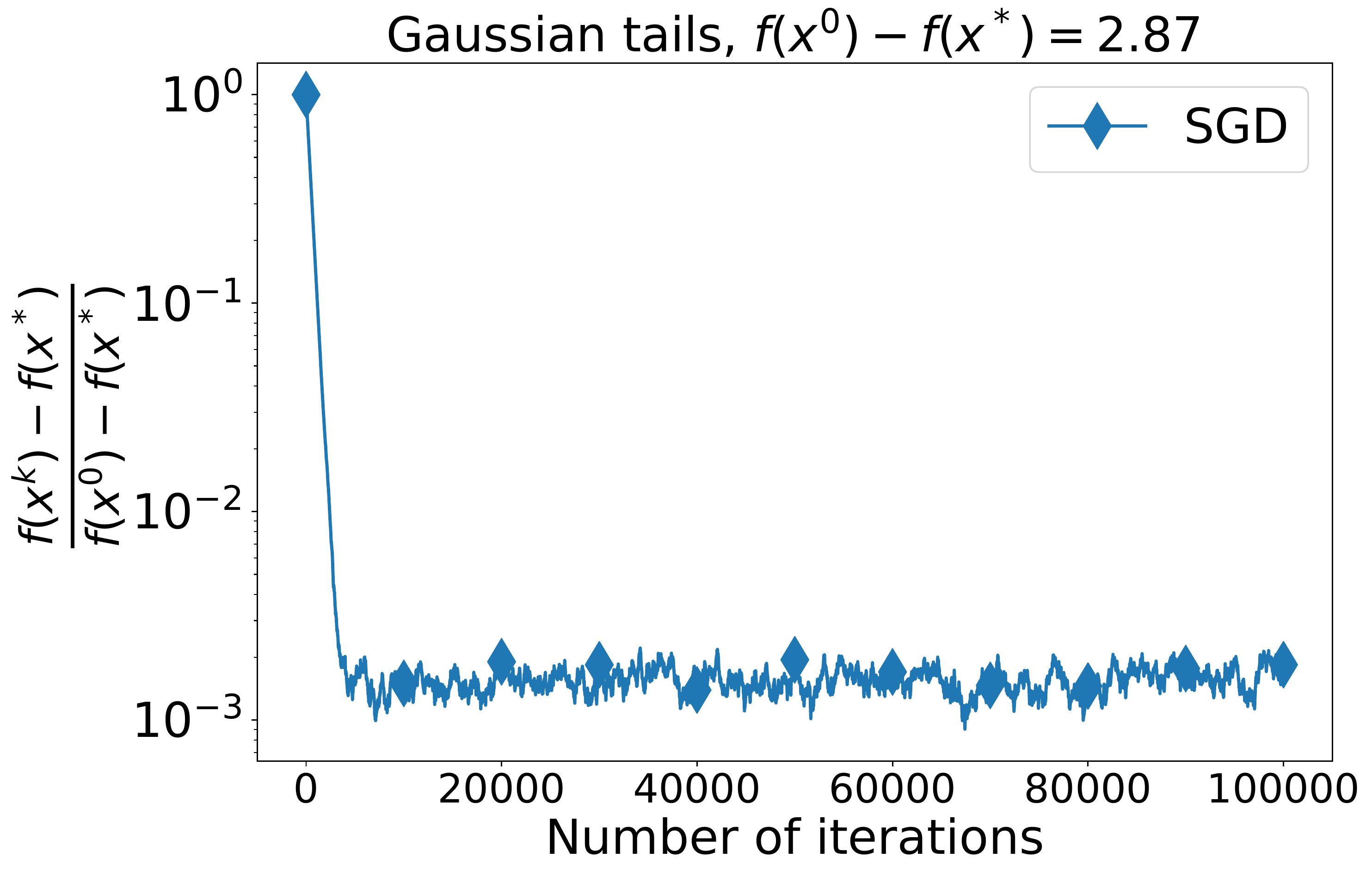}
    \includegraphics[width=0.32\textwidth]{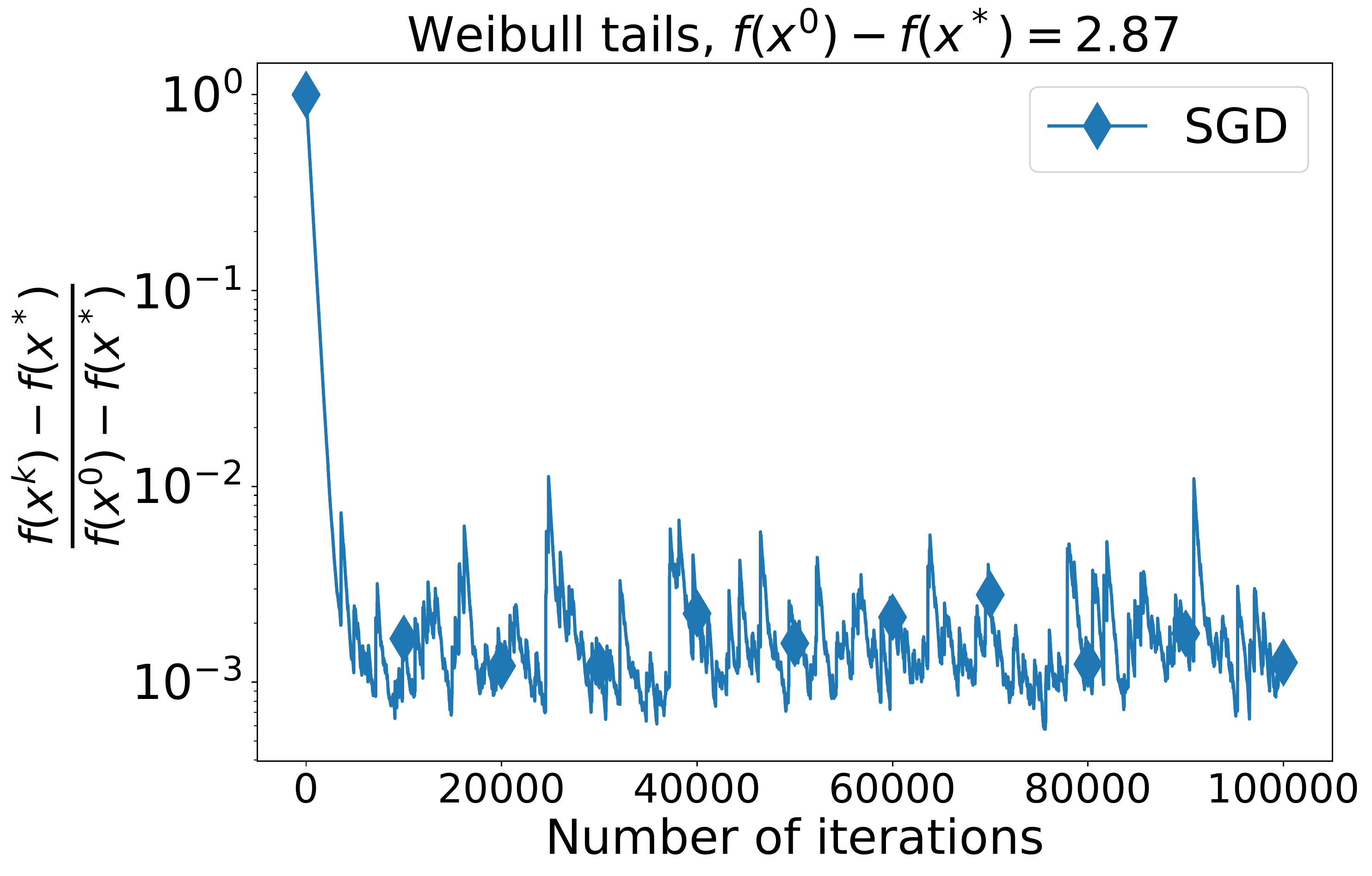}
    \includegraphics[width=0.32\textwidth]{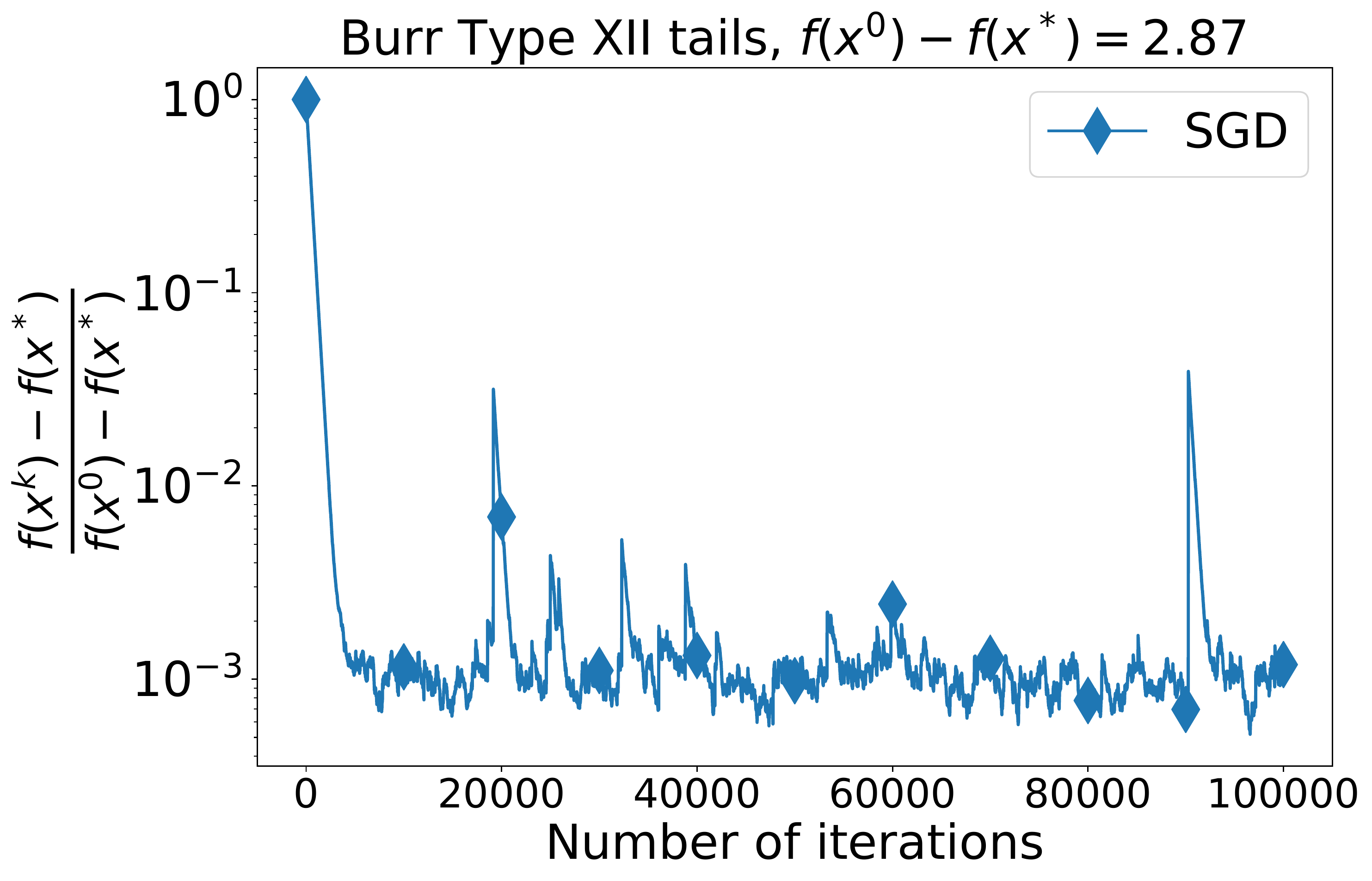}
    \includegraphics[width=0.32\textwidth]{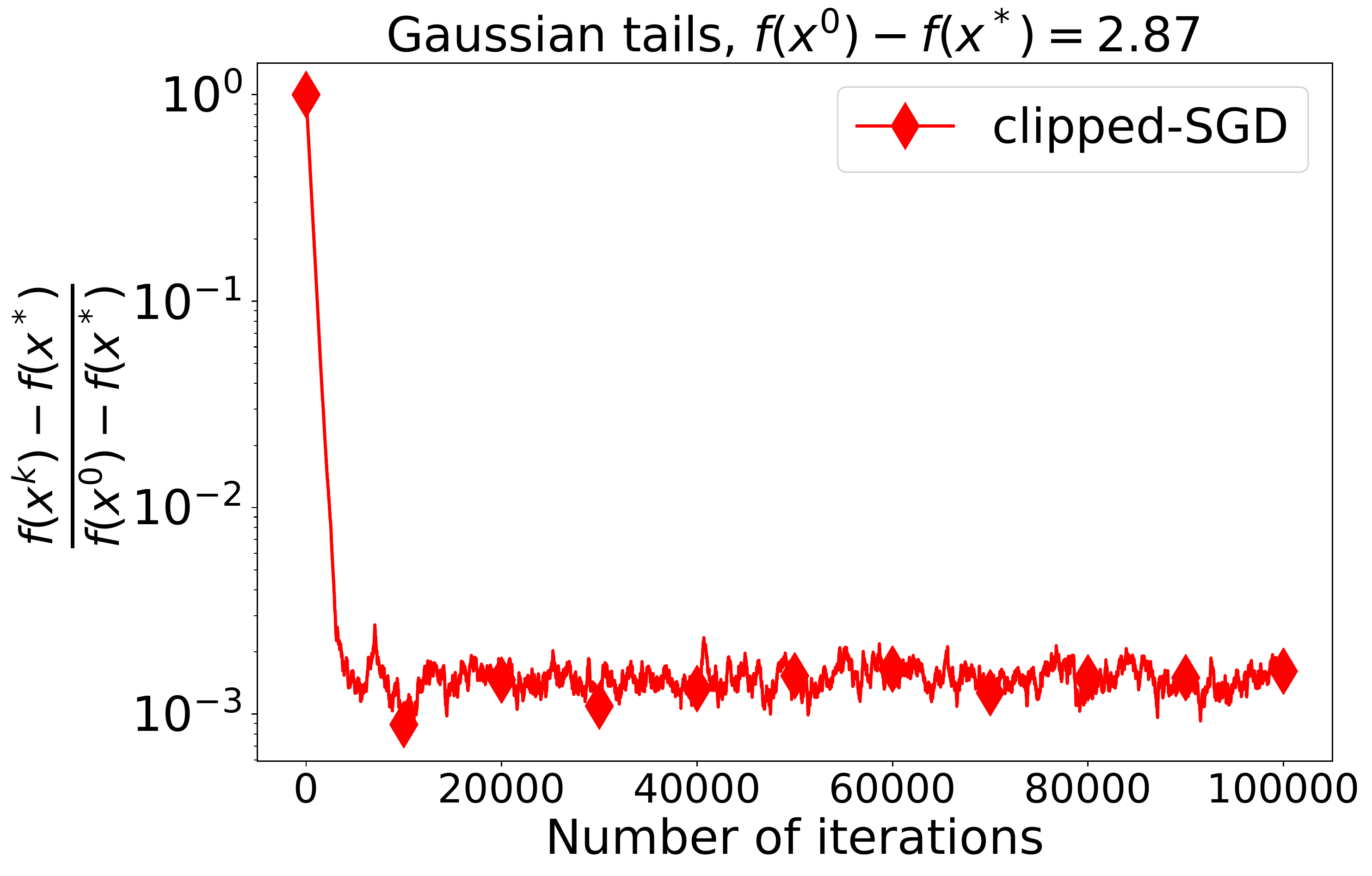}
    \includegraphics[width=0.32\textwidth]{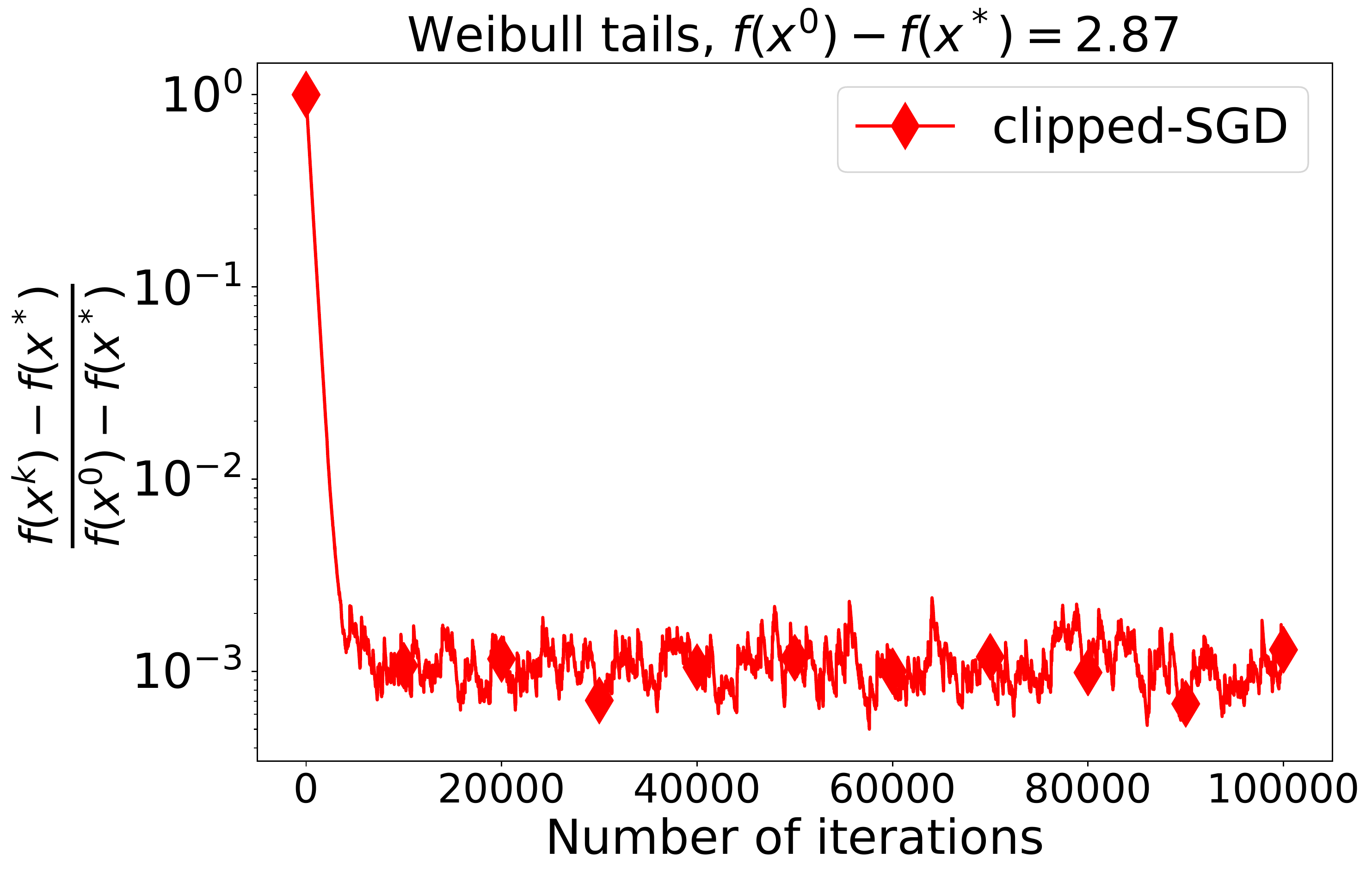}
    \includegraphics[width=0.32\textwidth]{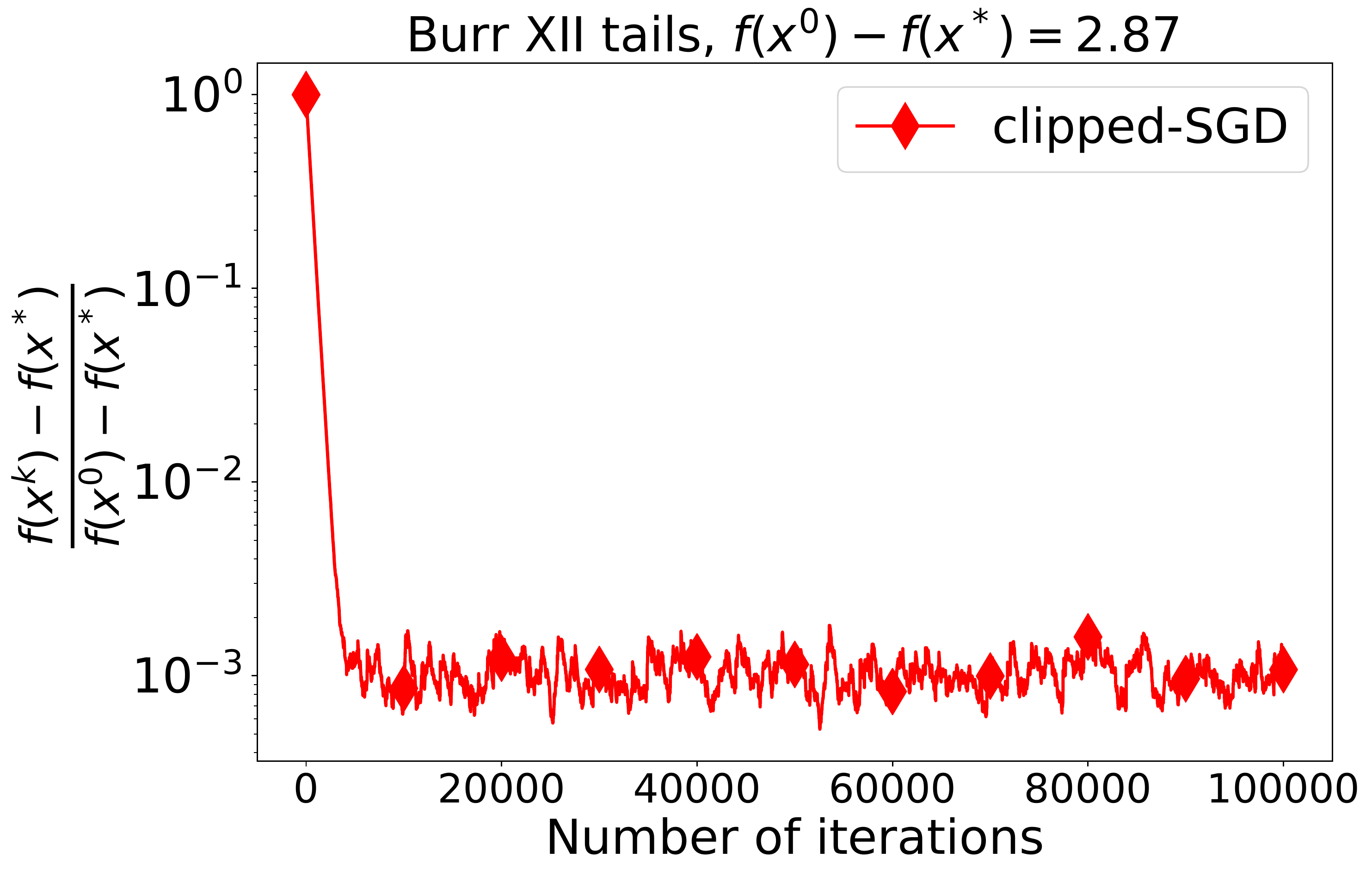}
    \includegraphics[width=0.32\textwidth]{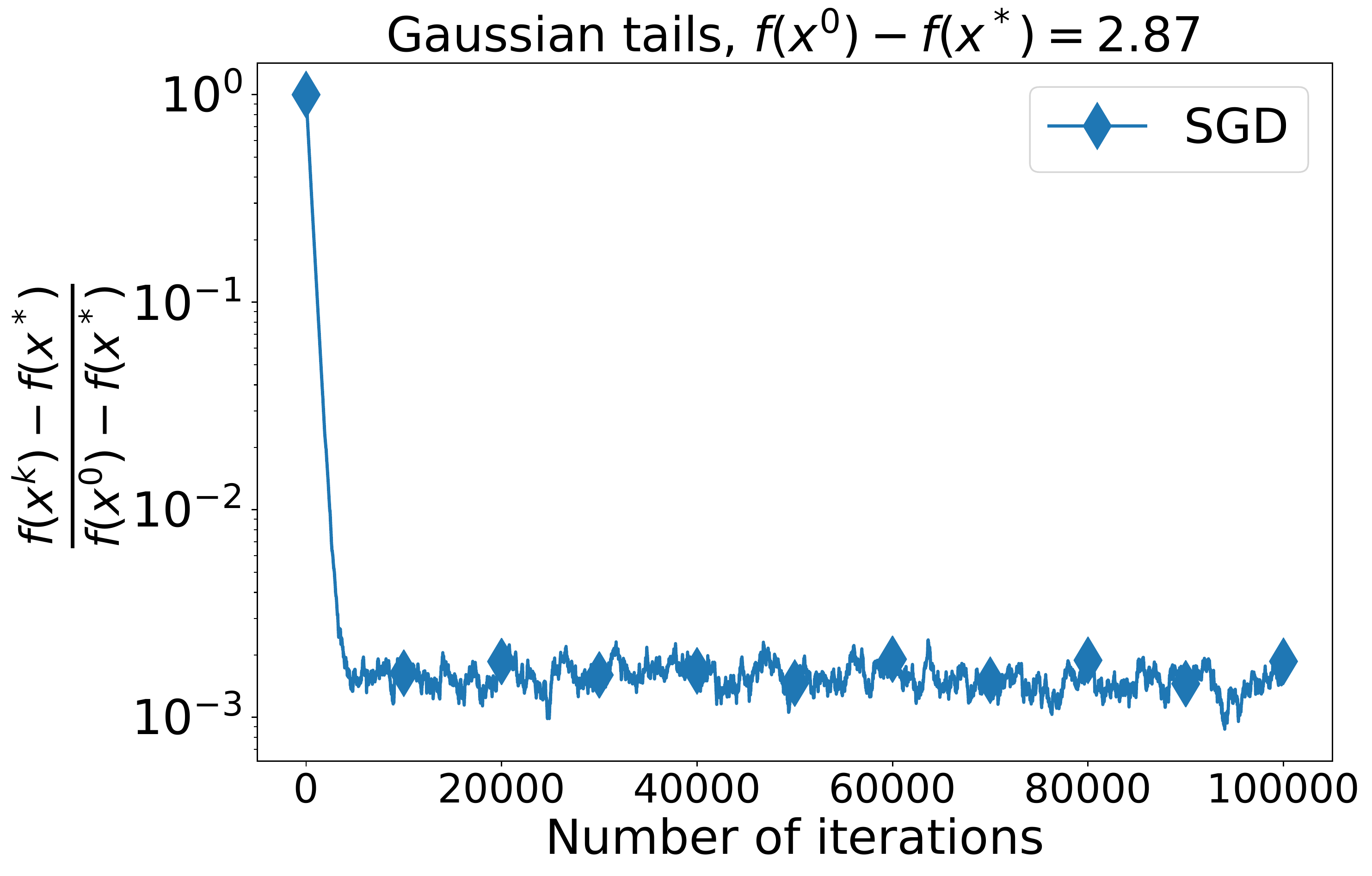}
    \includegraphics[width=0.32\textwidth]{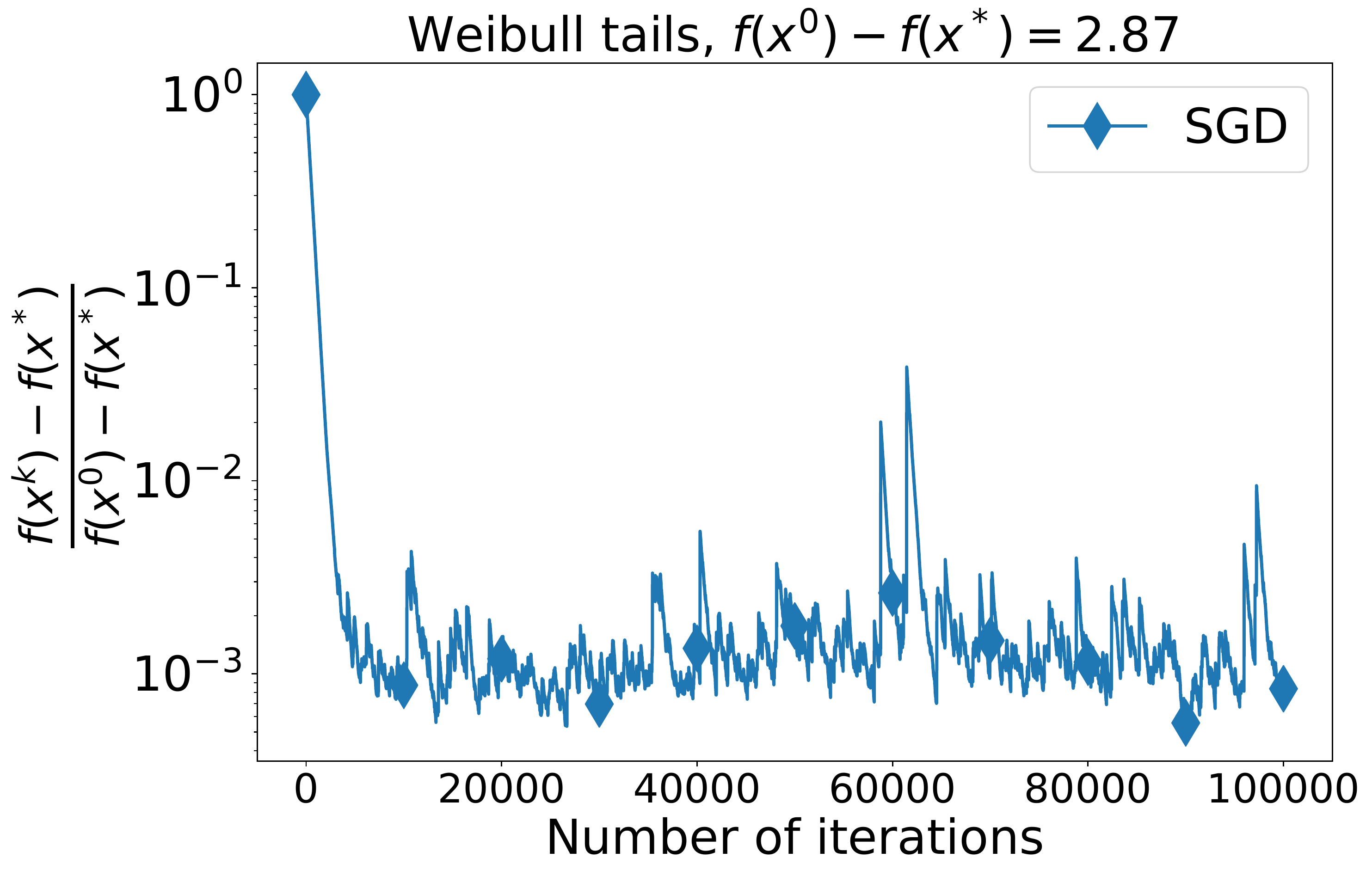}
    \includegraphics[width=0.32\textwidth]{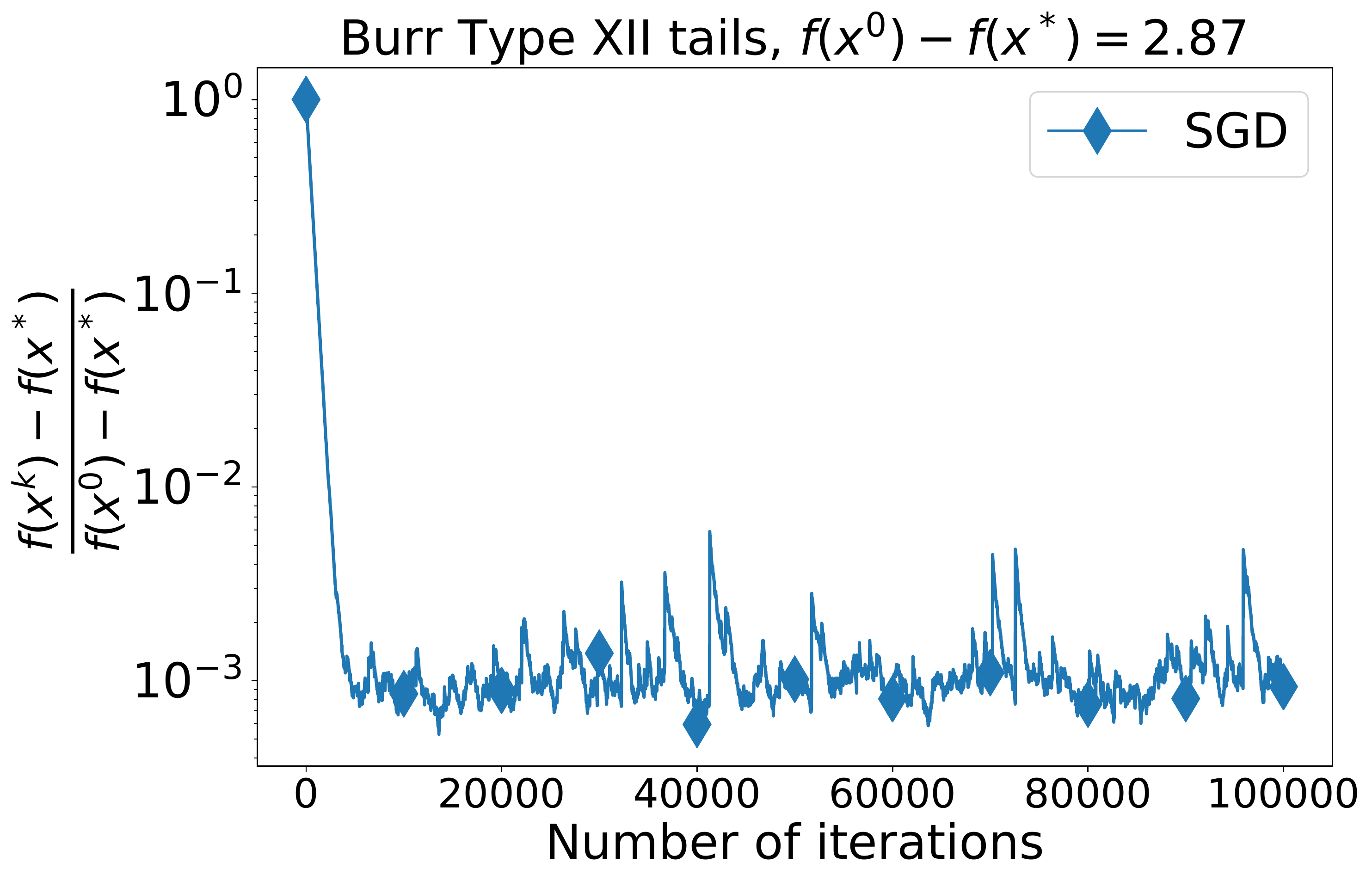}
    \includegraphics[width=0.32\textwidth]{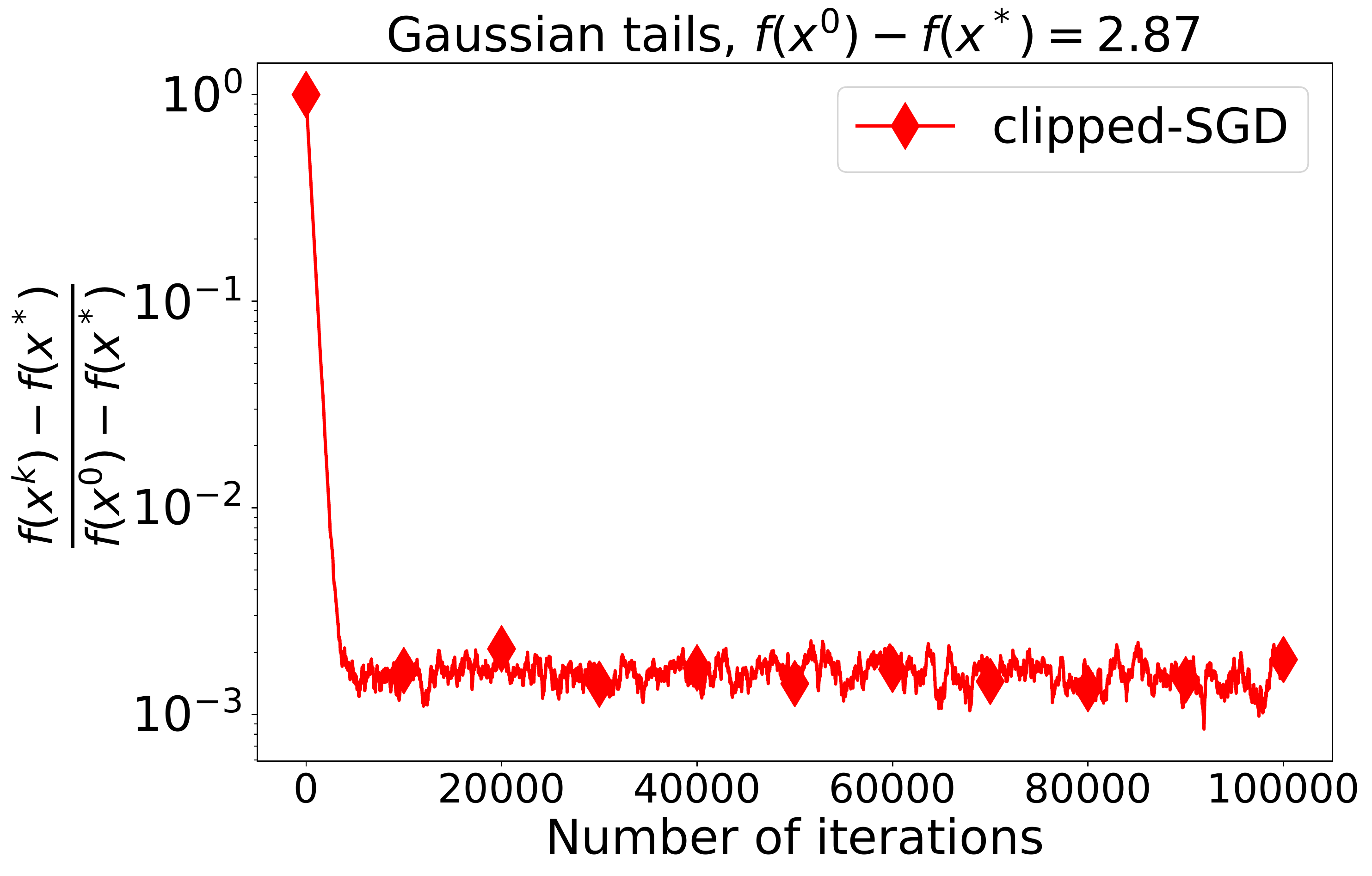}
    \includegraphics[width=0.32\textwidth]{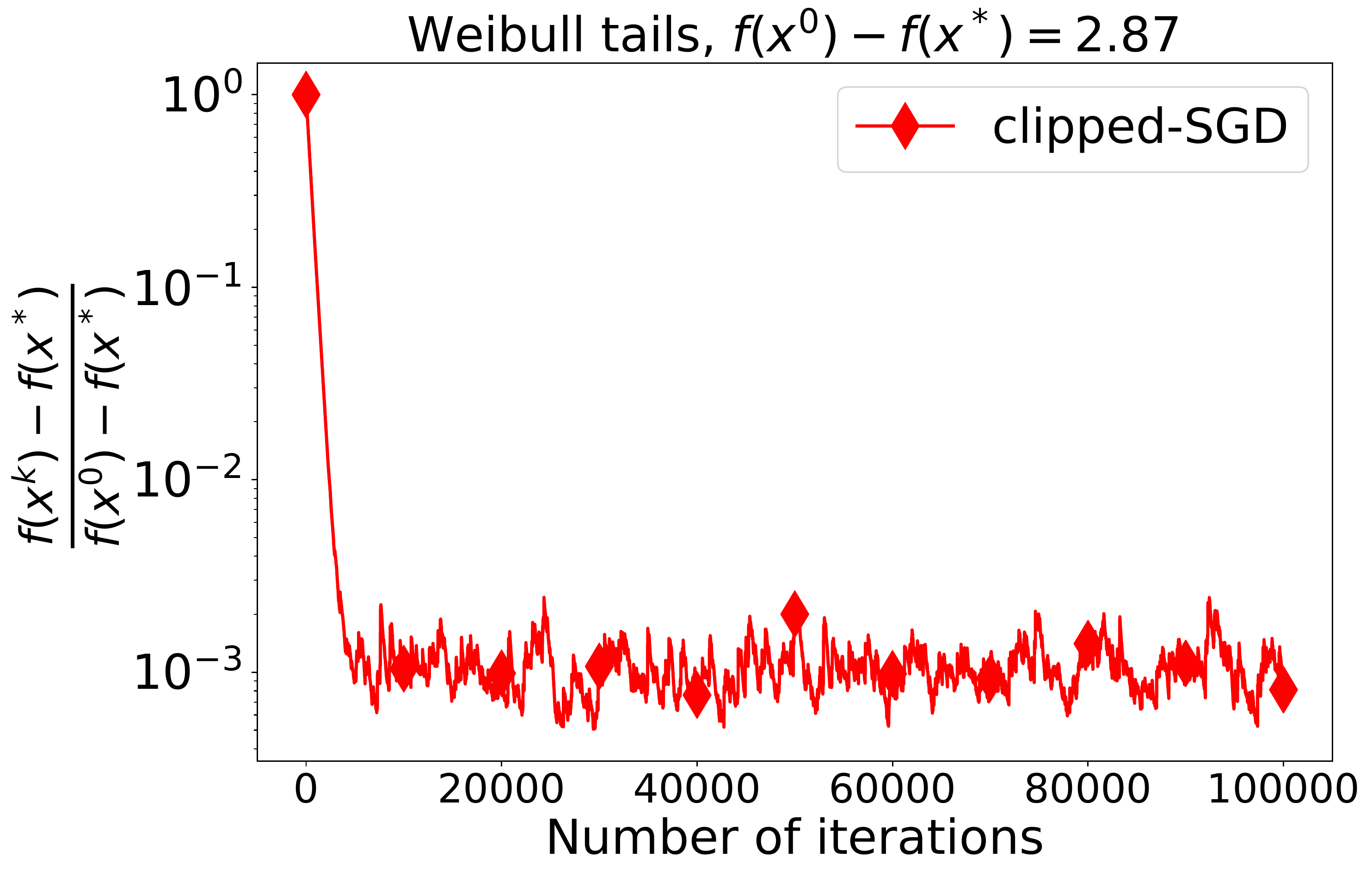}
    \includegraphics[width=0.32\textwidth]{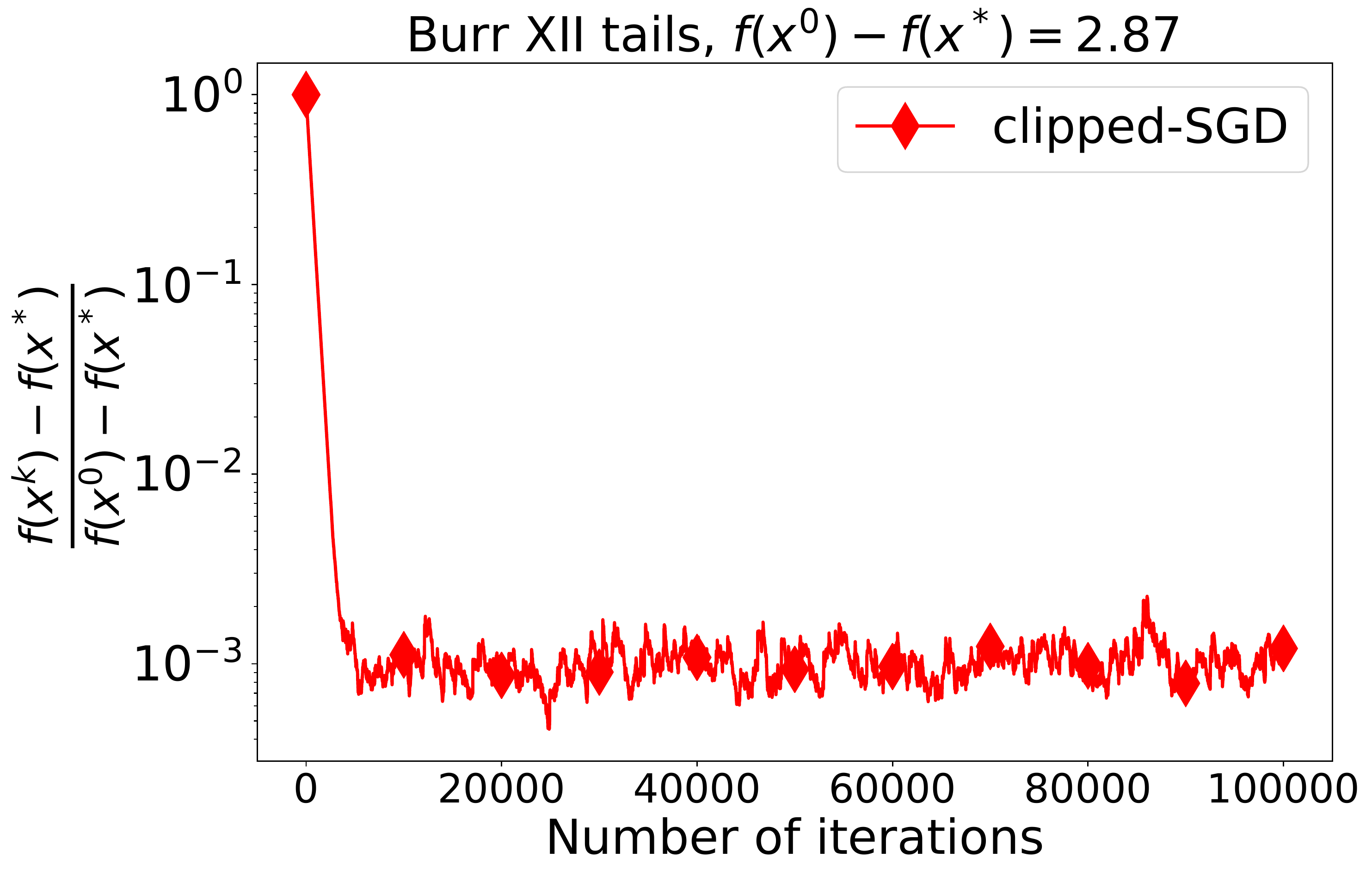}
    \caption{$2$ independent runs of {\tt SGD} (blue) and {\tt clipped-SGD} (red) applied to solve \eqref{eq:toy_problem} with $\xi$ having Gaussian (left column), Weibull (central column) and Burr Type XII (right column) tails.}
    \label{fig:tou_runs2}
\end{figure}

\begin{figure}[h]
    \centering
    \includegraphics[width=0.32\textwidth]{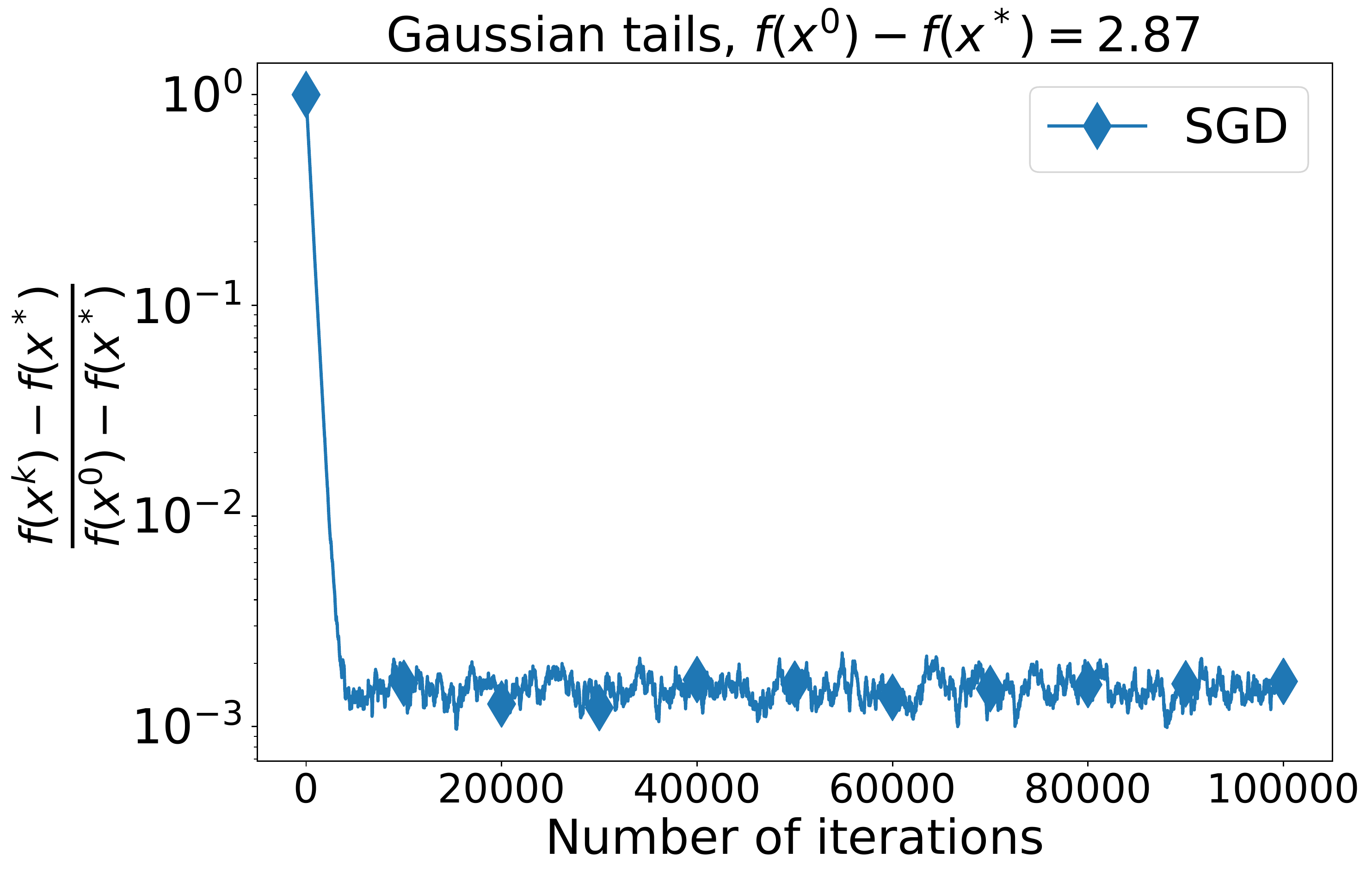}
    \includegraphics[width=0.32\textwidth]{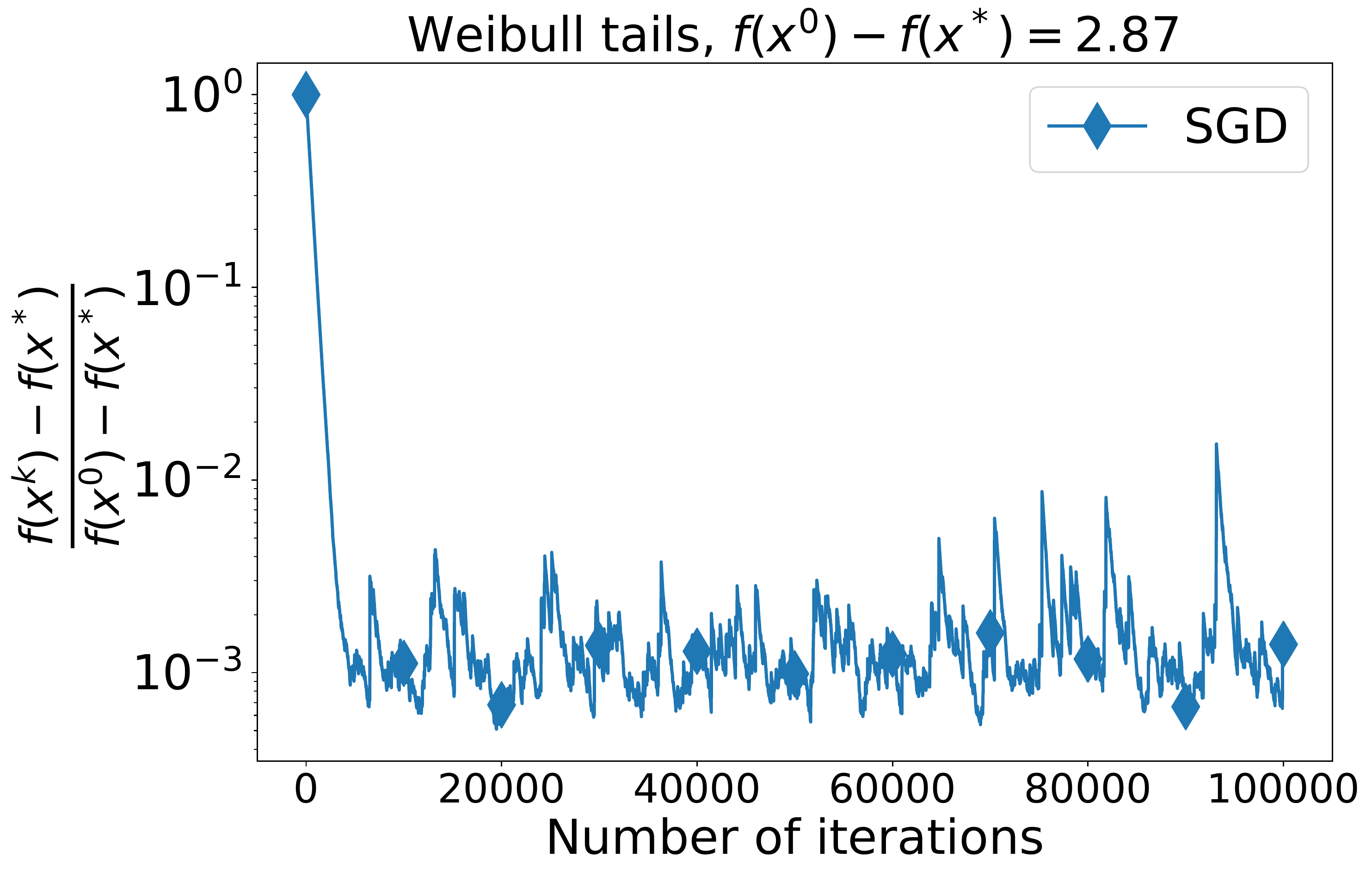}
    \includegraphics[width=0.32\textwidth]{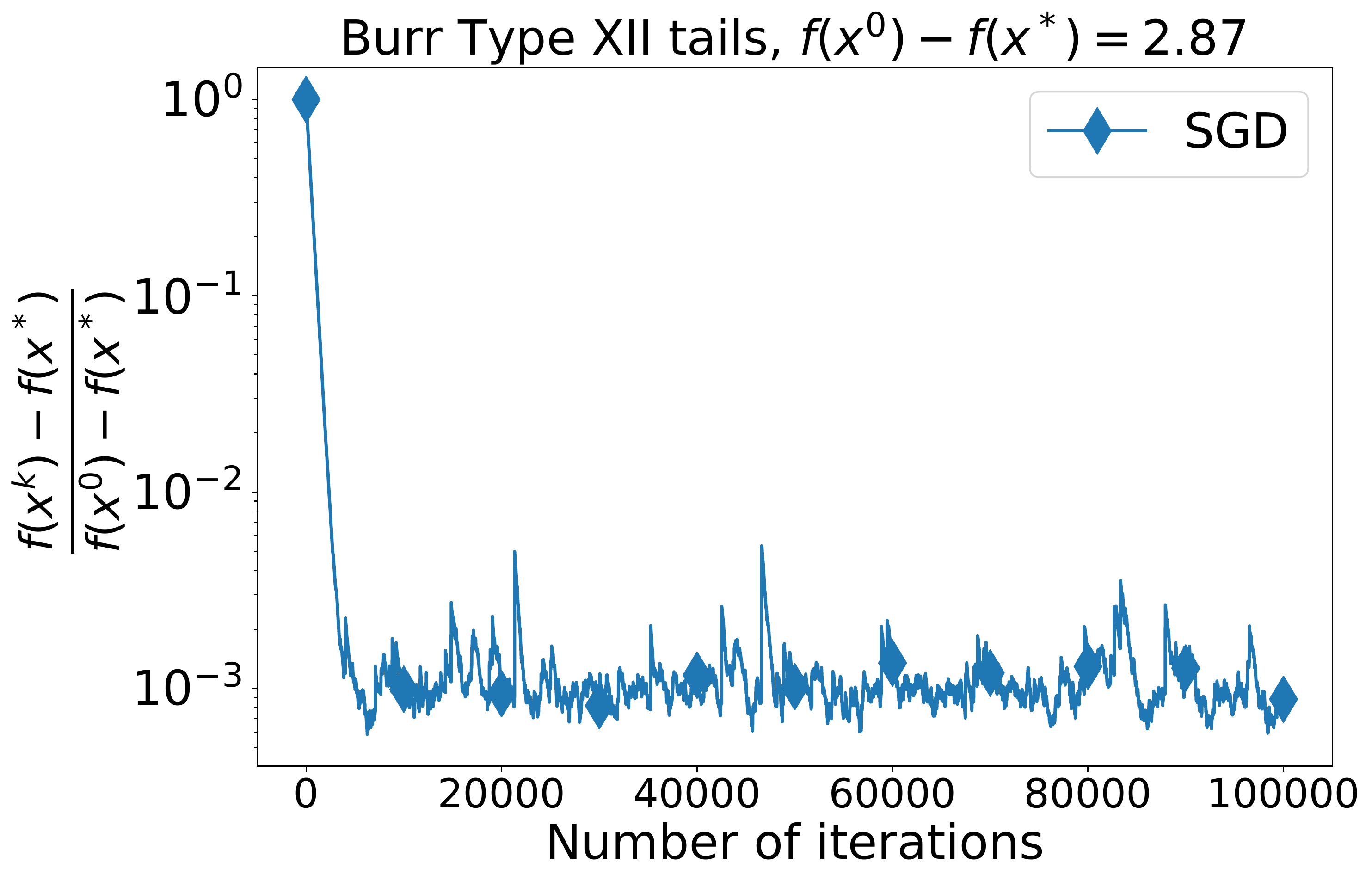}
    \includegraphics[width=0.32\textwidth]{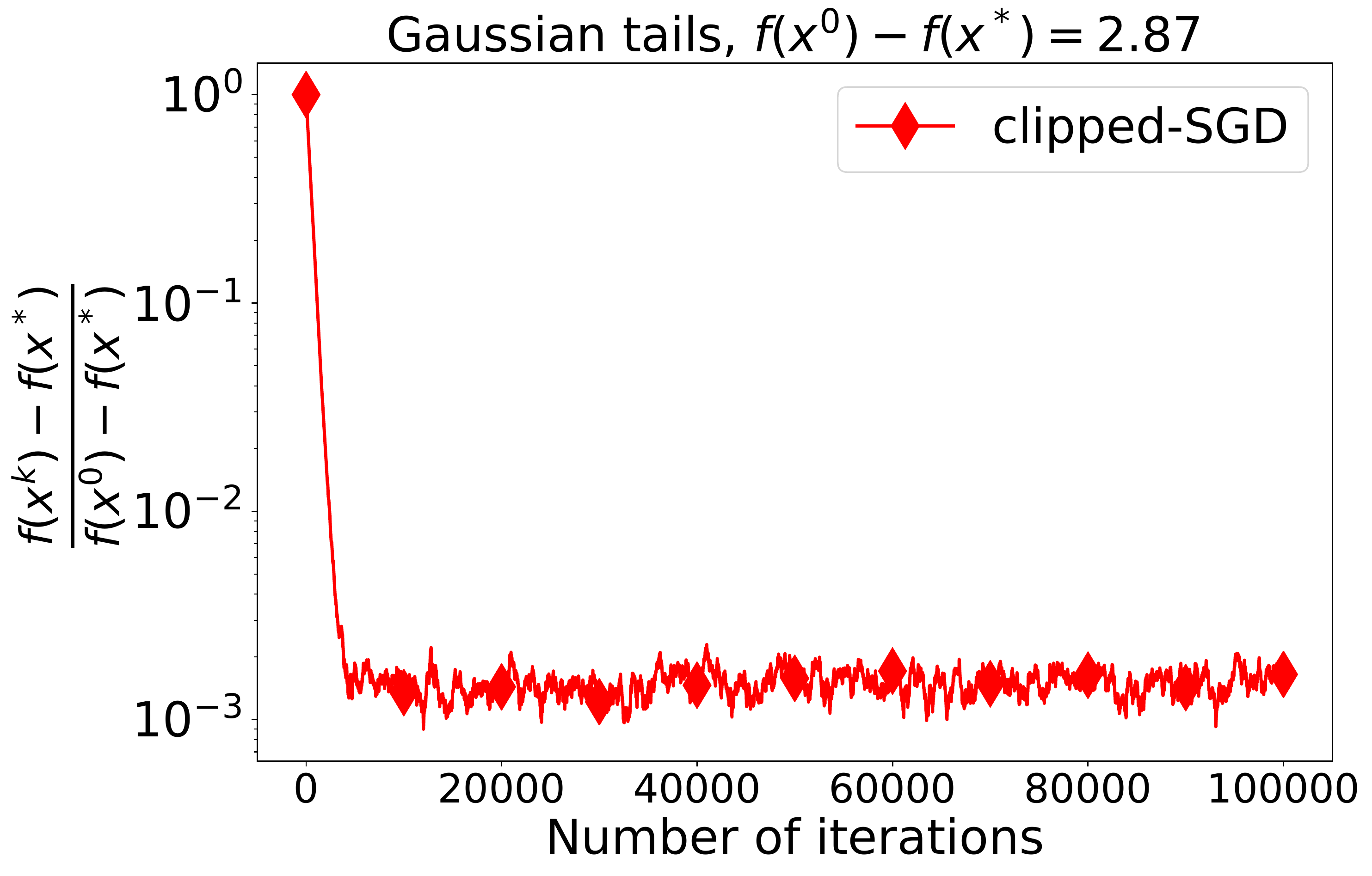}
    \includegraphics[width=0.32\textwidth]{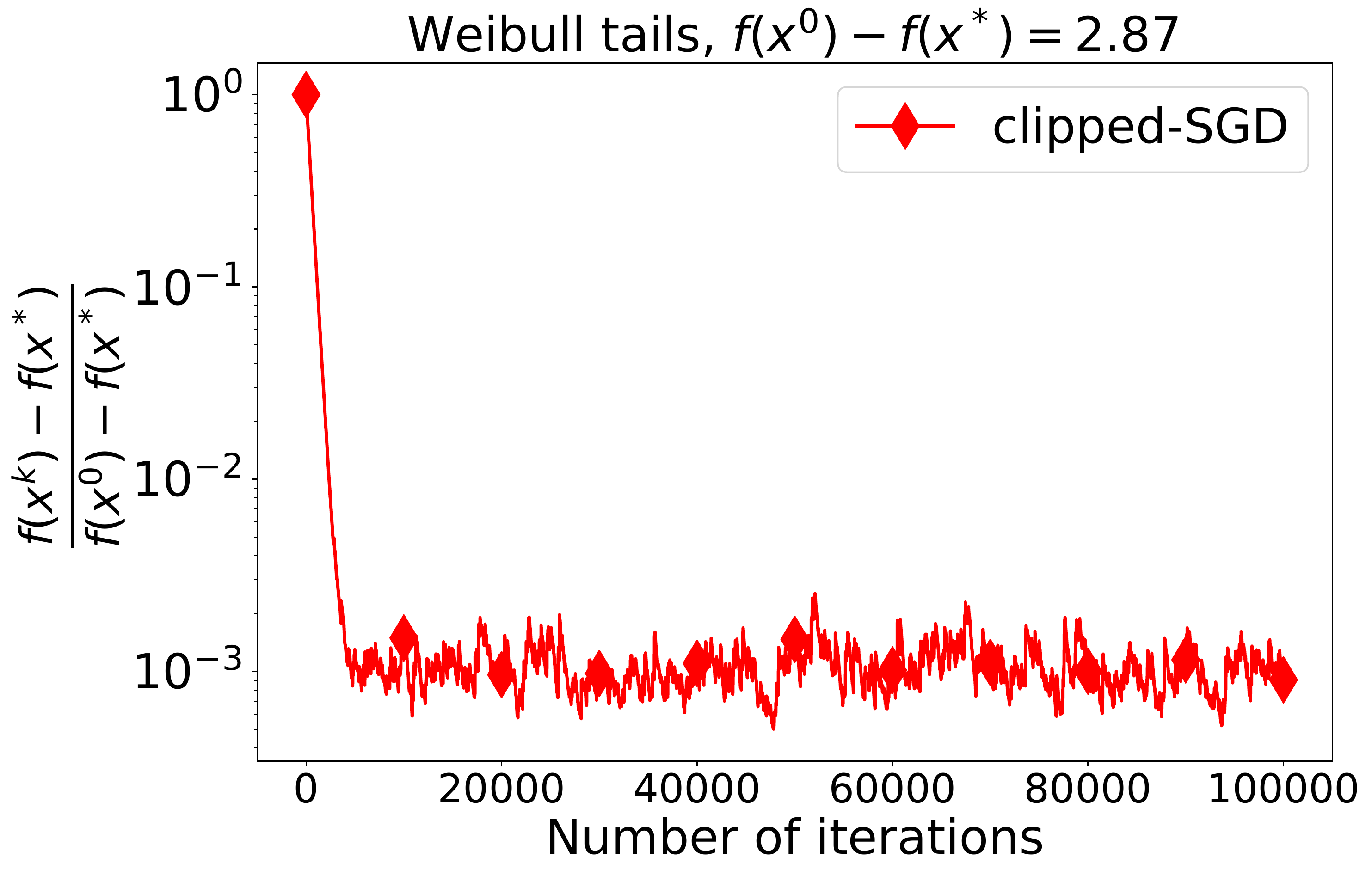}
    \includegraphics[width=0.32\textwidth]{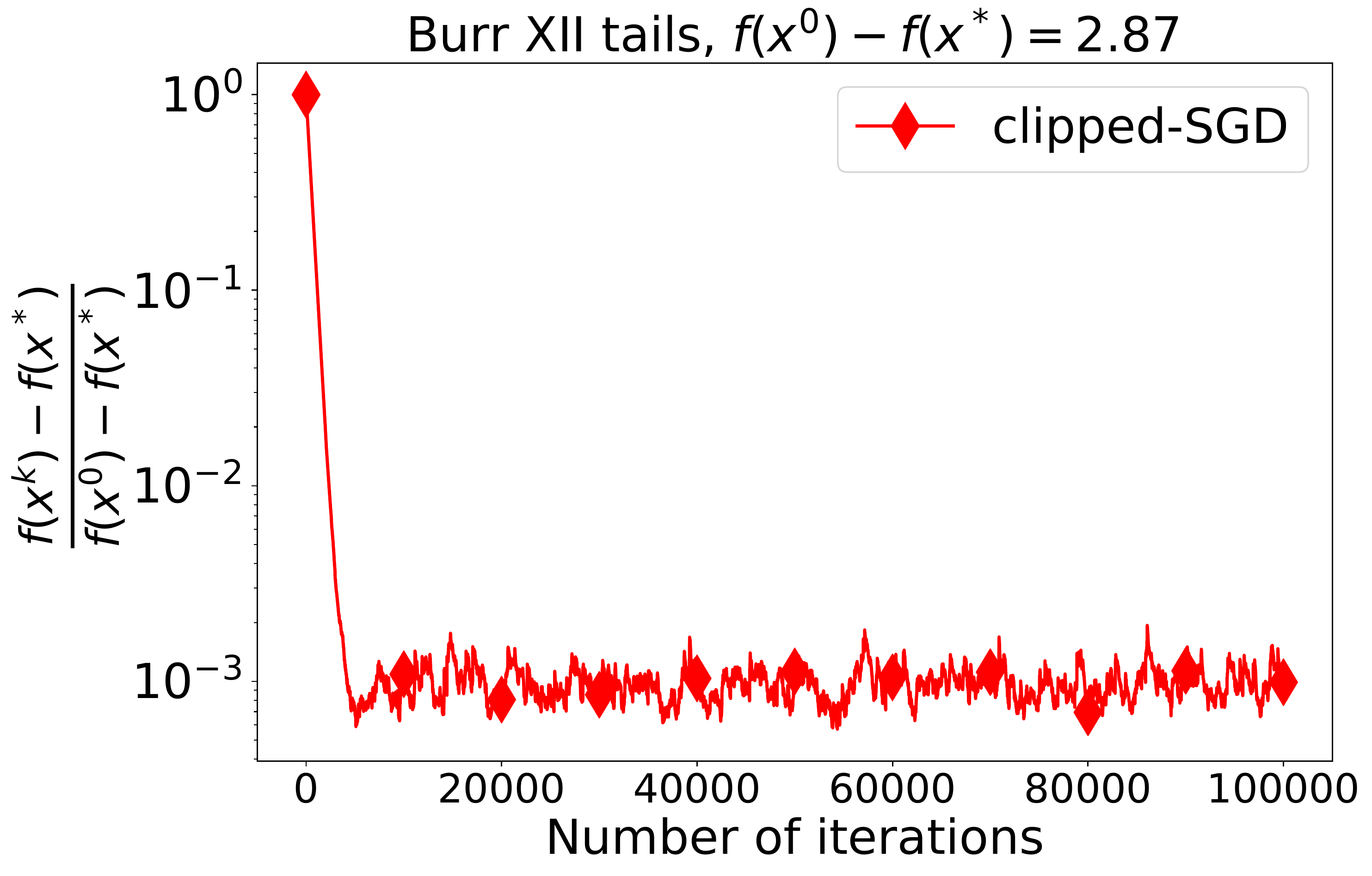}
    \includegraphics[width=0.32\textwidth]{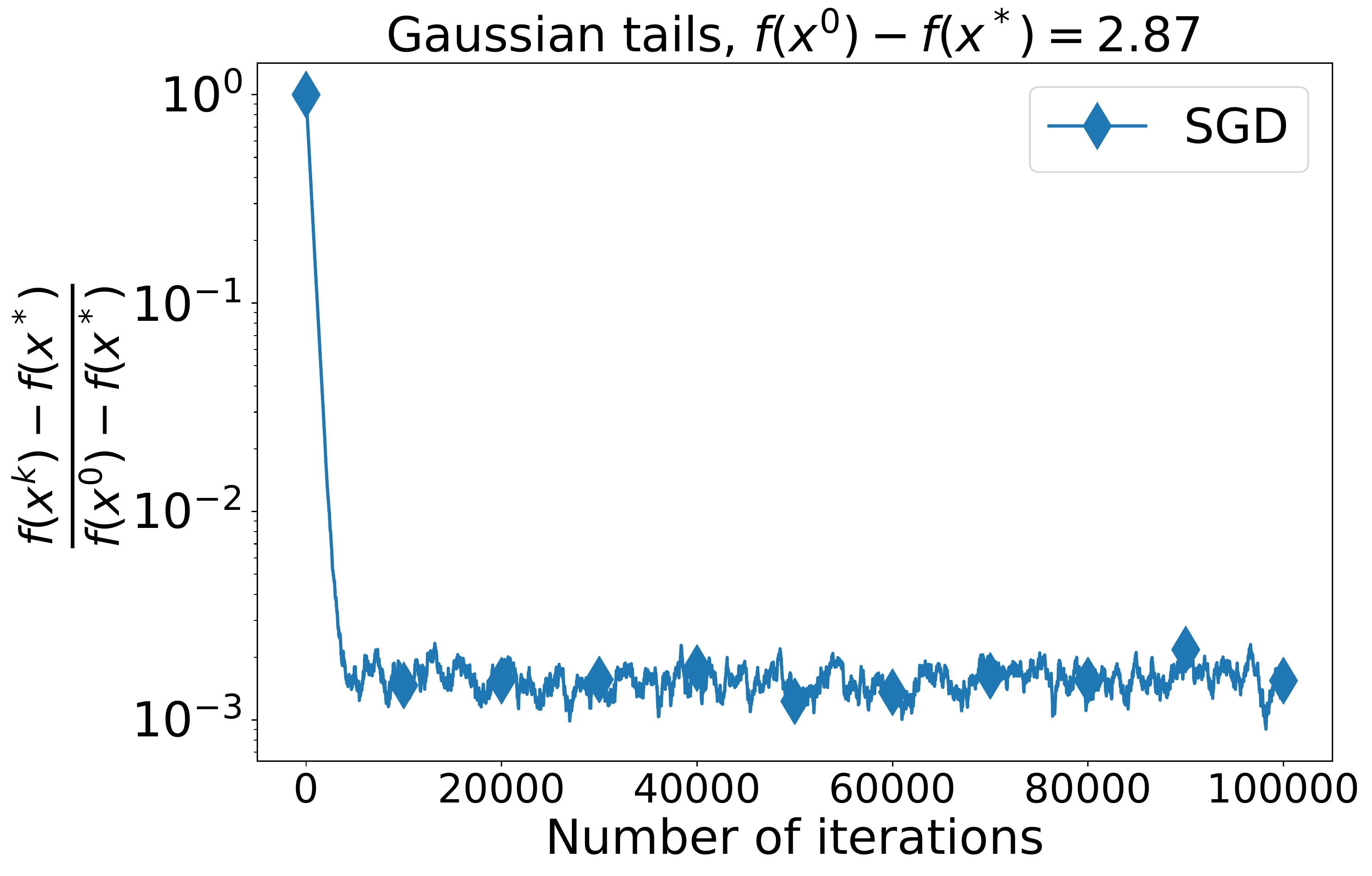}
    \includegraphics[width=0.32\textwidth]{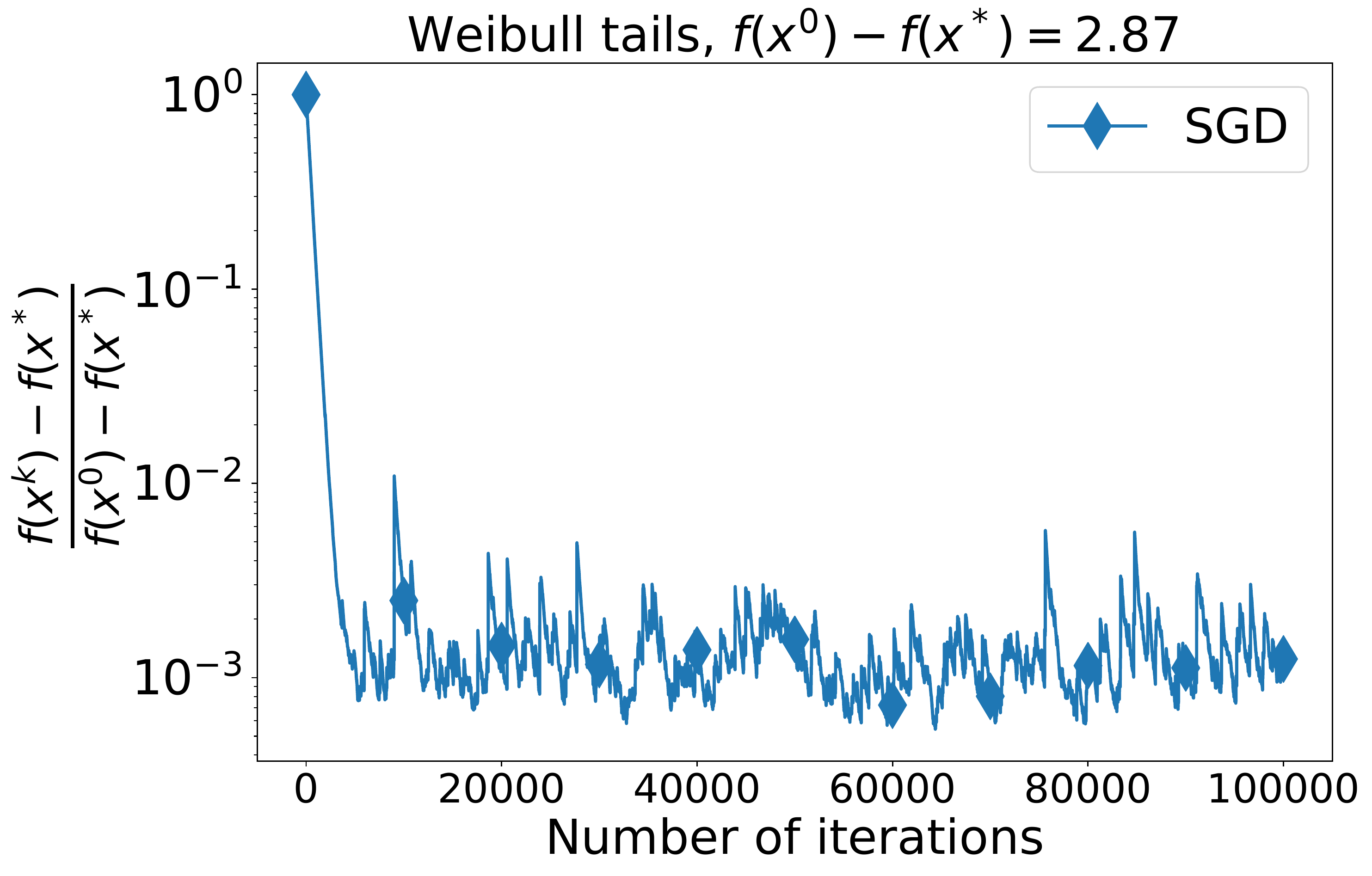}
    \includegraphics[width=0.32\textwidth]{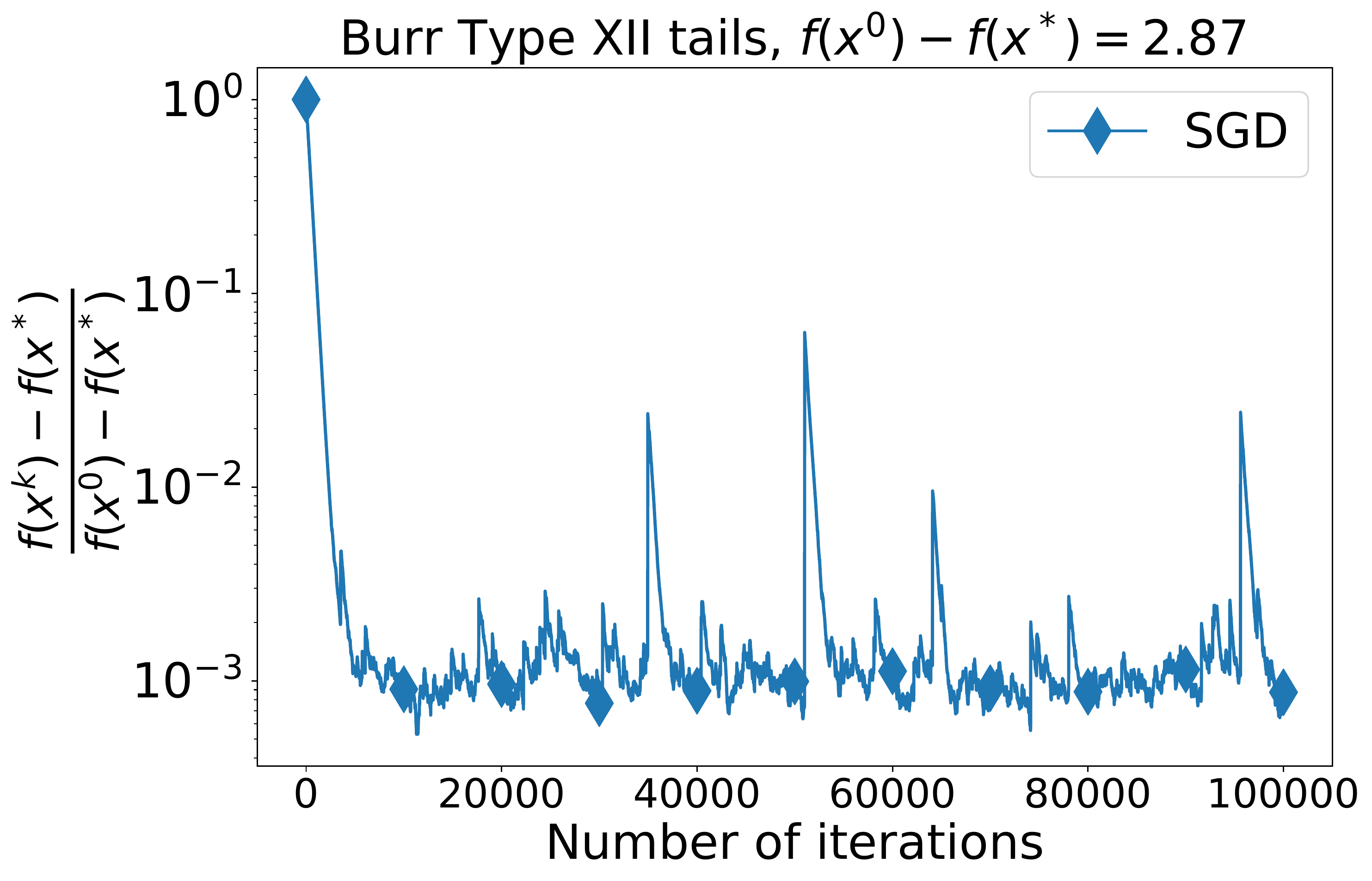}
    \includegraphics[width=0.32\textwidth]{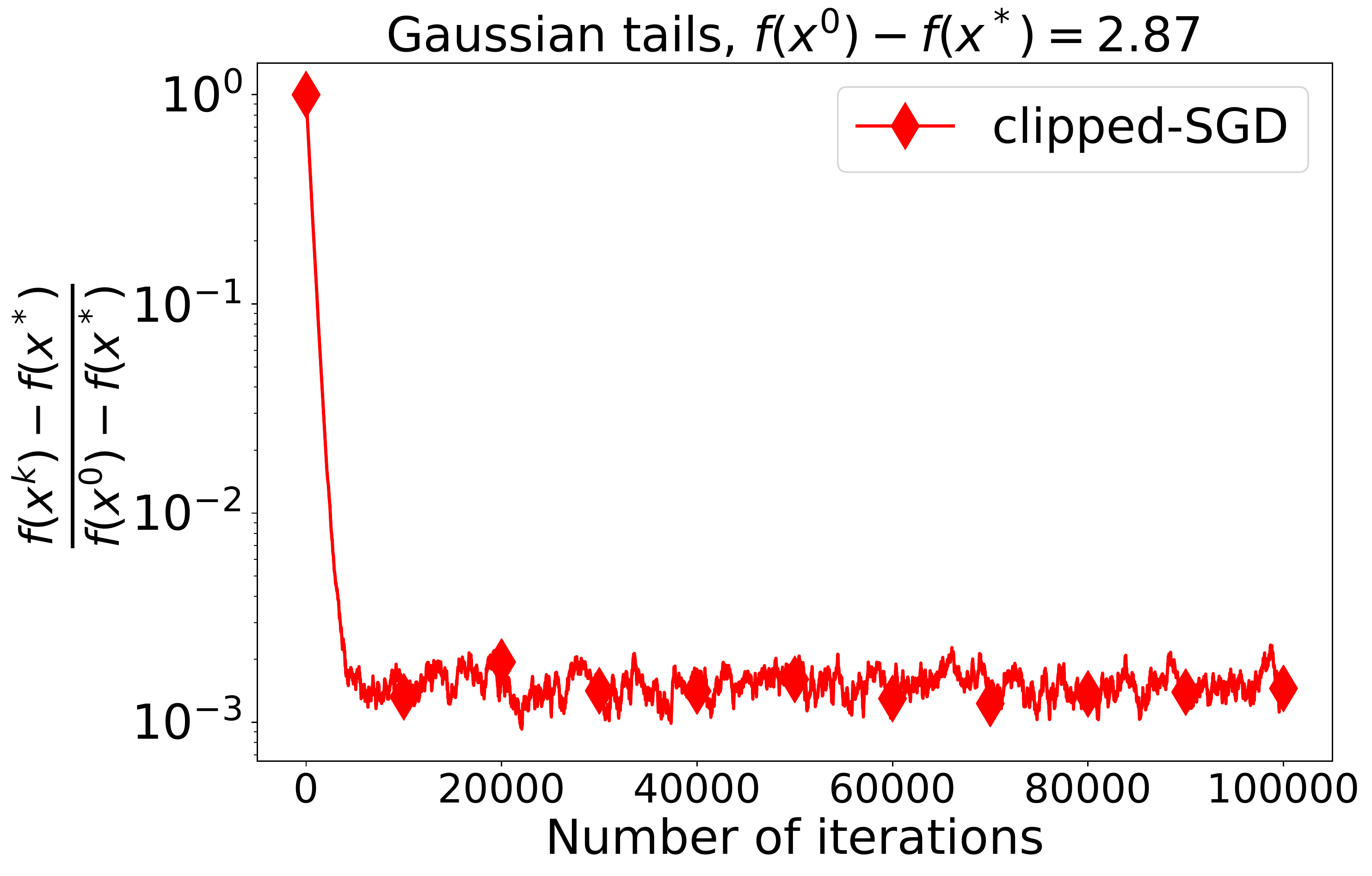}
    \includegraphics[width=0.32\textwidth]{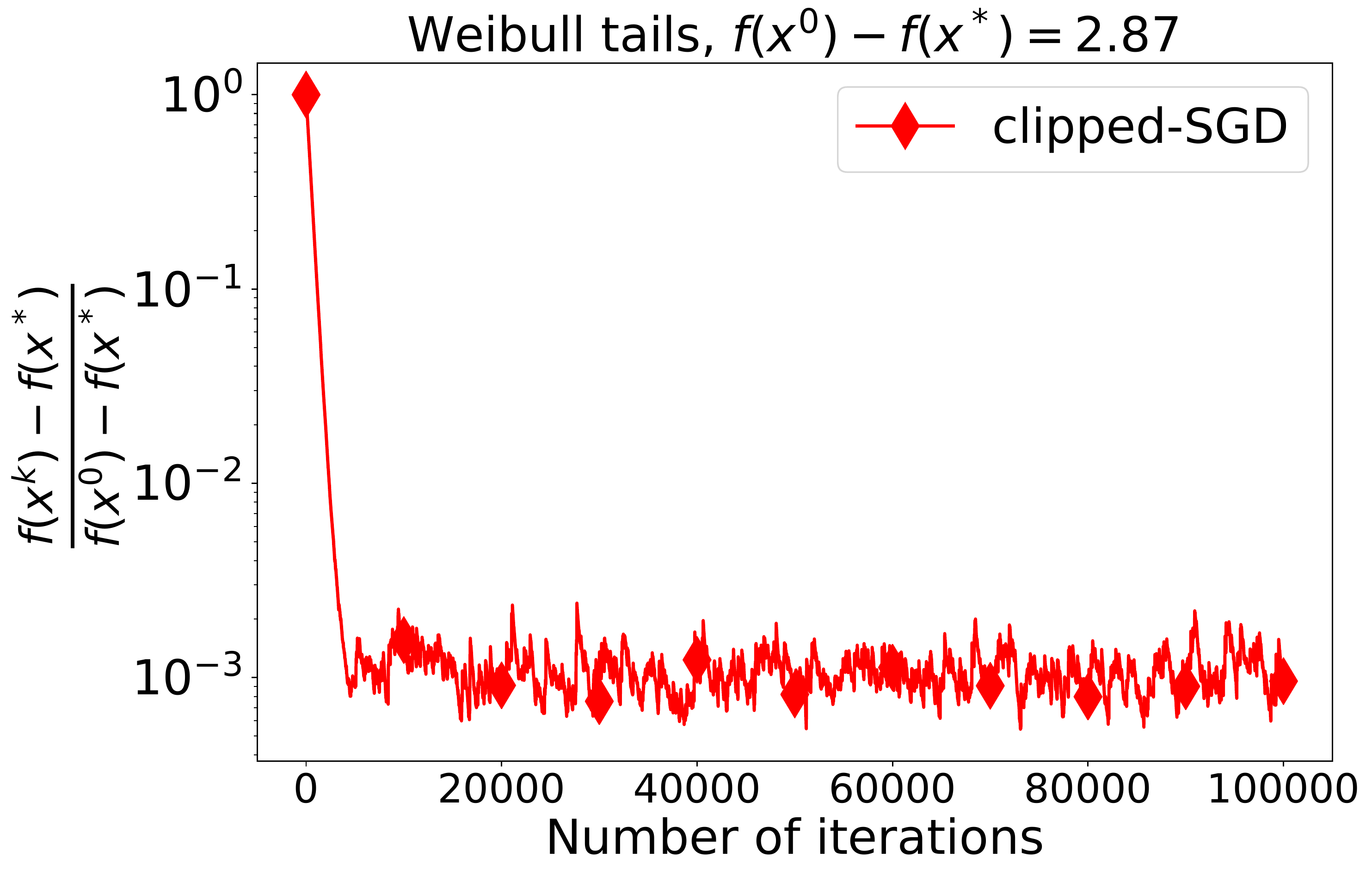}
    \includegraphics[width=0.32\textwidth]{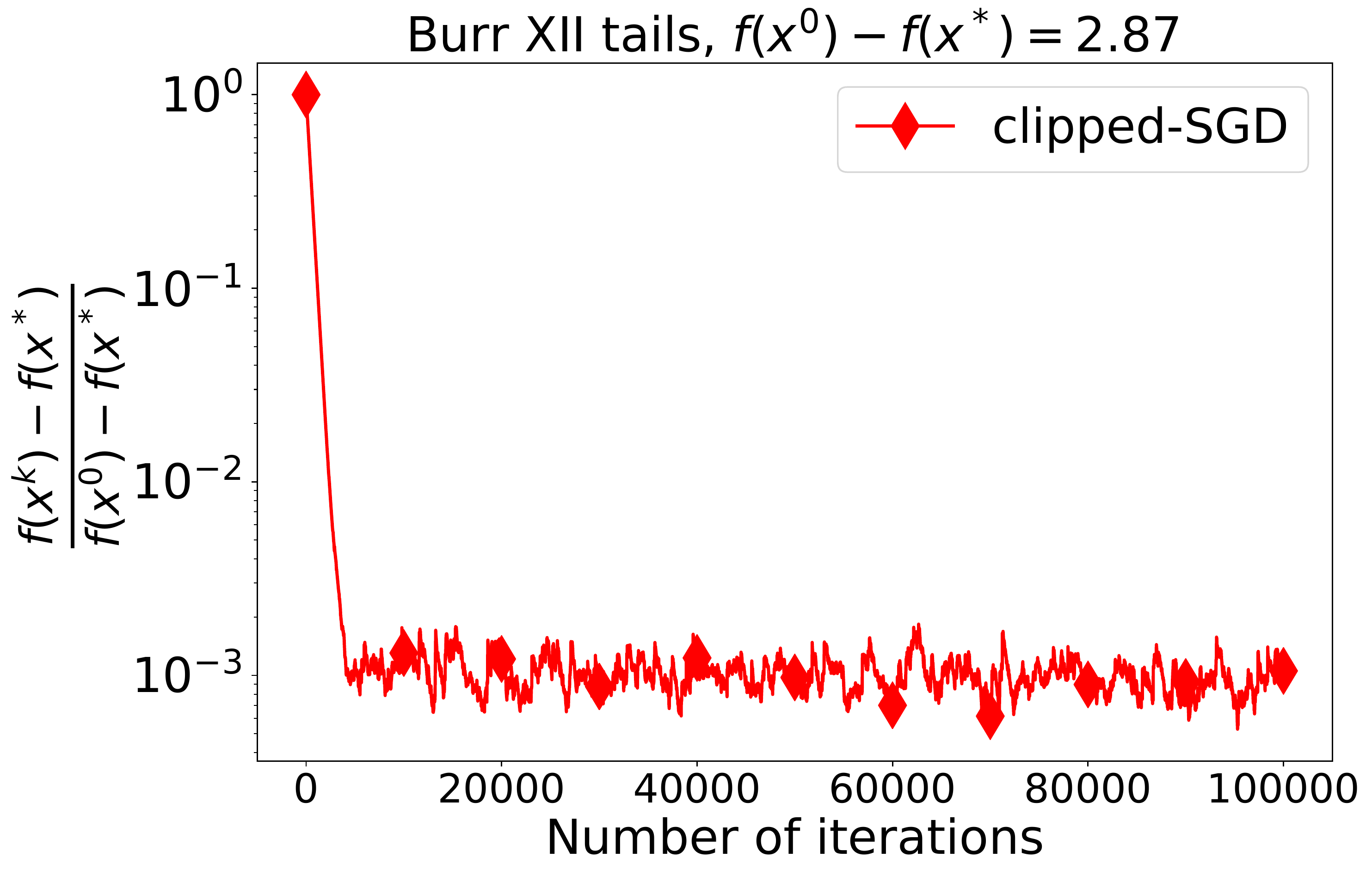}
    \caption{$2$ independent runs of {\tt SGD} (blue) and {\tt clipped-SGD} (red) applied to solve \eqref{eq:toy_problem} with $\xi$ having Gaussian (left column), Weibull (central column) and Burr Type XII (right column) tails.}
    \label{fig:tou_runs3}
\end{figure}

\begin{figure}[h]
    \centering
    \includegraphics[width=0.32\textwidth]{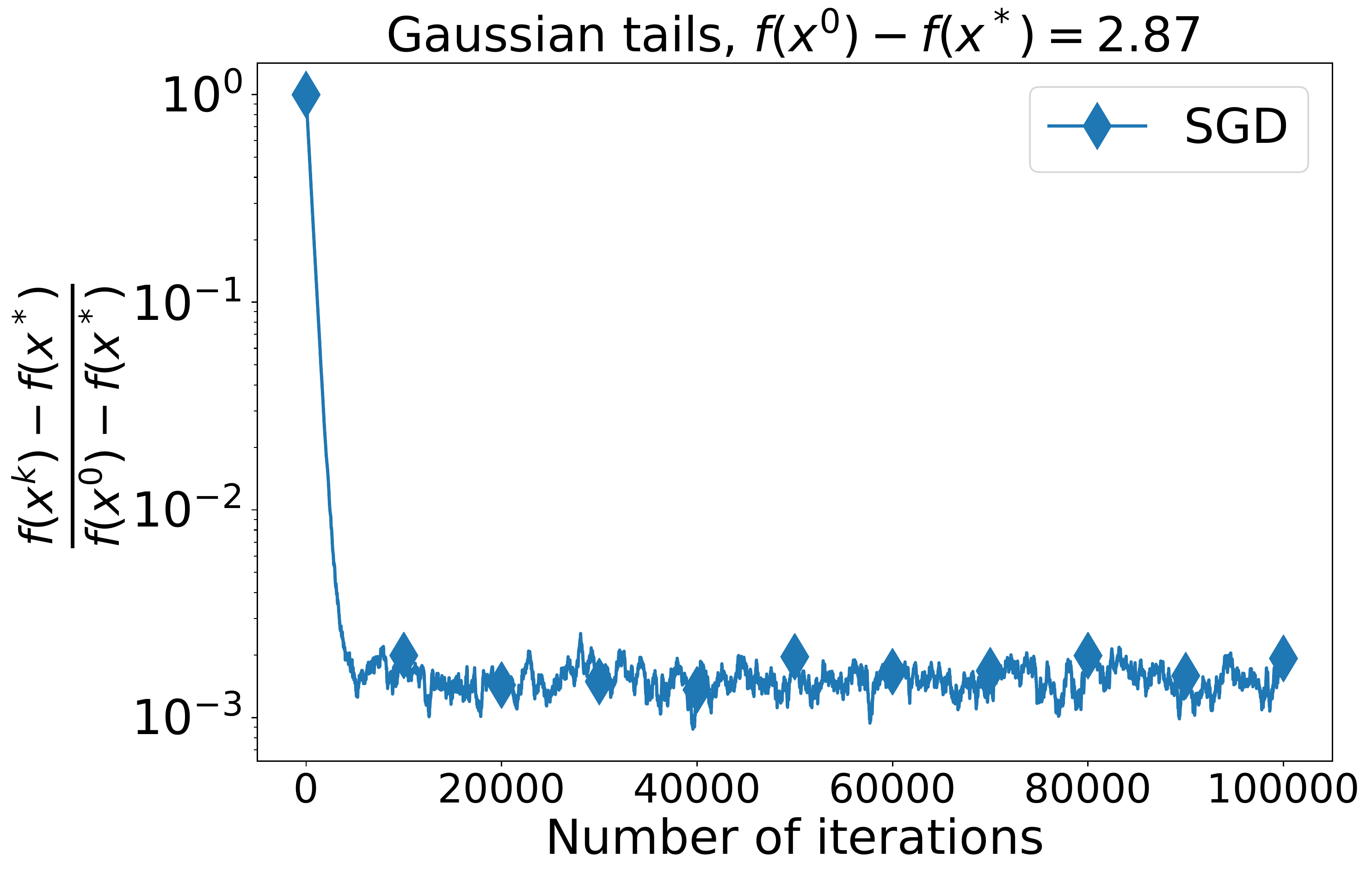}
    \includegraphics[width=0.32\textwidth]{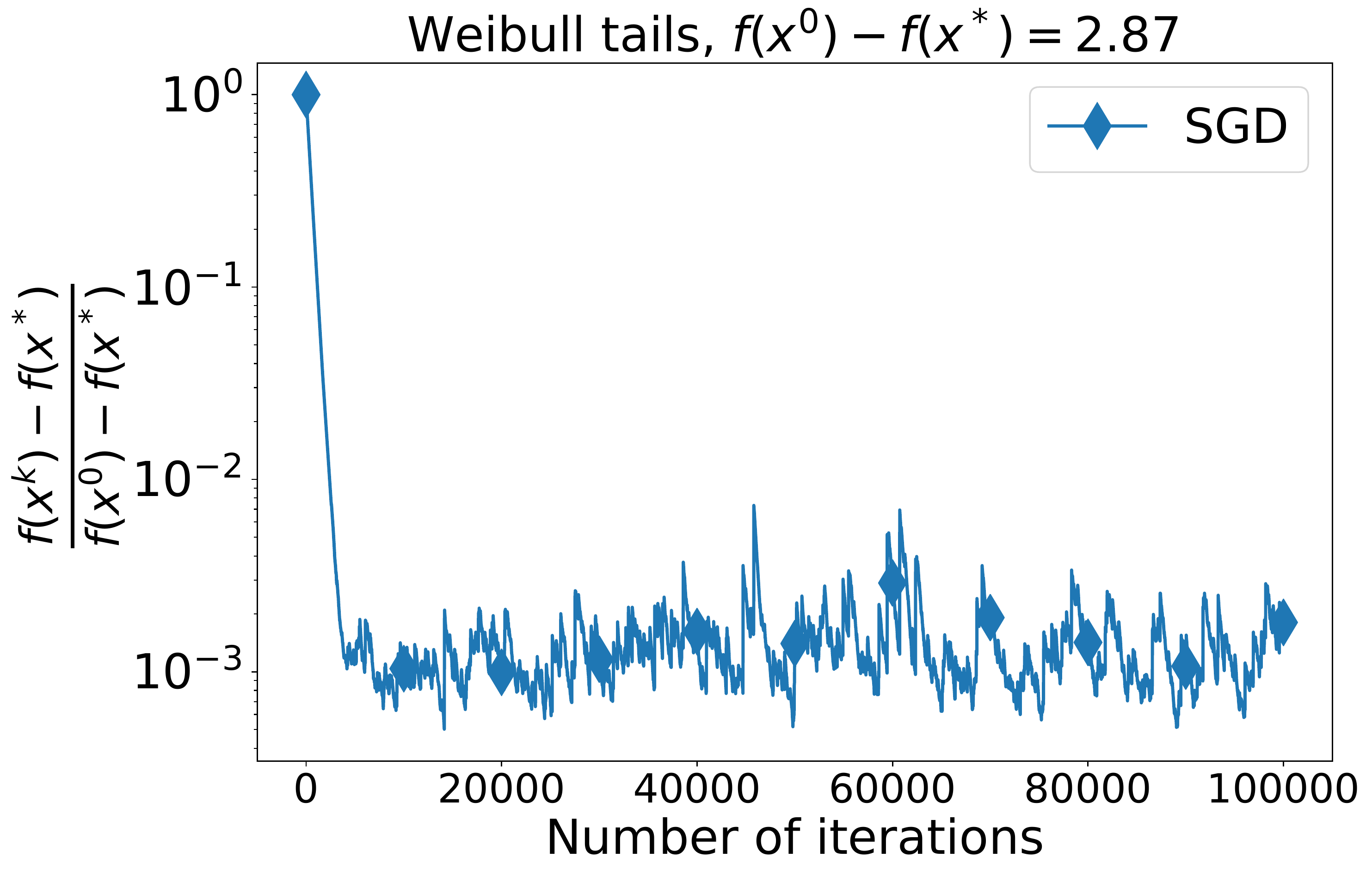}
    \includegraphics[width=0.32\textwidth]{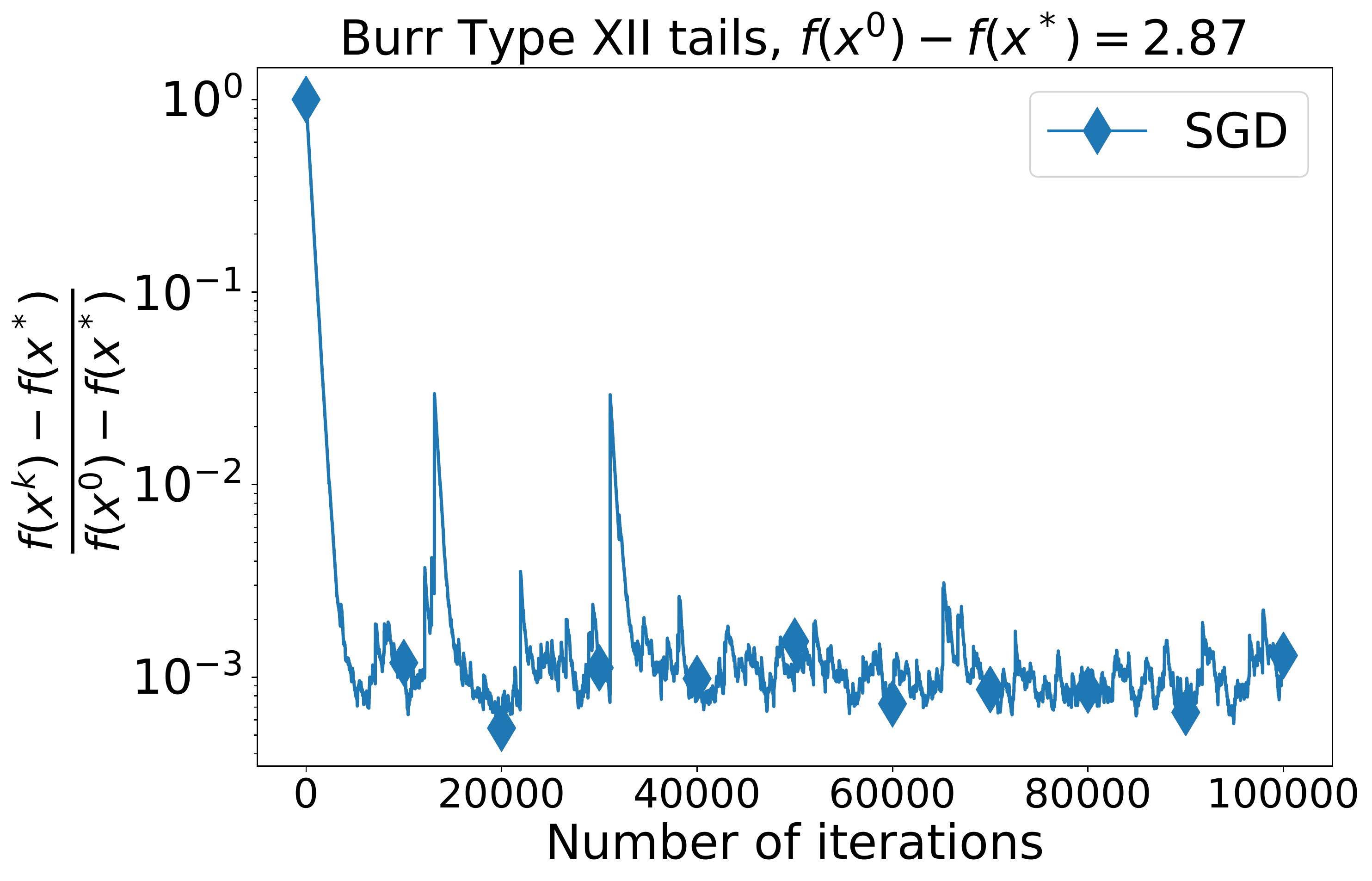}
    \includegraphics[width=0.32\textwidth]{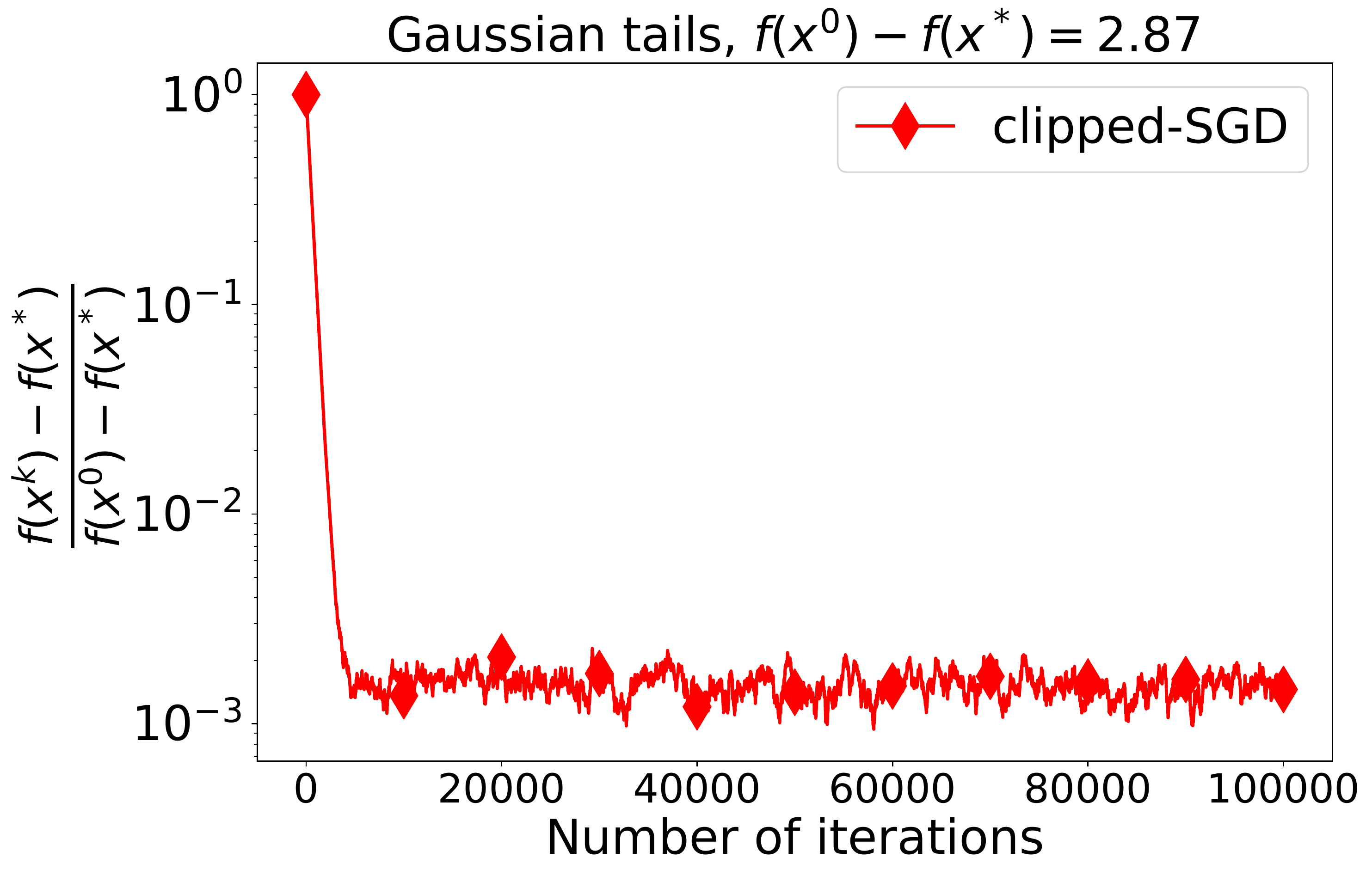}
    \includegraphics[width=0.32\textwidth]{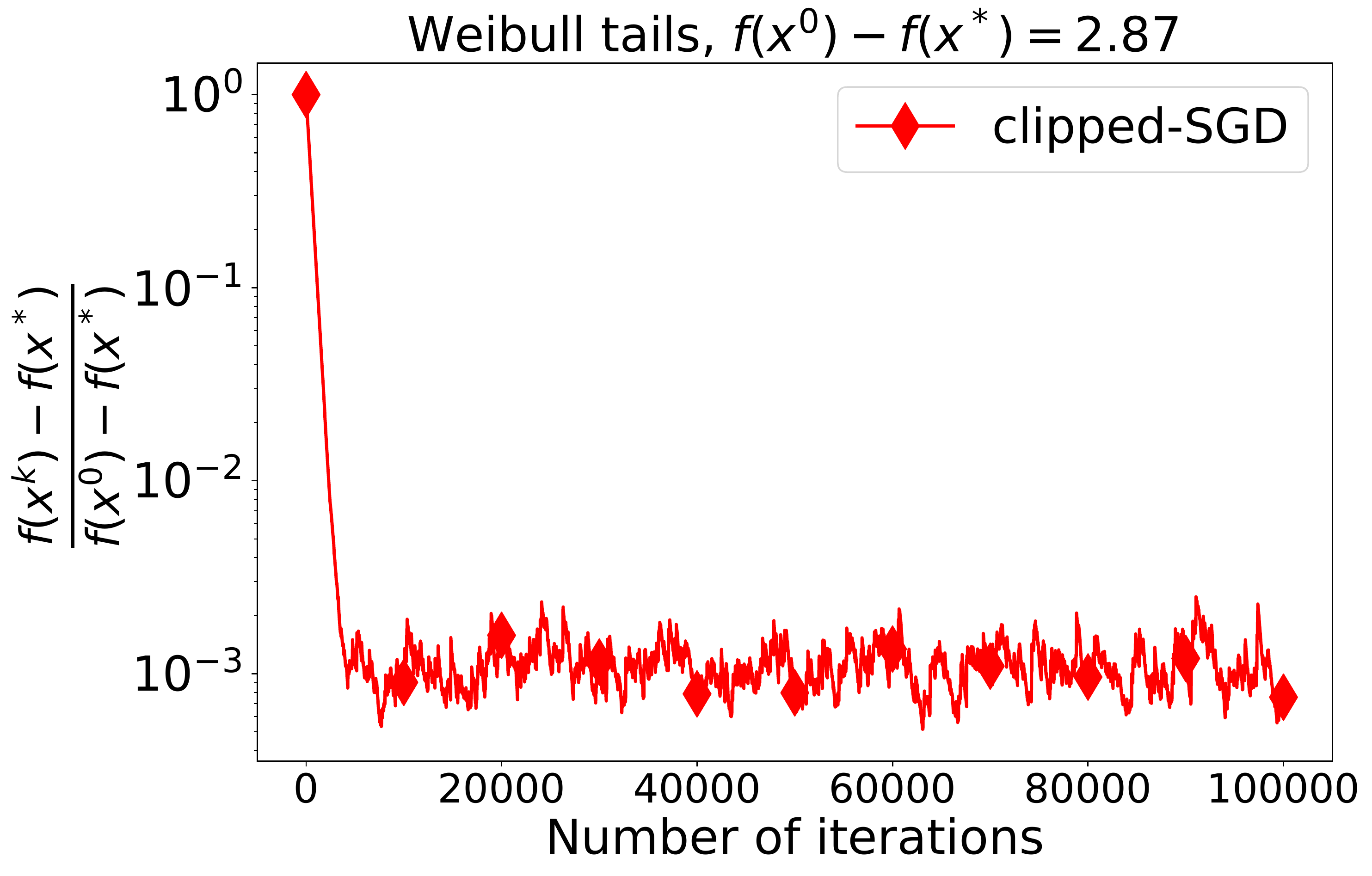}
    \includegraphics[width=0.32\textwidth]{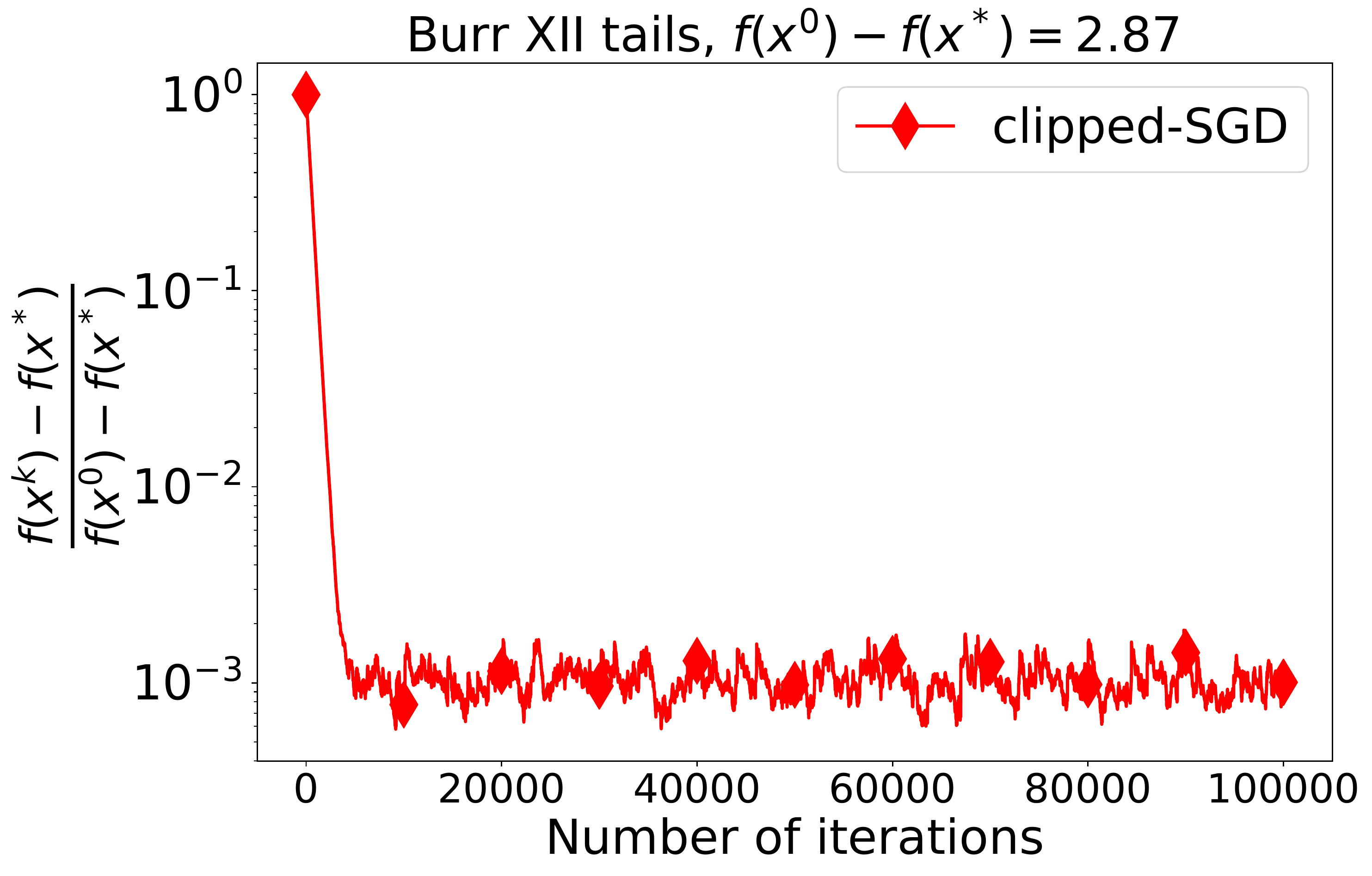}
    \includegraphics[width=0.32\textwidth]{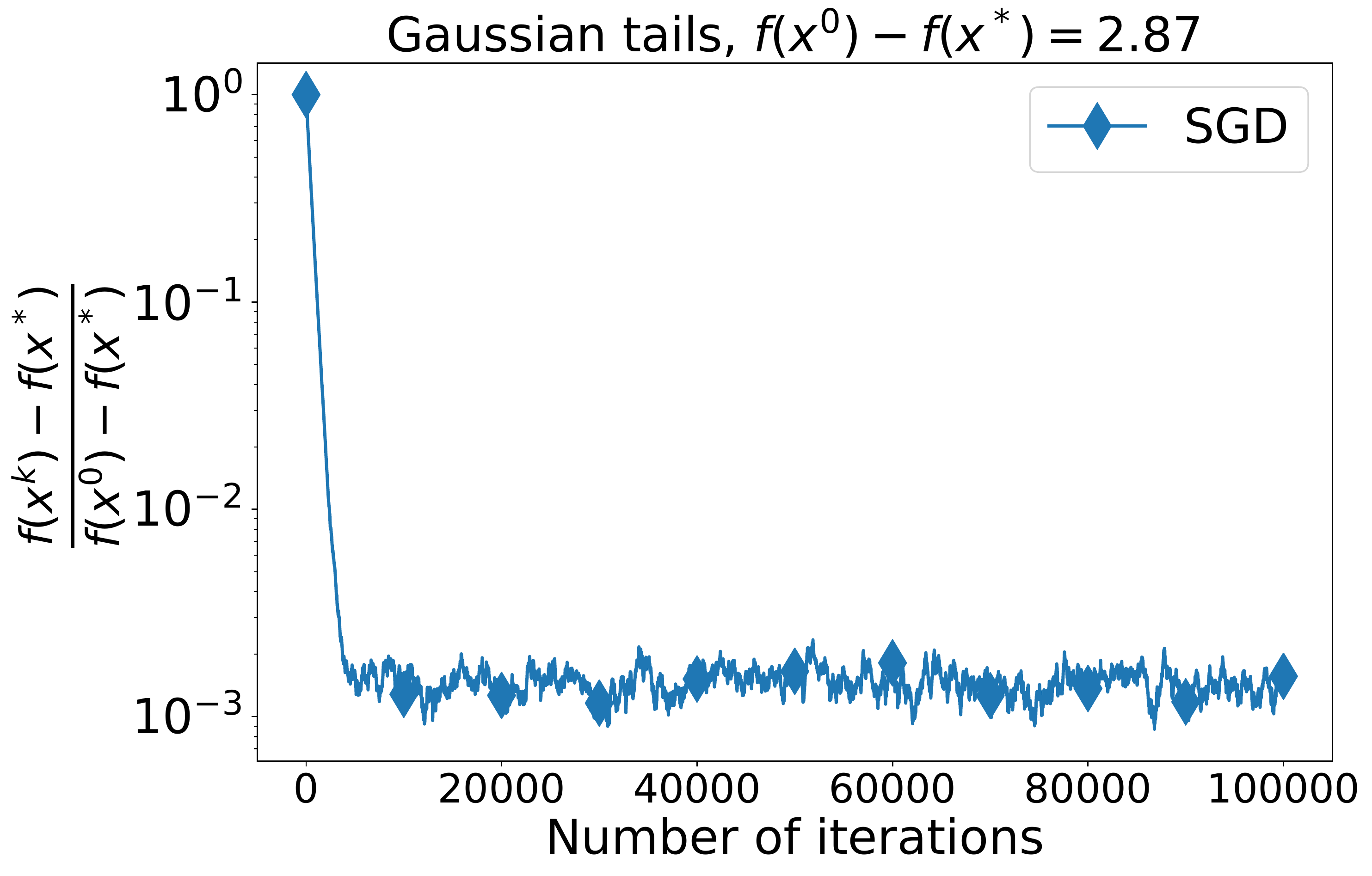}
    \includegraphics[width=0.32\textwidth]{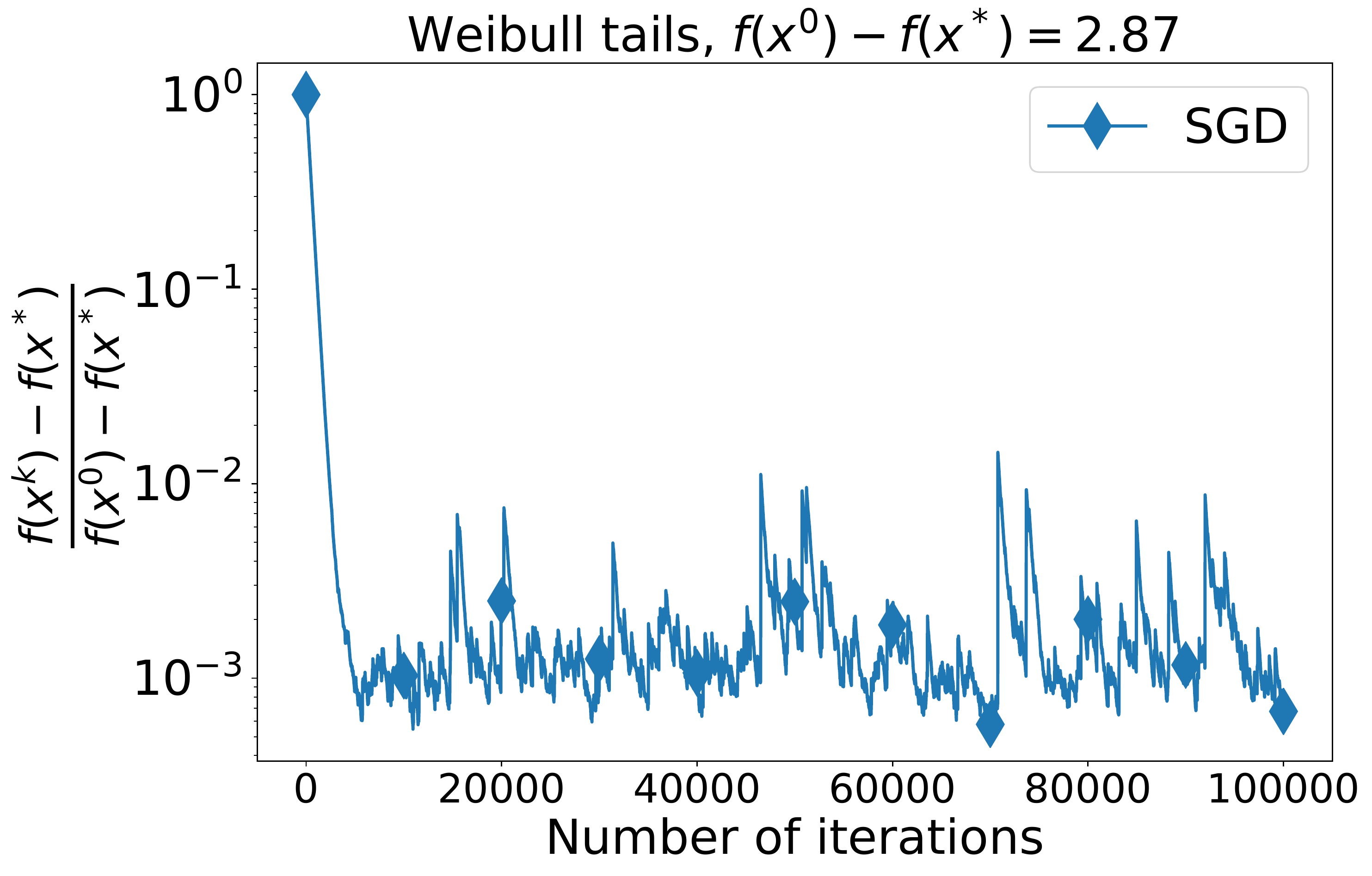}
    \includegraphics[width=0.32\textwidth]{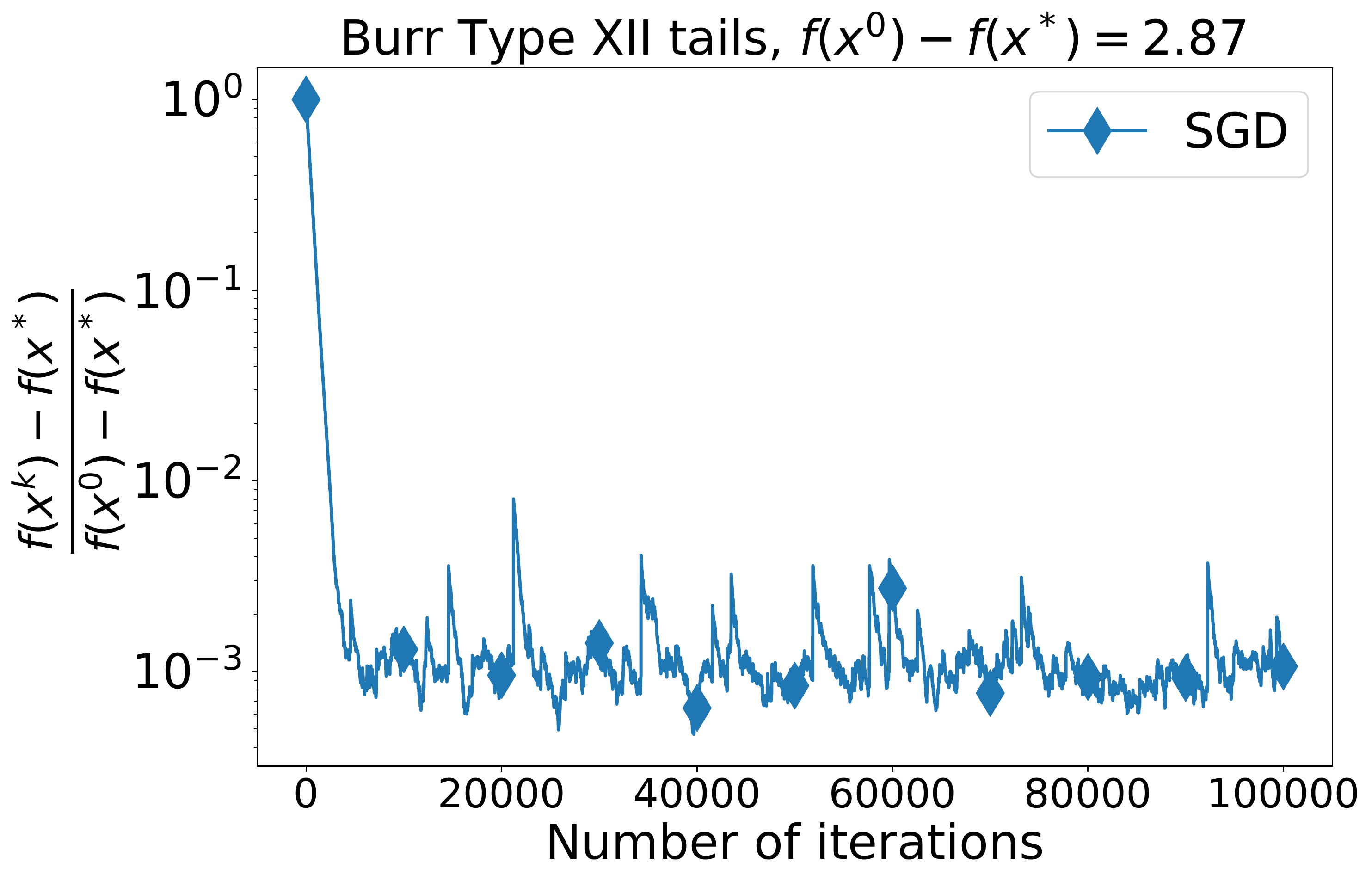}
    \includegraphics[width=0.32\textwidth]{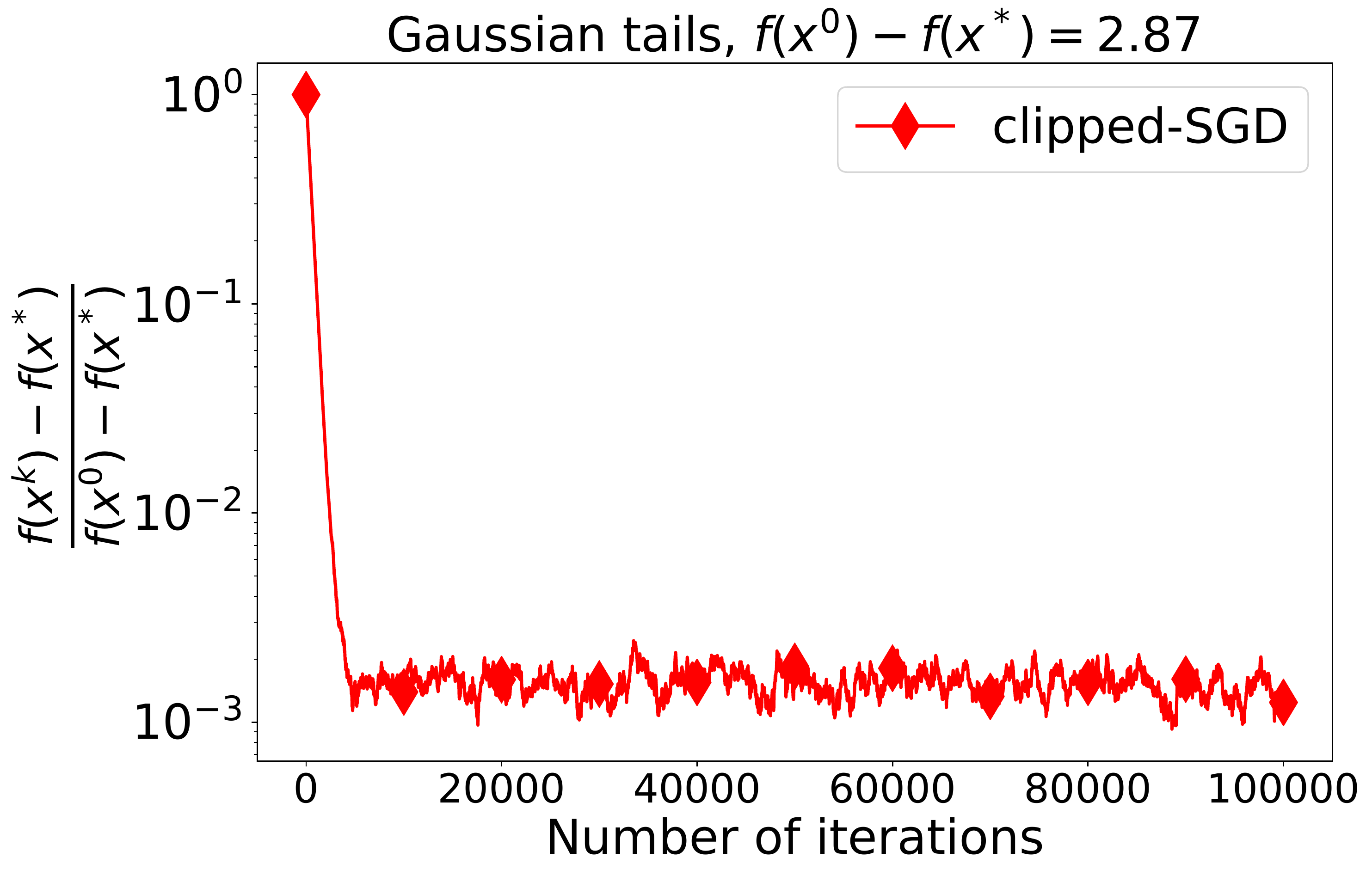}
    \includegraphics[width=0.32\textwidth]{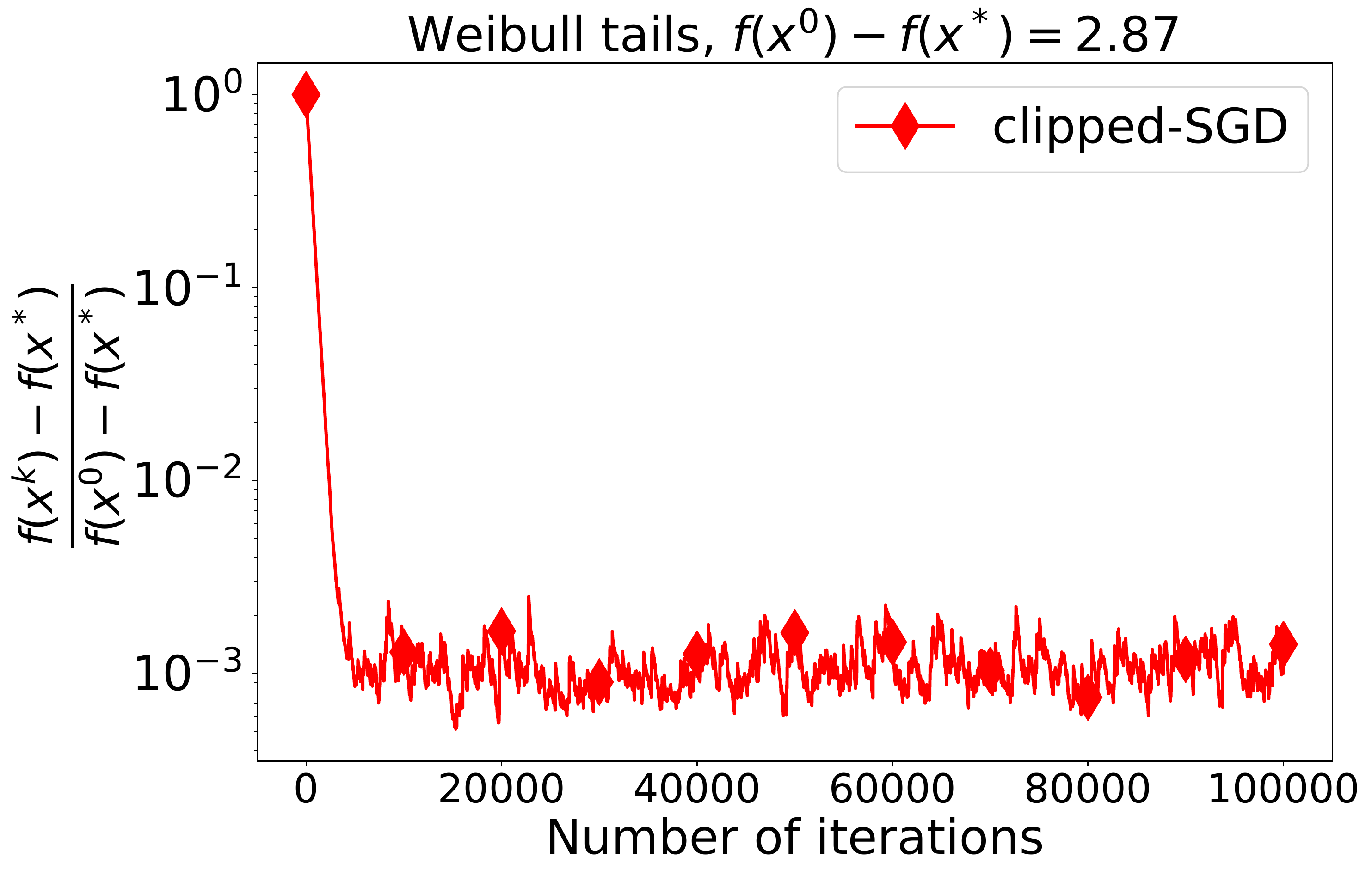}
    \includegraphics[width=0.32\textwidth]{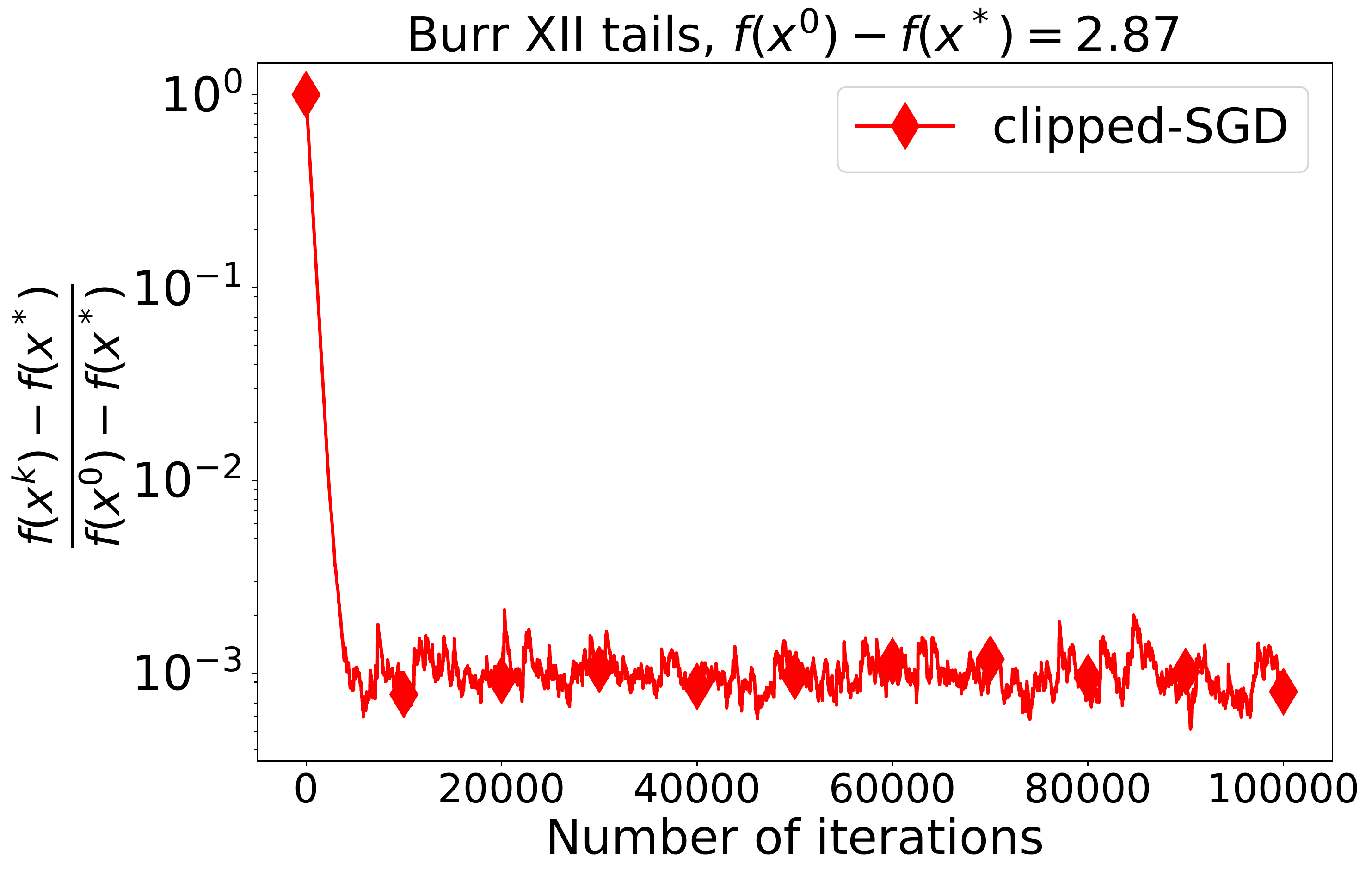}
    \caption{$2$ independent runs of {\tt SGD} (blue) and {\tt clipped-SGD} (red) applied to solve \eqref{eq:toy_problem} with $\xi$ having Gaussian (left column), Weibull (central column) and Burr Type XII (right column) tails.}
    \label{fig:tou_runs4}
\end{figure}

\begin{figure}[h]
    \centering
    \includegraphics[width=0.32\textwidth]{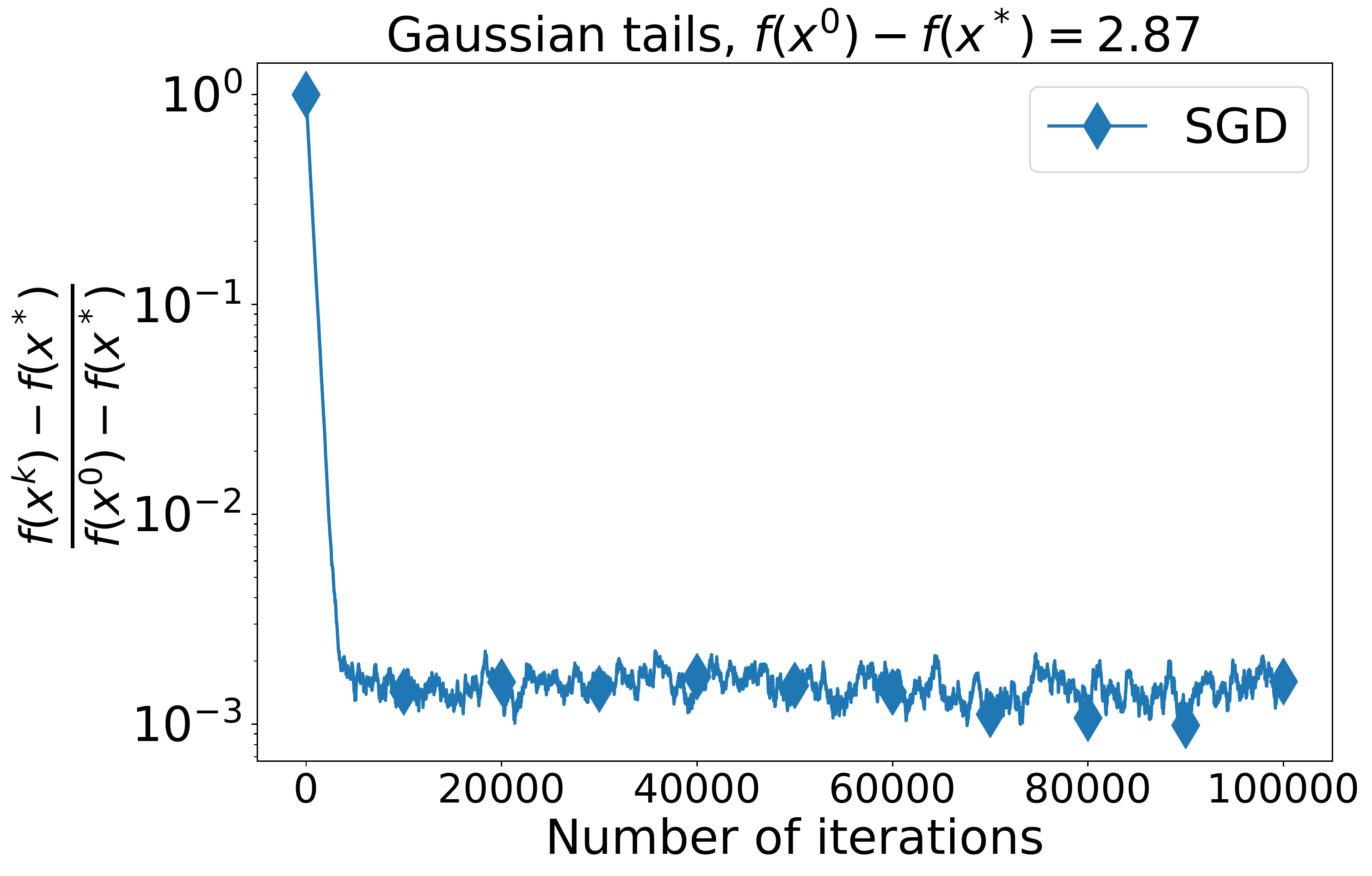}
    \includegraphics[width=0.32\textwidth]{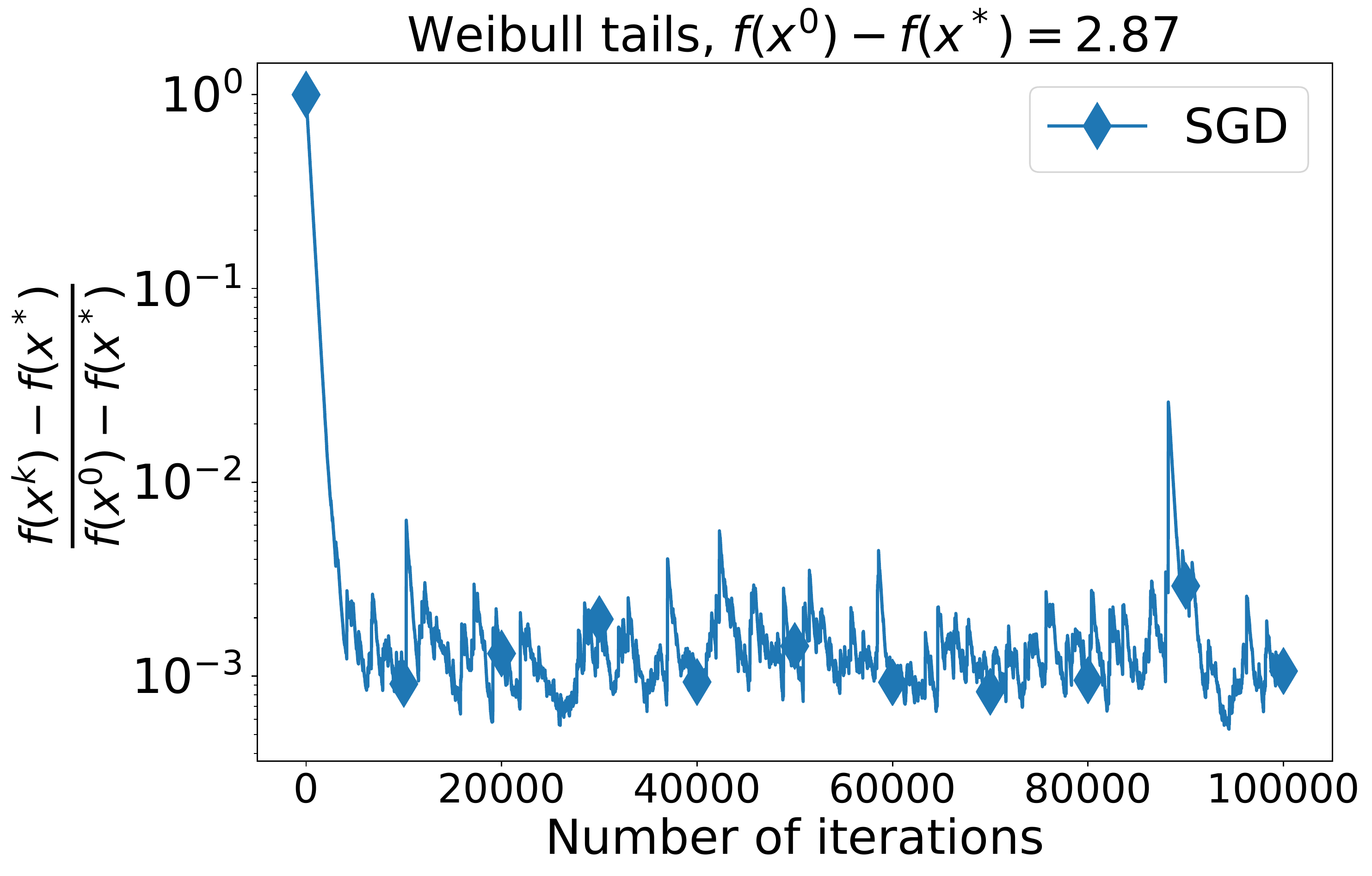}
    \includegraphics[width=0.32\textwidth]{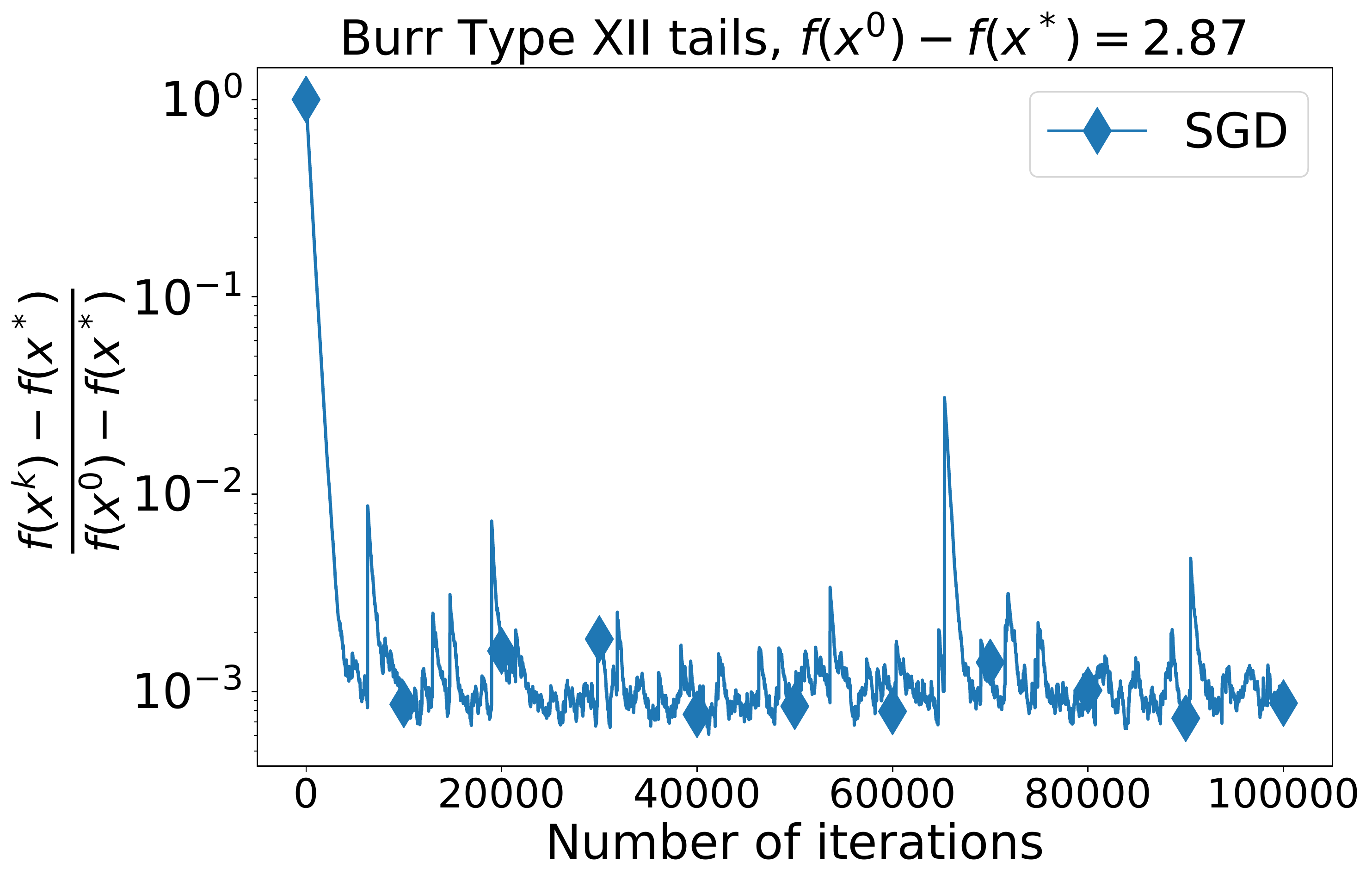}
    \includegraphics[width=0.32\textwidth]{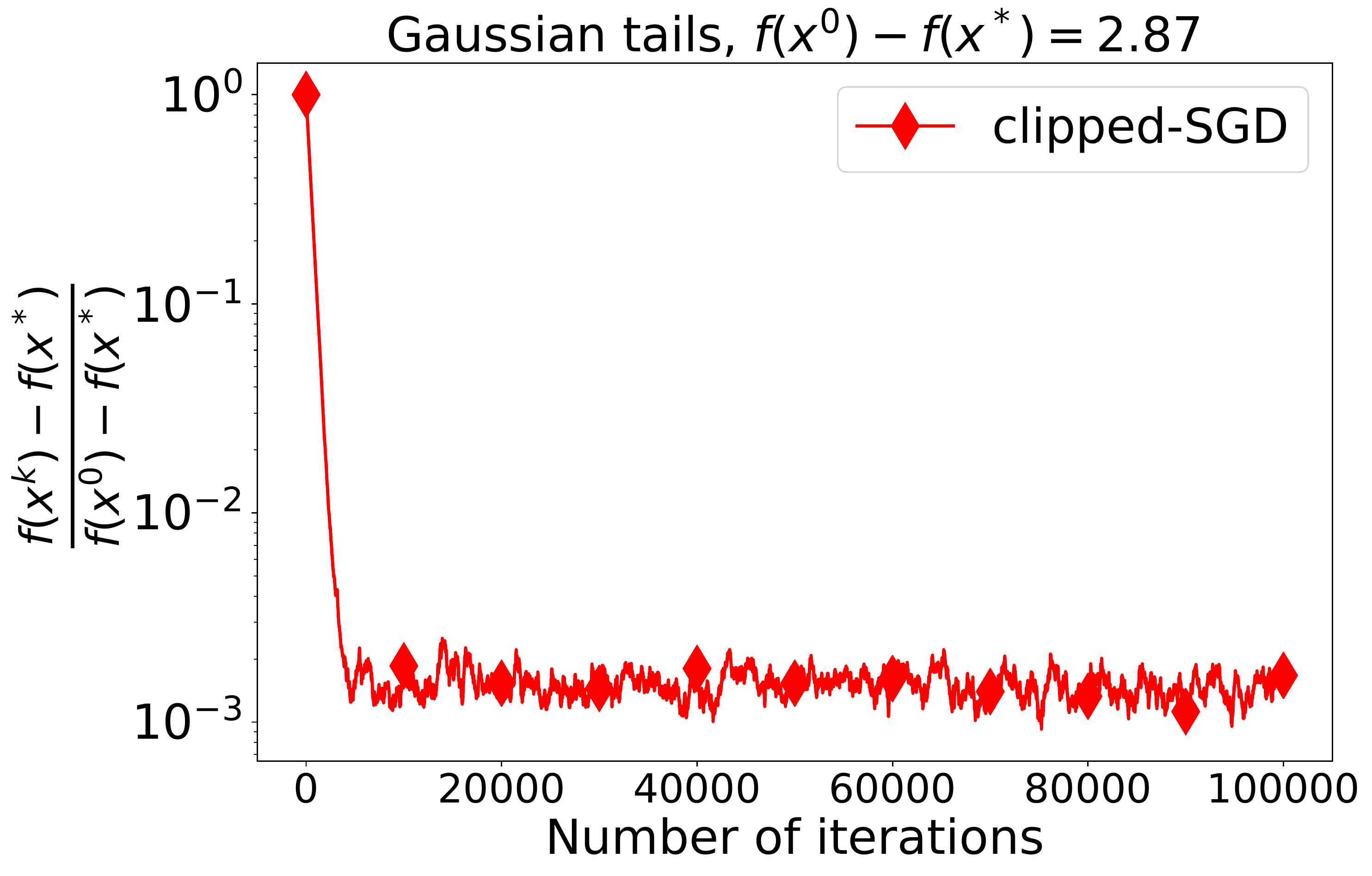}
    \includegraphics[width=0.32\textwidth]{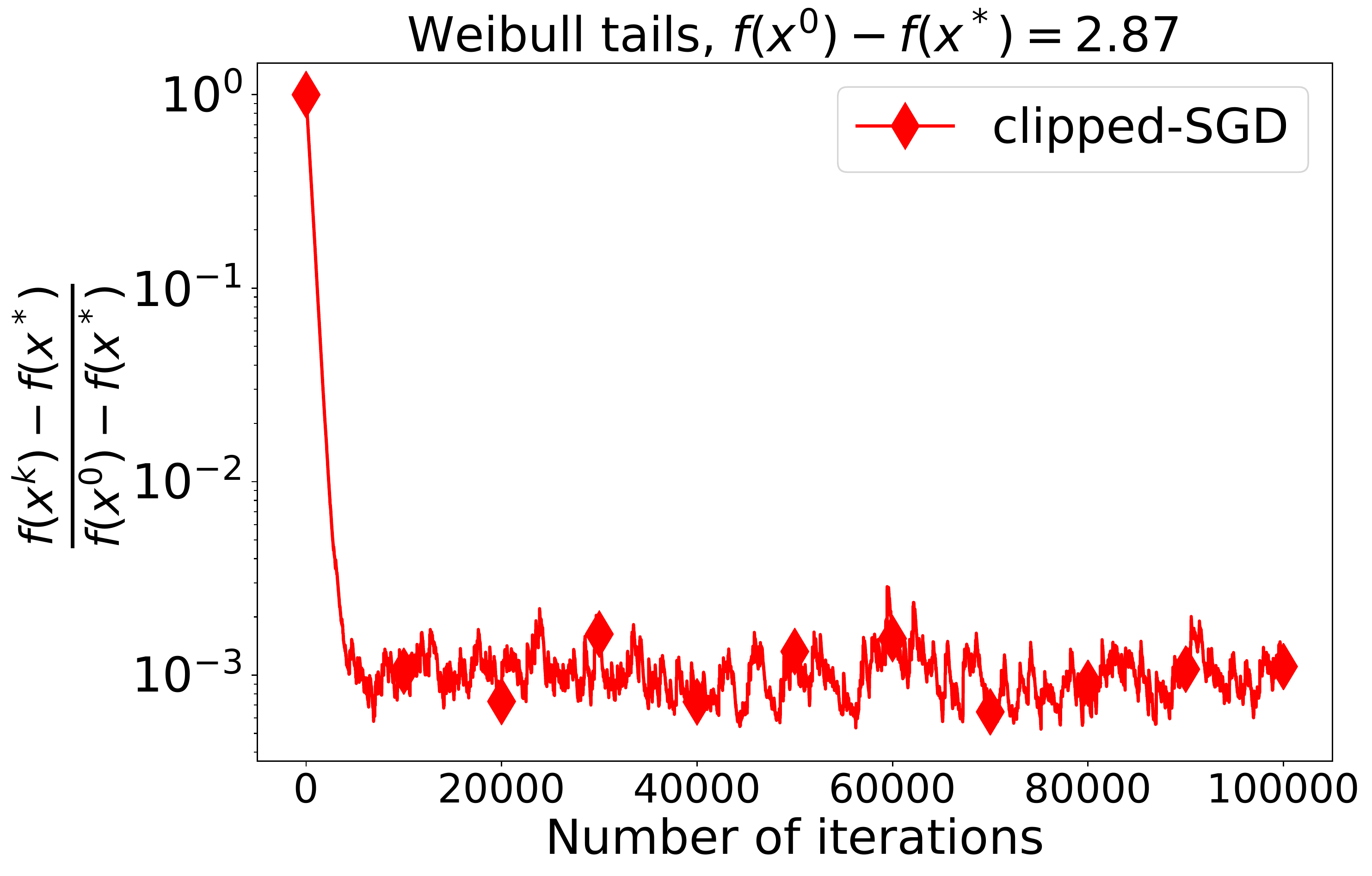}
    \includegraphics[width=0.32\textwidth]{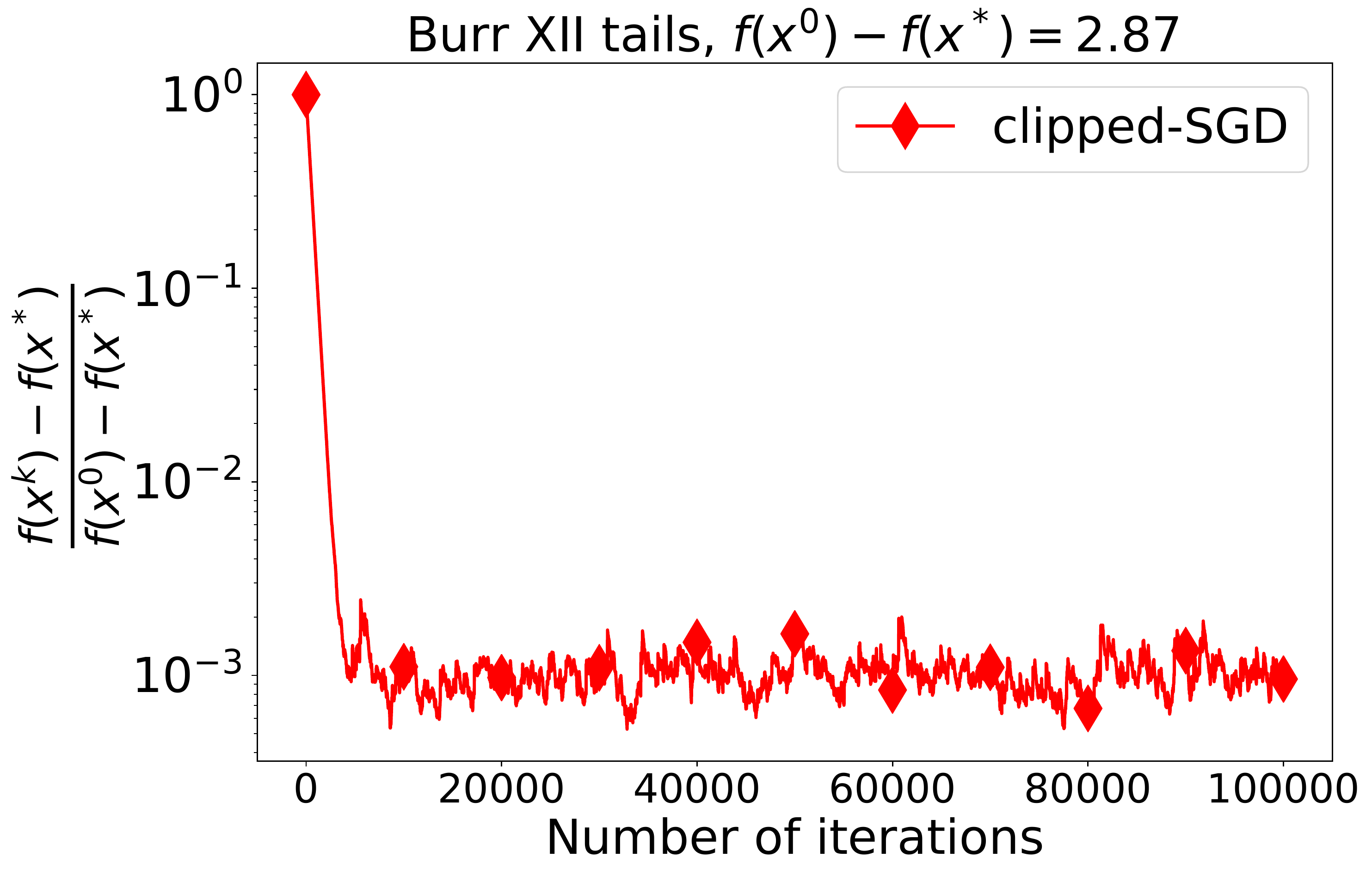}
    \includegraphics[width=0.32\textwidth]{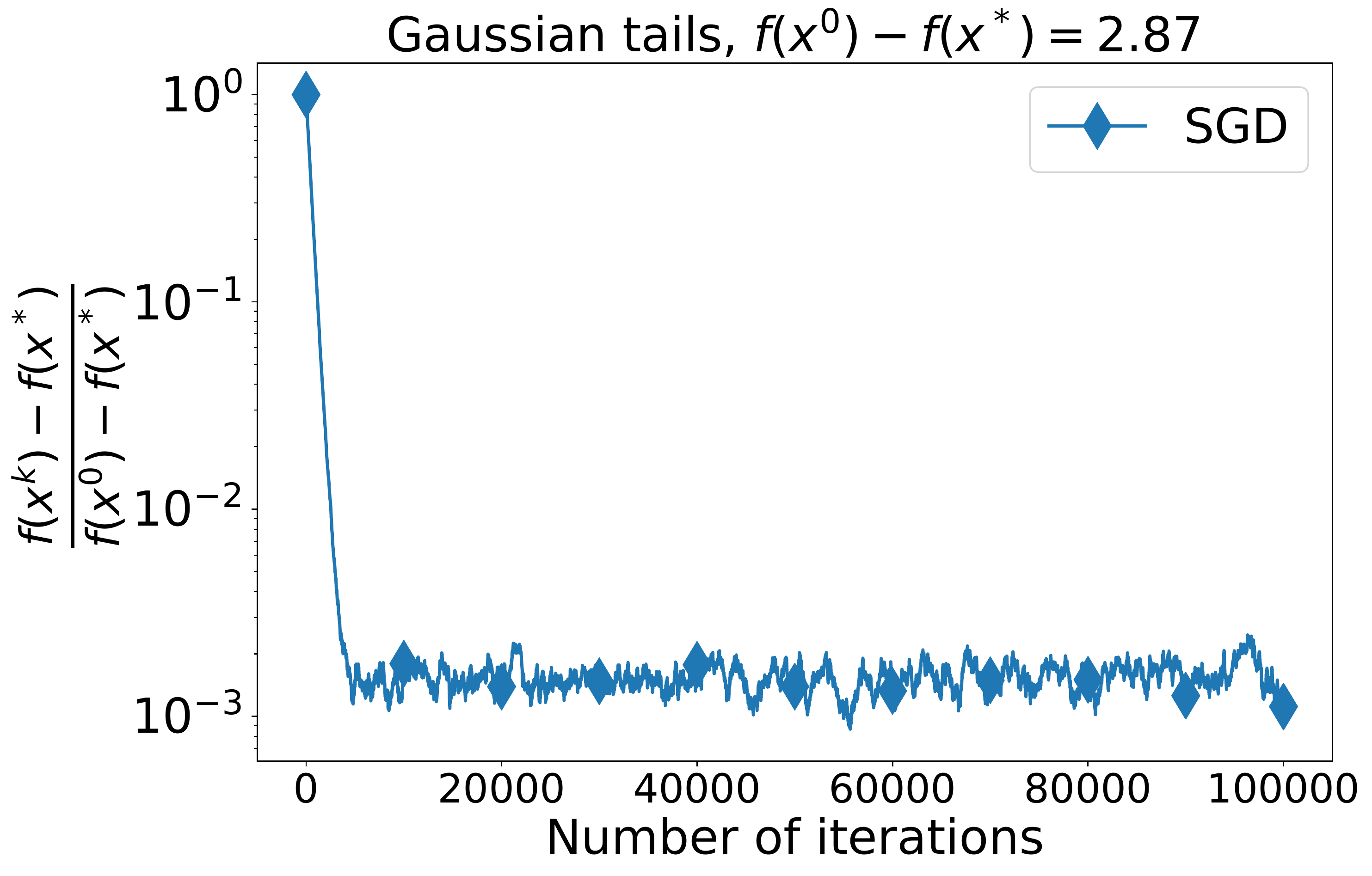}
    \includegraphics[width=0.32\textwidth]{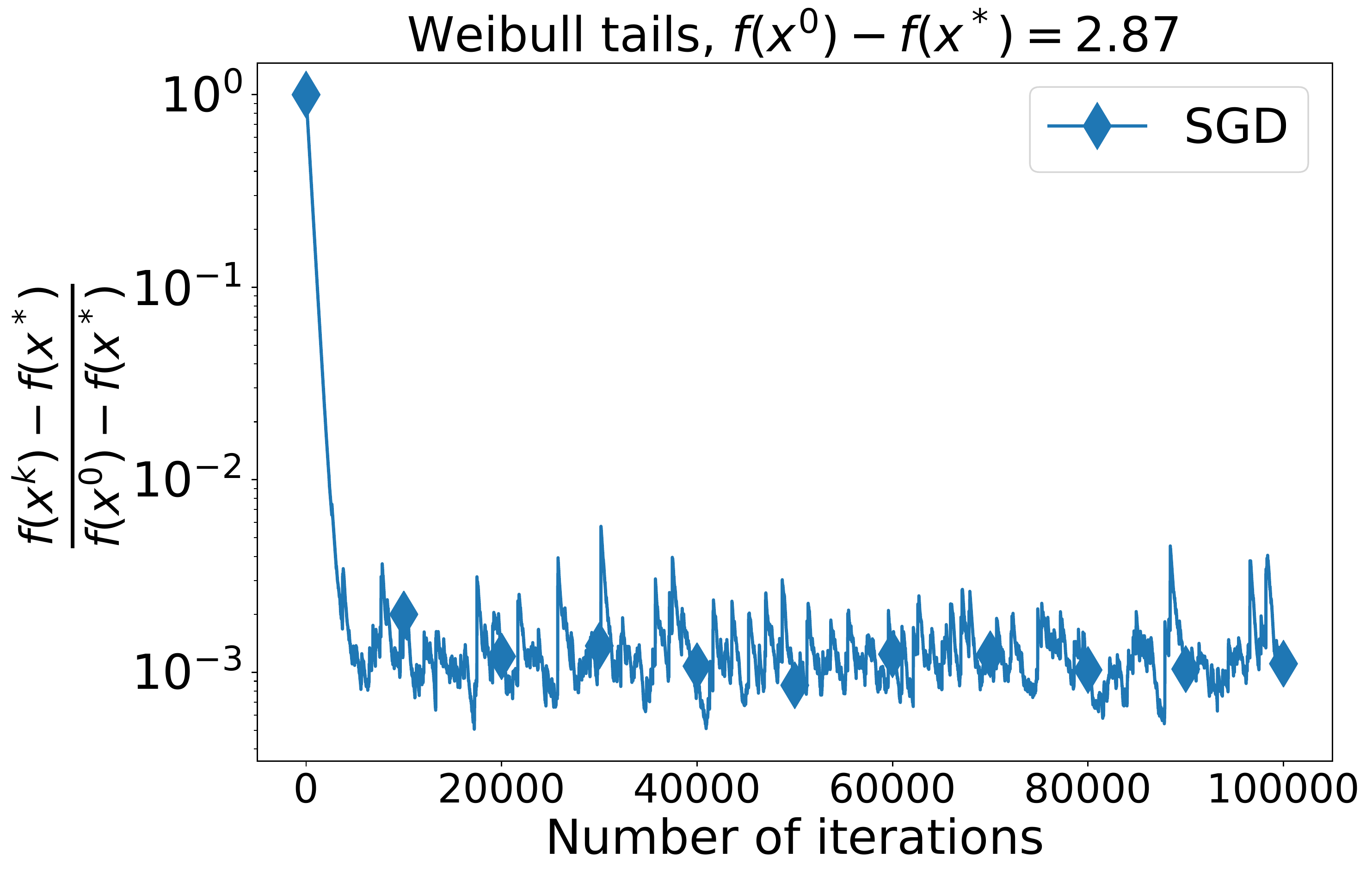}
    \includegraphics[width=0.32\textwidth]{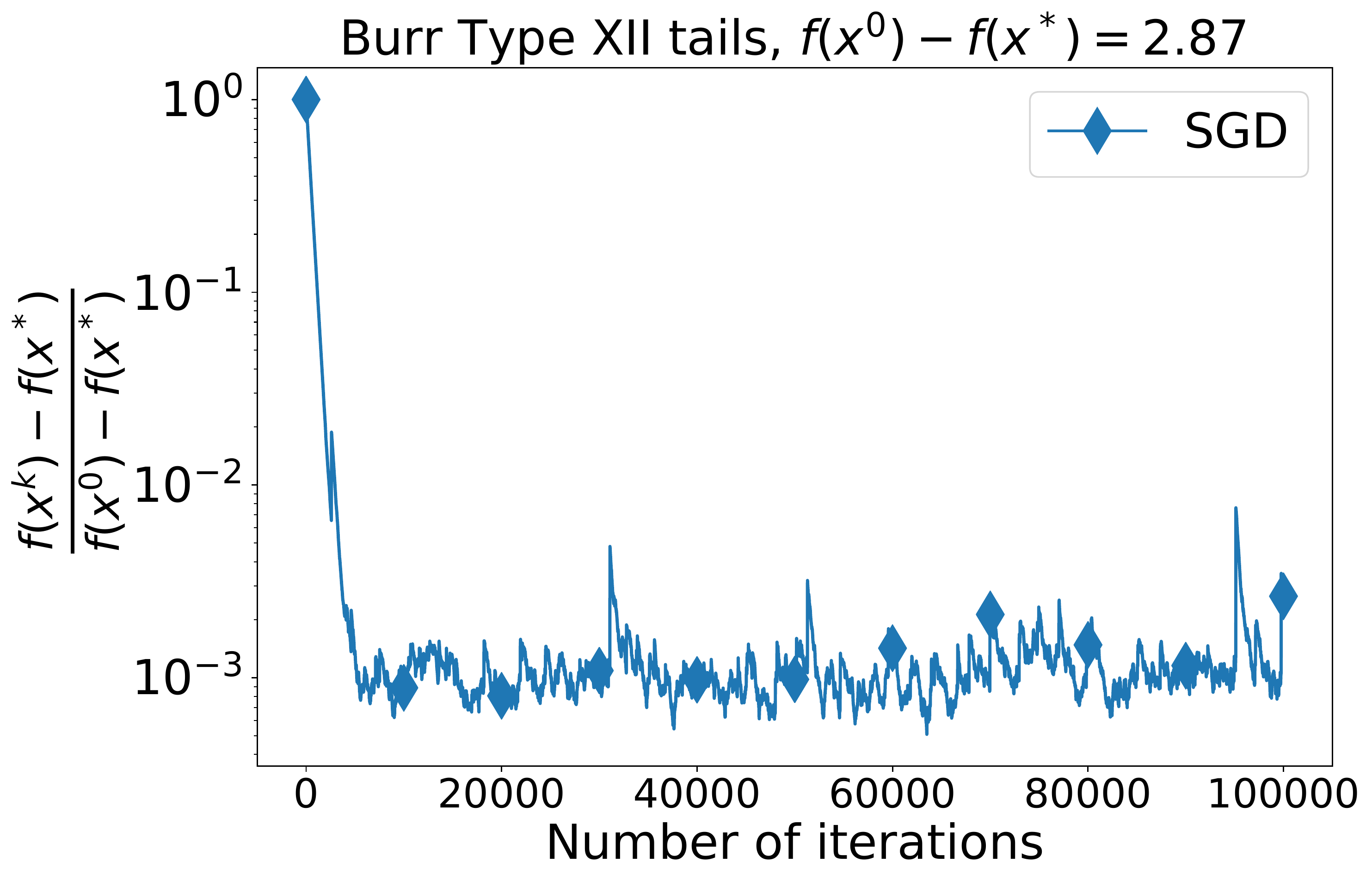}
    \includegraphics[width=0.32\textwidth]{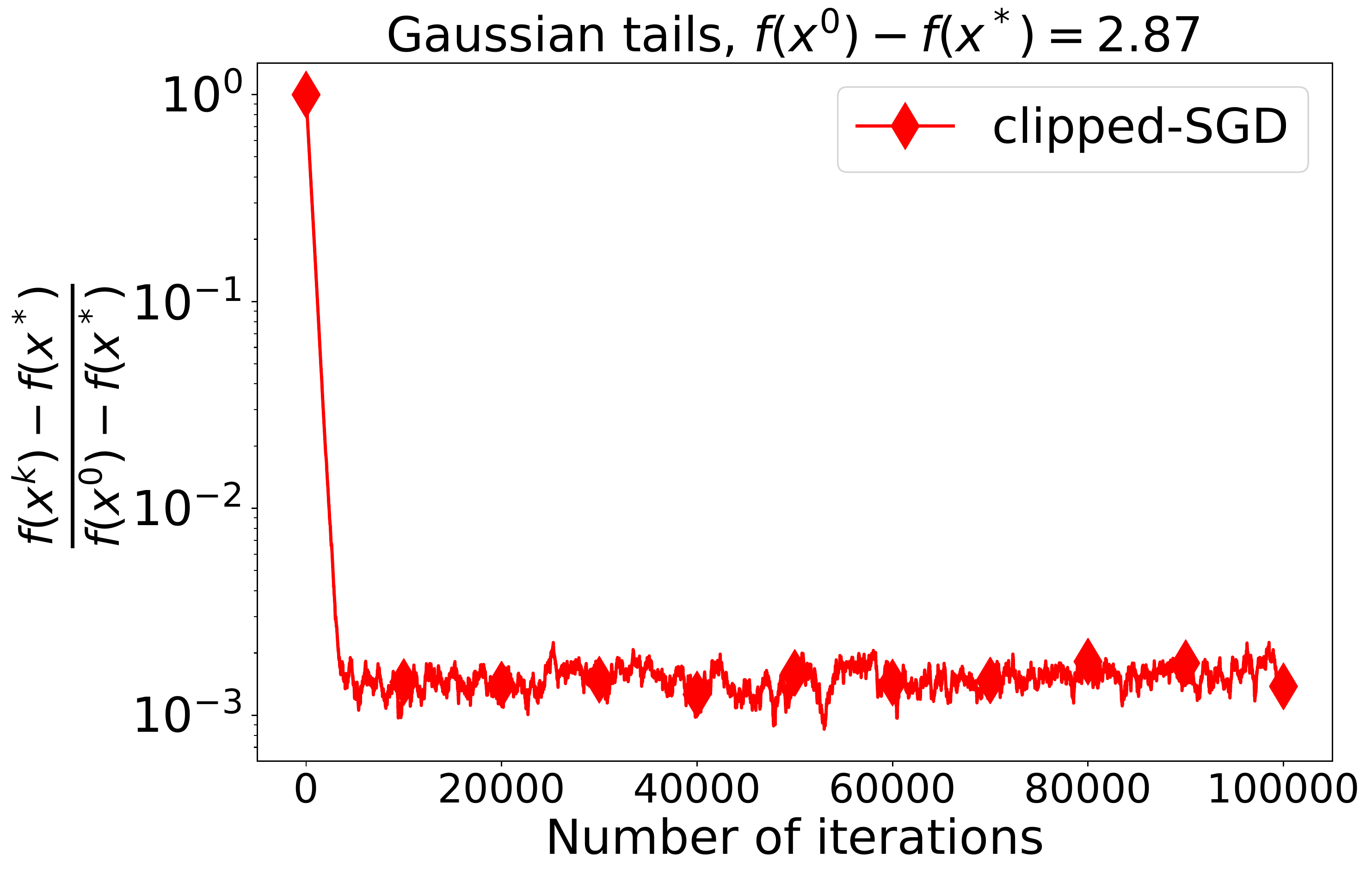}
    \includegraphics[width=0.32\textwidth]{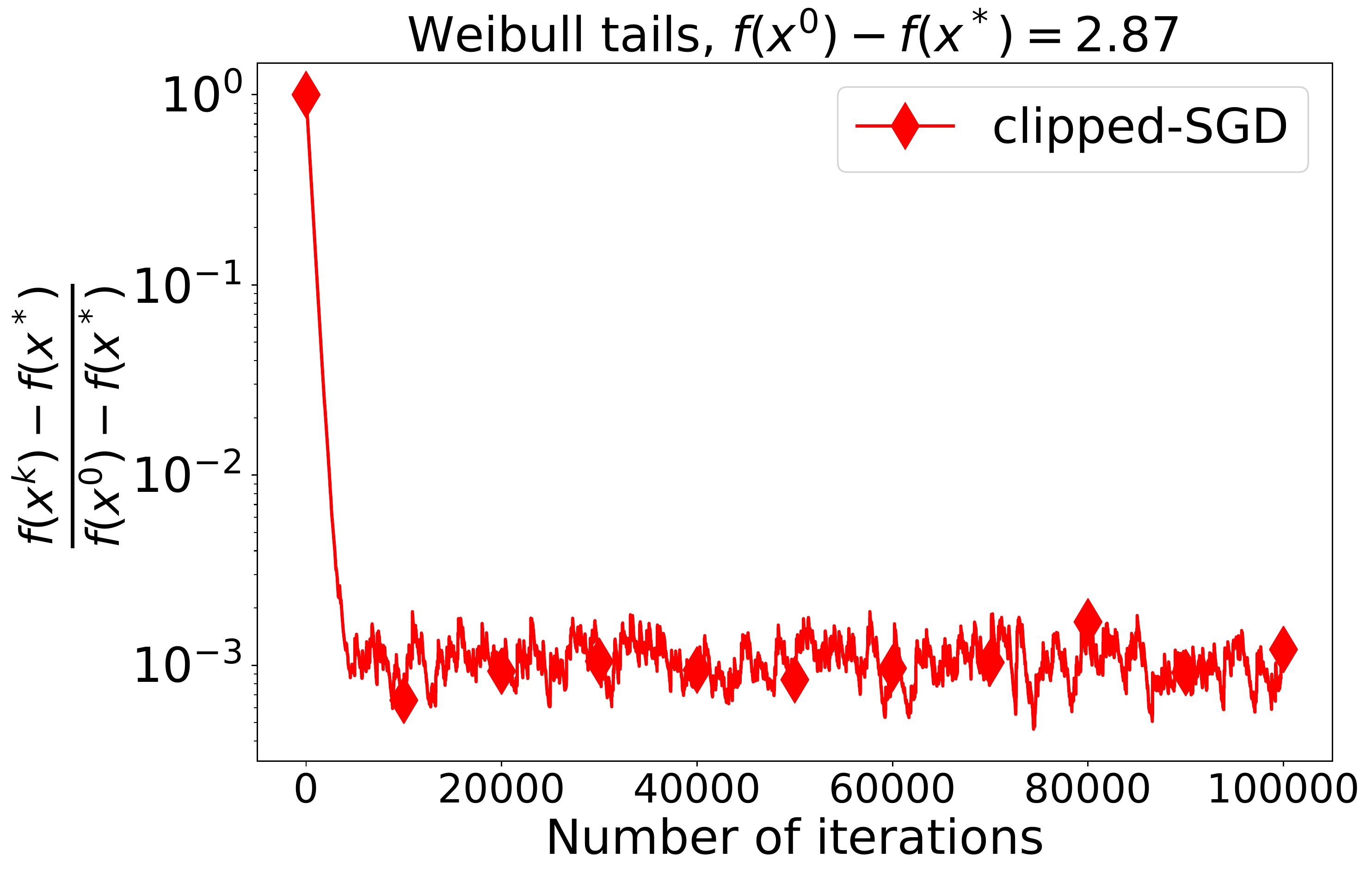}
    \includegraphics[width=0.32\textwidth]{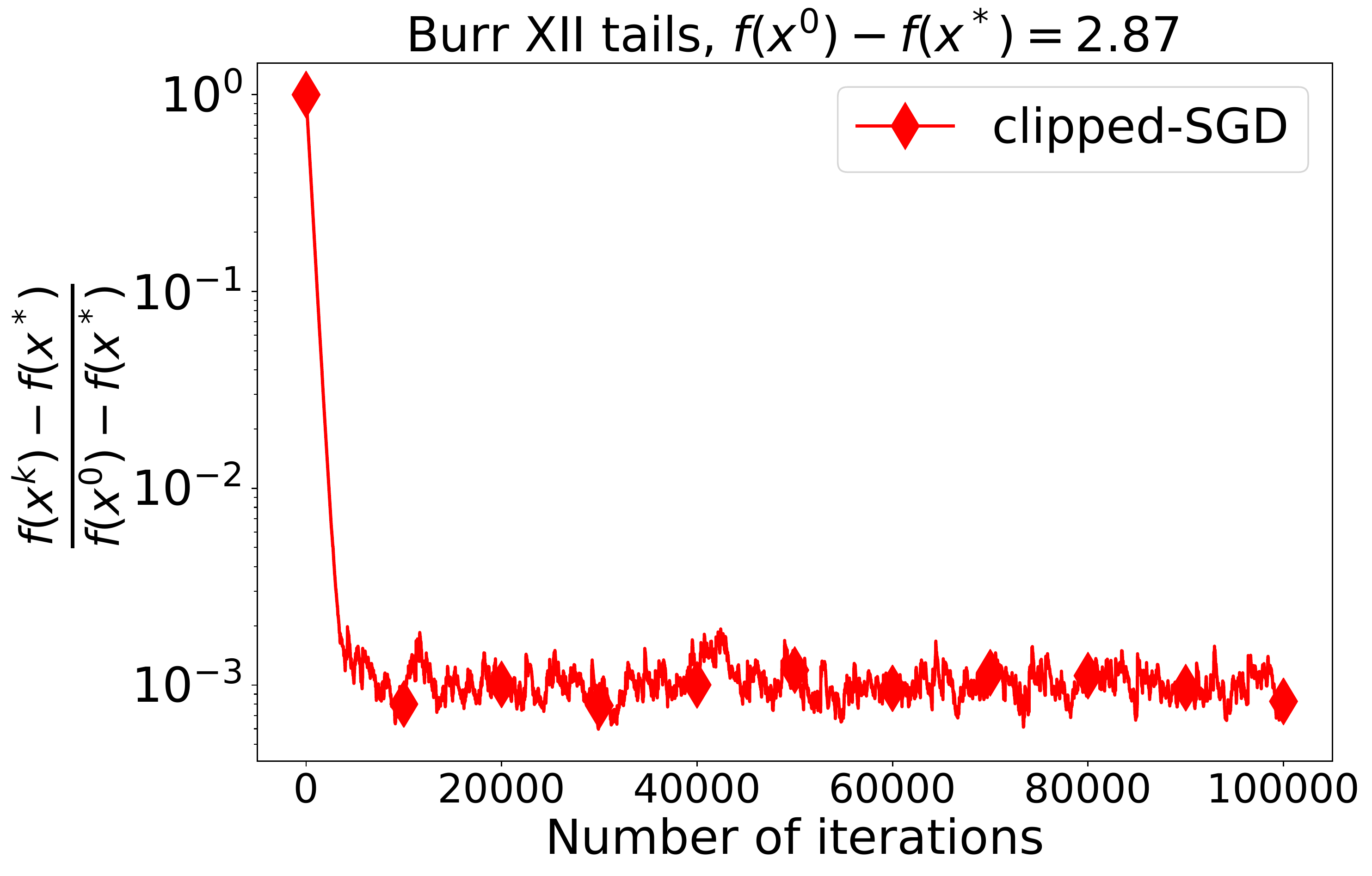}
    \caption{$2$ independent runs of {\tt SGD} (blue) and {\tt clipped-SGD} (red) applied to solve \eqref{eq:toy_problem} with $\xi$ having Gaussian (left column), Weibull (central column) and Burr Type XII (right column) tails.}
    \label{fig:tou_runs5}
\end{figure}

\subsection{Additional Details and Experiments with Logistic Regression}\label{sec:additional_logreg}
In this section, we provide additional details of the experiments presented in Section~\ref{sec:numerical_exp} together with extra numerical results. In particular, we consider the logistic regression problem:
\begin{equation}
    \min\limits_{x\in\R^n} f(x) = \frac{1}{r}\sum\limits_{i=1}^r \underbrace{\log\left(1 + \exp\left(-y_i\cdot (Ax)_i\right)\right)}_{f_i(x)} \label{eq:log_regr}
\end{equation}
where $A\in\R^{r\times n}$ is matrix of instances and $y\in\{0,1\}^r$ is vector of labels. It is well-known that $f(x)$ from \eqref{eq:log_regr} is convex and $L$-smooth with $L = \nicefrac{\lambda_{\max}(A^\top A)}{4r}$ where $\lambda_{\max}(A^\top A)$ denotes the maximal eigenvalue of $A^\top A$. One can consider problem \eqref{eq:log_regr} as a special case of \eqref{eq:main_problem} where $\xi$ is a random index uniformly distributed on $\{1,\ldots,r\}$ and $f(x,\xi) = f_{\xi}(x)$. We take the datasets from LIBSVM library \cite{chang2011libsvm}: see Table~\ref{tab:logreg_datasets} with the summary of the datasets we used.
\begin{table}[h]
    \centering
    \caption{Summary of used datasets.}
    \label{tab:logreg_datasets}
    \begin{tabular}{|c|c|c|c|c|c|}
        \hline
         & {\tt heart} & {\tt diabetes} & {\tt australian} & {\tt a9a} & {\tt w8a} \\
        \hline
        Size & $270$ & $768$ & $690$ & $32561$ & $49749$ \\
        \hline
        Dimension & $13$ & $8$ & $13$ & $123$ & $300$ \\
        \hline
    \end{tabular}
\end{table}

We notice that in all experiments that we did with logistic regression the initial suboptimality $f(x^0)-f(x^*)$ was of order $10$. Moreover, as it was mentioned in the main part of the paper the parameters for the methods were tuned. One can find parameters that we used in the experiments from Section~\ref{sec:numerical_exp} in Table~\ref{tab:logreg_params}.
\begin{table}[h]
    \centering
    \scriptsize
     \caption{Parameters that are used to produce plots presented in Figures~\ref{fig:heart_logreg}-\ref{fig:australian_logreg}. In the first contains the name of the dataset and the batchsize $m$ that was used for all methods tested on the dataset. For {\tt d-clipped-SGD} $\lambda_0$ is an initial clipping level, $l$ is a period (in terms of epochs) of decreasing the clipping level and $\alpha$ is a coefficient of decrease, i.e.\ every $l$ epochs the clipping level is multiplied by $\alpha$. For {\tt SSTM} parameter $a$ was picked the same as for {\tt clipped-SSTM} in order to emphasize the effect of clipping.}
    \label{tab:logreg_params}
    \begin{tabular}{|c|c|c|c|c|c|}
        \hline
         & {\tt SGD} & {\tt clipped-SGD} & {\tt d-clipped-SGD} & {\tt SSTM} & {\tt clipped-SSTM} \\
        \hline
        \makecell{{\tt heart}\\ $m = 20$} & $\gamma = \frac{1}{2L}$ & $\gamma = \frac{1}{2L}$, $\lambda = 2.72$ & \makecell{$\gamma = \frac{1}{2L}$, $\lambda_0 = 2.72$,\\ $l=10^3$, $\alpha = 0.9$} & $a = 10^4$ & \makecell{$a = 10^4$,\\ $B = 2\cdot 10^{-4}$}\\
        \hline
        \makecell{{\tt diabetes}\\ $m = 100$} & $\gamma = \frac{1}{10L}$ & $\gamma = \frac{1}{10L}$, $\lambda = 68.86$ & \makecell{$\gamma = \frac{1}{10L}$, $\lambda_0 = 68.86$,\\ $l=10^3$, $\alpha = 0.7$} & $a = 5\cdot10^3$ & \makecell{$a = 5\cdot10^3$,\\ $B = 7\cdot 10^{-4}$} \\
        \hline
        \makecell{{\tt australian}\\ $m = 50$} & $\gamma = \frac{1}{L}$ & $\gamma = \frac{1}{L}$, $\lambda = 74.47$ & \makecell{$\gamma = \frac{1}{L}$, $\lambda_0 = 74.47$,\\$l=1000$, $\alpha = 0.9$} & $a = 10^3$ & \makecell{$a = 5\cdot10^3$,\\ $B = 2\cdot 10^{-4}$} \\
        \hline
        \makecell{{\tt a9a}\\ $m = 100$} & $\gamma = \frac{1}{2L}$ & $\gamma = \frac{1}{2L}$, $\lambda = 0.025$ & \makecell{$\gamma = \frac{1}{L}$, $\lambda_0 = 4.9$,\\$l=5$, $\alpha = 0.5$} & $a = 1$ & \makecell{$a = 1$,\\ $B = 3\cdot 10^{-2}$} \\
        \hline
        \makecell{{\tt w8a}\\ $m = 1000$} & $\gamma = \frac{1}{L}$ & $\gamma = \frac{1}{L}$, $\lambda = 1.3$ & \makecell{$\gamma = \frac{1}{L}$, $\lambda_0 = 64.78$,\\$l=50$, $\alpha = 0.9$} & $a = 1$ & \makecell{$a = 1$,\\ $B = 19\cdot 10^{-2}$} \\
        \hline
    \end{tabular}
\end{table}

Next, we provide our numerical study of the distribution of $\|\nabla f_i(x^k) - \nabla f(x^k)\|_2$, where $x^k$ is the last iterate produced by {\tt SGD} in experiments presented in Section~\ref{sec:numerical_exp}, see Figure~\ref{fig:noise_distrib_last_sgd}.
\begin{figure}[h]
    \centering
    \includegraphics[width=0.32\textwidth]{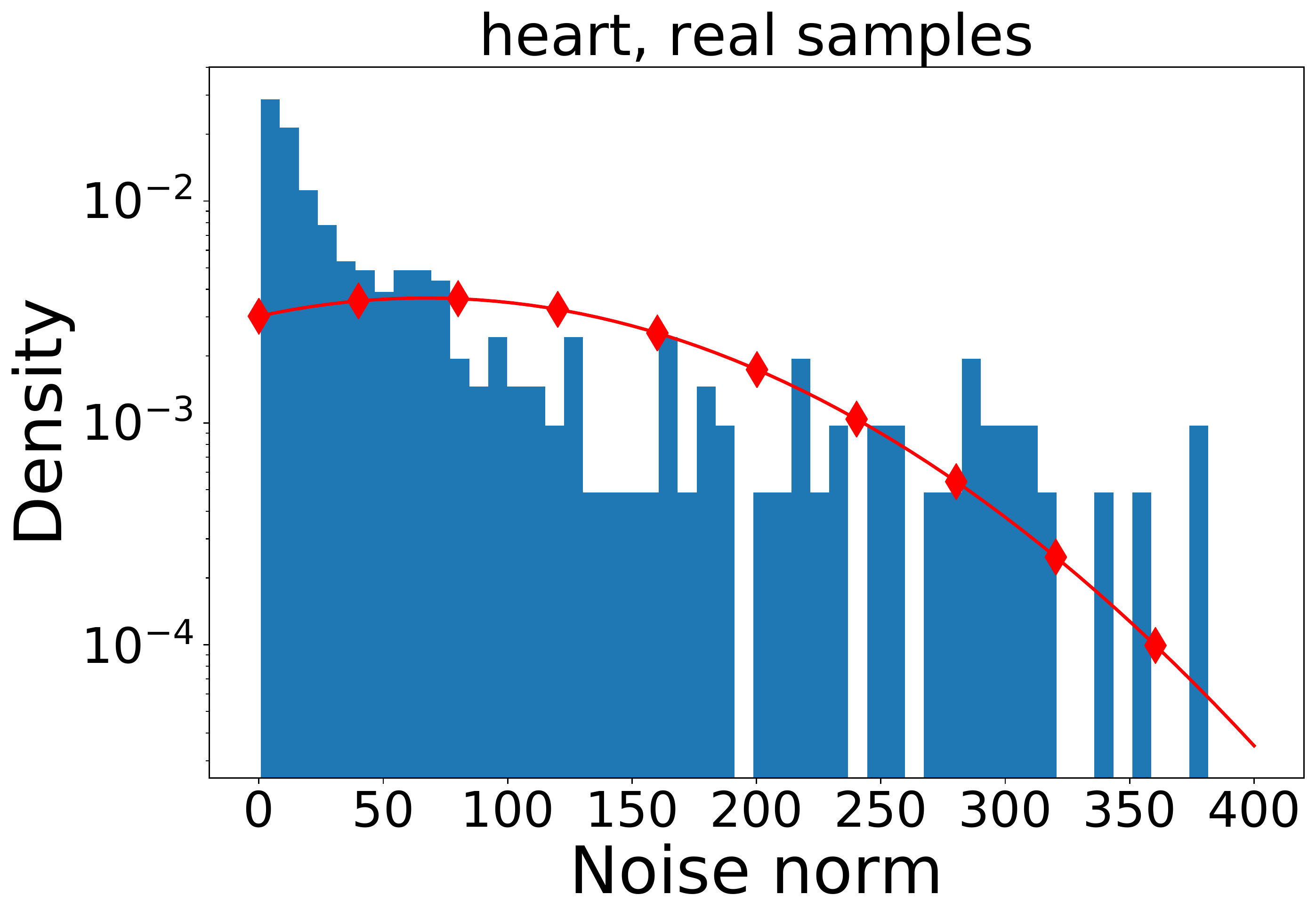}
    \includegraphics[width=0.32\textwidth]{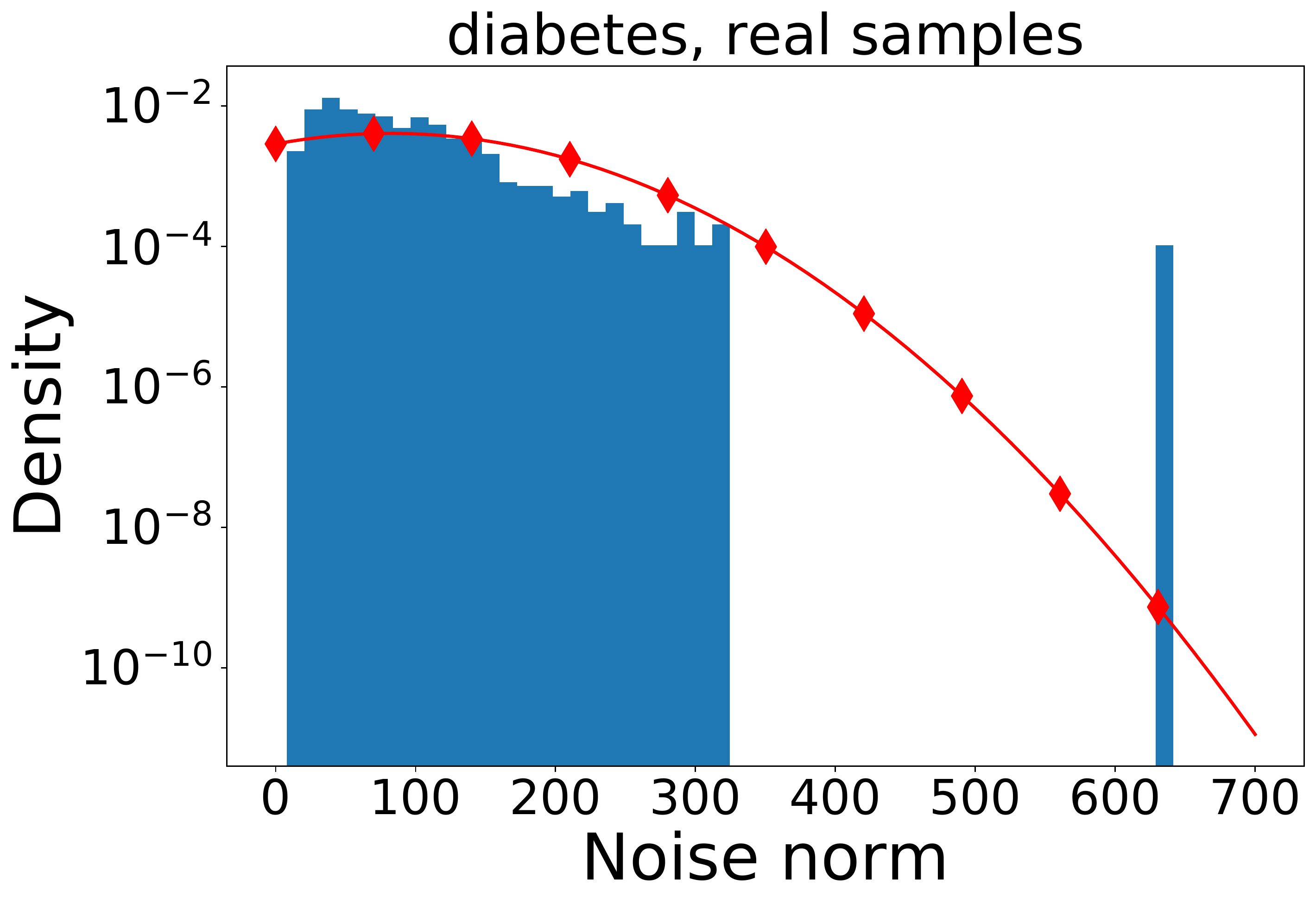}
    \includegraphics[width=0.32\textwidth]{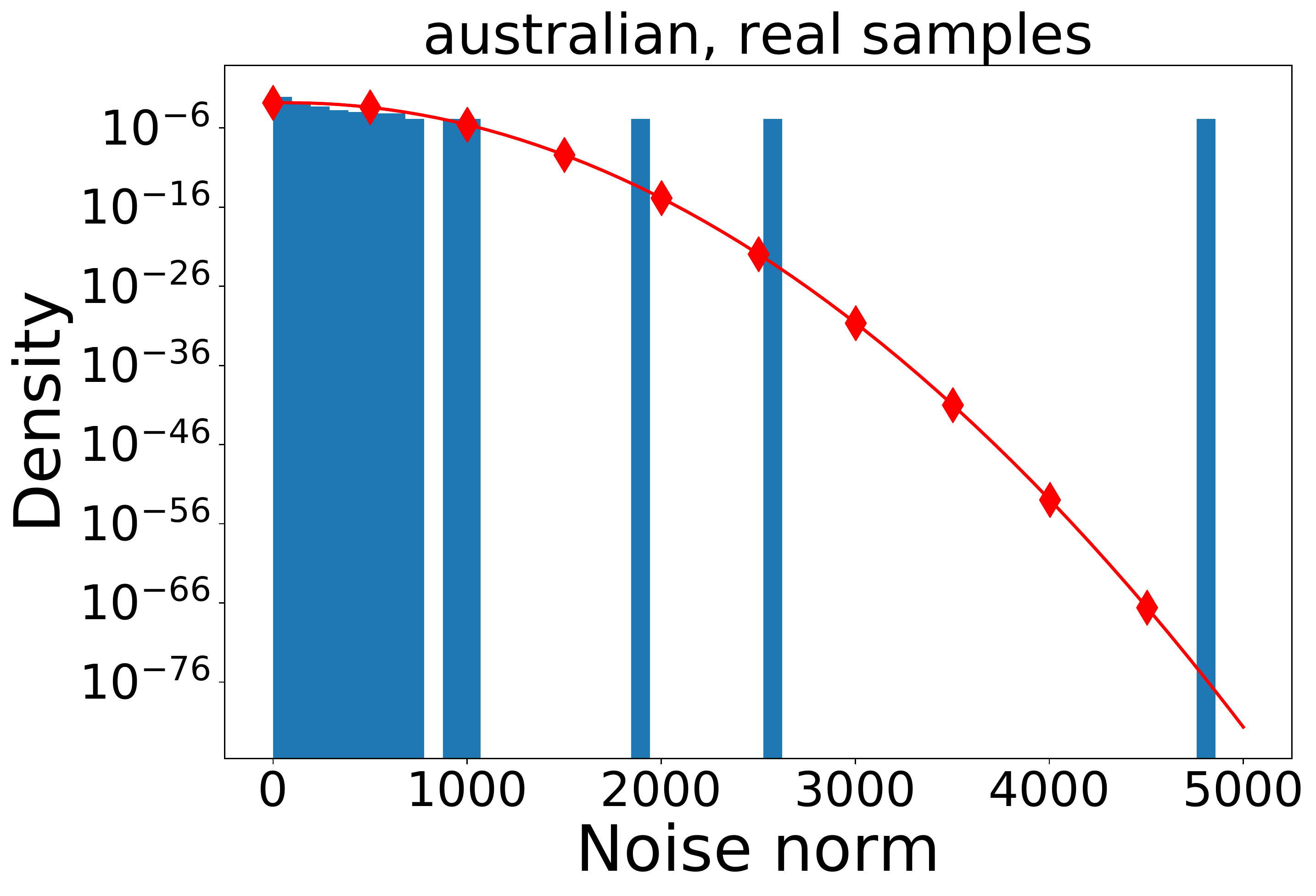}
    \includegraphics[width=0.32\textwidth]{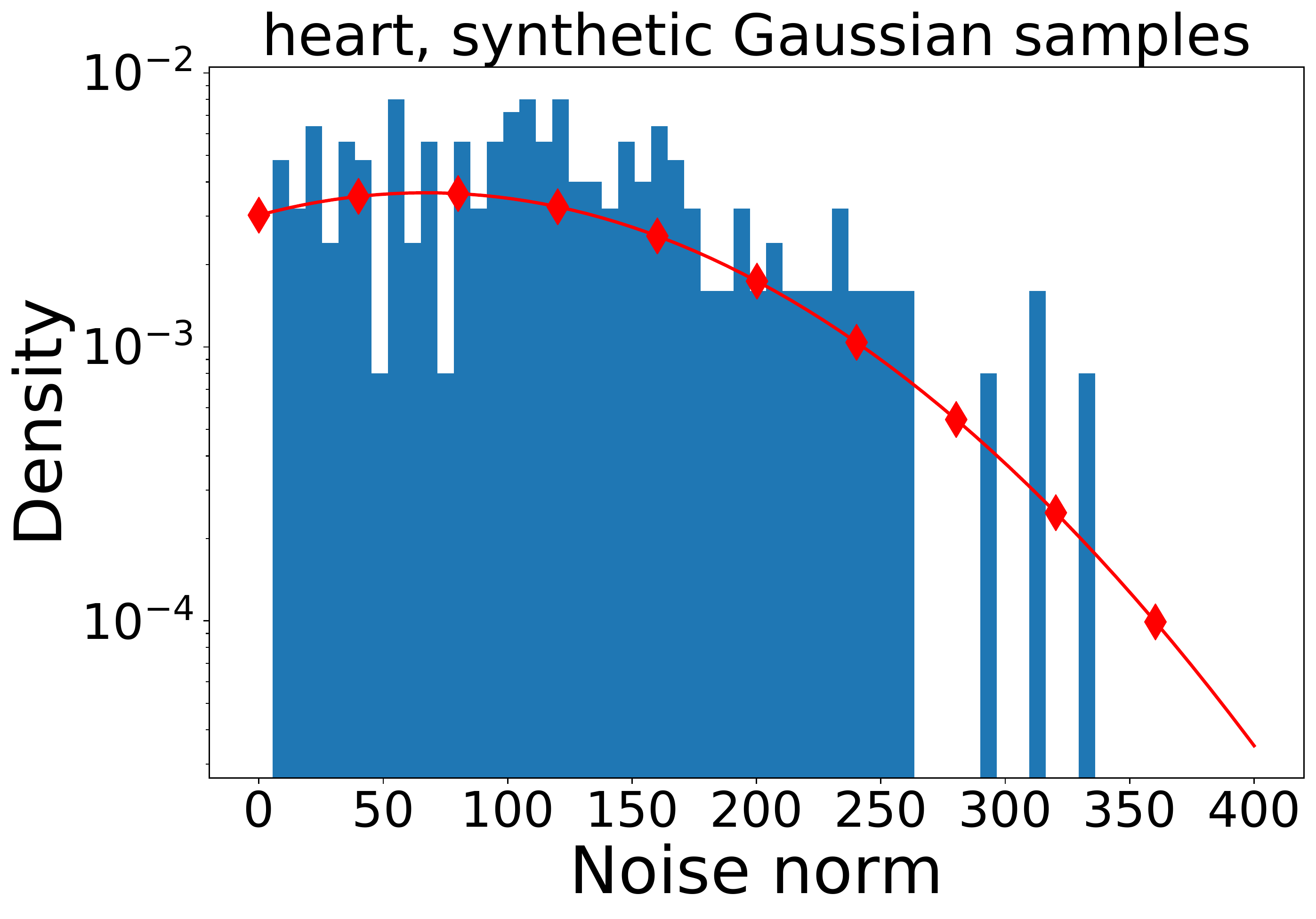}
    \includegraphics[width=0.32\textwidth]{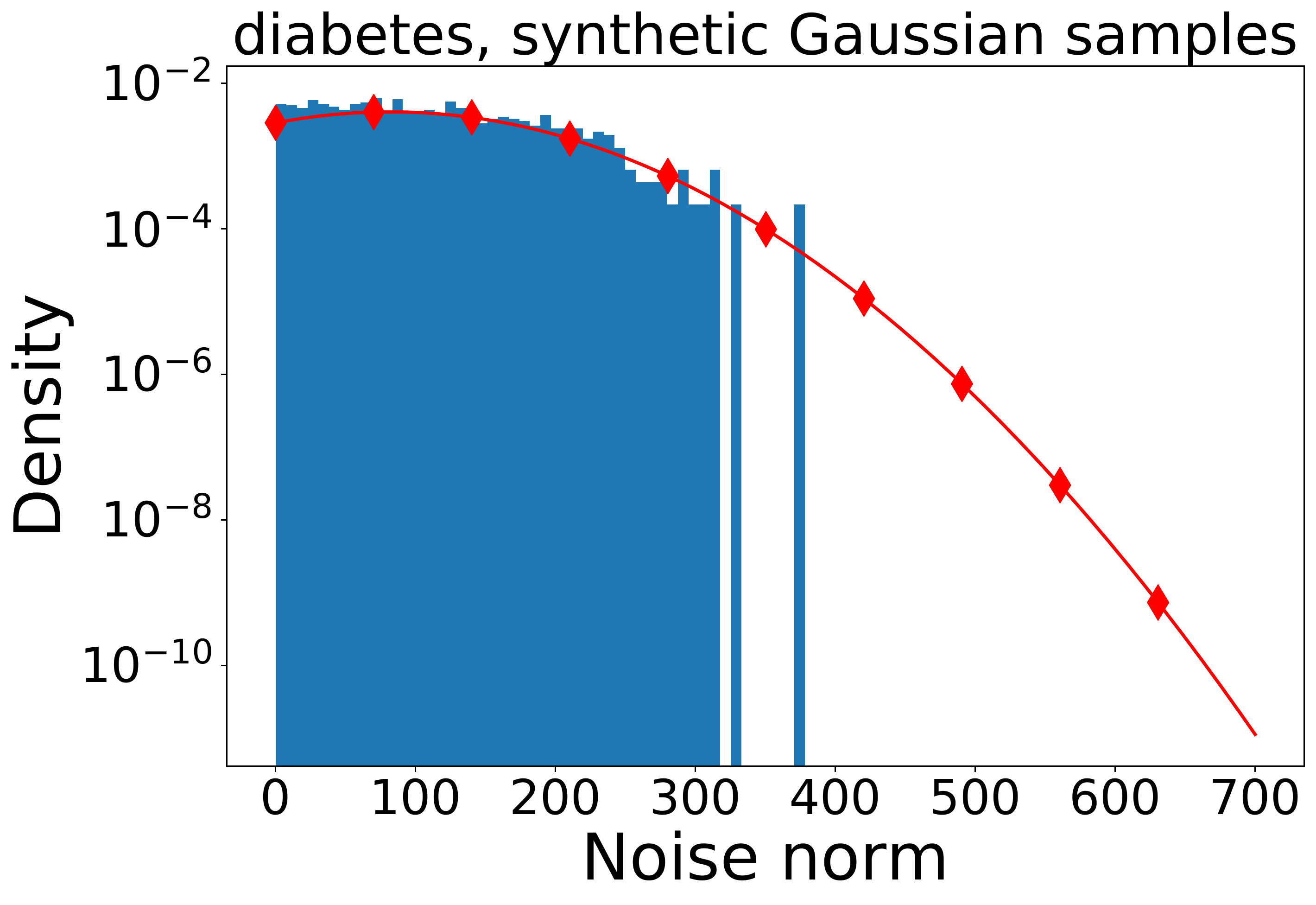}
    \includegraphics[width=0.32\textwidth]{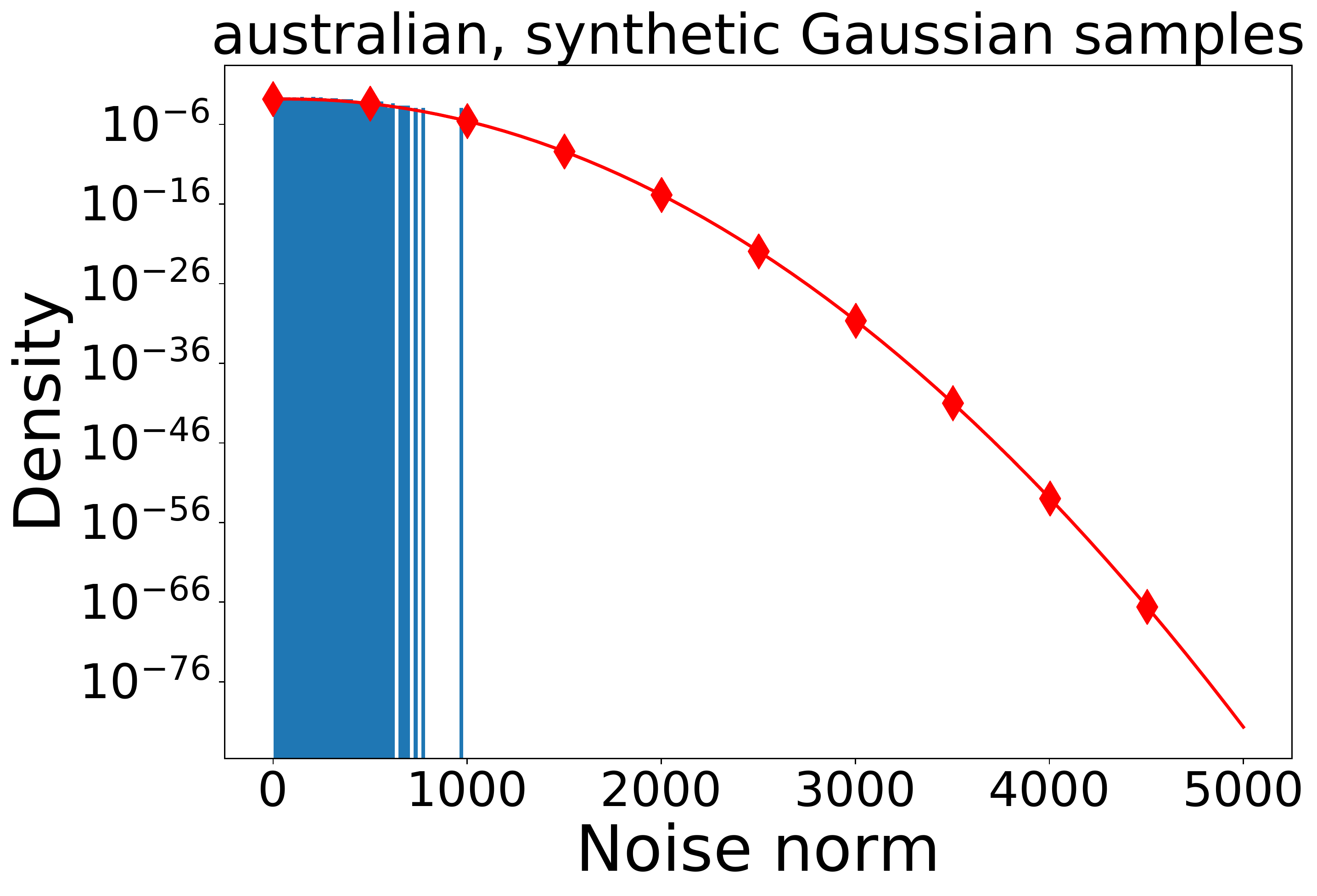}
    \caption{Histograms of $\|\nabla f_i(x^k) - \nabla f(x^k)\|_2$ for different datasets (the first row) and synthetic Gaussian samples with mean and variance estimated via empirical mean and variance of real samples $\|\nabla f_1(x^k) - \nabla f(x^k)\|_2,\ldots,\|\nabla f_r(x^k) - \nabla f(x^k)\|_2$ (the second row) where $x^k$ is the last point produced by {\tt SGD}. Red lines correspond to probability density functions of normal distributions with empirically estimated means and variances.}
    \label{fig:noise_distrib_last_sgd}
\end{figure}
As we mentioned in the main part of the paper these histograms are very similar to ones presented in Figure~\ref{fig:noise_distrib}, so, the insights that we got from Figure~\ref{fig:noise_distrib} are right. However, in our experiments with {\tt australian} dataset {\tt SGD} with the stepsize $\gamma = \nicefrac{1}{L}$ did not reach needed suboptimality in order to oscillate.

Therefore, we run {\tt SGD} along with its clipped variants with the same batchsize $m=50$ for bigger number of epochs and also tuned their parameters. One can find the results of these runs in Figure~\ref{fig:australian_logreg_bigger_steps}.
\begin{figure}[h]
    \centering
    \includegraphics[width=0.8\textwidth]{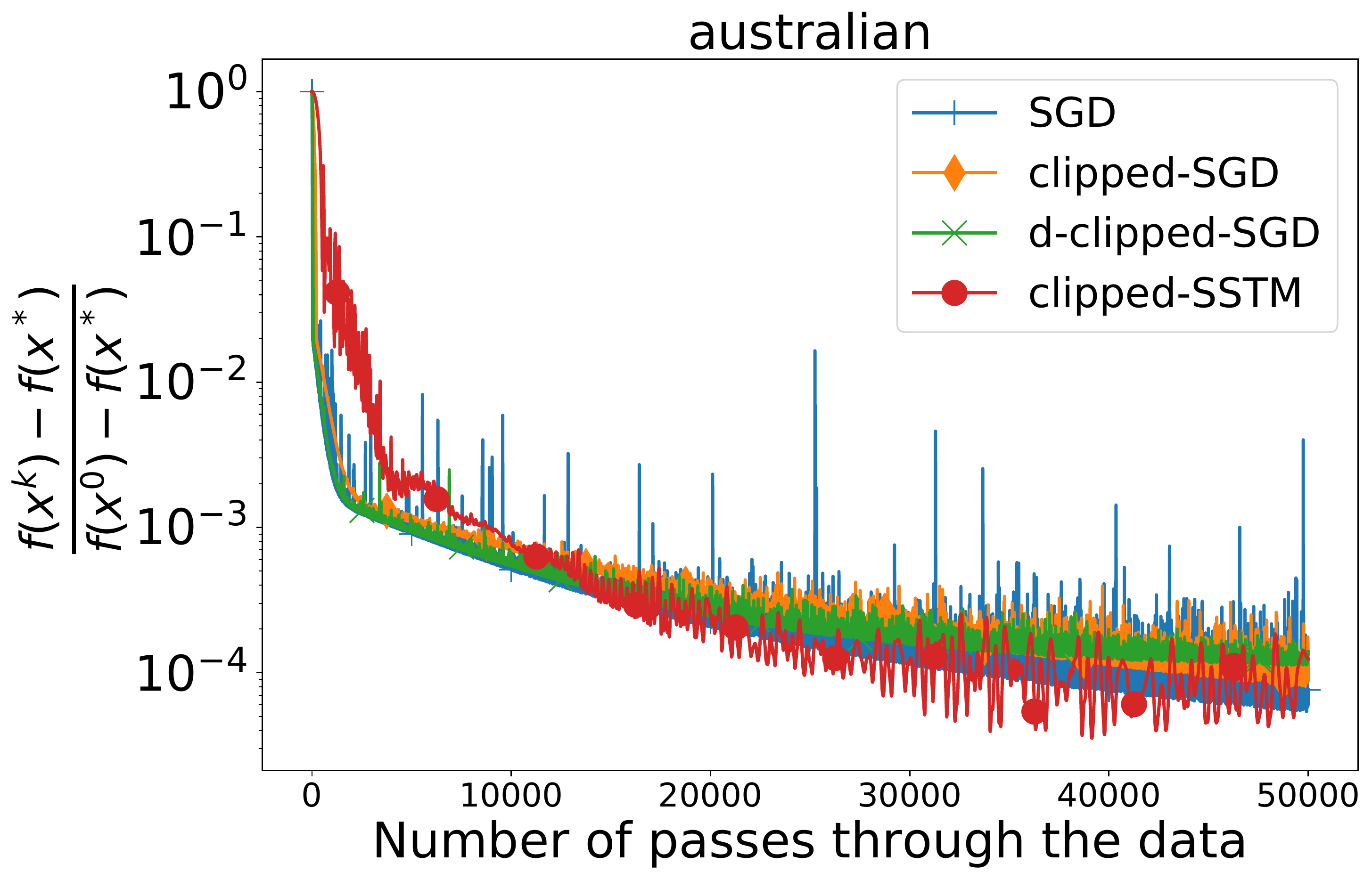}
    \caption{Trajectories of {\tt SGD}, {\tt clipped-SGD}, {\tt d-clipped-SGD} and {\tt clipped-SSTM} applied to solve logistic regression problem on {\tt australian} dataset. For {\tt SGD} and its clipped variants stepsize $\gamma = \frac{20}{L}$ was used. For {\tt clipped-SGD} we used $\lambda = 18.62$ and for {\tt d-clipped-SGD} the parameters are as follows: $\lambda_0 = 74.47$, $l = 1500$, $\alpha = 0.9$. Parameters for {\tt clipped-SSTM} are the same as in the corresponding cell in Table~\ref{tab:logreg_params}.}
    \label{fig:australian_logreg_bigger_steps}
\end{figure}
We see that {\tt SGD} with this stepsize achieves better suboptimality but it also oscillates significantly more. In contrast, {\tt clipped-SGD} and {\tt d-clipped-SGD} do not have significant oscillations and converge with the same rate as {\tt SGD}. Moreover, {\tt clipped-SSTM} shows slightly better performance in this case. Finally, we numerically studied the distribution of $\|\nabla f_i(x^k) - \nabla f(x^k)\|_2$, where $x^k$ is the last iterate produced by {\tt SGD}, see Figure~\ref{fig:noise_distrib_last_sgd_big_step}.
\begin{figure}[h]
    \centering
    \includegraphics[width=0.49\textwidth]{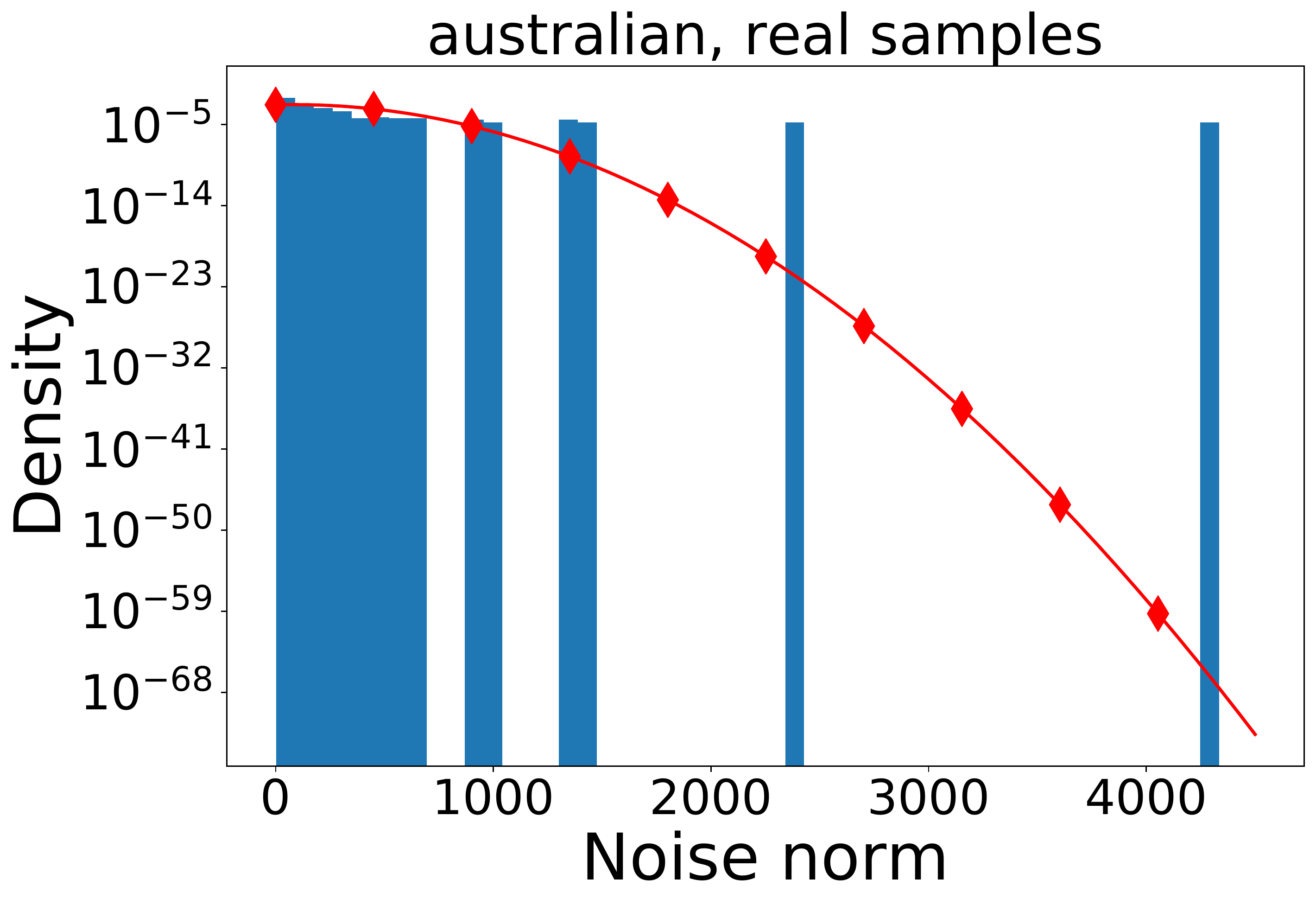}
    \includegraphics[width=0.49\textwidth]{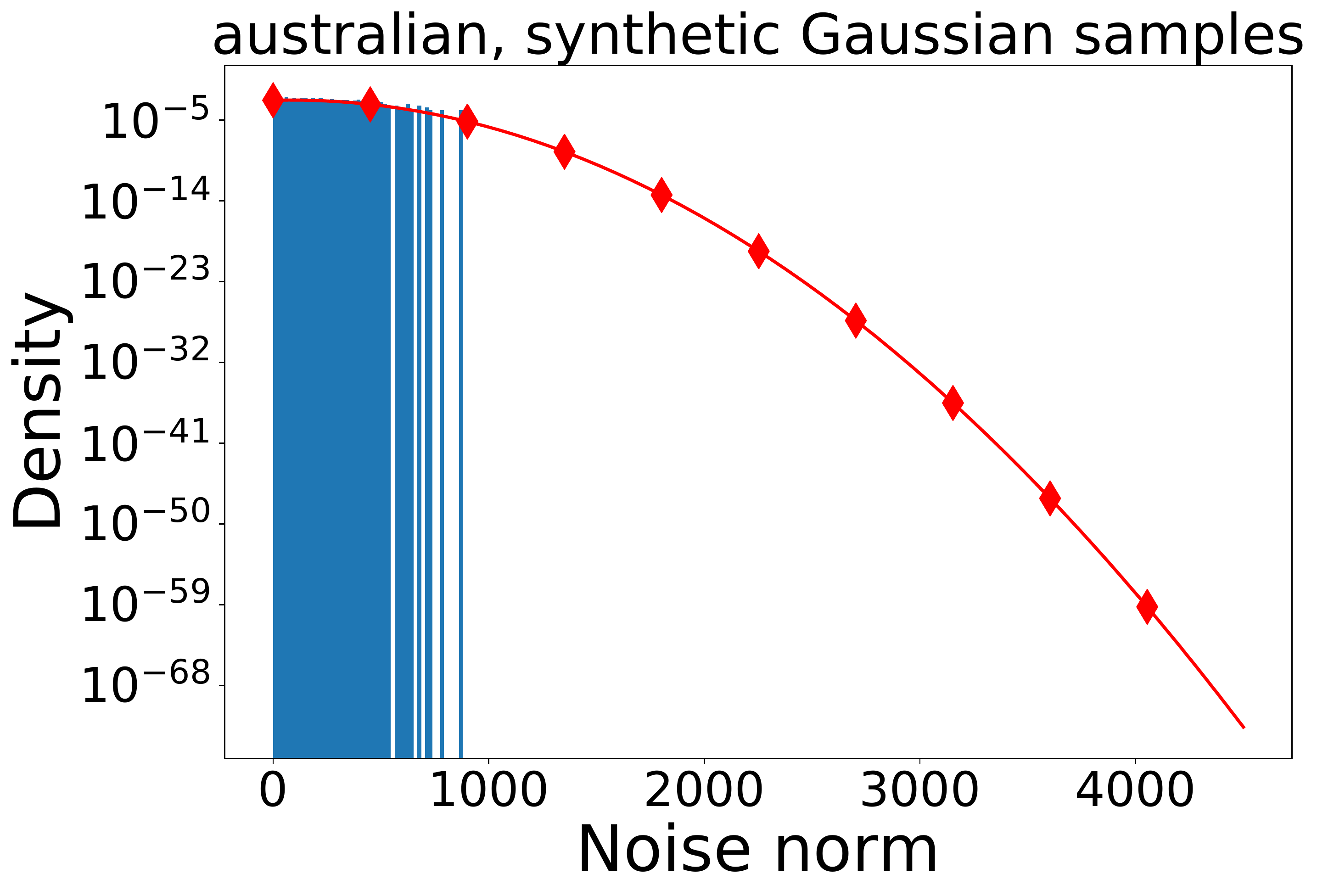}
    \caption{Histograms of $\|\nabla f_i(x^k) - \nabla f(x^k)\|_2$ for {\tt australian} dataset and synthetic Gaussian samples with mean and variance estimated via empirical mean and variance of real samples $\|\nabla f_1(x^k) - \nabla f(x^k)\|_2,\ldots,\|\nabla f_r(x^k) - \nabla f(x^k)\|_2$ where $x^k$ is the last point produced by {\tt SGD} with $\gamma = \frac{20}{L}$. Red lines correspond to probability density functions of normal distributions with empirically estimated means and variances.}
    \label{fig:noise_distrib_last_sgd_big_step}
\end{figure}
These histograms imply that the noise in stochastic gradients is heavy-tailed and explain an unstable behavior of {\tt SGD} in this case.

Finally, we conducted experiments on larger datasets: {\tt a9a} and {\tt w8a}. The results of our numerical test are reported on Figures~\ref{fig:a9a_w8a_logreg}~and~\ref{fig:noise_distrib_a9a_w8a}. We notice that {\tt SSTM} with given stepsize and batchsize suffers from noise accumulation, while {\tt clipped-SSTM} does not have this drawback and shows comparable performance with {\tt SGD} on {\tt a9a} and much better performance on {\tt w8a}.  

Figure~\ref{fig:noise_distrib_a9a_w8a} shows the gradient's noise distributions for both datasets. While the distribution of stochastic gradients at the optimum for {\tt a9a} have sub-Gaussian-like distribution, for {\tt w8a} they have heavy-tailed distribution.

\begin{figure}[h]
    \centering
    \includegraphics[width=0.45\textwidth]{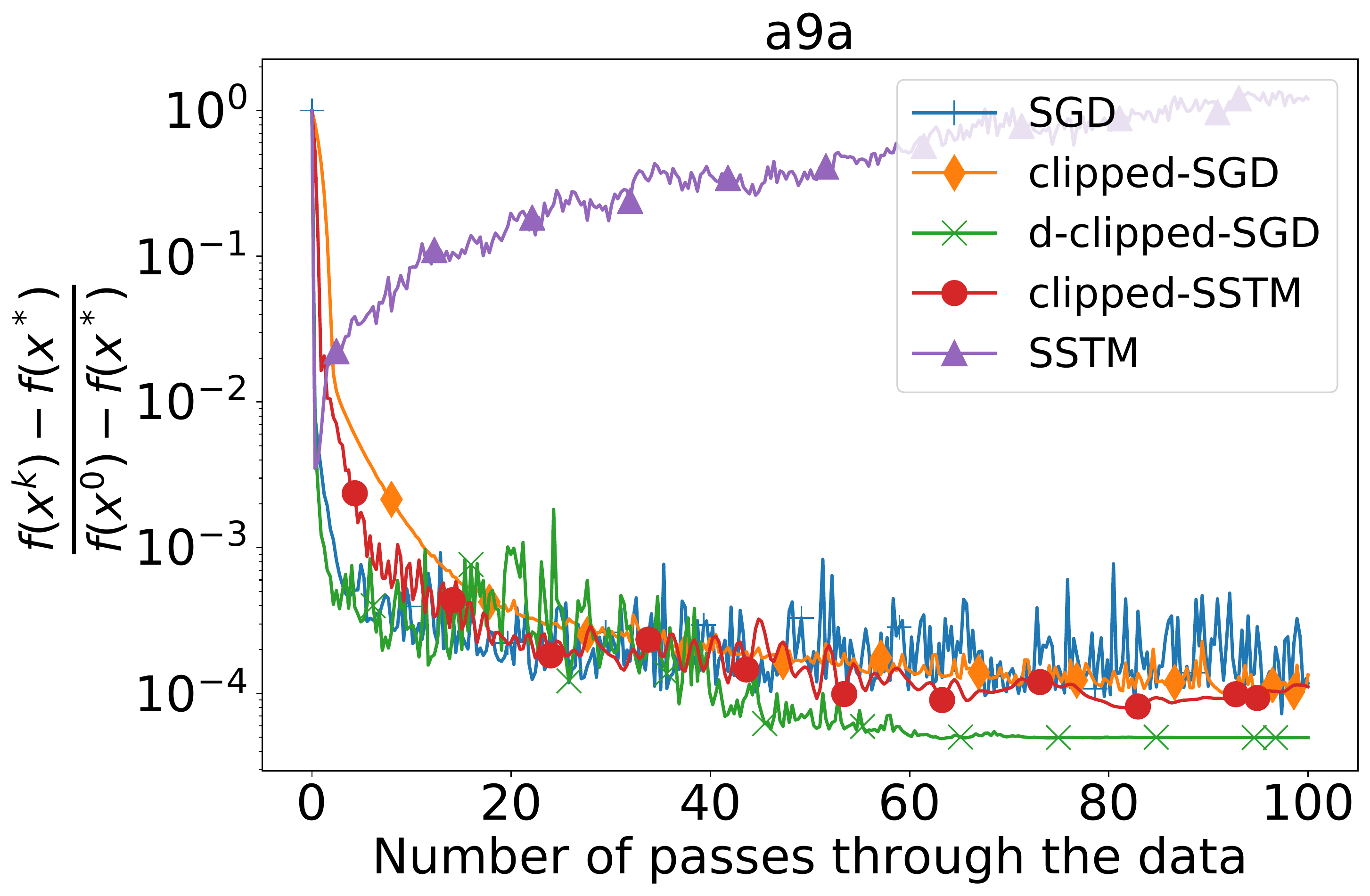}
    \includegraphics[width=0.45\textwidth]{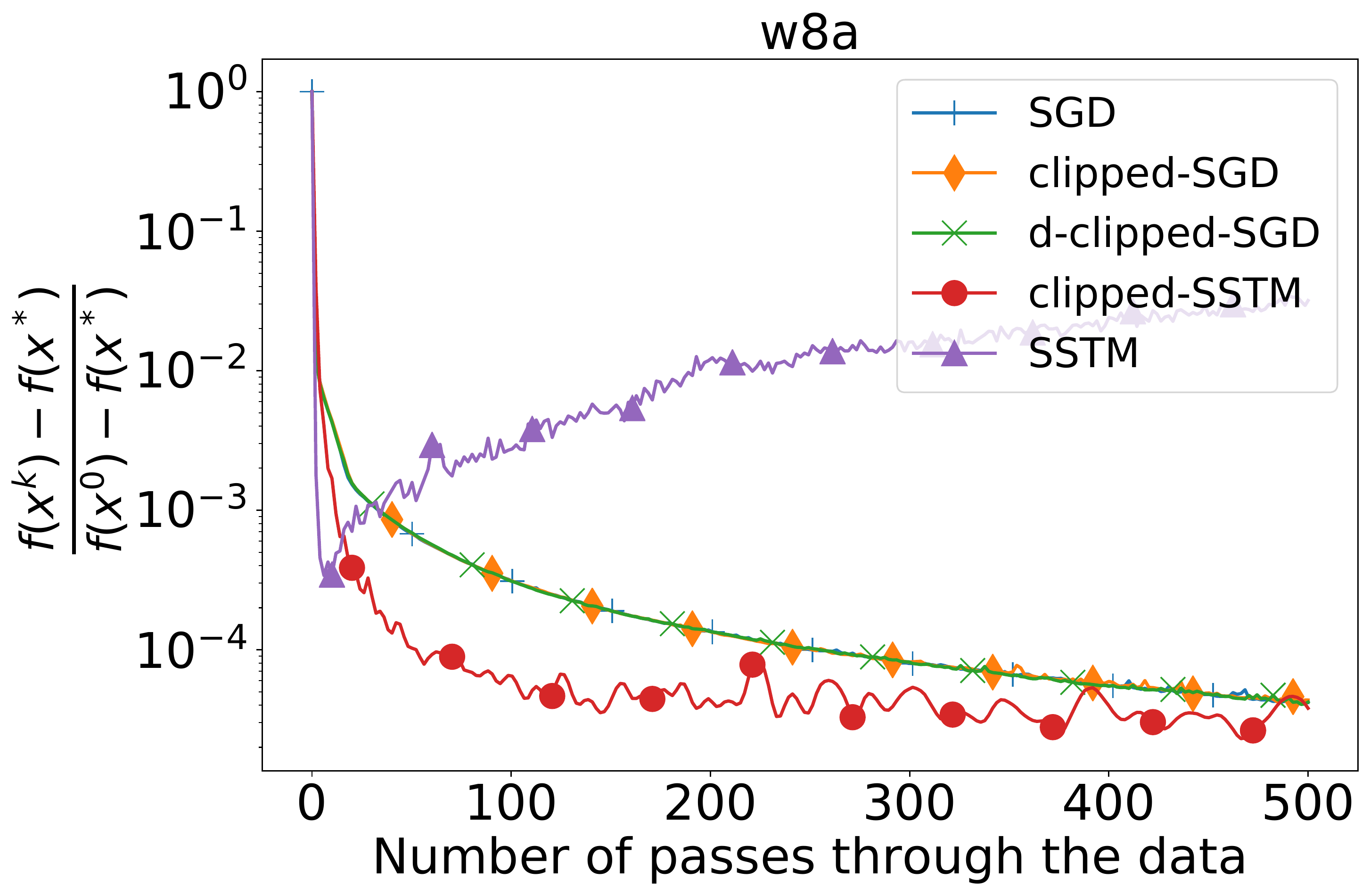}
    \caption{Trajectories of {\tt SGD}, {\tt clipped-SGD}, {\tt d-clipped-SGD} and {\tt clipped-SSTM} applied to solve logistic regression problem on {\tt a9a} and {\tt w8a} datasets. Parameters of the methods used in experiments are presneted in Table~\ref{tab:logreg_params}.}
    \label{fig:a9a_w8a_logreg}
\end{figure}

\begin{figure}[h]
    \centering
    \includegraphics[width=0.49\textwidth]{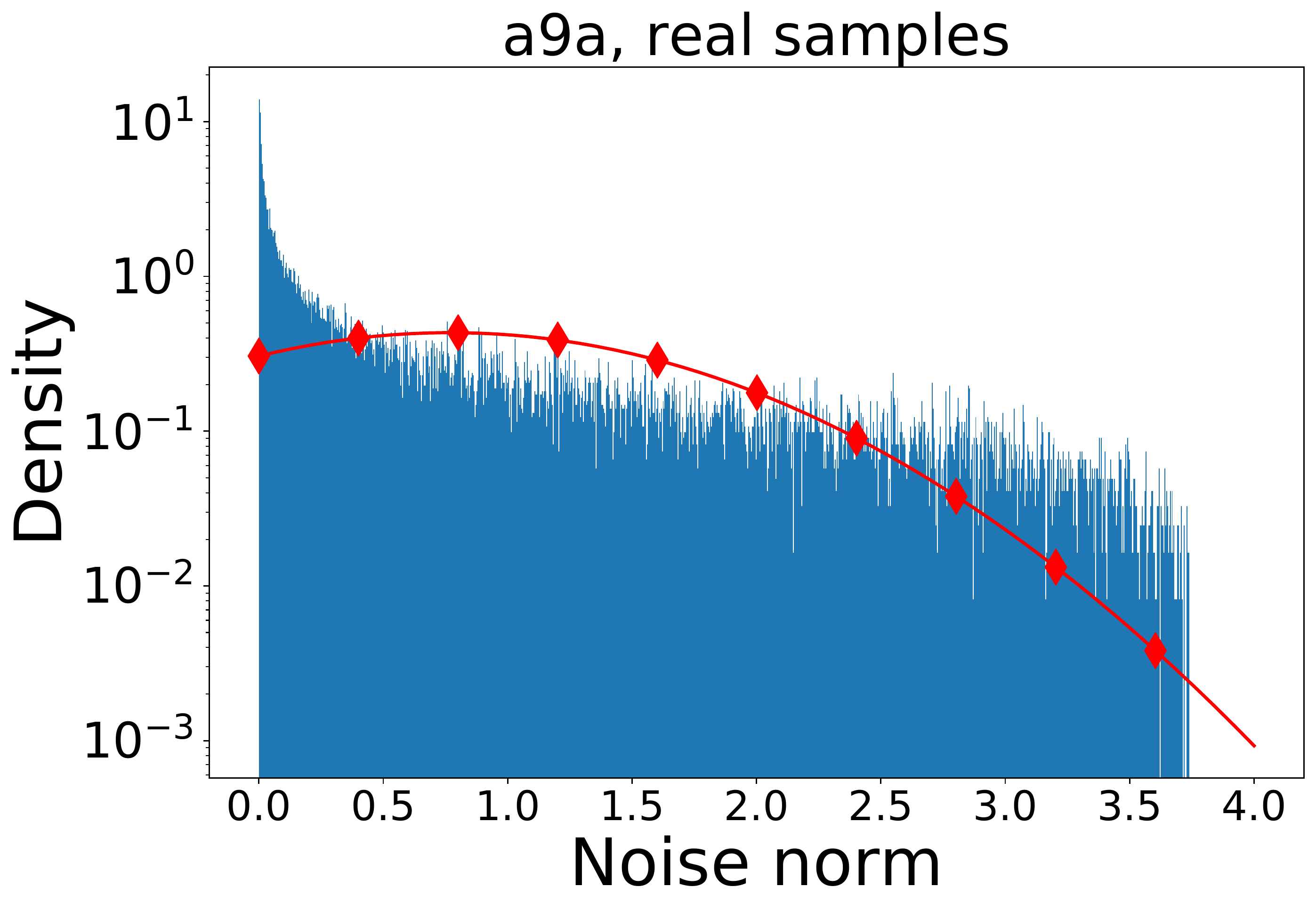}
    \includegraphics[width=0.49\textwidth]{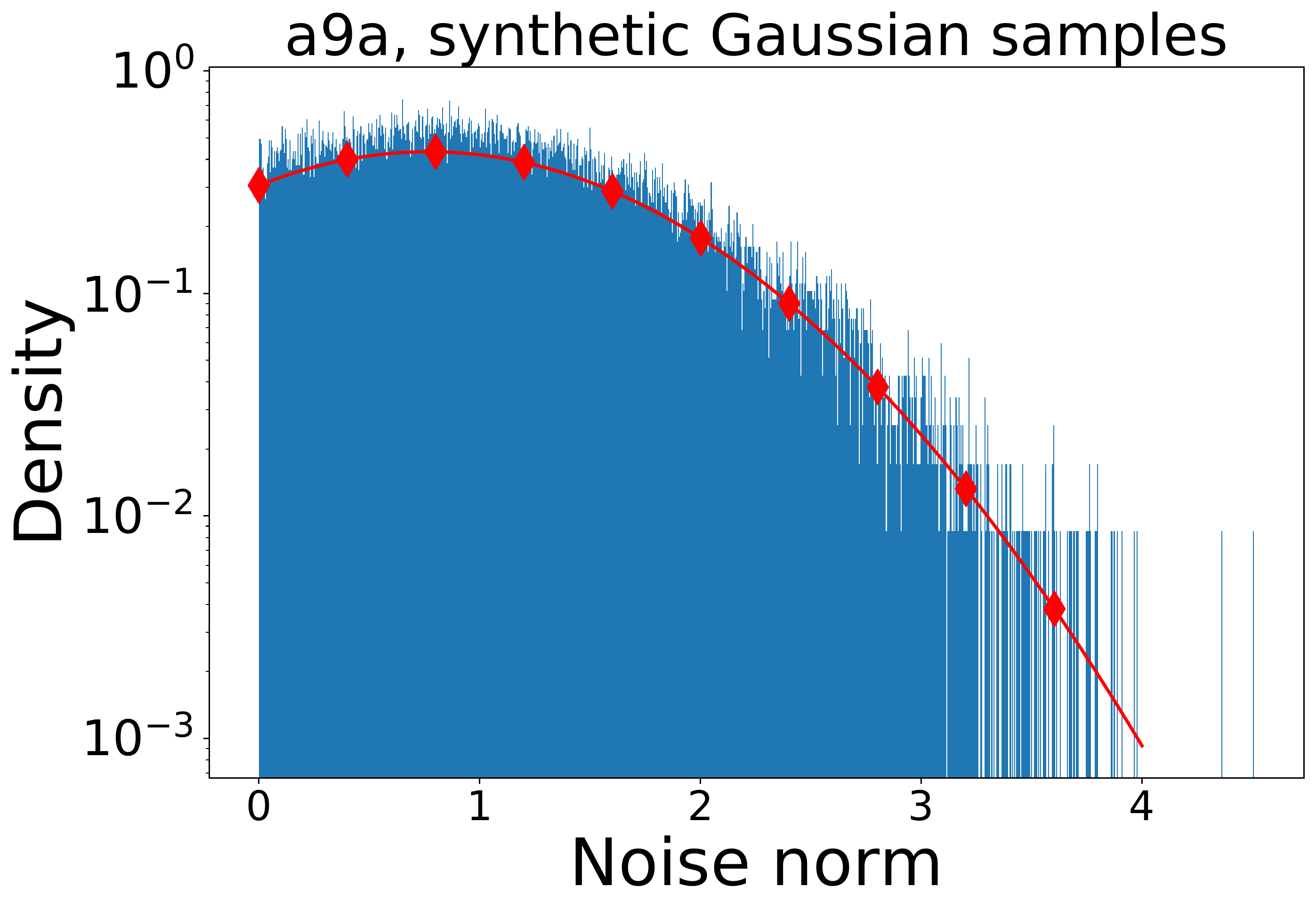}
    \includegraphics[width=0.49\textwidth]{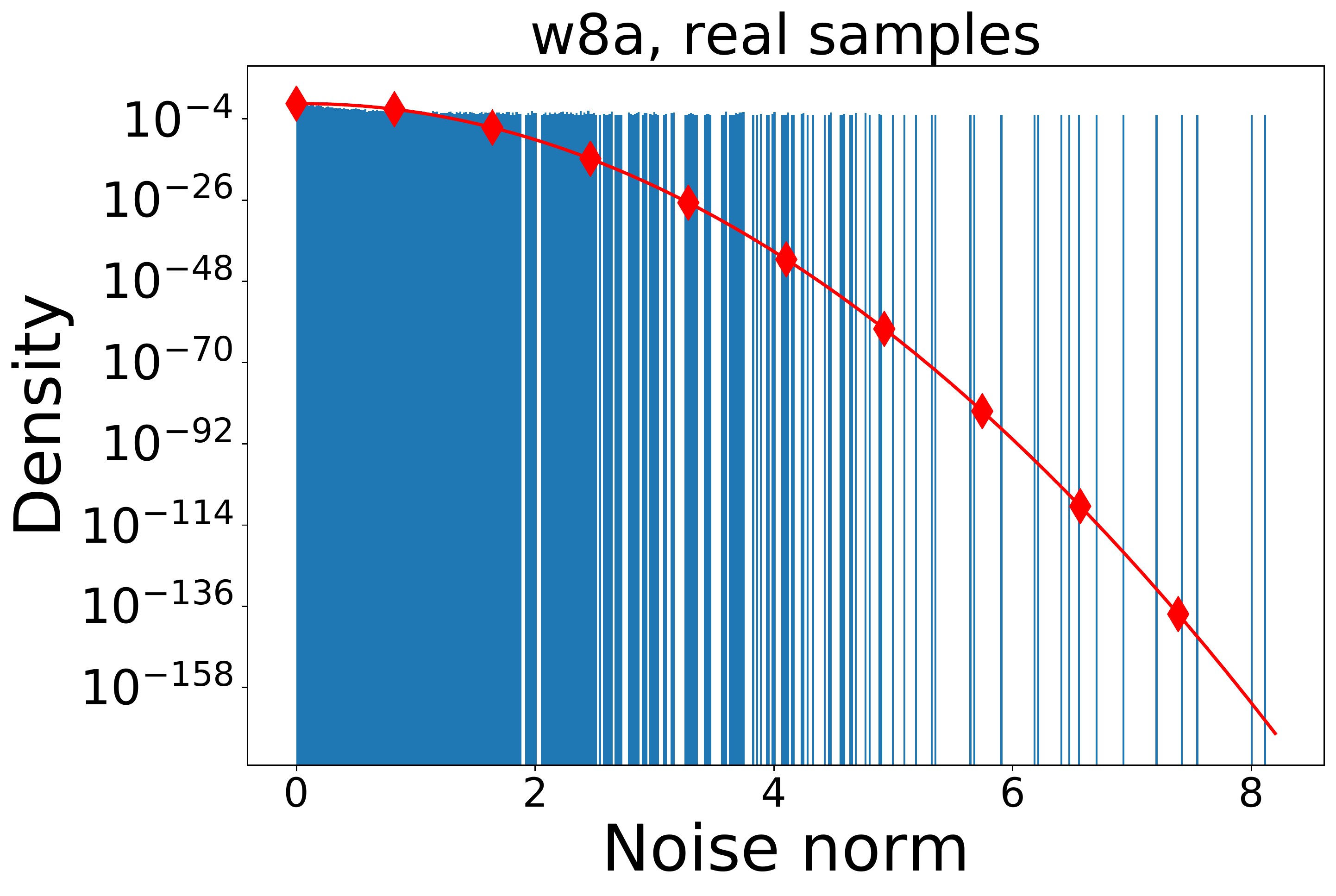}
    \includegraphics[width=0.49\textwidth]{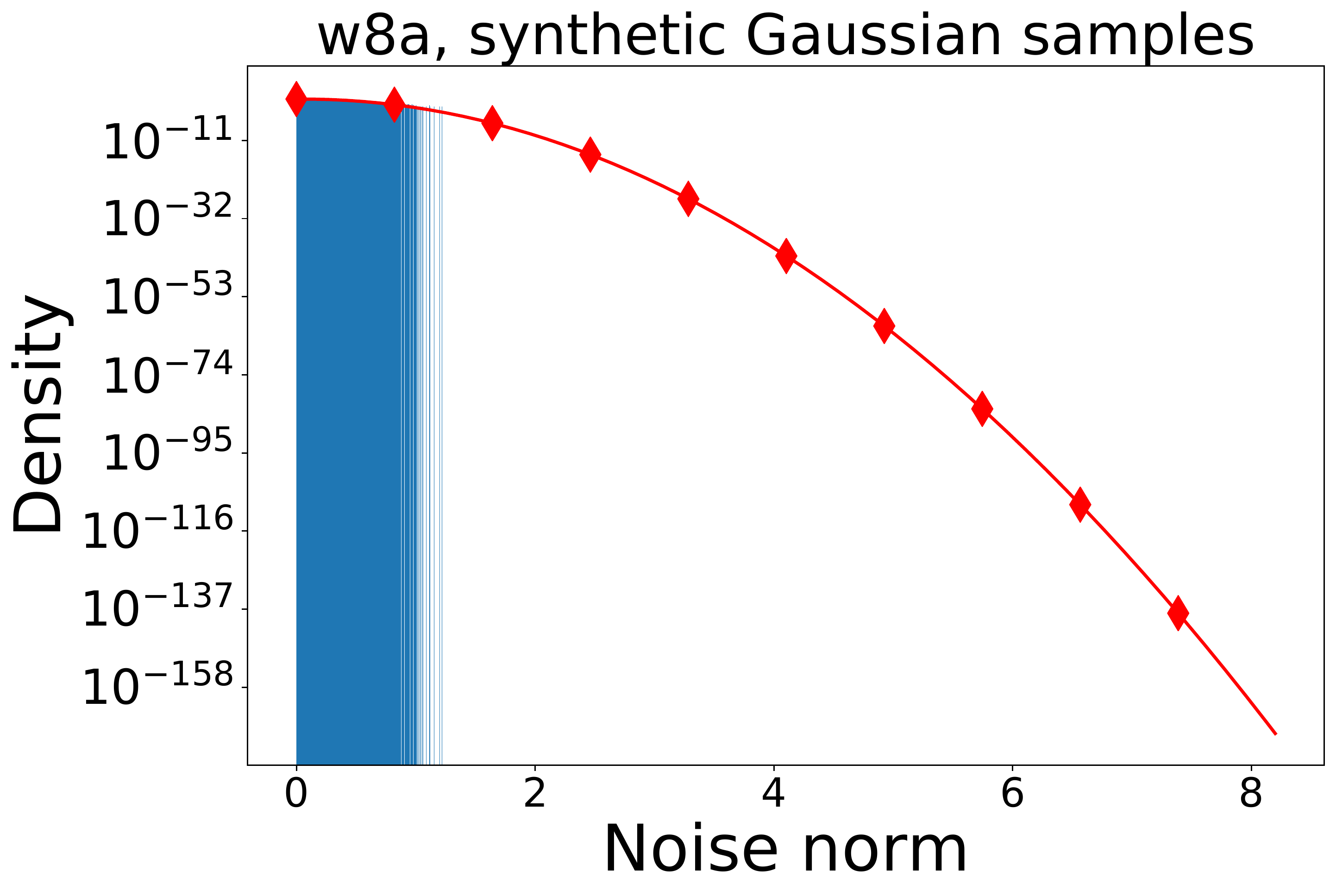}
    \caption{Histograms of $\|\nabla f_i(x^*)\|_2$ for {\tt a9a} and {\tt w8a} dataset and synthetic Gaussian samples with mean and variance estimated via empirical mean and variance of real samples $\|\nabla f_1(x^*)\|_2,\ldots,\|\nabla f_r(x^*)\|_2$. Red lines correspond to probability density functions of normal distributions with empirically estimated means and variances.}
    \label{fig:noise_distrib_a9a_w8a}
\end{figure}

\end{document}